\DeclareMathOperator\arctanh{arctanh}
\newtheorem*{theorem*}{Theorem}
\newtheorem{definition}{Definition}
\newtheorem{theorem}{Theorem}[section]
\newtheorem{proposition}[theorem]{Proposition}
\newtheorem{lemma}[theorem]{Lemma}
\newtheorem*{lemma*}{Lemma}
\newtheorem{remark}[theorem]{Remark}
\newtheorem{corollary}[theorem]{Corollary}
\newtheorem{conjecture}[theorem]{Conjecture}
\newtheorem*{conjecture*}{Conjecture}
\title{An Interpolation from Sol to Hyperbolic Space}
\author{Matei P. Coiculescu}
\begin{document}
\maketitle
\begin{abstract}
We study a one-parameter family of nonisomorphic solvable Lie groups, which, when equipped with canonical left-invariant metrics,
$$ds^2=e^{-2z}dx^2+e^{2\alpha z}dy^2+dz^2$$
becomes an interpolation from a model of the Sol geometry to a model of Hyperbolic Space, with a stop at $\mathbb{H}^2\cross\mathbb{R}$. These Lie groups are also Bianchi groups of Type VI with orthogonal coordinates. As a continuation of joint work with Richard Schwartz on Sol, we primarily analyze those Lie groups in our interpolation with some positive sectional curvature. Our main result is a characterization of the cut locus at the identity of the group that maximizes scalar curvature.
\end{abstract}
\section{Introduction}
We study a one-parameter family of homogeneous Riemannian 3-manifolds that interpolates between three Thurston geometries: Sol, $\mathbb{H}^2\cross\mathbb{R}$, and $\mathbb{H}^3$. Sol is quite strange from a geometric point of view. For example, it is neither rotationally symmetric nor isotropic, and, since Sol has sectional curvature of both signs, there is an interplay between focus and dispersion that causes the Riemannian exponential map to be singular. On the other hand, since both $\mathbb{H}^2\cross\mathbb{R}$ and $\mathbb{H}^3$ have nonpositive sectional curvature, the exponential map at any point is a diffeomorphism onto the whole space.  In this article, we attempt to show that Sol's peculiarity can be slowly untangled by an interpolation of geometries until we reach $\mathbb{H}^2\cross\mathbb{R}$, which has a qualitatively "better" behavior than Sol. The interpolation continues on to $\mathbb{H}^3$, but we will not spend much time with that part of the family. 

The study of families of geometric structures is growing in application. Although done with a different aim, Steve Trettel's Ph.D. thesis \cite{ST} takes this approach. It describes a collection of geometric transitions, defined by constructing analogs of familiar geometries (projective geometry, hyperbolic geometry, etc.) over real algebras. The similarity with our work is the concept of investigating well-known geometric structures as part of a single family.

Our family of Riemannian 3-manifolds arises from a one-parameter family of solvable Lie groups equipped with canonical left-invariant metrics. We denote the groups by $G_\alpha$ with $-1\leq\alpha\leq 1$. Each $G_\alpha$ is the semi-direct product of $\mathbb{R}$ with $\mathbb{R}^2$, with the following operation on $\mathbb{R}^3$:
$$(x,y,z)\ast(x',y',z')=(x'e^z+x, y'e^{-\alpha z}+y, z'+z).$$
Then $\mathbb{R}^3$, which is only the underlying set, can be equipped with the following left-invariant metric:
$$ds^2=e^{-2z}dx^2+e^{2\alpha z}dy^2+dz^2.$$
These $G_\alpha$ groups, when endowed with the canonical metric, perform our desired interpolation: linking the familiar and the unfamiliar. It is a natural question to analyze what happens in between.

We would also like to note the physical relevance of the $G_\alpha$ groups. First, the connection between the Bianchi groups (introduced first in \cite{LB}) and cosmology is well-explored. Indeed, a three-dimensional Lie group associated with a certain Bianchi Lie algebra corresponds to a symmetry group of space (as a part of space-time). The groups of Type VI (corresponding in our paper to $G_\alpha$ groups with $0<|\alpha|<1$) are associated to a cosmological model of space as homogeneous but anisotropic. Although the current consensus is that the universe is isotropic, one recent report \cite{WW} has experimental evidence suggesting the possibility of anisotropy. The authors of \cite{WW} report that the fine-structure constant exhibits statistically significant spatial variability. For further reading about the relationship between Bianchi groups and cosmology, see \cite{EM}. Lastly, the physical ramifications of the geodesic flow in a Lie group have been studied in detail since at least the work of \cite{VA}, where it was linked to hydrodynamics.

Our primary focus will be on $G_\alpha$ where $\alpha$ is positive. On this side of the family, we have the presence of both positive and negative sectional curvature, which makes the geometry of the geodesics and geodesic spheres quite interesting. In the limiting case when $\alpha=1$ (Sol) much is already known. Many properties of the geodesics were discovered by Grayson in \cite{G}. In \cite{MS}, Richard Schwartz and the author give an exact characterization of which geodesic segments in Sol are length minimizers, thereby giving a precise description of the cut locus of the identity in Sol (studying geodesics that start at the origin is sufficient since the spaces are homogeneous). This, in turn, led to the proof (also in \cite{MS}) that the geodesic spheres in Sol are homeomorphic to $S^2$. Recently, Richard Schwartz has computed area bounds of the spheres in Sol, and this interesting result may be found in \cite{S}. We look to generalize some of this work.

For positive $\alpha$, it happens that typical geodesics starting at the origin in $G_\alpha$ spiral around certain cylinders, as we will prove in Theorem 3.1. For each such geodesic, there is an associated period that determines how long it takes for it to spiral exactly once around. We denote the function determining the period by $P$, and we will show that it is a function of the initial tangent vector. We call a geodesic segment $\gamma$ of length $T$ $\textit{small, perfect,}$ or $large$ whenever $T<P_\gamma,$  $T=P_\gamma,$ or $T>P_\gamma,$ respectively. Our primary aim is the following conjecture:

\begin{conjecture*}
  A geodesic segment in $G_\alpha$ is a length minimizer
  if and only if it is small or perfect.
\end{conjecture*}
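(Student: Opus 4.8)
The plan is to pass, by homogeneity, to geodesics issuing from the identity $o=(0,0,0)$ and to describe $\exp\colon T_oG_\alpha\to G_\alpha$ precisely enough to locate the cut locus. The engine is the integrability of the geodesic flow: $x$ and $y$ are cyclic, so $a=e^{-2z}\dot x$ and $b=e^{2\alpha z}\dot y$ are conserved along a unit-speed geodesic, and $\dot z^{\,2}=1-a^2e^{2z}-b^2e^{-2\alpha z}$, so $z(t)$ is the motion of a unit-energy particle in the potential $V(z)=a^2e^{2z}+b^2e^{-2\alpha z}$, after which $x,y$ are recovered by quadrature. When $0<\alpha\le 1$ and $ab\neq0$ this potential has a single well, $z$ oscillates with a period $P=P_{(a,b)}$ that depends only on $(a,b)$, and $\gamma$ spirals as in Theorem 3.1; when $ab=0$ the geodesic lies in one of the totally geodesic surfaces $\{x=0\}$, $\{y=0\}$ (fixed sets of isometries) or on the $z$-axis, and there, since the coordinate projections such as $(x,y,z)\mapsto(0,y,z)$ are $1$-Lipschitz onto a convex submanifold, $\gamma$ minimizes for all time and is "small" for every length. (For $\alpha\le 0$ the potential is monotone, $P\equiv\infty$, and the statement collapses to "all geodesics minimize", classical for $\alpha\in\{0,-1\}$; the content is $0<\alpha\le1$, and "generic" below means $ab\neq0$.)

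The direction "large $\Rightarrow$ not minimizing" is the easy half. Let $\gamma$ be generic with initial velocity $(a,b,c)$. If $c\neq0$, let $\gamma^-$ have initial velocity $(a,b,-c)$; reversibility of the $z$-equation and $P=P_{(a,b)}$ give $z_{\gamma^-}(s)=z_\gamma(P-s)$, so the quadrature formulas force $\gamma^-(P)=\gamma(P)$ with $\gamma^-\neq\gamma$ and incoming velocities $(a,b,\pm c)$ at the common endpoint, which are not opposite; the standard corner argument then gives $\operatorname{cut}(\gamma)\le P_\gamma$. If $c=0$, apply this to the unit-velocity family $\theta\mapsto(a\cos\theta,b\cos\theta,\sin\theta)$: the endpoint $\gamma_\theta(P(\theta))$ is an even function of $\theta$ (it equals $\gamma_{-\theta}(P(\theta))$ and $P(\theta)$ is even), so its $\theta$-derivative at $0$ vanishes, yielding a nontrivial Jacobi field along $\gamma$ that vanishes at $0$ and at $P$; thus $\gamma(P)$ is conjugate to $o$ and again $\operatorname{cut}(\gamma)\le P_\gamma$. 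As special geodesics have no large subsegments, no large geodesic minimizes.

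For the converse it suffices to prove that $\exp$ is injective on the "period ball" $\bar B=\{tv:|v|=1,\ 0\le t\le P_v\}$ (with $P_v=\infty$ for special $v$). Indeed, were $\gamma_v|_{[0,T]}$ with $T\le P_v$ not minimizing, Hopf--Rinow would give a minimizing $\sigma=\gamma_w|_{[0,L]}$, $L=d(o,\gamma_v(T))<T$; if $v$ were special, $\gamma_v(T)$ would lie in a convex totally geodesic leaf and $\gamma_v|_{[0,T]}$ would already minimize, so $v$ is generic, hence $\gamma_v(T)$ has nonzero $x$- and $y$-coordinates, hence $w$ is generic too, hence $L\le\operatorname{cut}(w)\le P_w$ by the previous paragraph; but then $Lw,Tv\in\bar B$ with $\exp(Lw)=\exp(Tv)$, so injectivity gives $L=T$, a contradiction. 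Perfect geodesics minimize as limits of minimizers, and homogeneity finishes the conjecture. In this route a "no conjugate points for $t<P_v$" statement is \emph{not} needed; proving it (a determinant identity in the quadrature data) would only upgrade the conclusion to "$\exp$ is a diffeomorphism on the interior of $\bar B$".

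The main obstacle is thus injectivity of $\exp$ on $\bar B$, essentially the generic--generic case: two distinct spirals $\gamma_{v_1},\gamma_{v_2}$ with $t_i\le P_{v_i}$ and $\gamma_{v_1}(t_1)=\gamma_{v_2}(t_2)$ cannot occur. Here the explicit formulas of Section 3 must be pushed hard. I would first use that $x$ (and $y$) is \emph{strictly monotone} along a generic geodesic---$\dot x=ae^{2z}$ never vanishes---so that $t\mapsto\gamma_{(a,b,\pm c)}(t)$ meets any point at most once (of the at most two times per period at which $z$ hits a prescribed level, the $x$-values differ); it then remains to rule out two \emph{different} spirals---different $(a,b)$, or equal $(a,b)$ with opposite $\operatorname{sign}(c)$---reaching a common point before either completes a period. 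I expect this to reduce, via an angular "winding" coordinate along the spiral, to monotonicity of $P_{(a,b)}$ and of the drift integrals $\int_0^{P}e^{2z}\,dt$, $\int_0^{P}e^{-2\alpha z}\,dt$ in $(a,b)$ and $\alpha$---in effect re-running the delicate analysis of spiraling Sol geodesics from \cite{MS} and making it uniform in $\alpha$. It is the $\alpha$-dependence of these elliptic-type integrals that makes even the supporting estimates a genuine, likely computer-assisted, computation rather than a routine one.
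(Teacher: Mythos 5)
Your sketch of the easy direction is sound and in fact streamlines the paper's argument: conservation of $p_x=e^{-2z}\dot x$ and $p_y=e^{2\alpha z}\dot y$ lets you read the partner identity $\gamma^-(P)=\gamma(P)$ directly off the quadrature, where the paper (Theorem 3.11 together with Corollary 3.12) obtains it via the concatenation/holonomy formalism; the corner argument for $c\neq 0$ and the even-in-$\theta$ conjugate-point variation for $c=0$ are the same mechanism as in Corollary 3.12, and your $1$-Lipschitz projection argument for the special ($ab=0$) geodesics cleanly justifies a point the paper leaves implicit.

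The converse direction, however, contains both a misstatement and a genuine gap. The misstatement: injectivity of $\exp$ on the \emph{closed} period ball $\bar B$ is false, by your own first paragraph --- the perfect partner vectors $P_v v_+$ and $P_v v_-$ are distinct points of $\partial\bar B$ with the same image. What your Hopf--Rinow argument actually requires is the weaker statement that $\exp(Lw)=\exp(Tv)$ with $Lw,Tv\in\bar B$ forces $L=T$; that is consistent with the partner identification and is essentially what the paper establishes through the chain of results in Section 3.4 ($E$ injective on the small vectors, $E(M)\cap E(\partial M)=\emptyset$, and $E$ two-to-one on $\partial M-\partial_0M$ via the partner involution). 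The gap: you do not prove even this weaker statement. You correctly identify that it reduces to monotonicity properties of the period and drift integrals, uniformly in $\alpha$, and then stop. That is exactly where the paper also stops for general $\alpha$: the statement under review is explicitly a \emph{conjecture}, and the paper proves it only for $\alpha=1/2$ (citing \cite{MS} for $\alpha=1$), by combining the Bounding Box Theorem (valid for all $\alpha\in(0,1]$) with the Monotonicity Theorem for $\partial_0N_+$, whose proof hinges on a closed-form elliptic-integral expression for the period available only at $\alpha=1/2$ and $\alpha=1$, plus the limit $b_{x_0}(P/2)\to 4$ and the derivative bound of Lemma 5.11. So your proposal, like the paper, settles one implication in full generality and reduces the other to an unverified monotonicity statement; it is a reasonable research plan but not a proof of the conjecture.
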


The above conjecture is already known for $G_1$, or Sol, and was proven in \cite{MS}. In this article, we will reduce the general conjecture to obtaining bounds on the derivative of the period function by proving the \textbf{Bounding Box Theorem}. In particular, we will define a certain curve $\partial_0 N$ in Section 3.3, and a key step in our would-be proof of the main conjecture is that a portion of this curve should be the graph of a monotonically decreasing function. The main obstacle to proving the whole conjecture is this monotonicity result. We are able to show the desired monotonicity of $\partial_0 N$ for the group $G_{1/2}$ with our \textbf{Monotonicity Theorem} because we have found an explicit formula for the period function in this case. The group $G_{1/2}$ maximizes scalar curvature in our family, which offers more credence that it is truly a "special case" along with Sol. Thus, our main theorem is a proof of the conjecture for $\alpha=1/2$:
\begin{theorem*}
A geodesic segment in $G_{1/2}$ is a length minimizer
  if and only if it is small or perfect.
\end{theorem*}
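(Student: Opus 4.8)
The plan is to deduce the theorem from two results established earlier in the paper: the \textbf{Bounding Box Theorem}, which holds uniformly for all $G_\alpha$ with $0<\alpha\le 1$, and the \textbf{Monotonicity Theorem}, which is specific to $G_{1/2}$ and whose proof is the real work. Since $G_{1/2}$ is homogeneous it suffices to describe the cut locus of the identity: once we know this cut locus is exactly the image under the Riemannian exponential map of the "perfect" locus (initial vectors $v$ whose length equals the period $P_v$), both directions of the theorem follow simultaneously, because a geodesic segment minimizes precisely up to the first time it meets the cut locus, and by Theorem 3.1 together with the definitions of small, perfect, and large, that is exactly the condition that the segment be small or perfect.

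So the task reduces to pinning down the cut locus. Here I would recall the structure from Section 3: by Theorem 3.1 a generic geodesic from the identity spirals around a cylinder with a well-defined period $P$ depending only on its initial tangent vector, and the discrete isometries of the metric $ds^2=e^{-2z}dx^2+e^{z}dy^2+dz^2$ (the reflections in $x$ and in $y$, both fixing the identity) organize geodesics and their endpoints into symmetric pairs. The Bounding Box Theorem reduces the identification of the cut locus to a single planar statement: that the curve $\partial_0 N$ defined in Section 3.3 is the graph of a monotonically decreasing function on the relevant interval. Morally, monotonicity of $\partial_0 N$ guarantees that the perfect surface is embedded and that distinct symmetric geodesics of equal length first meet exactly along it, so that no shorter path sneaks past. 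This reduction is already in hand for every $\alpha$ in our range; the entire content of the present theorem lies in establishing the monotonicity hypothesis when $\alpha=1/2$.

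The main obstacle — and the reason the full conjecture is currently only accessible for $\alpha=1/2$ — is precisely this monotonicity, i.e. the Monotonicity Theorem. I would exploit the special feature of $G_{1/2}$: the geodesic system integrates to yield a closed-form expression for the period function $P$ (the analytic shadow of $G_{1/2}$ maximizing scalar curvature), which is unavailable for general $\alpha$. With this formula the curve $\partial_0 N$ acquires an explicit parametrization, and monotonicity reduces to showing that the derivative of $P$ with respect to the natural parameter along $\partial_0 N$ keeps a constant sign on the interval in question. I expect this to be a delicate but fully explicit estimate — controlling the sign of a combination of elementary and inverse-trigonometric terms — most plausibly handled after a substitution that rationalizes the relevant integrand, followed by a monotonicity or convexity argument on the resulting algebraic function.

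Finally I would assemble the pieces: the Bounding Box Theorem together with the Monotonicity Theorem identifies the cut locus of the identity in $G_{1/2}$ with the image of the perfect locus; homogeneity transports this description to every point; and, as noted above, this characterization of the cut locus is equivalent to the assertion that a geodesic segment in $G_{1/2}$ is a length minimizer if and only if it is small or perfect.
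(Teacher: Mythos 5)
Your high-level architecture matches the paper's: the theorem is indeed deduced from the Bounding Box Theorem (valid for all $0<\alpha\le 1$) together with the Monotonicity Theorem for $G_{1/2}$, and the real work is the latter. But your assembly step --- ``Bounding Box $+$ Monotonicity identifies the cut locus with the image of the perfect locus, and both directions follow'' --- hides several results that are genuinely needed and are not consequences of those two theorems. First, the ``only if'' direction requires showing that \emph{large} segments fail to minimize (Corollary 3.12 in the paper). This rests on Theorem 3.11, that the exponential map identifies perfect \emph{partner} vectors $(x,y,\pm z)$, which is proved by the concatenation/holonomy formalism of Section 3.2 and has nothing to do with either the Bounding Box or the Monotonicity Theorem; your sketch never invokes it. Second, what Bounding Box $+$ Monotonicity actually yield is only $E(M)\cap\partial N=\emptyset$ (small vectors avoid the candidate cut locus), and even that requires the reduction to symmetric flowlines (Lemma 3.16). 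To conclude that \emph{perfect} segments minimize, the paper still must prove that $E$ is injective on the perfect locus away from $Z=0$ (Lemma 3.21), and this injectivity is where two further nontrivial inputs enter: the strict monotonicity of the holonomy invariant as a function of the period (Proposition 3.14) and the Reciprocity Lemma (Theorem 3.15). Only after Lemmas 3.21--3.23 does one get Theorem 3.24 (perfect segments minimize) and hence the stated theorem.

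Your sentence ``morally, monotonicity of $\partial_0 N$ guarantees that the perfect surface is embedded and that distinct symmetric geodesics of equal length first meet exactly along it'' is precisely the content that must be proved, not assumed: embeddedness of $E(\partial M)$ is the injectivity statement above, and ``no shorter path sneaks past'' is Corollary 3.23 combined with Corollary 3.12. None of these follow formally from the two named theorems. Your description of the Monotonicity Theorem's proof is also thinner than what is required --- the paper needs the limit $b_{x_0}(P/2)\to 4$, an explicit $\arctanh$-based bound on $K$, and a verified rational inequality --- but since you correctly flag that part as the delicate explicit estimate, the substantive gap is in the assembly, not there.
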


Here we outline our paper. In the next section, we will present the basic differential geometric facts about all of the $G_\alpha$ groups, ending with a look at the geodesic flow (restricted to the unit tangent bundle). In section 3, we begin by generalizing certain results from \cite{G} and \cite{MS} to all of the $G_\alpha$ with $0<\alpha\leq 1$. Among these are Theorem 3.1, various propositions from \cite{MS}, and Corollary 3.12, which proves half of our main conjecture: large geodesic segments are not length minimizing in any positive $\alpha$ group. Essential to our analysis is the idea of \textit{concatenation}, introduced first in \cite{MS}, that extends to the other $G_\alpha$. Just as in \cite{MS}, we afterwards turn our attention to using \textit{symmetric flow lines} to analyze the cut locus of each $G_\alpha$. Near the end of section 3, we state the Bounding Box Theorem (valid for all $0<\alpha\leq 1$) and the Monotonicity Theorem (which we only manage to prove when $\alpha=1/2$). Assuming these, we finish the proof of our main theorem. In section 4, we prove the Bounding Box Theorem, and in Section 5 we prove the Monotonicity Theorem by narrowing our analysis to the group $G_{1/2}$.  We have sufficient information about the period function for this group to extend the main result from \cite{MS}, obtaining a characterization of the cut locus in $G_{1/2}$ and its consequences for geodesic spheres. We also have two appendices: Appendix A contains the (tiresome) proof of a lemma used in Section 5 and Appendix B has the Mathematica code that generates the figures in our paper. We remark that many of the results we will prove can be considered independently of their geometric consequences, as properties of certain nonlinear ordinary differential equations. In particular, the proof of the Bounding Box Theorem is purely analytic in nature.

We would like to thank Richard Schwartz for his support throughout the development of this article. His encouragement was essential in finishing this work and his geometric insights were invaluable. We also thank him for pointing out omissions in earlier drafts. We thank Benoit Pausader for his advice on tackling the differential equations that we encountered and Georgios Daskalopoulos for teaching us differential geometry. We are also grateful to Stephen Miller for helping us with a numerical computation. We would like to thank the anonymous referees for their very helpful and detailed comments. Lastly, we would like to acknowledge Matthew Grayson's incisive work on Sol, which continues to inspire us.
\section{The Basic Structure}

In this section, we collect some basic facts about all of the $G_\alpha$ groups. The main idea is that a fruitful way of analyzing the geometry of these Lie groups is to first understand the geodesic flow, and this is the setting in which we will continue our analysis for the remainder of this paper.

The principal object of our study will be a one-parameter family of three-dimensional Lie groups, whose Lie algebras are of Type VI in Bianchi's classification, as elaborated in \cite{LB}. For ease of computation, we can also construct our groups as certain subgroups of $GL_3(\mathbb{R})$, and to that end we let
$$G_\alpha=\bigg\{ \begin{pmatrix} e^z&0&x \\ 0&e^{-\alpha z}&y\\0&0&1\end{pmatrix}\bigg| x,y,z\in\mathbb{R}\bigg\} \textrm{ for all } -1\leq\alpha\leq1$$
We can consider each $G_\alpha$ as a matrix group or, equivalently, as $\mathbb{R}^3$ with the following group law:
$$(x,y,z)\ast(x',y',z')=(x'e^z+x, y'e^{-\alpha z}+y, z'+z).$$
Then, $\mathbb{R}^3$ with this group law has the following left-invariant metric
$$ds^2=e^{-2z}dx^2+e^{2\alpha z}dy^2+dz^2.$$
The Lie algebra of $G_\alpha,$ which we denote $\frak{g}_\alpha$, has the following orthonormal basis:
$$\bigg\{ X=\begin{pmatrix} 0&0&1 \\ 0&0&0\\0&0&0\end{pmatrix}\quad  Y=\begin{pmatrix} 0&0&0 \\ 0&0&1\\0&0&0\end{pmatrix}\quad  Z=\begin{pmatrix} 1&0&0 \\ 0&-\alpha&0\\0&0&0\end{pmatrix}\bigg\}$$
or
\begin{equation}X=e^z \frac{\partial}{\partial x}, Y=e^{-\alpha z}\frac{\partial}{\partial y}, Z=\frac{\partial}{\partial z}.\end{equation}
So, the structure equations are:
\begin{equation}[X,Y]=0\quad [Y,Z]=\alpha Y\quad [X,Z]=-X\end{equation}
The behavior of $\frak{g}_\alpha$ coincides with the Lie algebras of Type VI (in Bianchi's Classification) when $0<|\alpha|<1$ and we have three limiting cases: $\alpha=1,$ which is a Bianchi group of type VI$_{0}$, $\alpha=0,$ which is a Bianchi group of type III, and  $\alpha=-1$, which is a Bianchi group of type V. The intermediate cases are not unimodular, unlike the limiting cases, and that may also be of interest. As a consequence of the classification done in \cite{LB}, no two of the Lie algebras $\frak{g}_\alpha$ are isomorphic, hence
\begin{proposition}[Bianchi, \cite{LB}]
No two of the $G_\alpha$ are Lie group isomorphic.
\end{proposition}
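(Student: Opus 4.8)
The plan is to reduce the statement about Lie \emph{groups} to one about Lie \emph{algebras}, and then to distinguish the $\mathfrak{g}_\alpha$ by a single scale-invariant number built from their structure constants. Any Lie group isomorphism $\phi\colon G_\alpha\to G_\beta$ differentiates at the identity to a Lie algebra isomorphism $d\phi_e\colon\mathfrak{g}_\alpha\to\mathfrak{g}_\beta$, so it suffices to show that $\mathfrak{g}_\alpha\not\cong\mathfrak{g}_\beta$ whenever $\alpha\neq\beta$ in $[-1,1]$. (Each $G_\alpha$ is diffeomorphic to $\mathbb{R}^3$, hence simply connected, so this reduction in fact loses no information, though we will not need the converse.) Of course, once phrased this way the result is exactly the content of Bianchi's classification \cite{LB}; below I indicate the self-contained version of the argument.

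Next I would exploit the derived subalgebra $\mathfrak{g}_\alpha'=[\mathfrak{g}_\alpha,\mathfrak{g}_\alpha]$, which is a characteristic ideal and so is carried to $\mathfrak{g}_\beta'$ by any isomorphism. From the bracket relations $[X,Y]=0,\ [Y,Z]=\alpha Y,\ [X,Z]=-X$ one reads off $\mathfrak{g}_\alpha'=\operatorname{span}(X,\alpha Y)$: for $\alpha\neq0$ this is the $2$-dimensional abelian ideal $\operatorname{span}(X,Y)$, while $\mathfrak{g}_0'=\operatorname{span}(X)$ is $1$-dimensional. Thus $\mathfrak{g}_0$ is already separated from every $\mathfrak{g}_\alpha$, $\alpha\neq0$, by $\dim\mathfrak{g}_\alpha'$, and it remains to treat $\alpha\neq0$. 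For such $\alpha$, pick any $W\in\mathfrak{g}_\alpha\setminus\mathfrak{g}_\alpha'$; since $\mathfrak{g}_\alpha'$ is abelian, replacing $W$ by $cW+v$ (with $c\neq0$, $v\in\mathfrak{g}_\alpha'$) replaces $\operatorname{ad}_W|_{\mathfrak{g}_\alpha'}$ by $c\,\operatorname{ad}_W|_{\mathfrak{g}_\alpha'}$, so the endomorphism $A_\alpha:=\operatorname{ad}_W|_{\mathfrak{g}_\alpha'}$ of the plane $\mathfrak{g}_\alpha'$ is well defined up to conjugation and nonzero rescaling, and is carried by an isomorphism $\mathfrak{g}_\alpha\to\mathfrak{g}_\beta$ to $A_\beta$ up to the same ambiguities. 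Taking $W=Z$, the relations above give $A_\alpha=\operatorname{diag}(1,-\alpha)$ in the basis $(X,Y)$.

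Finally I would package $A_\alpha$ into a number insensitive to that ambiguity. The quantity
\[
\tau(\alpha):=\frac{(\operatorname{tr}A_\alpha)^2}{\det A_\alpha}=\frac{(1-\alpha)^2}{-\alpha}=2-\alpha-\frac1\alpha
\]
is unchanged under conjugation and under $A_\alpha\mapsto cA_\alpha$ (numerator and denominator both scale by $c^2$), and it is defined precisely when $\det A_\alpha\neq0$, i.e.\ when $\alpha\neq0$. Since $\tau'(\alpha)=\dfrac{1-\alpha^2}{\alpha^2}$ is strictly positive on $(-1,0)$ and on $(0,1)$, $\tau$ is strictly increasing — hence injective — on each of $[-1,0)$ and $(0,1]$; moreover $\tau$ maps $(0,1]$ into $(-\infty,0]$ and $[-1,0)$ into $[4,+\infty)$, so the two ranges are disjoint. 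Therefore $\tau$ is injective on all of $[-1,0)\cup(0,1]$, which together with the $\alpha=0$ case shows $\mathfrak{g}_\alpha\cong\mathfrak{g}_\beta\Rightarrow\alpha=\beta$, and hence $G_\alpha\cong G_\beta\Rightarrow\alpha=\beta$.

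The computations are routine; the one place that genuinely needs care is the middle step — verifying that $A_\alpha$ really is an isomorphism invariant up to conjugacy and nonzero scaling (so that one is entitled to kill the scalar by passing to $\tau$), and that $(\operatorname{tr})^2/\det$ is the correct scale-invariant combination of a $2\times2$ matrix. Everything else, including the monotonicity of $\tau$ and the disjointness of its two ranges, is elementary calculus.
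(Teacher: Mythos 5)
Your proof is correct, but it takes a genuinely different route from the paper: the paper gives no argument at all for this proposition, simply deducing it from Bianchi's classification of three-dimensional Lie algebras in \cite{LB}. Your version is self-contained and checks out in every step: the derived subalgebra $[\mathfrak{g}_\alpha,\mathfrak{g}_\alpha]$ is indeed $\operatorname{span}(X,Y)$ for $\alpha\neq 0$ and $\operatorname{span}(X)$ for $\alpha=0$, which isolates the $\mathbb{H}^2\times\mathbb{R}$ case; the operator $\operatorname{ad}_W|_{\mathfrak{g}'}$ is well defined up to conjugation and nonzero scaling precisely because $\mathfrak{g}'$ is abelian (so $\operatorname{ad}_v|_{\mathfrak{g}'}=0$ for $v\in\mathfrak{g}'$) and because the quotient $\mathfrak{g}/\mathfrak{g}'$ is one-dimensional; and $(\operatorname{tr}A)^2/\det A$ is the right projective invariant of a $2\times2$ matrix, yielding $\tau(\alpha)=2-\alpha-1/\alpha$, whose injectivity on $[-1,0)\cup(0,1]$ you verify correctly (increasing on each piece, with ranges $[4,\infty)$ and $(-\infty,0]$ respectively). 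What your approach buys is transparency and independence from the classification literature — it exhibits an explicit numerical invariant separating the $\mathfrak{g}_\alpha$ — at the cost of a page of computation; what the paper's citation buys is brevity and the contextual placement of the $\mathfrak{g}_\alpha$ within the Bianchi types VI, VI$_0$, III, and V, which the paper uses elsewhere for its cosmological remarks. Your invariant $\tau$ is, up to normalization, the standard modulus distinguishing the continuum of type VI algebras, so the two arguments are ultimately consistent.
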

One might ask what occurs if we let the parameter $|\alpha|>1$, and the answer is simple. In this case, the Lie algebra will be isomorphic to that of one of our $G_\alpha$ groups with $|\alpha|<1$. Since we restrict ourselves to simply connected Lie groups (indeed, Lie groups that are  diffeomorphic to $\mathbb{R}^3$), this means that the corresponding Lie groups are isomorphic as well. 

Now we recall an essential fact from Riemannian geometry. Given a smooth manifold with a Riemannian metric, there exists a unique torsion-free connection which is compatible with the metric, which is called the Levi-Civita connection. This can be easily proven with the following lemma, the proof of which may be found in \cite{K}.
\begin{lemma}[Koszul Formula]
Let $\nabla$ be a torsion-free, metric connection on a Riemannian manifold $(M,g)$. Then, for any vector fields $X,Y,$ and $Z,$ we have:
$$2g(\nabla_X Y, Z)= X(g(Y,Z))+Y(g(X,Z))-Z(g(X,Y))+$$
$$+g([X,Y],Z)-g([X,Z],Y)-g([Y,Z],X)$$
\end{lemma}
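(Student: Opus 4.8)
The plan is to derive the identity directly from the two defining properties of $\nabla$: metric compatibility, which says $W\bigl(g(U,V)\bigr)=g(\nabla_W U,V)+g(U,\nabla_W V)$ for all vector fields $U,V,W$, and the torsion-free condition, which says $\nabla_U V-\nabla_V U=[U,V]$. The key observation is that the right-hand side of the claimed formula is essentially a polarization of metric compatibility: it should emerge from adding two cyclic instances of the compatibility identity and subtracting a third, and then using torsion-freeness to trade the unwanted covariant derivatives for Lie brackets.

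Concretely, I would first write down the three equations
\begin{align}
X\bigl(g(Y,Z)\bigr) &= g(\nabla_X Y, Z) + g(Y, \nabla_X Z), \\
Y\bigl(g(X,Z)\bigr) &= g(\nabla_Y X, Z) + g(X, \nabla_Y Z), \\
Z\bigl(g(X,Y)\bigr) &= g(\nabla_Z X, Y) + g(X, \nabla_Z Y),
\end{align}
and then form the combination (first)~$+$~(second)~$-$~(third). The six terms on the resulting right-hand side group naturally into three pairs. Applying the torsion-free identity to each pair: the pair $g(\nabla_X Y,Z)+g(\nabla_Y X,Z)$ becomes $2g(\nabla_X Y,Z)-g([X,Y],Z)$ after substituting $\nabla_Y X=\nabla_X Y-[X,Y]$; the pair $g(Y,\nabla_X Z)-g(\nabla_Z X,Y)$ becomes $g([X,Z],Y)$; and the pair $g(X,\nabla_Y Z)-g(X,\nabla_Z Y)$ becomes $g([Y,Z],X)$. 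Substituting these back into the combined equation and isolating $2g(\nabla_X Y,Z)$ on one side yields exactly the stated formula.

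There is no substantive obstacle here; it is a short bookkeeping computation. The only place where care is needed is matching the orientation of each Lie bracket and tracking signs when the three equations are combined, since it is easy to produce the formula with, say, $+g([X,Z],Y)$ in place of $-g([X,Z],Y)$. As a sanity check I would verify the final signs on an orthonormal frame with known structure constants — for instance the basis $\{X,Y,Z\}$ of $\mathfrak{g}_\alpha$ from~(2), where every term $g([\,\cdot\,,\cdot\,],\cdot\,)$ is a constant determined by~(2) and $\nabla$ can be read off independently. Finally, I would note that since the right-hand side of the formula makes no reference to $\nabla$, this identity simultaneously forces uniqueness of any torsion-free metric connection (and, after checking that the formula does define such a connection, existence), which is precisely how the lemma is used in the sentence preceding it.
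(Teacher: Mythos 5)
Your proposal is correct, and the sign bookkeeping checks out: combining the three compatibility identities as (first)$+$(second)$-$(third) and using $\nabla_U V-\nabla_V U=[U,V]$ does yield exactly $2g(\nabla_X Y,Z)=X(g(Y,Z))+Y(g(X,Z))-Z(g(X,Y))+g([X,Y],Z)-g([X,Z],Y)-g([Y,Z],X)$. The paper itself gives no proof, deferring to \cite{K}, and your argument is precisely the standard derivation found there, so there is nothing to compare beyond noting agreement.
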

Using the Koszul Formula, we can easily compute the Levi-Civita connection $\nabla$ for each $G_\alpha$ from equation $(2)$. We get:
\begin{proposition}
The Levi-Civita connection of $G_\alpha$, with its left-invariant metric, is completely determined by
$$\begin{pmatrix} \nabla_X X && \nabla_X Y&&\nabla_X Z\\ \nabla_Y X && \nabla_Y Y&&\nabla_Y Z\\ \nabla_Z X && \nabla_Z Y&&\nabla_Z Z\end{pmatrix}=\begin{pmatrix} Z && 0&&-X\\ 0 && -\alpha Z&&\alpha Y\\ 0 && 0&&0\end{pmatrix}$$
where $\{X,Y,Z\}$ is the orthonormal basis of the Lie algebra, as in $(1)$.
\end{proposition}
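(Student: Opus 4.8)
The plan is to feed the orthonormal left-invariant frame $\{X,Y,Z\}$ and the structure equations $(2)$ directly into the Koszul Formula. The first observation is that since the metric is left-invariant and $X,Y,Z$ are left-invariant vector fields, each inner product $g(E_i,E_j)$ with $E_i,E_j\in\{X,Y,Z\}$ is the constant function $\delta_{ij}$ on $G_\alpha$, so every term of the form $E_i\big(g(E_j,E_k)\big)$ in the Koszul Formula vanishes. Hence on basis vectors the formula collapses to the purely algebraic identity
$$2g(\nabla_{E_i}E_j,E_k)=g([E_i,E_j],E_k)-g([E_i,E_k],E_j)-g([E_j,E_k],E_i).$$

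Next I would evaluate the right-hand side for each of the nine pairs $(E_i,E_j)$ against each $E_k\in\{X,Y,Z\}$, using $[X,Y]=0$, $[Y,Z]=\alpha Y$, $[X,Z]=-X$ together with their antisymmetric partners $[Z,Y]=-\alpha Y$, $[Z,X]=X$, and orthonormality. Because $\{X,Y,Z\}$ is orthonormal, the three numbers $g(\nabla_{E_i}E_j,X)$, $g(\nabla_{E_i}E_j,Y)$, $g(\nabla_{E_i}E_j,Z)$ are precisely the coordinates of $\nabla_{E_i}E_j$ in the frame, so reading them off yields the entries of the matrix in the statement. For example, $2g(\nabla_X X,Z)=-g([X,Z],X)-g([X,Z],X)=2g(X,X)=2$, while $g(\nabla_X X,X)=g(\nabla_X X,Y)=0$, giving $\nabla_X X=Z$; the other eight entries follow the same way.

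As a consistency check — and the only place a sign error could creep in — I would verify that the resulting connection is torsion-free, i.e. $\nabla_{E_i}E_j-\nabla_{E_j}E_i=[E_i,E_j]$ for all pairs, and metric-compatible, i.e. $0=E_i\,g(E_j,E_k)=g(\nabla_{E_i}E_j,E_k)+g(E_j,\nabla_{E_i}E_k)$; both are immediate from the displayed matrix. There are no analytic difficulties here: the "hard part" is merely bookkeeping, namely keeping straight the order of the arguments in the Koszul Formula and the signs in $[Z,X]=X$ and $[Z,Y]=-\alpha Y$. Finally, since both $\nabla$ and left translation are determined by their action on left-invariant vector fields, this computation on $\frak{g}_\alpha$ determines the Levi-Civita connection on all of $G_\alpha$, which is the assertion of the proposition.
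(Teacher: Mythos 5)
Your proposal is correct and follows essentially the same route as the paper: both feed the left-invariant orthonormal frame into the Koszul Formula, observe that the derivative terms vanish so the formula becomes purely algebraic in the structure constants, and illustrate with the computation $\nabla_X X = Z$ before noting the remaining entries follow identically. The torsion-free and metric-compatibility consistency check is a sensible addition but not a substantive difference.
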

\begin{proof}
As an illustration of this computation, we derive $\nabla_X X$ explicitly. The other entries in the matrix of covariant derivatives can be found in an identical manner. First, since $[X,X]=0$ and $X,Y,Z$ form an orthonormal basis of the Lie Algebra, the Koszul formula yields:
$$2g(\nabla_X X, V)=-2g([X,V],X),$$
where $V\in \{X,Y,Z\}$. Hence, using the structure equations in $(2)$, we get
$$g(\nabla_X X, X)=0, \quad g(\nabla_X X, Y)=0, \quad g(\nabla_X X, Z)=Z$$
so $\nabla_X X= Z,$ as desired.
\end{proof}
The coordinate planes play a special role in the geometry of $G_\alpha.$ Each group (which is diffeomorphic to $\mathbb{R}^3$) has three foliations by the $XZ,$ $YZ,$ and $XY$ planes. It will be worthwhile to compute the curvatures of these surfaces in $G_\alpha$. The sectional curvature can be computed easily from the Levi-Civita Connection, while the extrinsic (Gaussian) curvature and mean curvature are computed using the Weingarten equation. Lastly, for surfaces in a Riemannian 3-manifold we have the relation 
\begin{equation} Intrinsic = Extrinsic + Sectional \end{equation}
from Gauss' Theorema Egregium. For details, see the first chapter in \cite{N}. Straightforward computations yield the following proposition and its immediate consequences.
\begin{proposition} The relevant curvatures of the coordinate planes:
\begin{center}
 \begin{tabular}{|c c c c c|} 
 \hline
Plane& Sectional & Intrinsic & Extrinsic (Gaussian) & Mean  \\ 
 \hline\hline
XY& $\alpha$ & 0 & $-\alpha$ & $(1-\alpha)/2$ \\ 
 \hline
XZ& -1 & -1 & 0 &0 \\
 \hline
YZ& $-\alpha^2$ & $-\alpha^2$ & 0 &0\\ 
 \hline
\end{tabular}
\end{center}
\end{proposition}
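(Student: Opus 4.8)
The plan is to compute the three curvature quantities directly in the orthonormal frame $\{X,Y,Z\}$ using the Levi-Civita connection from Proposition 2.4, and then invoke the Gauss equation $(3)$ to cross-check the intrinsic column. For the \emph{sectional} curvature of the plane spanned by two of the orthonormal vectors $U,V$, I would use the formula $K(U,V)=g(R(U,V)V,U)$ with $R(U,V)W=\nabla_U\nabla_V W-\nabla_V\nabla_U W-\nabla_{[U,V]}W$. Since all the covariant derivatives $\nabla_U V$ are already expressed in the basis by Proposition 2.4, each $R(U,V)V$ is a short composition: e.g.\ for the $XY$-plane, $R(X,Y)Y=\nabla_X\nabla_Y Y-\nabla_Y\nabla_X Y-\nabla_{[X,Y]}Y$, and using $\nabla_Y Y=-\alpha Z$, $\nabla_X Y=0$, $[X,Y]=0$, this becomes $-\alpha\nabla_X Z=\alpha X$, so $K(X,Y)=g(\alpha X,X)=\alpha$. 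The same bookkeeping gives $K(X,Z)=-1$ from $\nabla_Z Z=0$, $\nabla_X Z=-X$, and $K(Y,Z)=-\alpha^2$ from $\nabla_Z Z=0$, $\nabla_Y Z=\alpha Y$. This fills the first column.

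Next I would compute the \emph{extrinsic (Gaussian) and mean} curvatures of each coordinate plane via the Weingarten map (shape operator) $S(U)=-\nabla_U \nu$, where $\nu$ is the appropriate unit normal: $\nu=Z$ for the $XY$-plane, $\nu=Y$ for the $XZ$-plane (up to the constant normalization, since $Y$ has unit length in the metric), and $\nu=X$ for the $YZ$-plane. From Proposition 2.4 the relevant derivatives are immediate: for the $XY$-plane, $S(X)=-\nabla_X Z=X$ and $S(Y)=-\nabla_Y Z=-\alpha Y$, so $S$ is diagonal with eigenvalues $1$ and $-\alpha$; hence the extrinsic (Gaussian) curvature is $\det S=-\alpha$ and the mean curvature is $\tfrac12\operatorname{tr}S=(1-\alpha)/2$. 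For the $XZ$-plane, $S(X)=-\nabla_X Y=0$ and $S(Z)=-\nabla_Z Y=0$, so $S\equiv 0$, giving extrinsic curvature $0$ and mean curvature $0$; these are totally geodesic. Similarly for the $YZ$-plane, $S(Y)=-\nabla_Y X=0$ and $S(Z)=-\nabla_Z X=0$, so again $S\equiv 0$. (I would double-check that $[X,Z]\ne 0$ and $[Y,Z]\ne 0$ do not spoil the claim that the $XZ$- and $YZ$-planes are submanifolds tangent to the stated frames: they are, because $X,Z$ span an integrable distribution — indeed a Lie subalgebra — and likewise $Y,Z$.)

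Finally I would verify the \emph{intrinsic} column against $(3)$: for the $XY$-plane, $\text{Intrinsic}=\text{Extrinsic}+\text{Sectional}=-\alpha+\alpha=0$, consistent with the fact that the $XY$-plane with the induced metric $e^{-2z}dx^2+e^{2\alpha z}dy^2$ at fixed $z$ is flat; for the $XZ$-plane, $-1+0$ wait, $\text{Intrinsic}=0+(-1)=-1$, matching the fact that this plane carries the metric $e^{-2z}dx^2+dz^2$, a copy of $\mathbb{H}^2$ of curvature $-1$; and for the $YZ$-plane, $0+(-\alpha^2)=-\alpha^2$, matching the metric $e^{2\alpha z}dy^2+dz^2$, a rescaled hyperbolic plane of curvature $-\alpha^2$. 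I do not anticipate a genuine obstacle here — the entire proposition is a finite, mechanical computation once Proposition 2.4 is in hand; the only points demanding a little care are the correct unit normalizations of the normal vectors in the metric $ds^2$ (so that the shape operator is computed with respect to a \emph{unit} normal) and the sign conventions for sectional curvature and for the Weingarten map, which must be chosen consistently so that the $XZ$-plane comes out with sectional curvature $-1$ rather than $+1$. The author's remark that these are "straightforward computations" is accurate; my writeup would simply record the $XX$/$XY$/$XZ$-plane cases in the style of the sample $\nabla_X X$ computation in Proposition 2.4 and leave the rest to the reader.
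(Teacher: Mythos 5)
Your proposal is correct and follows exactly the route the paper indicates (sectional curvature from the Levi-Civita connection of Proposition 2.4, extrinsic and mean curvature from the Weingarten map with respect to the unit normal $Z$, $Y$, or $X$, and the intrinsic column recovered from the relation Intrinsic $=$ Extrinsic $+$ Sectional); all the entries check out, including the eigenvalues $1$ and $-\alpha$ of the shape operator on the $XY$-plane. The paper omits the computation as straightforward, so your writeup simply supplies the details it leaves to the reader.
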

\begin{corollary}
The $XY$ plane is a minimal surface (having vanishing mean curvature) in $G_\alpha$ if and only if $\alpha=1$, and the $XZ$ and $YZ$ planes are minimal for all $\alpha$. Also, the $XY$ plane is always a constant-mean-curvature surface.
\end{corollary}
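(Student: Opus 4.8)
The plan is to read everything off from the Levi-Civita connection computed in Proposition 2.3 (equivalently, from the ``Mean'' column of Proposition 2.4); no new computation is really needed, which is exactly why this is stated as a corollary. The first step is to observe that each coordinate plane is a level set of one coordinate function and therefore carries a global unit normal field, namely $Z$ for the $XY$ plane $\{z=0\}$, $Y$ for the $XZ$ plane $\{y=0\}$, and $X$ for the $YZ$ plane $\{x=0\}$ (each of these frame vectors is orthogonal to the other two everywhere, by the diagonal form of the metric). Next, for a hypersurface with unit normal $\nu$ the shape operator is $S(V)=-\nabla_V\nu$, so Proposition 2.3 hands us the three shape operators by inspection: on the $XY$ plane $S(X)=-\nabla_X Z=X$ and $S(Y)=-\nabla_Y Z=-\alpha Y$; on the $XZ$ plane $S(X)=-\nabla_X Y=0$ and $S(Z)=-\nabla_Z Y=0$; on the $YZ$ plane $S(Y)=-\nabla_Y X=0$ and $S(Z)=-\nabla_Z X=0$.

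From here I would simply read off the three assertions. The $XZ$ and $YZ$ planes have vanishing shape operator, hence are totally geodesic --- in particular minimal --- for every $\alpha$, which also accounts for the zero entries in the ``Extrinsic'' and ``Mean'' rows of Proposition 2.4 for these planes. On the $XY$ plane the shape operator has eigenvalues $1$ and $-\alpha$, so its mean curvature is $H=\tfrac12\operatorname{tr}S=\tfrac12(1-\alpha)$ (and its extrinsic curvature is $\det S=-\alpha$, matching the table); since $\tfrac12(1-\alpha)$ vanishes exactly when $\alpha=1$, the $XY$ plane is minimal precisely in the Sol case. For the last claim I would note that $H=\tfrac12(1-\alpha)$ does not depend on the basepoint, so the $XY$ plane has constant mean curvature for every $\alpha$; if one wants a conceptual reason rather than a bare computation, observe that the subgroup $\{(x,y,0):x,y\in\mathbb{R}\}\subset G_\alpha$ acts by left translations (which are isometries), preserves the plane $\{z=0\}$, and is transitive on it, so the $XY$ plane is a homogeneous surface of $G_\alpha$ and its mean curvature is forced to be constant.

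I do not expect any genuine obstacle here; the content is entirely contained in Proposition 2.3. The only points that need a word of care are the convention for the mean curvature (I take it to be half the trace of the shape operator, so that the $XY$ value is $\tfrac12(1-\alpha)$) and the orientation of the unit normal, which changes $H$ by a sign; since the corollary only asserts the vanishing or the constancy of $H$, neither choice affects the statement.
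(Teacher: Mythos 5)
Your proof is correct and follows essentially the same route as the paper: the corollary is read off from the mean-curvature entries of Proposition 2.4, which the paper itself obtains from the Levi-Civita connection via the Weingarten equation, exactly as you do with the shape operators $S(V)=-\nabla_V\nu$. Your computation of the eigenvalues $1$ and $-\alpha$ on the $XY$ plane and the vanishing shape operators on the $XZ$ and $YZ$ planes matches the paper's table, and the extra homogeneity remark for the CMC claim is a harmless (and correct) bonus.
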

We will later strengthen this corollary by proving that the $XZ$ and $YZ$ planes are geodesically embedded.

It can be easily seen that $G_1$ is a model for the Sol geometry, $G_0$ is a model of $\mathbb{H}^2\cross\mathbb{R}$, and $G_{-1}$ is a model of $\mathbb{H}^3,$ or hyperbolic space. We can also compute the Ricci and scalar curvatures as well and we get that the scalar curvature of $G_\alpha$ is $S_\alpha=2\alpha-2-2\alpha^2$. $S_\alpha$ attains its maximum over the family at $S_{1/2}=-3/2$, and the minimum is attained at $S_{-1}=-6$ (as might be expected). Therefore, the group $G_{1/2}$ may also be considered a special case: the member of the interpolation that maximizes scalar curvature. Moreover, $S_\alpha$ is symmetric in the positive side of the family, in the sense that for all non-negative $\alpha$, $S_{\alpha}=S_{1-\alpha}$. 

Other self-evident properties of the coordinate planes could be stated, but we let the reader find these.
Now, we turn our attention to the geodesic flow of $G_\alpha$. Rather than attempting to derive analytic formulas for the geodesics from the geodesic equation, as done for Sol in \cite{T}, we restrict the geodesic flow to the unit-tangent bundle and consider the resulting vector field. The idea of restricting the geodesic flow to $S(G_1)$ for Sol was first explored by Grayson in his thesis \cite{G} and then used by Richard Schwartz and the current author to characterize the cut locus of the origin of Sol in \cite{MS}. We recall that the cut locus of a point $p$ in a Riemannian manifold $(M,g)$ is the locus of points on geodesics starting at $p$ where the geodesics cease to be length minimizing. 

Now, we extend the previous ideas to the other $G_\alpha$ groups. Indeed, consider $\frak{g}_\alpha$ and let $S(G_\alpha)$ be the unit sphere centered at the origin in $\mathfrak{g}_\alpha.$ Suppose that $\gamma(t)$ is a geodesic parametrized by arc length such that $\gamma(0)$ is the identity of $G$. Then, we can realize the development of $\gamma'(t)$, the tangent vector field along $\gamma$, as a curve on $S(G_\alpha),$ which will be the integral curve of a vector field on $S(G_\alpha)$ denoted by $\Sigma_\alpha.$ We compute the vector field $\Sigma_\alpha$ explicitly. 
\begin{proposition}
For the group $G_\alpha$ the vector field $\Sigma_\alpha$ is given by
$$\Sigma_{\alpha}(x,y,z)= (xz, -\alpha yz, \alpha y^2-x^2)$$
\end{proposition}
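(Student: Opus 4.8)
The plan is to derive the vector field $\Sigma_\alpha$ directly from the geodesic equations for the left-invariant metric, written in the orthonormal frame $\{X,Y,Z\}$ of equation (1). Write the unit tangent vector along a geodesic $\gamma$ as $\gamma'(t) = a(t)X + b(t)Y + c(t)Z$, so that $(a,b,c)$ is a curve on the unit sphere $S(G_\alpha)$ in $\mathfrak{g}_\alpha$; identifying $(x,y,z)$ in the statement with $(a,b,c)$, the claim is that $(a',b',c') = (ac,\,-\alpha bc,\,\alpha b^2 - a^2)$. The geodesic condition is $\nabla_{\gamma'}\gamma' = 0$, and expanding this in the frame using bilinearity of $\nabla$ together with the entries of the connection matrix from Proposition~2.4 gives a system of three ODEs in $a,b,c$.

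The key computation is the expansion of $\nabla_{\gamma'}\gamma'$. Using Proposition~2.4 one has $\nabla_X X = Z$, $\nabla_X Z = -X$, $\nabla_Y Y = -\alpha Z$, $\nabla_Y Z = \alpha Y$, and $\nabla_Z(\cdot) = 0$, while $\nabla_X Y = \nabla_Y X = 0$; note also that the coefficient functions $a,b,c$ are functions of $t$ only, so $\gamma'(a) = a'$ etc. Collecting the $X$, $Y$, and $Z$ components of $\nabla_{\gamma'}\gamma' = 0$ yields
\begin{align*}
a' &= ac, \\
b' &= -\alpha bc, \\
c' &= -a^2 + \alpha b^2,
\end{align*}
which is exactly $\Sigma_\alpha(a,b,c)$. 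One should also check consistency: the vector field is tangent to the unit sphere, i.e. $aa' + bb' + cc' = a^2c - \alpha b^2 c + c(-a^2 + \alpha b^2) = 0$, which confirms that arc-length parametrization is preserved, as it must be for a geodesic.

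I expect the only real subtlety to be bookkeeping: correctly reading off the nine entries of the connection matrix in Proposition~2.4 (it is not symmetric, since $\nabla_Z X = 0$ but $\nabla_X Z = -X$, etc.) and being careful that $\nabla_{\gamma'}\gamma'$ picks up both the "derivative of the coefficients" terms $a'X + b'Y + c'Z$ and the "curvature of the frame" terms like $a c\,\nabla_X Z + bc\,\nabla_Y Z + ac\,\nabla_Z X + \dots$. There is no analytic difficulty here — it is a finite linear-algebraic expansion — so the proof is short; the main point worth stating explicitly is the identification of the development of $\gamma'$ on $S(G_\alpha)$ with the integral curve of $\Sigma_\alpha$, which is immediate once the ODE system above is in hand.
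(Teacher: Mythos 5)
Your proposal is correct, and it is essentially the paper's argument in a more explicit form: the paper identifies $\Sigma_\alpha$ with $-\nabla_V V$ for the constant-coefficient field $V=xX+yY+zZ$ and reads the answer off Proposition~2.4, which is exactly the ``frame curvature'' term your expansion of $\nabla_{\gamma'}\gamma'=0$ isolates. Your version spells out the bookkeeping (and the tangency check $aa'+bb'+cc'=0$) that the paper leaves as ``this computation is elementary.''
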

\begin{proof}
Since we are dealing with a homogeneous space (a Lie group) it suffices to examine the infinitesimal change of $V=\gamma'(0)=(x,y,z).$ We remark that parallel translation along $\gamma$ preserves $\gamma'$ because we have a geodesic, but parallel translation does not preserve the constant (w.r.t. the left-invariant orthonormal frame) vector field $V=\gamma'(0)$ along $\gamma$. Indeed, the infinitesimal change in the constant vector field $V$ as we parallel translate along $\gamma$ is precisely the covariant derivative of $V$ with respect to itself, or $\nabla_V V.$ Then, our vector field on $S(G_\alpha)$ is precisely:
$$\Sigma_\alpha=\nabla_V (\gamma' - V)=\nabla_V \gamma' -\nabla_V V=-\nabla_V V.$$
We also remark that since $V$ is a constant vector field, $\Sigma_\alpha$ is determined completely by the Levi-Civita connection that we previously computed, and this computation is elementary.
\end{proof}
We remark where the equilibria points of $\Sigma_\alpha$ are, since these correspond to straight-line geodesics. When $0<\alpha\leq 1,$ the equilibria are
$$\bigg(\pm\sqrt{\frac{\alpha}{1+\alpha}},\pm\sqrt{\frac{1}{1+\alpha}},0\bigg) \textrm{ and } (0,0,\pm1).$$
When $\alpha=0$, the set $\{X=0\}\cap S(G_\alpha)$ is an equator of equilibria, and when $\alpha<0,$ the only equilibria are at the poles.
A glance at $\Sigma_\alpha$ gets us our promised strengthening of Corollary 2.5:
\begin{corollary}
The $XZ$ and $YZ$ planes are geodesically embedded. The $XY$ is never geodesically embedded, even when it is a minimal surface (i.e. for $\alpha=1$).
\end{corollary}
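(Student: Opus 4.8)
The plan is to reduce the whole statement to a one-line invariance property of the vector field $\Sigma_\alpha$ from Proposition 2.7, combined with homogeneity. Recall that a surface $S\subset G_\alpha$ is geodesically embedded (totally geodesic) precisely when every ambient geodesic tangent to $S$ at one of its points remains in $S$. Each of the three families of coordinate planes $\{x=c\}$, $\{y=c\}$, $\{z=c\}$ is permuted by the left translations of $G_\alpha$ — one checks directly from the group law that $L_{(x_0,y_0,z_0)}$ carries $\{y=0\}$ to $\{y=y_0\}$, and similarly for the other two families — and the left translations are isometries, while the three planes through the identity are themselves subgroups. Thus the isometry group acts transitively on each family, and through these subgroups it acts transitively on each plane of the family, so for each family it suffices to test the totally geodesic condition for geodesics emanating from the identity.

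Now write a unit-speed geodesic $\gamma$ with $\gamma(0)=\mathrm{id}$ in the orthonormal frame as $\gamma'(t)=a(t)X+b(t)Y+c(t)Z$, so that $(a,b,c)$ is an integral curve of $\Sigma_\alpha$ on $S(G_\alpha)$ and, in ambient coordinates, $\dot x=e^{z}a$, $\dot y=e^{-\alpha z}b$, $\dot z=c$. The tangent plane to $\{y=0\}$ at the identity is $\{b=0\}$; since $\Sigma_\alpha(x,0,z)=(xz,0,-x^2)$ has vanishing second coordinate, the locus $\{b=0\}$ is invariant under the flow, so $b(0)=0$ forces $b\equiv 0$, hence $\dot y\equiv 0$ and $y\equiv 0$: the geodesic stays in the $XZ$ plane. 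The $YZ$ plane is handled identically after interchanging $X$ and $Y$, using $\Sigma_\alpha(0,y,z)=(0,-\alpha yz,\alpha y^2)$ and the invariance of $\{a=0\}$. With the homogeneity reduction this proves that the $XZ$ and $YZ$ planes are geodesically embedded. (Equivalently one may read the second fundamental forms off the connection table of Proposition 2.3: for $\{y=0\}$ the unit normal is $Y$ and $\langle\nabla_U V,Y\rangle=0$ for all $U,V\in\{X,Z\}$, and likewise for $\{x=0\}$; since $\nabla$ is left-invariant this vanishing propagates to every plane of the two families.)

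For the $XY$ plane the same computation gives the opposite answer. Its tangent plane at the identity is $\{c=0\}$, but $\Sigma_\alpha(x,y,0)=(0,0,\alpha y^2-x^2)$, and the third coordinate $\alpha y^2-x^2$ is not identically zero on the unit circle $x^2+y^2=1$ — it equals $-1$ at $(1,0,0)$ for every $\alpha\in[-1,1]$. Taking $v=(1,0,0)\in T_{\mathrm{id}}\{z=0\}$ gives $c(0)=0$ but $\dot c(0)=-1\neq 0$, so $\dot z(t)=c(t)\neq 0$ for small $t>0$ and the geodesic immediately escapes $\{z=0\}$. Hence the $XY$ plane is never geodesically embedded, in particular not when $\alpha=1$, where Corollary 2.5 nonetheless makes it a minimal surface — consistent with the fact that totally geodesic implies minimal but not conversely. (The only tangent directions in $\{z=0\}$ along which a geodesic stays in the plane are the equilibria $(\pm\sqrt{\alpha/(1+\alpha)},\pm\sqrt{1/(1+\alpha)},0)$, which is exactly why the all-directions condition fails.) There is no genuine obstacle here: once $\Sigma_\alpha$ is available everything is the "glance" promised by the remark preceding the statement, and the only mild care needed is the homogeneity bookkeeping for the positive assertions.
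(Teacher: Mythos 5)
Your proof is correct and follows essentially the same route as the paper: read off from $\Sigma_\alpha$ that the loci $\{b=0\}$ and $\{a=0\}$ are flow-invariant (so geodesics tangent to the $XZ$ or $YZ$ planes stay in them), and that the third coordinate of $\Sigma_\alpha$ is nonzero for non-equilibrium directions in the $XY$ plane. The only difference is that you spell out the homogeneity reduction to the identity, which the paper leaves implicit.
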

\begin{proof}
Let $\gamma$ be a geodesic starting at the origin, with initial tangent vector in the $XZ$ plane of $\frak{g}_\alpha$ (i.e. the $Y$ coordinate of $\gamma'(0)$ is $0$). Then, $\Sigma_\alpha$ tells us that the $Y$ coordinate of $\gamma'(t)$ is $0$ $\forall t>0$. Hence $XZ$ is totally geodesic. The proof for the $YZ$ plane is identical. Now we prove the statement about the $XY$ plane. Observe that for any initial direction (excluding the straight line geodesic) in the $XY$ plane, the third coordinate of $\Sigma_\alpha$ is different from $0$. Thus, a geodesic $\gamma$ with a direction in the $XY$ plane cannot stay in the $XY$ plane, proving the last desired assertion.
\end{proof}
Consider the complement of the union of the two planes $X=0$ and $Y=0$ in $\frak{g}_\alpha$. This is the union of four connected components, which we call \textit{sectors}. Since the $XZ$ and $YZ$ planes are geodesically embedded, we have
\begin{corollary}
The Riemannian exponential map, which we denote by $E$, preserves each sector of $\frak{g}_\alpha$. In particular, if $(x,y,z)\in \frak{g}_\alpha$ is such that $x,y>0,$ then $E(x,y,z)=(a,b,c)$ with $a,b>0$. 
\end{corollary}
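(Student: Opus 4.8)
The plan is to follow a geodesic $\gamma$ issuing from the identity and track the two components of $\gamma'$, taken in the left-invariant frame, that drive the evolution of the first two ambient coordinates, showing that neither ever changes sign. Fix $V=(x,y,z)\in\mathfrak{g}_\alpha$ and let $\gamma(t)=E(tV)$ on $[0,1]$, written $\gamma(t)=(a(t),b(t),c(t))$ in the $\mathbb{R}^3$-coordinates of $G_\alpha$; expand $\gamma'(t)=\xi(t)X+\eta(t)Y+\zeta(t)Z$ in the orthonormal frame of $(1)$, so that $(\xi(0),\eta(0),\zeta(0))=(x,y,z)$. By construction (after normalization) $(\xi,\eta,\zeta)$ is an integral curve of $\Sigma_\alpha$, so Proposition 2.7 gives the evolution law
$$\dot\xi=\zeta\,\xi,\qquad \dot\eta=-\alpha\,\zeta\,\eta.$$

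First I would integrate these. Each is a linear scalar ODE in $\xi$ (respectively $\eta$) with smooth coefficient $\zeta(t)$, whence $\xi(t)=x\exp\!\big(\int_0^t\zeta\big)$ and $\eta(t)=y\exp\!\big(-\alpha\int_0^t\zeta\big)$. In particular $\xi(t)$ has the sign of $x$ and $\eta(t)$ the sign of $y$ for every $t$ (and $\xi\equiv 0$ precisely when $x=0$, which recovers the totally geodesic statement of Corollary 2.8). Second, I would pass back to ambient coordinates using $(1)$: along $\gamma$ one has $X=e^{c}\partial_x$, $Y=e^{-\alpha c}\partial_y$, $Z=\partial_z$, so matching $\gamma'=\dot a\,\partial_x+\dot b\,\partial_y+\dot c\,\partial_z$ against $\gamma'=\xi X+\eta Y+\zeta Z$ yields $\dot a=\xi e^{c}$, $\dot b=\eta e^{-\alpha c}$, $\dot c=\zeta$. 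Since $e^{c},e^{-\alpha c}>0$, the sign of $\dot a$ equals that of $x$ and the sign of $\dot b$ equals that of $y$ throughout, and because $a(0)=b(0)=0$ this forces $a(t),b(t)$ to have the signs of $x,y$ for all $t\in(0,1]$. Evaluating at $t=1$ puts $E(V)$ in the same sector as $V$; in particular $x,y>0$ gives $a,b>0$. (For a tangent vector that is not of unit length one applies the same conclusion to $V/|V|$ at time $|V|$, which changes no signs.)

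I do not anticipate a genuine obstacle; this is a short consequence of Propositions 2.6 and 2.7. The point worth flagging is that Corollary 2.8 on its own does \emph{not} close the argument: a priori a geodesic from the origin with $x>0$ could reach the plane $\{x=0\}$ and cross it transversally, and "totally geodesic" excludes only a tangential crossing. It is the sign of $\dot a$ — equivalently, the flow-invariance of the great circle $\{x=0\}\cap S(G_\alpha)$, which is visible from the vanishing of the first component of $\Sigma_\alpha$ there — that rules out the transversal crossing. The only real care needed is bookkeeping: keeping the frame components $\xi,\eta,\zeta$ of $\gamma'$ distinct from the coordinate velocities $\dot a,\dot b,\dot c$, and using $(1)$ correctly to convert between them.
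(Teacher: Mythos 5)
Your proof is correct and follows the route the paper intends: the paper states this corollary without proof, deducing it from the geodesic embeddedness of the coordinate planes, and your argument --- integrating $\dot\xi=\zeta\xi$, $\dot\eta=-\alpha\zeta\eta$ to get sign preservation of the frame components of $\gamma'$ and then converting to the coordinate velocities $\dot a=\xi e^{c}$, $\dot b=\eta e^{-\alpha c}$ with $a(0)=b(0)=0$ --- is exactly the computation that justifies that deduction. Your remark that total geodesy of the planes alone does not rule out a transversal crossing is well taken; the only blemish is an off-by-one in your citations (the structure field formula is Proposition 2.6 and the totally geodesic statement is Corollary 2.7, not 2.7 and 2.8).
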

We will use Corollary 2.8 often and without mentioning it. We make the following key observation about $\Sigma_\alpha$.
\begin{proposition}
The integral curves of $\Sigma_\alpha$ are precisely the level sets of the function $H(x,y,z)=\abs{x}^\alpha y$ on the unit sphere. 
\end{proposition}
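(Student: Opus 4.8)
The plan is to exhibit $H$ as a first integral of $\Sigma_\alpha$ that is tangent to $S(G_\alpha)$, and then use compactness of the sphere to upgrade the resulting inclusion "integral curve $\subseteq$ level set" to an equality of one-dimensional foliations. I would begin with two computations. On the sector $\{x>0\}$, where $H=x^\alpha y$ is smooth, $\nabla H=(\alpha x^{\alpha-1}y,\,x^\alpha,\,0)$, and the formula of Proposition 2.6 gives
$$\langle\Sigma_\alpha,\nabla H\rangle=(xz)\,\alpha x^{\alpha-1}y+(-\alpha yz)\,x^\alpha+(\alpha y^2-x^2)\cdot 0=\alpha x^\alpha yz-\alpha x^\alpha yz=0,$$
so $H$ is constant along the flow; the identical computation (with $\abs{x}^\alpha$ replacing $x^\alpha$) handles the other three sectors, while $H\equiv 0$ on the two planes $\{x=0\}$ and $\{y=0\}$, which are $\Sigma_\alpha$-invariant because the first, resp.\ second, component of $\Sigma_\alpha$ vanishes there. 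Likewise $\langle\Sigma_\alpha,(x,y,z)\rangle=x^2z-\alpha y^2z+(\alpha y^2-x^2)z=0$, so $\Sigma_\alpha$ is tangent to every sphere centered at the origin and restricts to $S(G_\alpha)$, with every integral curve contained in a level set of $H|_{S(G_\alpha)}$.

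For the converse I would first identify the critical locus of $H|_{S(G_\alpha)}$ with the zero set of $\Sigma_\alpha$. On $\{x>0\}$, $dH$ kills $T_pS(G_\alpha)$ exactly when $\nabla H(p)$ is parallel to the normal $(x,y,z)$; since the third component of $\nabla H$ vanishes this forces $z=0$, and the remaining collinearity condition simplifies to $x^{\alpha-1}(\alpha y^2-x^2)=0$, i.e.\ $\alpha y^2=x^2$ --- which, together with $\abs{p}=1$, is precisely the list of equilibria of $\Sigma_\alpha$ in that sector recorded after Proposition 2.6. On $\{x=0\}$ one computes directly that $\Sigma_\alpha(0,y,z)=\alpha y\,(0,-z,y)$ is tangent to the great circle $\{x=0\}$ and nonzero off the poles $(0,0,\pm1)$, symmetrically on $\{y=0\}$, and the poles are the remaining zeros of $\Sigma_\alpha$. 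Thus the zeros of $\Sigma_\alpha|_{S(G_\alpha)}$ are exactly the critical points of $H|_{S(G_\alpha)}$, with the convention that $H^{-1}(0)$ minus the poles is a disjoint union of four open arcs, each a smooth $1$-manifold.

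Finally I would conclude: if $c\neq 0$ is a regular value of $H|_{S(G_\alpha)}$ (note $H^{-1}(c)$ then lies in the open set $\{x\neq 0\}$, where $H$ is smooth), then $H^{-1}(c)$ is a closed subset of the compact manifold $S(G_\alpha)$, hence a compact embedded $1$-manifold --- a finite disjoint union of circles --- to which $\Sigma_\alpha$ is tangent and on which it never vanishes; since a nowhere-zero vector field on a circle has that circle as its unique orbit, each connected component of $H^{-1}(c)$ is a single integral curve. Combined with the inclusion from the first paragraph, this shows that away from the finitely many equilibria the integral curves of $\Sigma_\alpha$ coincide with the connected components of the level sets of $H$, while the singular fibre $H^{-1}(0)$ is the union of the two poles and four integral curves (the open arcs above). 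I expect the main obstacle to be this last step rather than the bracket computation: one must deal by hand with the locus $\{x=0\}$ where $H=\abs{x}^\alpha y$ is merely continuous, and, more importantly, use the compactness of $S(G_\alpha)$ to rule out a single level set being swept out by several distinct orbits --- absent which "first integral" yields only the inclusion, not the claimed equality.
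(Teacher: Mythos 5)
Your proposal is correct, and its central computation is the same one the paper performs, just packaged differently: the paper's single identity $\nabla_{sym}H=x^{\alpha-1}\cdot\Sigma_\alpha$ encodes exactly your two orthogonality relations $\langle\Sigma_\alpha,\nabla H\rangle=0$ and $\langle\Sigma_\alpha,(x,y,z)\rangle=0$, so the "first integral" half of your argument is the paper's proof in more elementary language. Where you genuinely go further is in the converse direction. The paper stops at "since this vector field is the same as the structure field up to a scalar function, the desired property follows," which strictly speaking only yields that each orbit is contained in a level set and is tangent to it at regular points; your critical-point analysis (matching the critical locus of $H|_{S(G_\alpha)}$ with the equilibria of $\Sigma_\alpha$) together with the compactness argument --- a regular fibre is a finite union of circles, and a nowhere-vanishing tangent field on a circle has the whole circle as its unique orbit --- is precisely what upgrades that inclusion to an equality. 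You also handle the locus $\{x=0\}$, where $H$ fails to be differentiable for $\alpha<1$, which the paper's reduction to the positive sector quietly sidesteps. Your phrasing of the conclusion in terms of \emph{connected components} of level sets is the right one (for $c\neq 0$ the fibre $H^{-1}(c)$ meets two sectors and so is disconnected); the proposition's statement, like the paper's proof, implicitly works one sector at a time. In short: same key lemma, but your version supplies the two details (compactness and the singular locus) that the paper leaves to the reader, at the cost of a longer argument and without the symplectic interpretation that the paper then reuses in Remark 2.10.
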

\begin{proof}
Without loss of generality, we consider the positive sector. We recall that the symplectic gradient is the analogue in symplectic geometry of the gradient in Riemannian geometry. In the case of the sphere with standard symplectic structure, the symplectic gradient is defined by taking the gradient of the function $H$ (on the sphere) and rotating it 90 degrees counterclockwise. Doing this computation for $H,$ yields $\nabla_{sym}H=x^{\alpha-1}\cdot\Sigma_\alpha(x,y,z).$ Since this vector field is the same as the structure field up to a scalar function, the desired property follows. 
\end{proof}
\begin{remark}
$\Sigma_\alpha$ is a Hamiltonian system in these coordinates if and only if $\alpha=1,$ i.e. for Sol ($G_1$) 
\end{remark}
We finish this section with a conjecture that, if true, provides some connection between the groups in the $G_\alpha$ family, for $\alpha \in [-1,1]$. We recall that the \textit{volume entropy}, $h$, of a homogeneous Riemannian manifold $(M,g)$ is a measure of the volume growth in $M$. We can define
$$h(M,g):=\lim_{R\rightarrow \infty} \frac{\log(\textrm{Vol } B(R))}{R}$$
where $B(R)$ is a geodesic ball of radius $R$ in $M$. Since $G_{-1}$ is a model of Hyperbolic space and $G_{1}$ is Sol, we know that $h(G_{-1})=2$ and $h(G_{1})=1$ (see \cite{S}). Based on this, we conjecture that:
\begin{conjecture}
$h(G_\alpha)$ is a monotonically decreasing function of $\alpha$ for $\alpha \in [-1,1]$.
\end{conjecture}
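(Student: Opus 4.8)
The plan is to compute $h(G_\alpha)$ outright. I claim that
$$h(G_\alpha)=1-\min\{\alpha,0\}=\begin{cases}1-\alpha,&-1\le\alpha\le 0,\\[2pt] 1,&0\le\alpha\le 1,\end{cases}$$
which is continuous, strictly decreasing on $[-1,0]$, constant on $[0,1]$, and therefore monotonically decreasing on $[-1,1]$; it also recovers the known values $h(G_{-1})=2$ and $h(G_1)=1$. Both inequalities in the estimate $\operatorname{Vol}B(R)\asymp e^{(1-\min\{\alpha,0\})R}$ — valid up to polynomial and constant factors, which do not affect $h$ — are obtained by comparing $G_\alpha$ with its two distinguished totally geodesic slices.

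For the upper bound on $\operatorname{Vol}B(R)$, note that $ds^2\ge e^{-2z}dx^2+dz^2$ and $ds^2\ge e^{2\alpha z}dy^2+dz^2$, where the right-hand sides are the induced metrics on the $XZ$- and $YZ$-planes; by Corollary 2.7 these planes are totally geodesic, and by Proposition 2.4 they are hyperbolic planes of curvature $-1$ and $-\alpha^2$ (the flat plane when $\alpha=0$). Hence the coordinate projections $(x,y,z)\mapsto(x,z)$ and $(x,y,z)\mapsto(y,z)$ are $1$-Lipschitz onto those planes, so $d_{G_\alpha}(o,\cdot)$ dominates the corresponding two-dimensional distances (and in particular $d_{G_\alpha}(o,(x,y,z))\ge|z|$). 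Feeding in the horocyclic distance formula $\cosh d_{\mathbb H^2}\big((0,0),(x,z)\big)=\cosh z+\tfrac12x^2e^{-z}$ and its curvature-$(-\alpha^2)$ rescaling, the slice $\{z=\mathrm{const}\}\cap B(R)$ lies inside a box $\{|x|\lesssim e^{(R+z)/2}\}\times\{|y|\lesssim e^{|\alpha|(R\mp z)/2}\}$ for $|z|\le R$; integrating the volume form $e^{(\alpha-1)z}dx\,dy\,dz$ over this family of boxes and evaluating the single remaining integral in $z$ gives $\operatorname{Vol}B(R)\le C_\alpha e^{(1-\min\{\alpha,0\})R}$, the dominant contribution coming from $z\to-\infty$ where the volume form is largest.

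For the matching lower bound I would exhibit explicit solid regions inside $B(R+O(1))$. For $\alpha<0$, at each height $z=-u$ with $0\le u\le R$ take the box $|x|\le e^{(R-u)/2}$, $|y|\le e^{|\alpha|(R-u)/2}$; any such point is reached by ascending to height $(R-u)/2$, moving cheaply in $x$ and $y$, and descending to $-u$, a broken geodesic of length $\le R+O(1)$, and this region has volume $\gtrsim e^{(1-\alpha)R}$. For $\alpha>0$, take instead the downward horn $\{-R\le z\le 0,\ |x|\le 1,\ |y|\le e^{-\alpha z}\}$: a point $(x,y,-u)$ is reached by moving in $x$ at height $0$, descending to $z=-\tfrac1\alpha\log|y|$ and depositing the $y$-displacement there at unit cost, then descending to $-u$ — again length $\le R+O(1)$ — and this region has volume $4(e^R-1)$. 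For $\alpha=0$ one instead uses the product structure $\mathbb{H}^2\cross\mathbb{R}$ directly, where $\operatorname{Vol}B(R)\sim cR^{1/2}e^R$. Corollary 2.8 transports each construction into all four sectors. Combining the two bounds yields the claimed formula for $h(G_\alpha)$ in every case, and the monotonicity is then immediate.

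I expect the work to be mostly bookkeeping rather than a structural obstacle, but two points demand care, both in the range $0<\alpha<1$. First, the non-unimodular volume form $e^{(\alpha-1)z}$ concentrates near $z\to-\infty$, exactly where the slices of $B(R)$ are thinnest, so one must verify that these competing exponential effects cancel to produce exponent precisely $1$ — that is, that the crude box enclosure of each slice is nevertheless sharp after the $z$-integration. Second, the lower-bound horn relies on the ``free deposit'' phenomenon — that a $y$-displacement of any size can be laid down during a $z$-descent at bounded total cost — which must be justified by writing the broken geodesics down explicitly and estimating their lengths. (As a consistency check for $\alpha\in(-1,0)$, the value $1-\alpha$ is also the trace of the derivation defining the Heintze group $G_\alpha$, which is known to equal its volume entropy.) None of this needs more than the structure already developed in Sections 2 and 3, but the estimates for the mixed-curvature groups are where the care is required.
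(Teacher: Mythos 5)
First, a point of order: the paper does not prove this statement. It appears only as a conjecture and is left open; the sole evidence offered is the pair of endpoint values $h(G_{-1})=2$ and $h(G_1)=1$. So there is no proof of record to compare yours against, and your proposal has to be judged as a new argument. Judged that way, the strategy is sound and the exponents check out. The projections $(x,y,z)\mapsto(x,z)$ and $(x,y,z)\mapsto(y,z)$ are indeed $1$-Lipschitz onto the totally geodesic $XZ$- and $YZ$-planes, so a point of $B(R)$ at height $z$ satisfies $|x|\le \sqrt{2}\,e^{(R+z)/2}$ and $|y|\lesssim e^{|\alpha|(R\mp z)/2}$, and integrating the volume form $e^{(\alpha-1)z}\,dx\,dy\,dz$ over these boxes gives $\mathrm{Vol}\,B(R)\lesssim e^{(1+\alpha)R/2}\int_{-R}^{R}e^{(\alpha-1)z/2}\,dz\lesssim e^{R}$ for $0<\alpha\le 1$, and $\lesssim e^{(1-\alpha)R}$ for $\alpha<0$ (the dominant height is $z=-R$, not $z\to-\infty$, but that is cosmetic). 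Your lower bounds match: in the horn, descending to $z_*=-\frac{1}{\alpha}\log|y|\ge -u$ makes the $y$-leg cost exactly $1$, so the broken path has length at most $u+2\le R+2$ while the horn has volume $4(e^R-1)$; for $\alpha<0$ the box family gives $\gtrsim e^{(1-\alpha)R}$. The Heintze-group cross-check (entropy equals the trace $1-\alpha$ of $\mathrm{ad}_Z$ on the nilradical when $\alpha<0$) is also correct. So $h(G_\alpha)=1-\min\{\alpha,0\}$.

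The one thing you must then confront is what this formula does to the conjecture: it is \emph{constant}, equal to $1$, on all of $[0,1]$. If ``monotonically decreasing'' is read as non-increasing, your computation proves the conjecture; if it is read strictly --- as the motivating picture of the cut locus ``dissipating'' from Sol to $\mathbb{H}^2\times\mathbb{R}$ suggests, and as the paper's own habit of distinguishing $dP/d\beta<0$ from ``non-increasing'' indicates --- then your computation refutes it on $[0,1]$ and confirms it only on $[-1,0]$. Either way you obtain a stronger and more informative statement than the conjecture itself, but you should state this dichotomy explicitly rather than announce the conjecture as proved. The two places where you flag that care is needed --- sharpness of the box enclosure after the $z$-integration against the non-unimodular factor $e^{(\alpha-1)z}$, and the explicit length estimate for the deposit path --- are exactly the right ones, and both go through as sketched.
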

\section{The Positive Alpha Family}
In this section, we start to explore the positive $\alpha$ side of the family. These geometries exhibit common behaviors such as geodesics always lying on certain cylinders, spiraling around in a "periodic-drift" manner. A natural way to classify vectors in the Lie algebra is by how much the associated geodesic segment under the Riemannian exponential map spirals around its associated cylinder. This classification allows us to discern how the exponential map behaves with great detail.
\subsection{Grayson Cylinders and Period Functions}
More than a few of our theorems in Section 3 may be considered generalizations of results in \cite{G} and \cite{MS}. To begin our analysis, we study the \textit{Grayson Cylinders} of the $G_\alpha$ groups. 
\begin{definition}
We call the level sets of $H(x,y,z)=|x|^\alpha y$ that are closed curves \textbf{loop level sets}.
\end{definition}
We prove the following theorem by adapting a method first used in \cite{G} for Sol.
\begin{theorem}[The Grayson Cylinder Theorem]
Any geodesic with initial tangent vector on the same loop level set as 
$$\bigg(\beta\sqrt{\frac{\alpha}{1+\alpha}}, \frac{\beta}{\sqrt{1+\alpha}}, \sqrt{1-\beta^2}\bigg), \beta\in[0,1]$$
lies on the cylinder given by 
$$w^2+e^{2z}+\frac{1}{\alpha}e^{-2\alpha z}=\frac{1+\alpha}{\alpha}\cdot\frac{1}{\beta^2}$$
where $w=x-y\sqrt{\alpha}$. We call these cylinders "Grayson Cylinders".
\end{theorem}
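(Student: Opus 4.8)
The plan is to follow Grayson's strategy for Sol: transcribe the flow of $\Sigma_\alpha$ into the coordinate description of the geodesic and read off enough first integrals to pin down the surface on which it lies.

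First I would write the unit-speed geodesic as $\gamma'(t)=a(t)X+b(t)Y+c(t)Z$ in the left-invariant frame of $(1)$, so that by Proposition 2.6 the triple $(a,b,c)$ is an integral curve of $\Sigma_\alpha$: $\dot a=ac$, $\dot b=-\alpha bc$, $\dot c=\alpha b^2-a^2$, with $a^2+b^2+c^2\equiv 1$. Since $X=e^z\partial_x$, $Y=e^{-\alpha z}\partial_y$, $Z=\partial_z$, this is the same system as $\dot x=e^z a$, $\dot y=e^{-\alpha z}b$, $\dot z=c$. From $\tfrac{d}{dt}(ae^{-z})=(\dot a-ac)e^{-z}=0$ and $\tfrac{d}{dt}(be^{\alpha z})=(\dot b+\alpha bc)e^{\alpha z}=0$ (using $\dot z=c$) one sees that $ae^{-z}$ and $be^{\alpha z}$ are constants $C_1,C_2$, whence
$$\dot x=C_1e^{2z},\qquad \dot y=C_2e^{-2\alpha z},\qquad \dot z^2=1-C_1^2e^{2z}-C_2^2e^{-2\alpha z},$$
the last being the unit-speed relation. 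I would also record that $H(\gamma')=\abs{a}^\alpha b=\abs{C_1}^\alpha C_2$, which both re-proves that $H$ is a first integral (Proposition 2.9) and shows that the hypothesis on $\gamma'$ is exactly $\abs{C_1}^\alpha C_2=H(V_0(\beta))$, where $V_0(\beta)$ denotes the displayed vector.

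Next I would normalize. Left-translations are isometries that fix the left-invariant frame, so they leave the development $(a,b,c)$ unchanged; a $z$-translation replaces $(C_1,C_2)$ by $(C_1e^{-s},C_2e^{\alpha s})$, and an $x$-translation shifts $w:=x-\sqrt{\alpha}\,y$ by a constant. Using the $z$-translation I arrange $C_1=\sqrt{\alpha}\,C_2$; combined with $\abs{C_1}^\alpha C_2=H(V_0(\beta))=\alpha^{\alpha/2}(1+\alpha)^{-(\alpha+1)/2}\beta^{\alpha+1}$, this forces $C_2=\beta/\sqrt{1+\alpha}$, so $1/(\alpha C_2^2)=(1+\alpha)/(\alpha\beta^2)$. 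Then, differentiating, $\dot w=C_1e^{2z}-\sqrt{\alpha}C_2e^{-2\alpha z}=\sqrt{\alpha}C_2(e^{2z}-e^{-2\alpha z})$, while differentiating $\dot z^2=1-C_1^2e^{2z}-C_2^2e^{-2\alpha z}$ gives $\ddot z=-(C_1^2e^{2z}-\alpha C_2^2e^{-2\alpha z})=-\alpha C_2^2(e^{2z}-e^{-2\alpha z})$; hence $\ddot z=-\sqrt{\alpha}\,C_2\,\dot w$, so $\dot z+\sqrt{\alpha}C_2\,w$ is constant along $\gamma$. An $x$-translation lets me take that constant to be $0$, that is $w=-\dot z/(\sqrt{\alpha}C_2)$; squaring and using $\dot z^2=1-\alpha C_2^2\bigl(e^{2z}+\tfrac1\alpha e^{-2\alpha z}\bigr)$ gives
$$w^2+e^{2z}+\tfrac1\alpha e^{-2\alpha z}=\frac{1}{\alpha C_2^2}=\frac{1+\alpha}{\alpha}\cdot\frac{1}{\beta^2},$$
the claimed equation. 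I would finish by noting that this level set is genuinely a cylinder: the curve $\{w^2+e^{2z}+\tfrac1\alpha e^{-2\alpha z}=\mathrm{const}\}$ in the $(w,z)$-plane is closed (its $z$-dependent part tends to $+\infty$ as $z\to\pm\infty$), and the surface is invariant under the one-parameter isometry group of left-translations $(x,y,z)\mapsto(x+s\sqrt{\alpha},\,y+s,\,z)$.

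The step I expect to need the most care is this reduction to "standard position": one must check that the normalizations $C_1=\sqrt{\alpha}\,C_2$ and the vanishing of $\dot z+\sqrt{\alpha}C_2\,w$ can always be achieved by an isometry for every geodesic whose development is the loop through $V_0(\beta)$ — precisely where homogeneity of $G_\alpha$ and the abundance of $x,y,z$-translations are used (without the translation one lands on a parallel copy of the displayed cylinder). I would also handle the degenerate ends of the family directly: $\beta=1$ is the straight-line equilibrium geodesic, for which the cylinder collapses to the line $\{w=0,\ z=0\}$, and as $\beta\to 0$ the cylinder recedes to infinity, matching the fact that the vertical geodesic lies on no cylinder of this form. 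The ODE manipulations themselves — the two first integrals and the identity $\ddot z=-\sqrt{\alpha}C_2\dot w$ — are short, and are the real content of the argument.
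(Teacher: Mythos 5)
Your proof is correct and follows essentially the same route as the paper: both develop the unit tangent along the loop level set and reduce the claim to a single integration showing that $w$ is an affine function of $\dot z$ — your conserved quantity $\dot z+\sqrt{\alpha}\,C_2\,w$ is precisely the paper's explicit integration of $w(t)$ in the parametrization of the geodesic by its $z$-coordinate. Your explicit handling of the normalization by left-translations (in particular the horizontal translation needed to center $w$, without which one lands on a parallel copy of the displayed cylinder) is a point the paper's computation passes over silently by dropping the constant of integration, so it is worth retaining.
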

\begin{proof}
We will consider, without loss of generality, the positive sector. Additionally, since we can always travel along a geodesic until we reach an initial tangent vector as in the statement above, it suffices to prove that such geodesics lie, locally, on our cylinders. More precisely, the above path of initial tangent vectors is a path on the unit tangent sphere which intersects each level set exactly once (a one-parameter family of level sets). Projecting the vector field $\Sigma_\alpha$ onto the $xy$ plane, we get \textit{flat flow lines}, which correspond to the following flows on the unit tangent sphere:
$$S_\beta(t)=\bigg(\beta\sqrt{\frac{\alpha}{1+\alpha}} e^{t}, \frac{\beta}{\sqrt{1+\alpha}} e^{-\alpha t}, z_\beta(t)\bigg)$$
where 
$$z_\beta(t)=\sqrt{1-\frac{\beta^2}{1+\alpha}(\alpha e^{2t}+e^{-2\alpha t})}.$$
The function $z_\beta(t)$ is only defined until the flat flow lines reach the unit circle; however, we are only concerned with the local behavior of the geodesics. The parametrization of the level sets induced from the \textit{flat} flow lines is not actually the one induced by $\Sigma_\alpha,$ indeed, there is a scalar function $\sigma_\beta(t)$ such that $S_\beta'(t)=\sigma_\beta(t)\cdot\Sigma_\alpha(S_\beta(t)).$ After a calculation, we see that $\sigma_\beta(t)=1/z_\beta(t).$ Now, if $\gamma(t)$ is the point on the geodesic corresponding to $S_\beta(t),$ it follows that 
$$\frac{d\gamma(t)}{dt}=\sigma_\beta(t)S_\beta(t),$$
whence the $z$ coordinate of $\gamma(t)$ is linear in $t.$ The $x,y$ travel is similarly obtained, after accounting for their respective infinitesimal distortion, which is done using the orthonormal basis of the Lie algebra as presented in equation $(1)$ . We have:
$$\gamma_x(t)=\beta\sqrt{\frac{\alpha}{1+\alpha}}\int_0^te^{2t}\sigma_\beta(t)dt$$
and
$$\gamma_y(t)=\frac{\beta}{\sqrt{1+\alpha}}\int_0^te^{-2\alpha t}\sigma_\beta(t)dt.$$
The function $w(t)=\gamma_x(t)-\sqrt{\alpha}\gamma_y(t)$ can be explicitly integrated and doing so gets us (where $z=t$)
$$w^2+e^{2z}+\frac{1}{\alpha}e^{-2\alpha z}=\frac{1+\alpha}{\alpha}\cdot\frac{1}{\beta^2}$$
as desired.
\end{proof}
If we consider Grayson Cylinders as regular surfaces in $\mathbb{R}^3$ with the ordinary Euclidean metric, then a simple derivation of their first and second fundamental forms reveals that they are surfaces with Gaussian curvature identically equal to zero. Hence, they are locally isometric to ordinary cylinders, and, because they are also diffeomorphic to ordinary cylinders, Grayson Cylinders are in fact isometric to ordinary cylinders, for all choices of $\alpha$ and $\beta$.

It is easier to gauge the shape of a Grayson Cylinder by looking at its projection onto the planes normal to the line $x-y\sqrt{\alpha}$, or, alternatively, as the implicit plot of a function of the two variables $w$ and $z$ defined in the statement of Theorem 3.1. It appears that as $\alpha$ is fixed, the Grayson Cylinders limit to two "hyperbolic slabs" as $\beta$ goes to zero. Alternatively, as $\beta$ is fixed and $\alpha$ varies, it appears that one side of the Grayson Cylinder is ballooning outwards. In Figures 1 and 2, we have some examples generated with the Mathematica code provided in Section 7.1.
\begin{figure}[H]
\centering
\begin{subfigure}{0.45\textwidth}
\includegraphics[width=\textwidth]{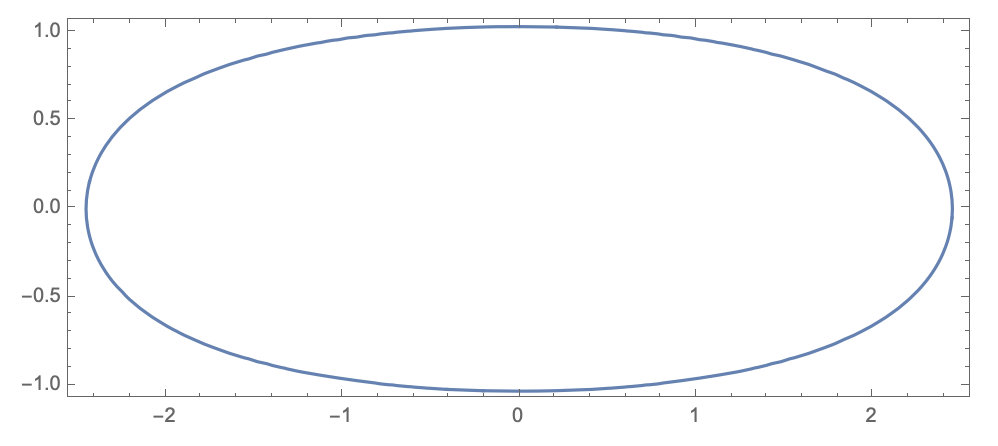}
\caption{$\alpha=1$ and $\beta=1/2$.}
\end{subfigure}
\hfill
\begin{subfigure}{0.45\textwidth}
\includegraphics[width=\textwidth]{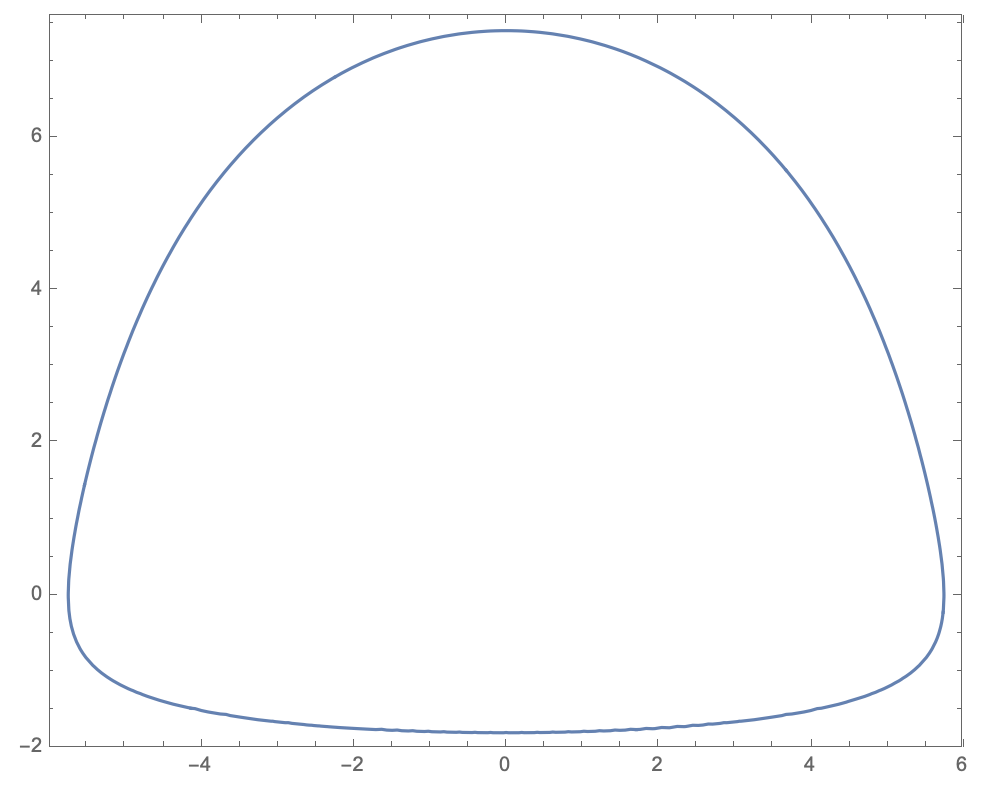}
\caption{$\alpha=1/4$ and $\beta=1/2$.}
\end{subfigure}
\caption{Slices of Grayson Cylinders with varying $\alpha$}
\end{figure}
\begin{figure}[H]
\centering
\begin{subfigure}{0.45\textwidth}
\includegraphics[width=\textwidth]{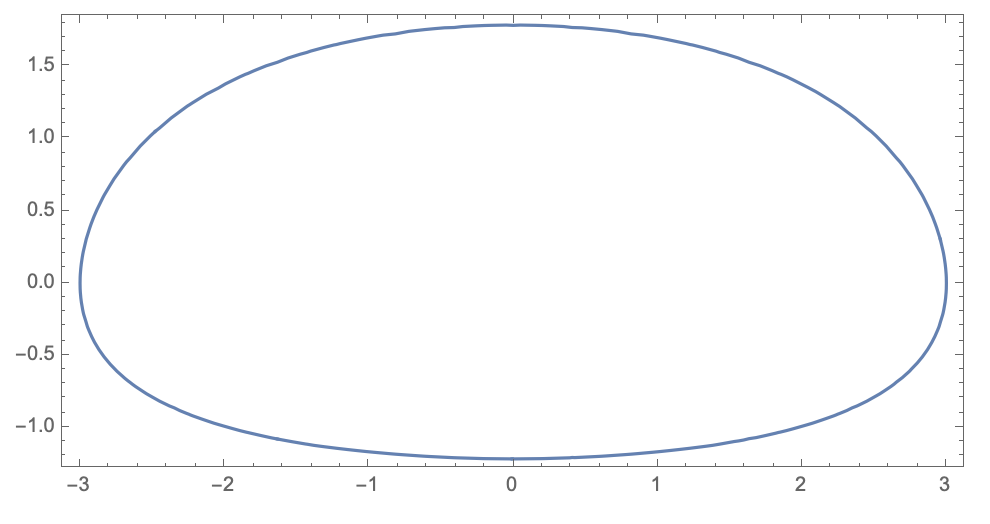}
\caption{$\alpha=1/2$ and $\beta=1/2$.}
\end{subfigure}
\hfill
\begin{subfigure}{0.45\textwidth}
\includegraphics[width=\textwidth]{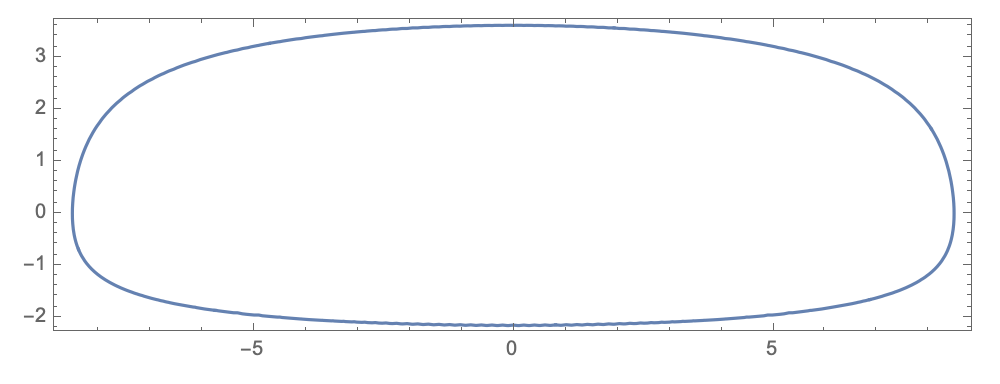}
\caption{$\alpha=1/2$ and $\beta=1/5$.}
\end{subfigure}
\caption{Slices of Grayson Cylinders with varying $\beta$}
\end{figure}
We denote loop level sets by $\lambda$. Each loop level set has an associated period, $P_\lambda,$ which is the time it takes for a flowline to go exactly once around $\lambda$, and it suffices to compute the period at one vector in a loop level set to know $P_\lambda$. We can compare $P_\lambda$ to the length $T$ of a geodesic segment $\gamma$ associated to a flowline that starts at some point of $\lambda$ and flows for time $T.$ We call $\gamma$ $\textit{small, perfect,}$ or $large$ whenever $T<P_\lambda,$  $T=P_\lambda,$ or $T>P_\lambda,$ respectively. It seems that this "classification" of geodesic segments in $G_\alpha$ is ideal. For instance, it was shown in \cite{MS} that a geodesic in Sol is length-minimizing if and only if it is small or perfect. We now derive an integral formula for $P_\lambda$ and simplify the integral in two special cases. 
\begin{proposition}
Let $\lambda$ be the loop level set associated to the vector 
$$V_\beta=\bigg(\beta\sqrt{\frac{\alpha}{1+\alpha}}, \frac{\beta}{\sqrt{1+\alpha}}, \sqrt{1-\beta^2}\bigg),$$
then 
$$P_\lambda(\beta) = \int_{-t_1}^{t_0} \frac{2dt}{\sqrt{1-\frac{\beta^2}{\alpha+1}(\alpha e^{2t}+e^{-2\alpha t})}}$$
where $t_0$ and $t_1$ are the times it takes to flow from $V_\beta$ to the equator of $S(G_\alpha)$ in the direction of, and opposite to the flow of $\lambda$, respectively. 
\end{proposition}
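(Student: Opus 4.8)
The plan is to work entirely with the two parametrizations of a loop level set $\lambda$ already produced in the proof of Theorem 3.1. On one hand there is the genuine integral curve of $\Sigma_\alpha$ through $V_\beta$; its natural parameter $s$ is the one in which "going once around $\lambda$" has, by definition, length $P_\lambda(\beta)$. On the other hand there is the flat flow line $S_\beta(t)$, which traces the same curve and is related to the $\Sigma_\alpha$-flow by $S_\beta'(t)=\sigma_\beta(t)\,\Sigma_\alpha(S_\beta(t))$ with $\sigma_\beta(t)=1/z_\beta(t)$, where $z_\beta(t)=\sqrt{R(t)}$ and $R(t)=1-\tfrac{\beta^2}{1+\alpha}(\alpha e^{2t}+e^{-2\alpha t})$. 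Reparametrizing, $ds/dt=\sigma_\beta(t)=1/z_\beta(t)$, so the $\Sigma_\alpha$-flow time needed to sweep out the portion of $\lambda$ covered by $S_\beta$ equals $\int dt/\sqrt{R(t)}$ over the $t$-interval on which $S_\beta$ is defined.

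First I would identify that interval and match it with the $t_0,t_1$ of the statement. Differentiating, $R'(t)=-\tfrac{2\alpha\beta^2}{1+\alpha}(e^{2t}-e^{-2\alpha t})$ vanishes only at $t=0$, is positive for $t<0$ and negative for $t>0$; hence $R$ increases to its unique maximum $R(0)=1-\beta^2\ge 0$ and then decreases, tending to $-\infty$ as $t\to\pm\infty$. So there are unique $t_0>0$ and $t_1>0$ with $R(t_0)=R(-t_1)=0$, and these are exactly the flat-flow times from $V_\beta=S_\beta(0)$ to the equator $\{z=0\}$ in the two directions (the flat flow and the $\Sigma_\alpha$-flow agree in orientation since $\sigma_\beta>0$, which identifies "forward" with the direction of the flow of $\lambda$). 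Thus $S_\beta$ parametrizes precisely the arc $\lambda^+$ of $\lambda$ lying in $\{z\ge 0\}$, from one equator endpoint to the other, and the $\Sigma_\alpha$-time along $\lambda^+$ is $\int_{-t_1}^{t_0} dt/\sqrt{R(t)}$. Since $R$ has a simple zero at each endpoint (its derivative there is nonzero, as $t_0,-t_1\ne 0$), the integrand has only an integrable $(t_0-t)^{-1/2}$-type singularity, so this integral is finite.

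It remains to supply the factor $2$, i.e.\ to show the complementary arc $\lambda^-\subset\{z\le 0\}$ contributes the same $\Sigma_\alpha$-time. Since $H(x,y,z)=|x|^\alpha y$ does not involve $z$, the reflection $\rho:(x,y,z)\mapsto(x,y,-z)$ preserves $\lambda$ and carries $\lambda^+$ onto $\lambda^-$; and a one-line check from $\Sigma_\alpha(x,y,z)=(xz,-\alpha yz,\alpha y^2-x^2)$ shows that $\Sigma_\alpha\circ\rho$ and $\Sigma_\alpha$ share the same third component but have opposite first two components. Writing a point of $\lambda^-$ as $(x_0(u),y_0(u),-z_\beta(u))$ with the same functions $x_0,y_0,z_\beta$ used for $S_\beta$ (so $x_0'=x_0$, $y_0'=-\alpha y_0$), this says the $\Sigma_\alpha$-flow along $\lambda^-$ obeys $du/ds=-z_\beta(u)$, i.e.\ $ds=-du/z_\beta(u)$; integrating from $u=t_0$, where $\lambda^+$ leaves into $\{z<0\}$, back to $u=-t_1$ gives once more $\int_{-t_1}^{t_0} du/\sqrt{R(u)}$. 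Adding the two arcs yields $P_\lambda(\beta)=2\int_{-t_1}^{t_0} dt/\sqrt{1-\tfrac{\beta^2}{\alpha+1}(\alpha e^{2t}+e^{-2\alpha t})}$, as claimed.

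The part requiring care is the bookkeeping around $\lambda$ as a closed orbit: that for the loop level sets (i.e.\ $\beta\in(0,1)$) $\lambda$ is a single embedded circle, that $S_\beta$ exhausts exactly its $z\ge 0$ half, that $\lambda^+$ and $\lambda^-$ share the same two equator endpoints, and that the $\Sigma_\alpha$-flow genuinely exits $\lambda^+$ at $S_\beta(t_0)$ and re-enters at $S_\beta(-t_1)$ — so that "once around" really is $\lambda^+$ followed by $\lambda^-$ with no over- or under-counting. All of this is essentially contained in the earlier formula for $\Sigma_\alpha$, its identification with the level sets of $H$ in Proposition~2.9, and the description of the equilibria, but it should be spelled out so the reflection argument for the factor $2$ is airtight; I do not expect any genuinely hard estimate here, only careful assembly.
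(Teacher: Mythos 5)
Your proposal is correct and follows essentially the same route as the paper: reparametrize the loop level set by the flat flow lines so that the flow-time element is $dt/z_\beta(t)=dt/\sqrt{R(t)}$, integrate over the upper arc between the two equator crossings at $t_0$ and $-t_1$, and obtain the factor $2$ from the symmetry of the loop level sets under $(x,y,z)\mapsto(x,y,-z)$. You simply spell out the bookkeeping (uniqueness of the zeros of $R$, integrability at the endpoints, and the explicit reflection computation for the lower arc) that the paper leaves implicit.
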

\begin{proof}
The loop level sets are symmetric with respect to the $XY$ plane. Also, we recall from the proof of Theorem 3.1 that 
$$\frac{d\gamma(t)}{dt}=\sigma_\beta(t)S_\beta(t), \textrm{ hence } \bigg\|\frac{d\gamma(t)}{dt}\bigg\|=\sigma_\beta(t).$$
Thus, the length of a perfect geodesic segment $\gamma$ starting at $V_\beta$ is
$$P_\lambda(\beta)=2\cdot\textrm{Length}(\gamma)=2\cdot\int_{-t_1}^{t_0}\bigg\|\frac{d\gamma(t)}{dt}\bigg\|dt=\int_{-t_1}^{t_0} \frac{2dt}{\sqrt{1-\frac{\beta^2}{\alpha+1}(\alpha e^{2t}+e^{-2\alpha t})}}.$$
\end{proof}
\begin{remark}
The times $t_0$ and $t_1$ are precisely when the flat flow lines hit the unit circle, or when 
\begin{equation}\alpha e^{2t_0}+e^{-2\alpha t_0}=\frac{\alpha+1}{\beta^2} \textrm{ and } \alpha e^{-2t_1}+e^{2\alpha t_1}=\frac{\alpha+1}{\beta^2}\end{equation}
\end{remark}
Stephen Miller helped us to numerically compute the period function for any choice of positive $\alpha$. The Mathematica code for this can be found in Section 7.6. An explicit formula for the period function in Sol ($G_1$) was derived in \cite{MS} and \cite{T}. It is:
$$P_\lambda(\beta)=\frac{4}{\sqrt{1+\beta^2}}\cdot K\bigg(\frac{1-\beta^2}{1+\beta^2}\bigg)$$
where $K(m)$ is the complete elliptic integral of the first kind, with the parameter as in Mathematica.

A closed-form expression of $P_\lambda$ can also be obtained for $G_{1/2}$. Since elliptic integrals have been studied extensively and many of their properties are well-known, the following expression allows us to analyze $P_\lambda$ more easily. 
\begin{corollary}
When $\alpha=1/2,$ or for the group $G_{1/2}$, the period function is given by 
$$P_\lambda(\beta)=\frac{4\sqrt{3}}{\beta\sqrt{e^{t_0-t_1}+2e^{t_1}}}\cdot K\bigg(\frac{2(e^{t_1}-e^{-t_0})}{e^{t_0-t_1}+2e^{t_1}}\bigg)$$
where
$$t_0=\log\bigg(\frac{1}{\beta}\cdot\bigg(\frac{1}{(-\beta^3 + \sqrt{-1 + \beta^6})^{\frac{1}{3}}} + (-\beta^3 + 
       \sqrt{-1 + \beta^6})^{\frac{1}{3}}\bigg)\bigg)$$
        and
   $$t_1=\log\bigg(\frac{1}{2}\bigg(\frac{1}{\beta^2}+\frac{1}{\beta^4(-2+\frac{1}{\beta^6}+\frac{2\sqrt{-1+\beta^6}}{\beta^3})^{\frac{1}{3}}}+(-2+\frac{1}{\beta^6}+\frac{2\sqrt{-1+\beta^6}}{\beta^3})^{\frac{1}{3}}\bigg)\bigg)$$
\end{corollary}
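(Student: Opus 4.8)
The plan is to convert the period integral of Proposition 3.2 into Legendre normal form by a rational substitution, and then to obtain $t_0$ and $t_1$ by solving the two cubics of Remark 3.3 with Cardano's formula.

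\emph{Reduction to a complete elliptic integral.} Setting $\alpha = 1/2$ in Proposition 3.2, the quantity under the square root, $1 - \tfrac{\beta^2}{\alpha+1}(\alpha e^{2t} + e^{-2\alpha t})$, becomes $1 - \tfrac{\beta^2}{3}(e^{2t} + 2e^{-t})$. I would substitute $u = e^{t}$, so that $dt = du/u$ and the limits $-t_1, t_0$ become $e^{-t_1}, e^{t_0}$; after clearing the denominator inside the radical this should give
$$P_\lambda(\beta) = \frac{2\sqrt{3}}{\beta}\int_{e^{-t_1}}^{e^{t_0}}\frac{du}{\sqrt{-u\,q(u)}},\qquad q(u) := u^3 - \frac{3}{\beta^2}u + 2,$$
a quartic under the radical. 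The key point is that $e^{t_0}$ and $e^{-t_1}$ are roots of $q$: this is exactly what equation $(4)$ of Remark 3.3 says when $\alpha = 1/2$, since both equations there collapse to $v^2 + 2v^{-1} = 3/\beta^2$. Because $q$ has no quadratic term and constant term $2$, Vieta's relations pin the third root as $r_3 = -(e^{t_0} + e^{-t_1}) = -2e^{t_1 - t_0} < 0$; hence, together with the root $u = 0$, the four roots of $-u\,q(u)$ are ordered $r_3 < 0 < e^{-t_1} < e^{t_0}$, the radicand is positive on the integration interval, and that interval has the two \emph{largest} roots as its endpoints. I would then invoke the classical reduction of such integrals: for $a > b > c > d$,
$$\int_b^a \frac{du}{\sqrt{(a-u)(u-b)(u-c)(u-d)}} = \frac{2}{\sqrt{(a-c)(b-d)}}\,K\!\left(\frac{(a-b)(c-d)}{(a-c)(b-d)}\right),$$
with the parameter convention of Mathematica; see any standard table of elliptic integrals. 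Taking $a = e^{t_0}$, $b = e^{-t_1}$, $c = 0$, $d = r_3$, one computes $(a-c)(b-d) = e^{t_0-t_1} + 2e^{t_1}$ and $(a-b)(c-d) = 2(e^{t_1} - e^{-t_0})$, and multiplying by the prefactor $2\sqrt3/\beta$ reproduces the asserted closed form exactly.

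\emph{Closed forms for the endpoints.} It remains to express $t_0, t_1$. Writing $v = e^{t_0}$, the first equation of Remark 3.3 becomes the depressed cubic $v^3 - \tfrac{3}{\beta^2}v + 2 = 0$, whose discriminant $108(1-\beta^6)/\beta^6$ is positive for $\beta \in (0,1)$; Cardano's formula (the \emph{casus irreducibilis}, with principal branches chosen consistently, using the identity $(-\beta^3 + \sqrt{\beta^6-1})(-\beta^3 - \sqrt{\beta^6-1}) = 1$ to combine the two radical summands) singles out the root displayed in the statement, and $t_0$ is its logarithm. For $t_1$, one notes that $x := e^{t_1} = 1/e^{-t_1}$ satisfies $x^3 - \tfrac{3}{2\beta^2}x^2 + \tfrac12 = 0$; depressing by $x = y + \tfrac{1}{2\beta^2}$ and applying Cardano once more, then undoing the shift, yields the (messier) expression for $t_1$ in the statement.

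\emph{Main obstacle.} The genuinely delicate part is the root bookkeeping in the reduction step: one must check that $r_3 < 0$ --- equivalently that $t_0, t_1 > 0$, which holds because $z_\beta(t)$ is maximized at $t=0$ --- so that the Legendre reduction is applied in the correct regime with the right endpoints, and then simplify $(a-c)(b-d)$ and the modulus parameter into the compact forms quoted, which relies on the Vieta identity $e^{t_0} + e^{-t_1} = 2e^{t_1 - t_0}$. The endpoint computation is elementary but laborious, its only subtlety being the consistent choice of branches in Cardano's formula so that the extracted root is the one with the intended sign.
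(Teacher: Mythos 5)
Your proposal is correct and follows essentially the same route as the paper: the substitution $u=e^{t}$, Vi\`ete's formulas to identify the fourth root $-2e^{t_1-t_0}$ of the quartic, the standard reduction of $\int_b^a du/\sqrt{(a-u)(u-b)(u-c)(u-d)}$ to a complete elliptic integral (which the paper simply cites as formula 3.147.6 of Gradshteyn--Ryzhik), and Cardano's formula applied to the cubic from Remark 3.3 for the endpoints. Your additional bookkeeping on the root ordering and the cube-root branches only makes explicit what the paper leaves implicit.
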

\begin{proof}
The fact that $t_0$ and $t_1$ are as above is nothing more than solving Equation $(4)$, which becomes a cubic polynomial when $\alpha=\frac{1}{2}$. Now, from Proposition 3.2, we have the integral formula for the period, and we perform the change of variables $u=e^t$:
$$P_\lambda(\beta) = \int_{-t_1}^{t_0} \frac{2dt}{\sqrt{1-\frac{2\beta^2}{3}(\frac{1}{2} e^{2t}+e^{-t})}}=\int_{e^{-t_1}}^{e^{t_0}} \frac{2du}{\sqrt{u^2-\frac{2\beta^2}{3}(\frac{1}{2}u^4+u)}}.$$
Further simplifications yield:
$$P_\lambda(\beta)=\frac{2\sqrt{3}}{\beta}\cdot\int_{e^{-t_1}}^{e^{t_0}}\frac{du}{\sqrt{u(\frac{3}{\beta^2}u-u^3+2)}}.$$
However, we know the four roots of the quartic polynomial in the square root, they are $u=0, e^{-t_1}, e^{t_0}$, and using Vi\`{e}te's Formulas, $-2e^{t_1-t_0}$. So, we factor and get
$$P_\lambda(\beta)=\frac{2\sqrt{3}}{\beta}\cdot\int_{e^{-t_1}}^{e^{t_0}}\frac{du}{\sqrt{u(e^{t_0}-u)(u-e^{-t_1})(u+2e^{t_1-t_0})}}.$$
The above integral has already been computed for us in terms of elliptic integrals. Indeed, we find our integral in formula 6 of section 3.147, page 275 in the tome \cite{GR}. Further simplifications get us our desired expression for $P_\lambda$.
\end{proof}
Here we state an essential fact, which is forthrightly supplied to us by the above expression of $P_\lambda(\beta)$ in terms of an elliptic integral. We have:
\begin{proposition}
$$\frac{d}{d\beta}\big(P_\lambda(\beta)\big)<0$$
when $\alpha=1$ and $\alpha=1/2$. Moreover, for $\alpha=1$,
$$\lim_{\beta\to 1} P_\lambda(\beta)=\pi\sqrt{2}$$
and for $\alpha=1/2$,
$$\lim_{\beta\to 1} P_\lambda(\beta)=2\pi.$$
\end{proposition}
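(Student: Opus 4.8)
The plan is to read both claims directly off the closed forms for $P_\lambda$ recorded above, using only elementary properties of the complete elliptic integral $K(m)=\int_0^{\pi/2}(1-m\sin^2\theta)^{-1/2}\,d\theta$: positivity, $K(0)=\pi/2$, strict monotonicity in $m$, and log-convexity in $m$. For $\alpha=1$ this is immediate. In $P_\lambda(\beta)=\tfrac{4}{\sqrt{1+\beta^2}}\,K\!\big(\tfrac{1-\beta^2}{1+\beta^2}\big)$ the factor $\tfrac{4}{\sqrt{1+\beta^2}}$ is positive and strictly decreasing on $(0,1)$, the parameter $m(\beta)=\tfrac{1-\beta^2}{1+\beta^2}$ is strictly decreasing there, and $K$ is strictly increasing in $m$; so $P_\lambda$ is a product of positive strictly decreasing functions of $\beta$, whence $P_\lambda'(\beta)<0$. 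Letting $\beta\to1$ sends $m\to0$, so $P_\lambda\to\tfrac{4}{\sqrt2}\cdot\tfrac\pi2=\pi\sqrt2$.

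For $\alpha=1/2$ the prefactor is no longer monotone, so I would first trade $\beta$ for a better parameter. Write $a=e^{t_0}$, $b=e^{-t_1}$ for the two positive roots of the cubic $u^3-\tfrac{3}{\beta^2}u+2$ from Corollary 3.5 (its third root being $-(a+b)=-2e^{t_1-t_0}$), so Vi\`{e}te's relations are $a^2+ab+b^2=3/\beta^2$ and $ab(a+b)=2$, with $0<b<1<a$. Set $r:=b/a=e^{-(t_0+t_1)}\in(0,1)$. Substituting these identities into Corollary 3.5 collapses the expression to
\[
P_\lambda=A(r)\,K\!\big(m(r)\big),\qquad A(r)=4\sqrt{\frac{r^2+r+1}{2r+1}},\qquad m(r)=\frac{1-r^2}{2r+1}.
\]
One also finds $\beta^2=\tfrac{3}{2^{2/3}}\,\tfrac{(r+r^2)^{2/3}}{1+r+r^2}$, and a short logarithmic-derivative computation reduces the positivity of $\tfrac{d}{dr}\beta^2$ on $(0,1)$ to the inequality $2-r-r^2>0$. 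Hence $r\mapsto\beta$ is an increasing bijection of $(0,1)$, and it suffices to prove $\tfrac{d}{dr}P_\lambda<0$ on $(0,1)$.

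Differentiating gives $\tfrac{d}{dr}P_\lambda=A'(r)K(m)+A(r)K'(m)\,m'(r)$, where $m'(r)=-2(r^2+r+1)/(2r+1)^2<0$, while $A'(r)$ has the sign of $2r^2+2r-1$ and so vanishes at $r_\ast=\tfrac{\sqrt3-1}{2}$. On $(0,r_\ast]$ we have $A'(r)\le0$, so both terms are non-positive with the second strictly negative: done. On $(r_\ast,1)$ one must beat the positive term $A'(r)K(m)$; dividing through, the desired inequality becomes
\[
\frac{K'(m(r))}{K(m(r))}\;>\;R(r):=\frac{(2r^2+2r-1)(2r+1)}{4(r^2+r+1)^2}.
\]
I would prove this by separating the two sides with the constant $\tfrac14$. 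On the right, $R$ is strictly increasing on $(0,1)$: over a positive denominator $R'$ has the sign of $-4r^4-8r^3+12r^2+16r+2=2+4r(4+3r-2r^2-r^3)$, and the cubic factor is positive on $[0,1]$; together with $R(1)=\tfrac14$ this yields $R(r)<\tfrac14$ on $(r_\ast,1)$. On the left, since $m$ is decreasing in $r$ it suffices that $m\mapsto K'(m)/K(m)$ be increasing, which follows from log-convexity of $K$ in $m$ — $K$ is an integral of the functions $m\mapsto(1-m\sin^2\theta)^{-1/2}$, each log-convex in $m$, and log-convexity is preserved by integration (H\"older) — combined with $K'(0)/K(0)=(\pi/8)/(\pi/2)=\tfrac14$ and $m(r)\to0$ as $r\to1$; this gives $K'(m(r))/K(m(r))>\tfrac14$ on $(r_\ast,1)$. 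Combining, $\tfrac{d}{dr}P_\lambda<0$ on all of $(0,1)$, so $P_\lambda'(\beta)<0$; and as $\beta\to1$ we have $r\to1$, $m\to0$, $A\to4$, whence $P_\lambda\to4K(0)=2\pi$.

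The main obstacle is exactly the inequality $K'(m)/K(m)>R(r)$ on $(r_\ast,1)$: because $A(r)$ first decreases and then increases (with minimum at $r_\ast$), one cannot argue by monotonicity of the individual factors as in the $\alpha=1$ case, so a genuine quantitative comparison between the logarithmic derivative of an elliptic integral and a rational function of $r$ is required. What makes the estimate succeed is the exact coincidence $K'(0)/K(0)=R(1)=\tfrac14$ at the degenerate endpoint $\beta=1$, together with log-convexity of $K$; the only computations that look truly unavoidable are the sign of $R'$ on $(0,1)$ and this boundary value.
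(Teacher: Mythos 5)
Your proposal is correct, and it supplies a genuine proof of a statement the paper only asserts: the text says the proposition is ``forthrightly supplied'' by the closed-form expressions and gives no argument, later using it as an input to Proposition 5.5. For $\alpha=1$ your one-line observation (both factors $4/\sqrt{1+\beta^2}$ and $K\big(\tfrac{1-\beta^2}{1+\beta^2}\big)$ are positive and strictly decreasing) is surely what the author had in mind. For $\alpha=1/2$ the claim is genuinely not immediate from Corollary 3.4, since the prefactor there is not monotone in $\beta$, and your reparametrization by the root ratio $r=e^{-(t_0+t_1)}$ is the key new step. I checked the details: Vi\`ete's relations $a^2+ab+b^2=3/\beta^2$ and $ab(a+b)=2$ for the cubic $u^3-\tfrac{3}{\beta^2}u+2$ (you have correctly fixed a sign typo in the paper's intermediate display, which is consistent with Equation (4) and with the quoted factorization) do collapse the period to $A(r)K(m(r))$ with $A=4\sqrt{(1+r+r^2)/(1+2r)}$ and $m=(1-r^2)/(1+2r)$; the monotonicity of $\beta$ in $r$ reduces to $2-r-r^2>0$; and the critical inequality on $(r_\ast,1)$ becomes $K'/K>R(r)$ with your $R$. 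The separation by the constant $\tfrac14$ works: the sign of $R'$ is indeed governed by $-4r^4-8r^3+12r^2+16r+2$, which is positive on $[0,1]$, so $R<R(1)=\tfrac14$ there, while log-convexity of $K$ together with $K'(0)/K(0)=\tfrac14$ gives $K'/K\geq\tfrac14$; strictness of the final comparison comes from $R(r)<\tfrac14$ for $r<1$, so the mild imprecision of writing $K'/K>\tfrac14$ rather than $\geq\tfrac14$ is harmless. The limits $\pi\sqrt2$ and $4K(0)=2\pi$ follow as you state. The exact coincidence of the two boundary values at $\tfrac14$ is a nice structural explanation of why monotonicity persists up to the degenerate flowline $\beta=1$, and your argument could be recorded as the missing proof of this proposition.
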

We could not find a similar formula for $P_\lambda$, when $\alpha$ is not $1$ or $1/2$, in terms of elliptic integrals or hypergeometric functions. This is unfortunate, as Proposition 3.5 is vital to the method here and in \cite{MS} to characterize the cut locus of $G_1$ and $G_{1/2}$. However, we can still numerically compute the period function as presented in the appendix, and this allows us to conjecture:
\begin{conjecture} Let $P(\beta)$ be the period function in $G_\alpha$, then
$$\frac{d}{d\beta}\big(P_\lambda(\beta)\big)<0$$
and
$$\lim_{\beta\to 1} P(\beta)=\frac{\pi\sqrt{2}}{\sqrt{\alpha}}.$$
\end{conjecture}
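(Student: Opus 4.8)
\emph{Proof proposal.} The plan is to reduce $P_\lambda$ to an integral with fixed endpoints, from which the limit is immediate and the monotonicity becomes a single delicate analytic assertion about the convex potential $\phi(t):=\dfrac{\alpha e^{2t}+e^{-2\alpha t}}{1+\alpha}$. Since $\phi''>0$ everywhere, $\phi$ is strictly convex with unique minimum $\phi(0)=1$, and the integral of Proposition~3.2 reads $P_\lambda=\int_{-t_1}^{t_0}\frac{2\,dt}{\sqrt{1-\beta^2\phi(t)}}$ with $\phi(t_0)=\phi(-t_1)=\beta^{-2}$. Substituting $w=\phi(t)-1$ on each side of $t=0$, and then $\xi=w/r$ with $r:=\beta^{-2}-1$, one obtains after simplification
$$P_\lambda=2\sqrt{1+r}\int_0^1\frac{\psi(r\xi)}{\sqrt{\xi(1-\xi)}}\,d\xi,\qquad \psi(w):=\sqrt{w}\left(\frac{1}{\phi'(t_+(w))}+\frac{1}{|\phi'(t_-(w))|}\right),$$
where $t_+(w)>0>t_-(w)$ are the two roots of $\phi(t)=1+w$. (When $\alpha=1$ one computes $\psi(w)=(w+2)^{-1/2}$ and the formula becomes $4\sqrt{(1+r)/(2+r)}\,K\!\left(r/(2+r)\right)$, recovering the known closed form.) From $\phi(t)=1+2\alpha t^2+O(t^3)$ one checks that $\psi$ extends to a continuous positive function on $[0,\infty)$ with $\psi(0)=1/\sqrt{2\alpha}$, smooth on $(0,\infty)$; and that $r$ is a strictly decreasing function of $\beta$ on $(0,1)$, with $r\to0^+$ as $\beta\to1^-$.

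The limit now follows from dominated convergence: as $r\to0^+$ one has $\psi(r\xi)\to\psi(0)$ uniformly on $[0,1]$, so
$$\lim_{\beta\to1^-}P(\beta)=2\,\psi(0)\int_0^1\frac{d\xi}{\sqrt{\xi(1-\xi)}}=\frac{2}{\sqrt{2\alpha}}\cdot\pi=\frac{\pi\sqrt{2}}{\sqrt{\alpha}},$$
consistent with the values $\pi\sqrt2$ ($\alpha=1$) and $2\pi$ ($\alpha=\tfrac12$) of Proposition~3.5.

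For the monotonicity it suffices to show $dP_\lambda/dr>0$ for $r>0$ (since $dr/d\beta<0$). Differentiating under the integral sign — legitimate because the integrand and its $r$-derivative have at most integrable singularities at $\xi=0,1$, uniformly for $r$ in compacta —
$$\sqrt{1+r}\,\frac{dP_\lambda}{dr}=\int_0^1\frac{\psi(r\xi)+2(1+r)\,\xi\,\psi'(r\xi)}{\sqrt{\xi(1-\xi)}}\,d\xi,$$
so everything reduces to the positivity of this integral. A clean sufficient condition is that $w\mapsto\sqrt{1+w}\,\psi(w)$ be non-decreasing, since then $\xi\,\psi'(r\xi)\ge-\tfrac{1}{2(1+r)}\,\psi(r\xi)$ on $(0,1)$ (using $r\xi<r$), which already forces the integrand to be positive. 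Writing $\rho:=\phi(\phi-1)/(\phi')^2$, one has $\sqrt{1+w}\,\psi(w)=\sqrt{\rho(t_+(w))}+\sqrt{\rho(t_-(w))}$, and for Sol ($\alpha=1$) this closes the argument: there $\rho$ is strictly monotone on each of $(0,\infty)$ and $(-\infty,0)$ — equivalently $(\phi')^2(2\phi-1)\ge 2\phi''\phi(\phi-1)$ with equality only at $t=0$ — and indeed $\sqrt{1+w}\,\psi(w)=\sqrt{(1+w)/(2+w)}$.

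The main obstacle is that this sufficient condition is available essentially only at $\alpha=1$. For every $\alpha\in(0,1)$ one has $(\phi')^2(2\phi-1)-2\phi''\phi(\phi-1)\sim-\tfrac{32}{3}\alpha^2(1-\alpha)\,t^3$ as $t\to0$, so $\rho$ fails to be monotone near $t=0$ on the positive half-line and the two branches $\sqrt{\rho(t_\pm(w))}$ are not individually monotone; the required monotonicity of their sum $\sqrt{1+w}\,\psi(w)$ — which does persist (e.g.\ for $\alpha=\tfrac12$ one computes $\sqrt{1+w}\,\psi(w)=1+\tfrac13 w+O(w^{3/2})$, the leading $w^{1/2}$-terms of the two branches cancelling) — can come only from cancellation between the branches, governed by the asymmetry of $\phi$. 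Since no closed form for $\psi$ is known when $\alpha\notin\{1,\tfrac12\}$ (the cases $\alpha=1,\tfrac12$ being special precisely because the period is then an explicit elliptic integral, Proposition~3.5), I would attack the remaining gap by seeking a differential inequality directly for $w\mapsto\sqrt{\rho(t_+(w))}+\sqrt{\rho(t_-(w))}$, using the defining relation $\phi(t_\pm(w))=1+w$ to trade the $t_\pm$-derivatives for derivatives of $\phi$, or, failing that, by estimating the integral above without passing through a pointwise bound. Controlling this cancellation uniformly in $\alpha\in(0,1]$ is exactly the step that resists, and is why the statement remains conjectural.
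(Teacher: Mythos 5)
This statement is Conjecture 3.6 of the paper: it has no proof there, only the numerical table in Section 3.1 and the two special cases $\alpha\in\{1,\tfrac12\}$ handled via the explicit elliptic-integral formulas of Corollary 3.4 and Proposition 3.5. So you should not expect to match a proof that does not exist; the relevant question is whether your attempt settles any part of the conjecture, and in fact it does. Your reduction of Proposition 3.2 to the fixed-endpoint form $P_\lambda=2\sqrt{1+r}\int_0^1\psi(r\xi)\,\xi^{-1/2}(1-\xi)^{-1/2}\,d\xi$ is correct (I checked the substitutions, the value $\psi(0)=1/\sqrt{2\alpha}$ from $\phi''(0)=4\alpha$, and the recovery of the known closed form at $\alpha=1$), and the dominated-convergence computation of the limit is a complete proof of the second half of the conjecture for \emph{all} $\alpha\in(0,1]$ — strictly more than the paper establishes, since the paper has this limit only for $\alpha=1$ and $\alpha=\tfrac12$. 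That part is worth extracting as a standalone result.

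The genuine gap is exactly where you say it is: the monotonicity. Your sufficient condition — that $w\mapsto\sqrt{1+w}\,\psi(w)$ be non-decreasing — does imply positivity of the differentiated integrand (though your intermediate bound $\xi\,\psi'(r\xi)\ge-\tfrac{1}{2(1+r)}\psi(r\xi)$ is not quite what the hypothesis gives; the correct chain is $2(1+r)\xi\,\psi'(r\xi)\ge-\tfrac{(1+r)\xi}{1+r\xi}\psi(r\xi)$ together with $\tfrac{(1+r)\xi}{1+r\xi}\le1$, which still yields strict positivity on $(0,1)$). But as you observe, the branch-wise monotonicity of $\sqrt{\rho(t_\pm(w))}$ fails for every $\alpha\in(0,1)$ because $\phi'''(0)=8\alpha(1-\alpha)\ne0$ breaks the symmetry, and the needed monotonicity of the \emph{sum} rests on a cancellation you cannot yet control. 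So the monotonicity claim remains open for general $\alpha$, exactly as in the paper; note that even for $\alpha=\tfrac12$ your route does not close it, whereas the paper does prove that case by differentiating the explicit elliptic-integral expression (Proposition 3.5, with the hard estimates deferred to Lemma 5.11 and Appendix A). If you want a concrete next step, verifying your sufficient condition for $\alpha=\tfrac12$ directly from Corollary 3.4 would at least confirm that the condition is not vacuous off $\alpha=1$.
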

This conjecture would lend something quantitative to the idea that the bad behavior (or the cut locus) of $G_\alpha$ dissipates at infinity as Sol interpolates to $\mathbb{H}^2\cross\mathbb{R}$. With the Mathematica code in Section 7.6, we get the following numerical evidence for Conjecture 3.6.
\begin{center}
 \begin{tabular}{|c c c|} 
 \hline
$\alpha$ & Numerical Value of $P(\alpha,\beta=.999)$ & $\pi\sqrt{2}/\sqrt{\alpha}$  \\ 
 \hline\hline
 0.1 & 14.0792 & 14.0496 \\
  \hline
 0.2 & 9.94735 & 9.93459 \\
  \hline
 0.3 & 8.11985 & 8.11156 \\
  \hline
 0.4 & 7.03114 & 7.02481 \\
  \hline
 0.5 & 6.28842 & 6.28319 \\
  \hline
 0.6 & 5.7403 & 5.73574 \\
  \hline
 0.7 & 5.31436 & 5.31026 \\
  \hline
 0.8 & 4.97106 & 4.96729 \\
  \hline
 0.9 & 4.68673 & 4.68321 \\
  \hline
 1. & 4.44622 & 4.44288 \\
  \hline
\end{tabular}
\end{center}

\subsection{Concatenation and Some Other Useful Facts}
An important property which extends from Sol to $G_\alpha$ for $0<\alpha<1$ is that the loop level sets are symmetric with respect to the plane $Z=0.$ This simple observation allows the technique of $\it{concatenation},$ essential to the analysis in \cite{MS}, to be replicated for all of the $G_\alpha$ groups, when $\alpha>0$. For the interested reader, Richard Schwartz's Java program \cite{S2} uses concatenation to generate geodesics and geodesic spheres in Sol, and a modified version of this program can generate the spheres and geodesics in any $G_\alpha$ group as well as in other Lie groups, such as Nil. As an illustration of the power of this technique in numerical simulations, we present in Figure 3 the geodesic spheres of radius around $5$ in four different geometries. The spheres are presented from the same angle, the purple line is the $z$ axis, and the red lines are the horizontal axes. The salient phenomenon is that one "lobe" of the sphere is contracting as $\alpha$ goes to zero. Qualitatively, this corresponds to the dissipation of the "bad" behavior (or the cut locus) of $G_\alpha$ as $\alpha$ tends to zero, since the amount of shear diminishes.
\begin{figure}[H]
\centering
\begin{subfigure}{0.45\textwidth}
\includegraphics[width=0.9\linewidth]{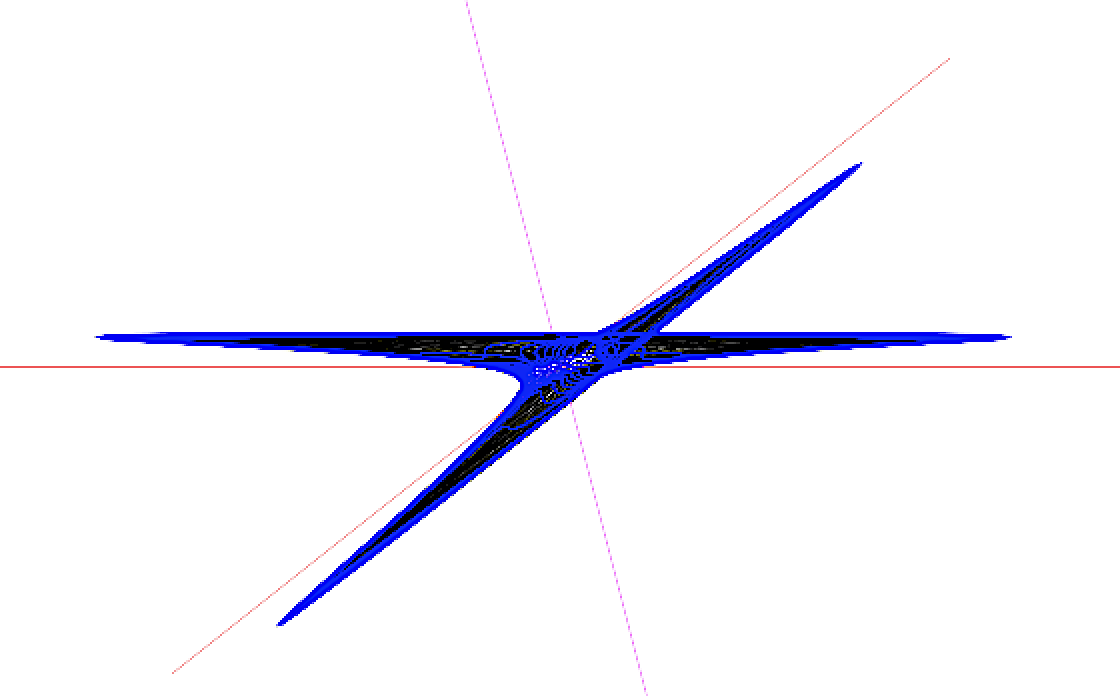}
\caption{In Sol, or the group $G_1$.}
\end{subfigure}
\hfill
\begin{subfigure}{0.45\textwidth}
\includegraphics[width=0.9\linewidth]{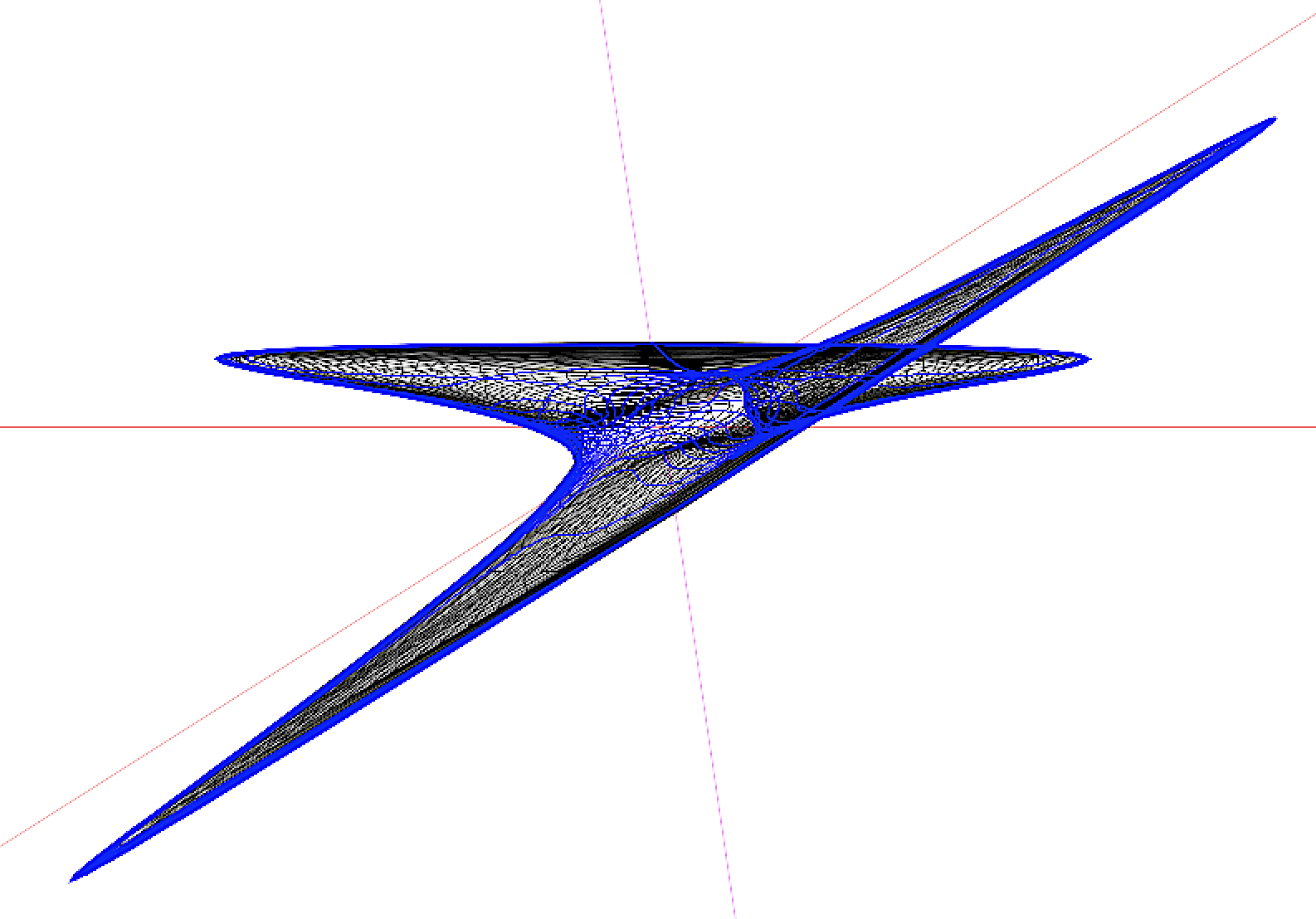}
\caption{In the group $G_{3/4}$.}
\end{subfigure}
\begin{subfigure}{0.45\textwidth}
\includegraphics[width=0.9\linewidth]{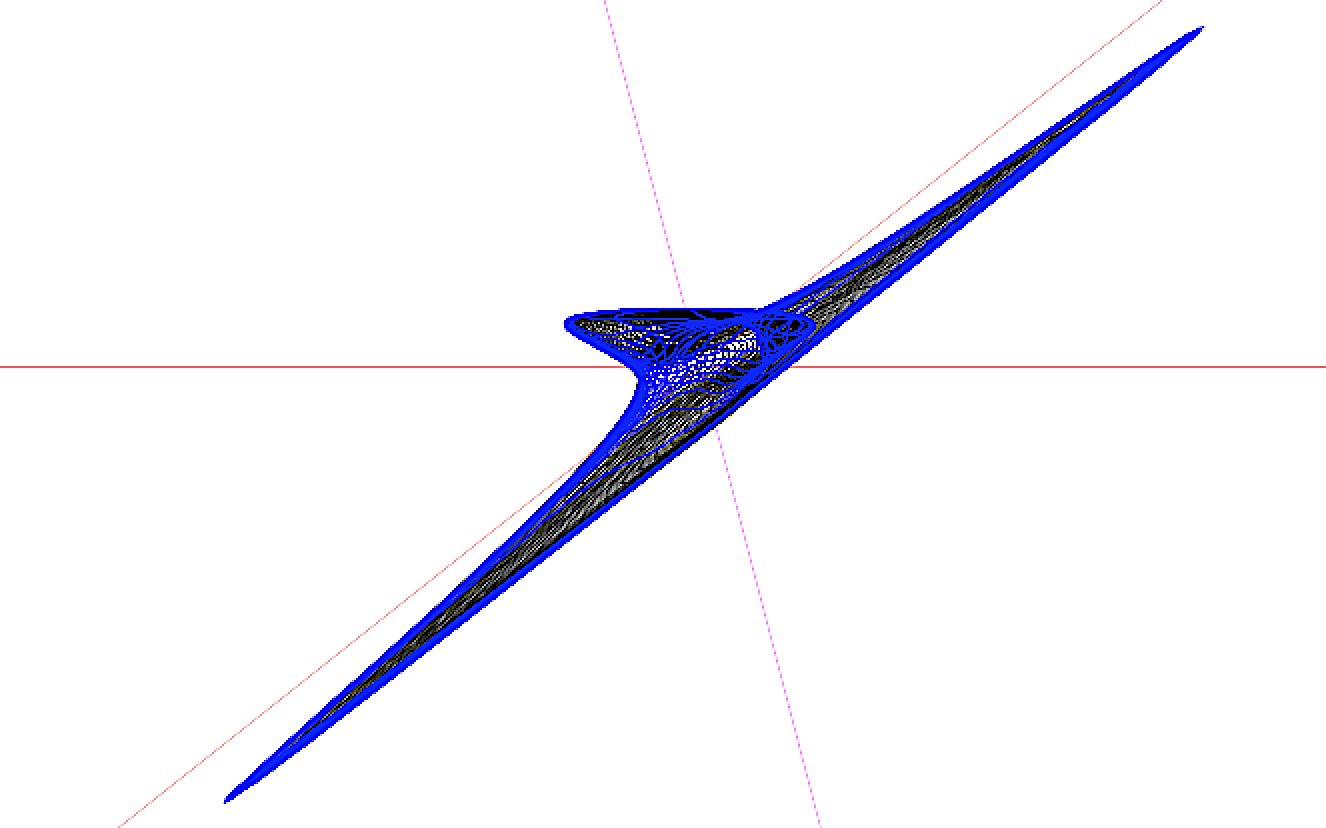}
\caption{In the group $G_{1/2}$}
\end{subfigure}
\begin{subfigure}{0.45\textwidth}
\includegraphics[width=0.9\linewidth]{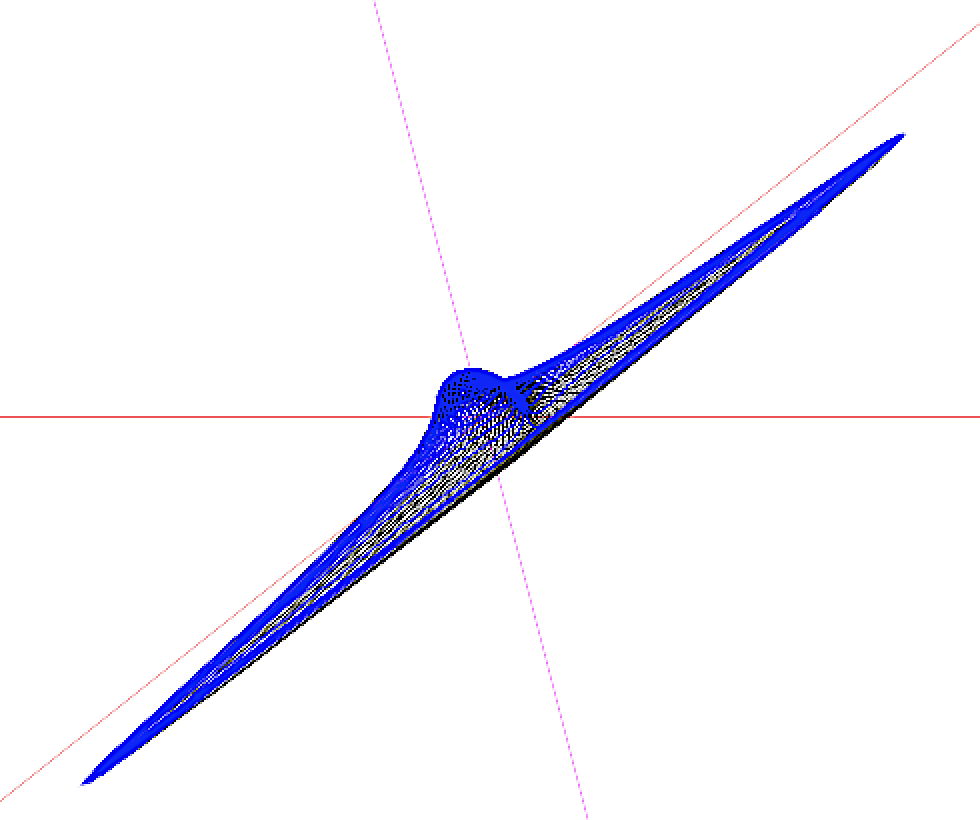}
\caption{In the group $G_0$, or $\mathbb{H}^2\cross \mathbb{R}$.}
\end{subfigure}
\caption{Geodesic spheres of radius around $5$ in four different geometries.}
\end{figure}
 Each flowline $\lambda$ of our vector field corresponds to a segment of a geodesic $\gamma$. Let $T$ be the time it takes to trace out $\lambda,$ then $T$ is exactly the length of $\gamma$ since we always take unit-speed geodesics. Let $L_\lambda$ be the far endpoint of $\gamma$ and consider the equally spaced times
$$0=t_0<t_1<\ldots <t_n=T$$
with corresponding points $\lambda_0, \ldots, \lambda_n$ along $\lambda.$ Then we have
$$L_\lambda=\lim_{n\to\infty} (\epsilon_n \lambda_0)\ast\ldots\ast(\epsilon_n \lambda_n),\quad \epsilon_n=T/(n+1)$$
where $\ast$ is the group law in $G_\alpha.$ The above equation is well-defined because the underlying space of both $G_\alpha$ and its Lie algebra is $\mathbb{R}^3.$ We will also use the notation $\lambda=a|b$ to indicate that we are splitting $\lambda$ into two sub-trajectories, $a$ and $b.$ 

The above equation yields:
$$L_\lambda=L_a \ast L_b$$
when $\lambda=a|b.$
We set $\epsilon_n \lambda_j=(x_{n,j},y_{n,j},z_{n,j})$.
Vertical displacements commute in $G_\alpha,$ therefore the third coordinate of
the far endpoint of $\gamma$ is given by
$$\lim_{n \to \infty} \sum_{j=0}^n z_{n,j}.$$
From this, and the symmetry of the flow lines with respect to $Z=0,$ we get the following lemmas. Since these results are an immediate extension of the results in \cite{MS}, we provide only sketches of their proofs.
\begin{lemma}
If the map $(x,y,z) \to (x,y,-z)$ exchanges the
two endpoints of the flowline $\lambda$ then the endpoints of the geodesic segment $\gamma$
both lie in the plane $Z=0$ and $L_\lambda$ is a
horizontal translation. In this case we call
$\lambda$ {\it symmetric\/}.
\end{lemma}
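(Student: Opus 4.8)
The plan is to push the hypothesis through the concatenation machinery assembled just above. Write $\iota$ for the involution $(x,y,z)\mapsto(x,y,-z)$, and parametrize the flowline as $\lambda\colon[0,T]\to S(G_\alpha)$ by the arclength of $\gamma$, so that $\lambda(0)$ and $\lambda(T)$ are its endpoints. By
$$L_\lambda=\lim_{n\to\infty}(\epsilon_n\lambda_0)\ast\cdots\ast(\epsilon_n\lambda_n),\qquad \epsilon_n=\frac{T}{n+1},$$
together with the fact that vertical displacements commute in $G_\alpha$, the third coordinate of $L_\lambda$ equals $\lim_{n\to\infty}\sum_{j=0}^n z_{n,j}$, where $z_{n,j}$ is the third coordinate of $\epsilon_n\lambda_j$. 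So the lemma reduces to two points: (i) each of these sums is $0$, whence $L_\lambda$ has vanishing third coordinate; and (ii) an element $(a,b,0)\in G_\alpha$ acts on the left by $(x',y',z')\mapsto(x'+a,\,y'+b,\,z')$, i.e.\ a horizontal translation. Granting (i), the identity $\gamma(0)$ and the far endpoint $L_\lambda$ both lie in $\{Z=0\}$, and (ii) identifies $L_\lambda$ as a horizontal translation; that is everything claimed.

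The content is in (i), and the key step is to show that $\iota$ carries $\lambda$ to its own time-reversal. I would first note that $H(x,y,z)=|x|^{\alpha}y$ satisfies $H\circ\iota=H$, so by Proposition 2.9 $\iota$ preserves each flowline as a set; and that a one-line computation gives $\Sigma_\alpha(\iota(p))=-\,d\iota_p\big(\Sigma_\alpha(p)\big)$, so $\iota$ reverses the direction of the flow. Hence $t\mapsto\iota(\lambda(T-t))$ is again an integral curve of $\Sigma_\alpha$, and by the endpoint hypothesis it starts at $\iota(\lambda(T))=\lambda(0)$; uniqueness of solutions of $\dot u=\Sigma_\alpha(u)$ then forces $\iota(\lambda(t))=\lambda(T-t)$ for all $t\in[0,T]$. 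Applied to the equally spaced sample points this gives $\lambda_{n-j}=\iota(\lambda_j)$, so $z_{n,j}=-z_{n,n-j}$, and therefore $\sum_{j=0}^n z_{n,j}=0$ for every $n$ (pairing $j$ with $n-j$, the middle index, if present, contributing $0$). This is exactly the assertion that a symmetric flowline is odd about the equator of $S(G_\alpha)$, as in \cite{MS}.

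I expect the only delicate point to be the upgrade from "$\iota$ swaps the two endpoints of $\lambda$" to "$\iota$ reverses all of $\lambda$"; the rest is bookkeeping. That upgrade rests on uniqueness for the flow ODE, which is immediate since $\Sigma_\alpha$ is a polynomial (hence smooth) vector field on $S(G_\alpha)$. Since this lemma is a direct transcription of the corresponding fact for Sol established in \cite{MS}, I would present only the sketch above rather than a full argument.
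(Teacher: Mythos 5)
Your proposal is correct and follows the same route as the paper: the paper's (one-sentence) proof simply asserts that for a symmetric flowline the sum $\lim_{n\to\infty}\sum_{j=0}^n z_{n,j}$ vanishes, which is exactly the cancellation you establish. The extra work you do — using $H\circ\iota=H$ and the flow-reversal identity $\Sigma_\alpha(\iota(p))=-d\iota_p(\Sigma_\alpha(p))$ plus ODE uniqueness to upgrade "swaps the endpoints" to "reverses the whole flowline" — is a legitimate filling-in of the detail the paper leaves implicit, not a different argument.
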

\begin{proof}
Since $\lambda$ is symmetric, the sum $\lim_{n \to \infty} \sum_{j=0}^n z_{n,j}$ vanishes, so the total vertical displacement is zero.
\end{proof}
\begin{lemma}
If $\lambda$ is not symmetric then we can write
  $\lambda=a|b|c$ where $a,c$ are either symmetric
  or empty, and $b$ lies entirely above or entirely
  below the plane $Z=0$.  Since $L_a$ and
  $L_c$ are horizontal translations -- or just
  the identity in the empty cases -- and
    $L_b$ is not such a translation, the endpoints of $\lambda$ are
  not in the same horizontal plane.
  \end{lemma}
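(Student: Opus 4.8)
\section*{Proof proposal}

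The plan is to run everything through the reflection $R\colon(x,y,z)\mapsto(x,y,-z)$ that defines symmetry in Lemma 3.7. The computation on which the whole argument rests is the interaction of $R$ with the flow: checking $R$ against the explicit field $\Sigma_\alpha(x,y,z)=(xz,-\alpha yz,\alpha y^2-x^2)$ of Proposition 2.6 gives $dR\circ\Sigma_\alpha=-\Sigma_\alpha\circ R$, so $R$ conjugates the flow $\Phi_t$ of $\Sigma_\alpha$ to its time reversal, $R\circ\Phi_t=\Phi_{-t}\circ R$. Hence every loop level set $\lambda$ is $R$-invariant; its two intersections with the equator $\{z=0\}$ are its only $R$-fixed points; and $R$ swaps the two arcs $\lambda\cap\{z>0\}$ and $\lambda\cap\{z<0\}$, so these take equal flow-time, i.e.\ consecutive equator crossings along $\lambda$ are exactly half a period $P_\lambda/2$ apart. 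From $R\circ\Phi_t=\Phi_{-t}\circ R$ alone I would then extract the working criterion: parametrizing $\lambda$ by unit-speed flow time on $[0,T]$, a subflowline $\lambda|_{[s,s']}$ is symmetric exactly when its midpoint $\lambda\!\left(\tfrac{s+s'}{2}\right)$ lies on the equator.

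Given a non-symmetric $\lambda=\lambda|_{[0,T]}$, its midpoint therefore lies strictly inside one of the half-spaces $\{z>0\}$, $\{z<0\}$. If $\lambda$ never meets the equator, take $b=\lambda$ and $a,c$ empty. Otherwise let $t_0$ be the largest equator time $\le T/2$ (set $t_0=0$ and $a$ empty if there is none) and $t_1$ the smallest equator time $\ge T/2$ (set $t_1=T$ and $c$ empty if there is none), and put $a=\lambda|_{[0,2t_0]}$, $b=\lambda|_{[2t_0,\,2t_1-T]}$, $c=\lambda|_{[2t_1-T,\,T]}$; these are legitimate because $0\le 2t_0\le T$ and $0\le 2t_1-T\le T$. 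By the midpoint criterion $a$ and $c$ are symmetric when nonempty, so by Lemma 3.7 the endpoints $L_a,L_c$ are horizontal translations (or the identity). Granting that $b$ is nonempty and lies in one half-space, its net vertical displacement is an integral of a function of strict constant sign, hence nonzero, so $L_b$ has nonzero third coordinate; since third coordinates add under the group law of $G_\alpha$, the point $L_\lambda=L_a\ast L_b\ast L_c$ sits at nonzero height, and as the other endpoint of the geodesic over $\lambda$ is the identity, the two endpoints do not lie in a common horizontal plane.

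The step I expect to carry the real weight is the claim that $b$ is nonempty and equator-free, that is, that $2t_0\le 2t_1-T$ and that no equator time lies strictly between $2t_0$ and $2t_1-T$. This is where the $P_\lambda/2$ spacing is used. For the flowlines that matter here --- those of length $T\le P_\lambda$ --- a segment of flow-duration $T$ meets the equator at most twice, so it suffices to run the subcases. If there are two crossings $c_1<c_2$, one first checks $c_1<T/2<c_2$ and $c_2-c_1=P_\lambda/2>T/2$; then $2c_1<2c_2-T$, and since $c_1<2c_1$ while $c_2>2c_2-T$, neither crossing lies in $(2c_1,\,2c_2-T)$, as needed. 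The one-crossing cases are the same inequalities with $a$ or $c$ empty, and the zero-crossing case is immediate. The lone configuration the scheme does not reach is the perfect flowline $T=P_\lambda$, where $b$ degenerates; but then $\lambda$ closes up and the geodesic over it already returns to height $0$, so nothing is lost. Apart from this bookkeeping the argument parallels the proof of the corresponding fact in \cite{MS}.
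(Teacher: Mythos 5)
Your proof is correct, and it supplies an argument where the paper gives essentially none: Lemma 3.8 sits in the block of results for which the paper offers "only sketches," and in this case the entire justification is the sentence embedded in the statement, with the existence of the decomposition $\lambda=a|b|c$ simply asserted. Your route --- the anti-equivariance $dR\circ\Sigma_\alpha=-\Sigma_\alpha\circ R$ of the reflection $R(x,y,z)=(x,y,-z)$, hence $R\circ\Phi_t=\Phi_{-t}\circ R$, the resulting "midpoint on the equator" criterion for symmetry of a subflowline, and the $P_\lambda/2$ spacing of consecutive equator crossings --- is the natural way to make the assertion precise, and your cut points $2t_0$ and $2t_1-T$ are exactly the ones that make $a$ and $c$ symmetric. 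The subcase bookkeeping checks out; the only cosmetic slips are at the boundary (e.g.\ the inequality $c_1<2c_1$ fails when $c_1=0$, and $c_2-c_1>T/2$ becomes an equality when $T=P_\lambda$), but since you only need the crossings to avoid the \emph{open} interval $(2c_1,\,2c_2-T)$, these cases cause no harm.

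One point is worth making explicit: your restriction to $T\le P_\lambda$, and your separate treatment of $T=P_\lambda$, is not a convenience but a necessity. As literally stated the lemma is false for perfect and longer flowlines: a full-period flowline starting at a point with $z\neq 0$ has both endpoints equal to that point, so $R$ does not exchange them and $\lambda$ is not symmetric, yet by Lemma 3.9 the corresponding geodesic returns to the plane $Z=0$, so its endpoints \emph{do} lie in a common horizontal plane and no decomposition of the stated form can exist. The statement should therefore be read as applying to flowlines of length less than one period, which is how it is actually used (e.g.\ in Lemma 3.16, where it is applied to small flowlines). Your observation that $b$ degenerates at $T=P_\lambda$ is precisely this phenomenon; I would promote it from an aside to an explicit hypothesis on $\lambda$.
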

 \begin{lemma} If $\lambda=a|b$, where both $a$ and $b$ are
symmetric, then both endpoints of
$\gamma$ lie in the plane $Z=0$.  We can do this whenever
$\lambda$ is one full period of a loop
level set.  Hence, a perfect
geodesic segment has both endpoints in the same
horizontal plane.
\end{lemma}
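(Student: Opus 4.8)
The plan is to derive the lemma from two ingredients: the symmetric-flowline lemma (Lemma 3.7), which says that if the reflection $(x,y,z)\mapsto(x,y,-z)$ exchanges the two endpoints of a flowline then the corresponding $L_\lambda$ is a horizontal translation, and the elementary fact that the horizontal translations form a subgroup of $G_\alpha$. For the latter, writing the group law with vanishing third coordinates gives $(u,v,0)\ast(u',v',0)=(u+u',v+v',0)$, so $N:=\{(u,v,0):u,v\in\mathbb{R}\}=G_\alpha\cap\{Z=0\}$ is a subgroup (the normal $\mathbb{R}^2$ of the semidirect product), and "horizontal translation" means exactly "element of $N$".

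Given this, the first assertion is immediate. If $\lambda=a\,|\,b$ with both $a$ and $b$ symmetric, then Lemma 3.7 gives $L_a,L_b\in N$, and the concatenation identity $L_\lambda=L_a\ast L_b$ together with closure of $N$ forces $L_\lambda\in N$; in particular the third coordinate of $L_\lambda$ vanishes. Since $\gamma$ starts at the identity $(0,0,0)$ and ends at $L_\lambda$, both endpoints of $\gamma$ lie in the plane $Z=0$.

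The substance is the claim that a full period of a loop level set can be written as $a\,|\,b$ with both pieces symmetric. Here I would use that the reflection $R(x,y,z)=(x,y,-z)$ preserves the unit sphere $S(G_\alpha)$ and fixes $H(x,y,z)=|x|^{\alpha}y$ (which does not involve $z$), hence preserves every level set of $H$ on $S(G_\alpha)$; so if $p_0=\gamma'(0)$ and $\lambda$ is one full period through $p_0$, the point $R(p_0)$ again lies on $\lambda$. I would then split $\lambda$ at $R(p_0)$: let $a$ run from $p_0$ to $R(p_0)$ and $b$ from $R(p_0)$ back to $p_0$. Since $R$ interchanges the two endpoints of $a$, and likewise of $b$, both pieces are symmetric and the previous paragraph applies. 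In the degenerate case $R(p_0)=p_0$ — that is, $p_0$ on the equator $\{z=0\}$ — this split is trivial, but then $R$ already interchanges the (coinciding) endpoints of $\lambda$ itself, so Lemma 3.7 applies to $\lambda$ directly. A perfect geodesic segment is, by definition, one whose flowline is precisely one full period of a loop level set, so by the above both of its endpoints lie in a single horizontal plane (the plane $Z=0$ if it emanates from the identity, and a translate of it in general, by left-invariance of the metric).

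I do not expect a serious obstacle: with Lemma 3.7 available, the argument is the one-line subgroup computation, the concatenation identity, and the bookkeeping of reflections (including the equator degeneracy). The one point to be careful about is that the "obvious" splitting of a period into its upper and lower halves, as used in the proof of Proposition 3.2, does \emph{not} produce symmetric pieces — their endpoints sit on the equator, where $R$ acts trivially rather than by exchange — which is exactly why one should split at $p_0$ and its reflection $R(p_0)$ instead.
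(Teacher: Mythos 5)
Your proof is correct and follows the same route as the paper: apply the concatenation identity $L_\lambda = L_a \ast L_b$ together with Lemma 3.7 and the fact that the horizontal translations $\{Z=0\}$ form a subgroup. You additionally supply the explicit splitting of a full period at $p_0$ and its reflection $R(p_0)$ (and handle the equator degeneracy), a point the paper asserts without proof, so your write-up is if anything more complete than the original.
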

\begin{proof}
Since $\lambda=a|b$, we know that $L_\lambda=L_a\ast L_b$. Since $L_a$ and $L_b$ are horizontal translations by Lemma 3.7, it follows that $L_\lambda$ stays in the same $Z=0$ plane.
\end{proof}
\begin{lemma} If $\lambda_1$ and $\lambda_2$ are full trajectories of the same
  loop level set, then
we can write $\lambda_1=a|b$ and $\lambda_2=b|a$, which leads to
$L_{\lambda_2}=L_a^{-1}L_{\lambda_1}L_a$. Working this out with the group law in $G_\alpha$ gives:
$(a_1 e^{-z}, b_1 e^{\alpha z}, 0)=(a_2, b_2, 0)$, where $(x,y,z)=L_a$ and $(a_i, b_i, 0)=L_{\lambda_i}$.
In particular, we have $a_1^\alpha b_1=a_2^\alpha b_2$ and we call the function (of the flowlines)
$H_\lambda=\sqrt{|a_1^\alpha b_1|}$ the {\it holonomy invariant\/} of
the loop level set $\lambda.$ 
\end{lemma}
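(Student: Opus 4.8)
The plan is to combine the closed-loop structure of $\lambda$ with the concatenation identity $L_{a|b}=L_a\ast L_b$ and with Lemma 3.10. First I would set up the decomposition. Since $\lambda_1$ and $\lambda_2$ are full trajectories of the \emph{same} loop level set $\lambda$, which is a single closed curve, they are cyclic rotations of one another: if $\lambda_1$ starts at $p_1$ and $\lambda_2$ starts at $p_2$, then flowing once around $\lambda$ from $p_1$ passes through $p_2$. Let $a$ be the portion of $\lambda$ traced from $p_1$ to $p_2$ and $b$ the portion from $p_2$ back to $p_1$, so that $\lambda_1=a|b$; flowing once around from $p_2$ then traverses $b$ followed by $a$, giving $\lambda_2=b|a$. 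This is precisely where the hypothesis that the level set is a loop (rather than an unbounded level set) is used, and it is also what guarantees that $a$ and $b$ are honest sub-trajectories to which the concatenation formula applies.

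Next I would apply concatenation to both: $L_{\lambda_1}=L_a\ast L_b$ and $L_{\lambda_2}=L_b\ast L_a$. Solving the first for $L_b=L_a^{-1}\ast L_{\lambda_1}$ and substituting into the second gives the conjugation formula $L_{\lambda_2}=L_a^{-1}\ast L_{\lambda_1}\ast L_a$. By Lemma 3.10, since each $\lambda_i$ is one full period of a loop level set, $L_{\lambda_i}$ is a horizontal translation, so I can write $L_{\lambda_i}=(a_i,b_i,0)$ and $L_a=(x,y,z)$. From the explicit group law one computes $L_a^{-1}=(-xe^{-z},-ye^{\alpha z},-z)$, and then a direct multiplication yields $L_{\lambda_1}\ast L_a=(x+a_1,\,y+b_1,\,z)$ followed by $L_a^{-1}\ast(L_{\lambda_1}\ast L_a)=(a_1e^{-z},\,b_1e^{\alpha z},\,0)$. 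Equating with $(a_2,b_2,0)$ gives $a_2=a_1e^{-z}$ and $b_2=b_1e^{\alpha z}$, hence $a_2^\alpha b_2 = a_1^\alpha e^{-\alpha z}\, b_1 e^{\alpha z}=a_1^\alpha b_1$, so the quantity $a_1^\alpha b_1$ depends only on $\lambda$. Passing to $|a_1^\alpha b_1|$ — harmless since the signs of $a_i$ and $b_i$ are fixed by the sector, which the exponential map preserves by Corollary 2.8 — shows $H_\lambda=\sqrt{|a_1^\alpha b_1|}$ is a well-defined invariant of $\lambda$.

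I do not expect the algebra to present any real difficulty; the crux of the argument is the first paragraph, namely verifying that two full trajectories of the same loop level set are genuinely cyclic re-parametrizations of one another, so that the clean splitting $\lambda_1=a|b$, $\lambda_2=b|a$ is legitimate and the concatenation formula applies to the (possibly non-symmetric) pieces $a$ and $b$. The only other point to watch is the choice of branch for $a_1^\alpha$ when $\alpha$ is irrational, which is exactly what the absolute values in the definition of $H_\lambda$ are there to handle.
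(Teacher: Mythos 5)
Your proof is correct and follows essentially the same route the paper takes (the paper embeds the whole argument in the statement of the lemma and gives no separate proof): the cyclic splitting $\lambda_1=a|b$, $\lambda_2=b|a$, the concatenation identity $L_{\lambda}=L_a\ast L_b$, and the explicit conjugation computation with the group law, all of which you carry out correctly. One small citation slip: the fact that each $L_{\lambda_i}$ is a horizontal translation $(a_i,b_i,0)$ comes from Lemma 3.9 (a perfect segment has both endpoints in the plane $Z=0$), not from Lemma 3.10, which is the statement you are proving.
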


Let $E$ be the Riemannian
exponential map.
We call $V_+=(x,y,z)$ and $V_-=(x,y,-z)$, vectors in the Lie algebra, {\it partners\/}.
The symmetric trajectories discussed in Lemma 3.7 have endpoints
which are partners.
Note that if $V_+$ and $V_-$ are partners, then 
one is perfect if and only if the other one is, because they lie on the same loop level set. The next two facts are generalizations of results from \cite{MS} about Sol, and, in particular, Corollary 3.12 proves half of our main conjecture.

\begin{theorem}
If $V_+$ and $V_-$ are perfect partners, then
$E(V_+)=E(V_-)$.
\end{theorem}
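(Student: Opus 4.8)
The plan is to adapt, essentially word for word, the concatenation argument used for Sol in \cite{MS}; the only inputs are the reflection symmetry of a loop level set and the commutativity of horizontal translations in $G_\alpha$. Before anything else, dispose of the degenerate cases: if the common loop level set $\lambda$ of $V_+$ and $V_-$ is a point (the case $\beta=1$, an equilibrium of $\Sigma_\alpha$), or if $z=0$ so that $V_+=V_-$, there is nothing to prove. So assume $0<\beta<1$ and $z\neq 0$, so that $\lambda$ is a genuine closed curve on $S(G_\alpha)$ and $V_+\neq V_-$ both lie on it.

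First I would record the reflection symmetry. Let $\phi\colon(x,y,z)\mapsto(x,y,-z)$. Since $\lambda$ is a level set of $H(x,y,z)=\abs{x}^{\alpha}y$ (Proposition 2.9) and $H$ is even in $z$, we have $\phi(\lambda)=\lambda$; and from the formula for $\Sigma_\alpha$ in Proposition 2.6 one checks at once that $\phi$ is a time-reversing symmetry of the flow, $\phi_\ast\Sigma_\alpha=-\Sigma_\alpha$, so $\phi$ reverses the flow direction on $\lambda$. Hence $\phi|_\lambda$ is an orientation-reversing involution of a circle, so it has exactly two fixed points $p_0,p_1$, and these are precisely the two points of $\lambda$ on the equator $\{z=0\}$. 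Note also that $V_-=\phi(V_+)$, so $V_+$ and $V_-$ form a single $\phi$-orbit on $\lambda$.

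Next I would produce the key decomposition. Since $V_+$ is perfect, its flowline $\lambda_+$ is one full period of $\lambda$; running the flow from $V_+$ it meets one equator point, then $V_-$, then the other equator point, then returns to $V_+$. Write $\lambda_+=C\,|\,D$, where $C$ is the subarc from $V_+$ to $V_-$ and $D$ the subarc from $V_-$ back to $V_+$; thus each of $C,D$ contains exactly one of $p_0,p_1$ in its interior. Because $\{V_+,V_-\}$ is a single $\phi$-orbit, one checks directly that $\phi(C)=C$ and $\phi(D)=D$, with $\phi$ exchanging the two endpoints of each arc, so by Lemma 3.7 both $C$ and $D$ are symmetric and $L_C,L_D$ are horizontal translations. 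Running the flow instead from $V_-$ for one full period yields $\lambda_-=D\,|\,C$ with the very same arcs. Now the concatenation identity $L_{a\,|\,b}=L_a\ast L_b$ (equivalently Lemma 3.10 with $a=C$, $b=D$) gives $L_{\lambda_+}=L_C\ast L_D$ and $L_{\lambda_-}=L_D\ast L_C$; since $L_C$ and $L_D$ lie in the abelian subgroup $\{z=0\}\cong\mathbb{R}^2$ of $G_\alpha$, they commute, and therefore $E(V_+)=L_{\lambda_+}=L_{\lambda_-}=E(V_-)$.

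I do not expect a genuine difficulty here. The one point that needs care is the verification in the decomposition step that the subarcs $C$ and $D$, which are cut off by the \emph{partners} $V_\pm$ rather than by the equator points $p_0,p_1$, are each individually invariant under $\phi$; it is this, together with the endpoint exchange, that lets Lemma 3.7 apply to them, so that $L_C$ and $L_D$ are horizontal translations and hence commute.
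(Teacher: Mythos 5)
Your proof is correct and follows the same route as the paper: split the full-period flowline at the two partner points into $\lambda_+=a|b$ and $\lambda_-=b|a$ with $a,b$ symmetric (hence horizontal translations by Lemma 3.7), and conclude by commutativity of horizontal translations. The only difference is that you verify explicitly, via the reflection $\phi$ and the equator crossings, that the two subarcs are symmetric, a point the paper simply asserts.
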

\begin{proof}
Let $\lambda_{\pm}$ be the trajectory which
makes one circuit around the loop level set starting
at $U_{\pm}$.  As above, we write the flowlines as
$\lambda_+=a|b$ and
$\lambda_-=b|a$.  Since $V_+$ and $V_-$ are partners, we can take
$a$ and $b$ both to be symmetric.
But then the elements
$L_a$, $L_b$, $L_{\lambda_1}$, $L_{\lambda_2}$
all preserve the plane $Z=0$ and hence mutually commute, by Lemma 3.7.
By Lemma 3.10, we have
$L_{\lambda_+}=L_{\lambda_-}$.
But $E(V_{\pm})=L_{\lambda_{\pm}}$.
\end{proof}

\begin{corollary}
A large geodesic segment is not a length minimizer.
\end{corollary}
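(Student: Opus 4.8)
The plan is to derive the corollary from Theorem 3.11 together with the classical principle that a unit-speed geodesic stops minimizing as soon as it reaches a point joined to its starting point by a second geodesic of equal length. Let $\gamma\colon[0,T]\to G_\alpha$ be a large geodesic segment with $\gamma(0)=e$ and unit initial velocity $U:=\gamma'(0)$; thus $U$ lies on a loop level set $\lambda$ and $T>P:=P_\lambda$. Because every sub-segment of a length minimizer is again a length minimizer, it suffices to produce \emph{some} non-minimizing sub-segment of $\gamma$.

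First I would treat the generic case in which $U=(x,y,z)$ has $z\neq0$. Then the partner $U'=(x,y,-z)$ is a unit vector \emph{distinct} from $U$ and lying on the same loop level set, so the scaled vectors $PU$ and $PU'$ are perfect partners. Theorem 3.11 gives $q:=E(PU)=E(PU')$, so the perfect sub-segment $\gamma_1:=\gamma|_{[0,P]}$ and the length-$P$ geodesic $\gamma_2$ issuing from $e$ in the direction $U'$ both terminate at $q$. These are distinct geodesics through $q$, so by uniqueness of geodesics their velocities at $q$ differ: $\gamma_2'(P)\neq\gamma_1'(P)=\gamma'(P)$. Concatenating $\gamma_2$ with $\gamma|_{[P,T]}$ therefore yields a path from $e$ to $\gamma(T)$ of length exactly $T$ with a genuine corner at $q$; rounding that corner (the first variation of arc length) strictly shortens it, so $\gamma$, of length $T$, is not a length minimizer.

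It remains to dispose of the non-generic case $z=0$, where $U$ equals its own partner and the argument above does not apply verbatim; here I would simply flow a little before invoking it. Since $\Sigma_\alpha(x,y,0)=(0,0,\alpha y^2-x^2)$ has nonzero vertical component at every non-equilibrium point of the equator, $\gamma'(s)$ leaves the equator for all sufficiently small $s>0$, while remaining on $\lambda$ (the flowlines are the level sets of $H$) and hence keeping the same period $P$. Choosing $s\in(0,\,T-P)$ and left-translating its initial point to $e$, the sub-segment $\gamma|_{[s,T]}$ is a large geodesic segment whose initial velocity is off the equator; by the generic case it is not a length minimizer, and therefore neither is $\gamma$.

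The only substantive ingredient is Theorem 3.11, already established, so I expect no real obstacle: what remains is the standard cut-point mechanism plus the minor bookkeeping of reducing to the case in which $\gamma'(0)$ differs from its partner. If anything, the fussiest point is checking that flowing off the equator genuinely lands one in the generic case, but this is routine.
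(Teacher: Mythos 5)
Your proof is correct, and while the generic case runs on the same fuel as the paper's (Theorem 3.11 applied to the perfect partners $PU$ and $PU'$, plus the standard fact that a geodesic cannot minimize past a point reached by a second geodesic of the same length), you handle the degenerate case $z=0$ by a genuinely different mechanism. The paper argues by contradiction: if a large segment minimized, its perfect sub-segment would be a \emph{unique} minimizer without conjugate points; for $z\neq 0$ uniqueness fails by Theorem 3.11, and for $z=0$ the even variation $\epsilon\mapsto V_\epsilon=(x_\epsilon,y_\epsilon,\epsilon)$ through perfect vectors (whose $\pm\epsilon$ members are partners with a common image) produces a vanishing Jacobi field, i.e.\ a conjugate point. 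You instead avoid conjugate points entirely: you observe that $\Sigma_\alpha$ has nonzero vertical component at every non-equilibrium equator point, so sliding the basepoint forward by a small $s\in(0,T-P)$ and left-translating puts the initial velocity off the equator while keeping the segment large (the period depends only on the loop level set), reducing to the generic case. This is more elementary -- it sidesteps the need to verify that the paper's variation is a nontrivial Jacobi field vanishing at both ends -- at the cost of the small bookkeeping you acknowledge; the one boundary point worth a sentence is that an equator equilibrium (where your flow-off argument would stall) does not lie on a loop level set, so no geodesic in that direction is classified as large and the case genuinely does not arise. Your corner-rounding justification in the generic case is also a self-contained replacement for the paper's appeal to uniqueness of minimizers.
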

\begin{proof}
If this is false then, by shortening our geodesic, we
can find a perfect geodesic segment $\gamma$, corresponding
to a perfect vector $V=(x,y,z)$, which is a unique
geodesic minimizer without conjugate points.  If $z \not =0$
we immediately contradict Theorem 3.11. 
If $z=0$, we consider the variation,
$\epsilon \to \gamma(\epsilon)$ through same-length
perfect geodesic segments
$\gamma(\epsilon)$ corresponding to
the vector $V_{\epsilon}=(x_{\epsilon},y_{\epsilon},\epsilon)$.
The vectors $V_{\epsilon}$ and $V_{-\epsilon}$ are partners, so
$\gamma(\epsilon)$ and $\gamma(-\epsilon)$
have the same endpoint. Hence, this variation
corresponds to a conjugate point on $\gamma$
and again we have a contradiction.
\end{proof}
The next step is to analyze what happens for small and perfect geodesics. To begin, we point out another consequence of Theorem 3.11. Let $M$ be the set of vectors in the Lie algebra of $G_\alpha$ associated to small geodesic segments and let $\partial M$ be the set of vectors associated to perfect geodesic segments. Lastly, let $\partial_0 M$ be the intersection of $\partial M$ with the plane $Z=0.$ Since $E$ identifies perfect partner vectors, we have a vanishing Jacobi field at each point of $\partial_0 M.$ However, we still have:
\begin{proposition}
$dE$ is nonsingular in $\partial M - \partial_0 M$ 
\end{proposition}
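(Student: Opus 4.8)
The plan is to reduce the statement to a single monotonicity fact about the holonomy invariant $H_\lambda$ of Lemma 3.10. Working (by Corollary 2.8) in one sector, fix $V \in \partial M - \partial_0 M$; it lies on a loop level set $\lambda$ with parameter $\beta$ and gives a perfect geodesic $\gamma = \gamma_V$ of length $T = P_\lambda(\beta)$. First I would introduce coordinates $(\beta, s, r)$ near $V$ in $\mathfrak{g}_\alpha$, writing a vector as $r\,u(\beta,s)$ where $u(\beta,\cdot)$ parametrizes the loop level set in $S(G_\alpha)$ (so $s$ is a phase, normalized so that $s=0$ is the equatorial, i.e.\ symmetric, point) and $r$ is the norm; then $\partial M = \{r = P_\lambda(\beta)\}$ and $\partial_r$ is transverse to it. Under $E$ the three coordinate directions map to $dE_V(\partial_r) = \gamma'(T)$ and two Jacobi fields $\partial_s E = dE_V(\partial_s)$, $\partial_\beta E = dE_V(\partial_\beta)$ along $\gamma$ that vanish at the origin. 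Nonsingularity of $dE_V$ is precisely linear independence of this triple at $E(V)$.

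Next I would extract three facts from the concatenation machinery of Section 3.2. (i) Since $\gamma$ is perfect, Lemma 3.9 places $E(V)$ in the plane $Z=0$, and since one full period of the flow returns the tangent vector to itself, $\gamma'(T)$ has the same frame components as $V/|V|$; because $E(V)$ sits at height $0$, the $\partial_z$-component of $dE_V(\partial_r) = \gamma'(T)$ is just the third component of $V/|V|$, which is nonzero exactly because $V\notin\partial_0 M$. (ii) Running the holonomy computation of Lemmas 3.7--3.10 along $\gamma$ yields the explicit formula $E\big(P_\lambda(\beta)\,u(\beta,s)\big) = \big(a_1(\beta)e^{-\zeta_\beta(s)},\ b_1(\beta)e^{\alpha\zeta_\beta(s)},\ 0\big)$, where $(a_1(\beta),b_1(\beta),0)$ is the endpoint of a symmetric perfect geodesic on $\lambda$ and $\zeta_\beta(s)$ is the $z$-coordinate, as a function of time $s$, of that geodesic; differentiating in $s$ shows that $\partial_s E$ equals $\dot\zeta_\beta(s)$ times a nonzero vector tangent to the curve $C_\beta := \{\,|x|^\alpha y = H_\lambda(\beta)^2\,\}\cap\{Z=0\}$, hence $\partial_s E \ne 0$ precisely when $V\notin\partial_0 M$. (iii) The same formula shows $|x|^\alpha y$ is constant $= H_\lambda(\beta)^2$ along $s\mapsto E(P_\lambda(\beta)u(\beta,s))$, so $\partial_\beta E$ is transverse to $C_\beta$ if and only if $\tfrac{d}{d\beta}H_\lambda(\beta)\ne 0$.

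Assembling these: for $V\notin\partial_0 M$ the vectors $\partial_s E$ and $\partial_\beta E$ both lie in $\{Z=0\}$, while $dE_V(\partial_r) = \gamma'(T)$ supplies the missing $\partial_z$-direction by (i), so the triple is independent as soon as $\partial_s E,\partial_\beta E$ are independent in $\{Z=0\}$, and by (ii)--(iii) this holds if and only if $H_\lambda'(\beta)\ne 0$. Thus the proposition follows once one knows the holonomy invariant has nowhere-vanishing derivative in $\beta$ — equivalently, that the plane curve $\beta\mapsto(a_1(\beta),b_1(\beta))$ is everywhere transverse to the level sets of $|x|^\alpha y$. I expect this to be the main obstacle: $a_1(\beta)$ and $b_1(\beta)$ are themselves oscillation integrals (integrate $e^{2\zeta}$, resp.\ $e^{-2\alpha\zeta}$, against $d\zeta/\sqrt{1 - x^2 e^{2\zeta} - y^2 e^{-2\alpha\zeta}}$ over the turning interval $[\zeta_-,\zeta_+]$), and pinning down the sign of the derivative of the product $a_1^\alpha b_1$ is exactly the kind of estimate that elsewhere in this paper is only carried out in closed form for special $\alpha$. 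For general $0<\alpha\le 1$ I would either invoke the Sol computation of \cite{MS} together with its adaptation to the family, or prove non-criticality of $H_\lambda$ directly from this integral representation; a softer alternative worth trying is to show that $E$ restricted to $\partial M$ has a fold singularity along $\partial_0 M$ — using the partner involution $\iota(x,y,z)=(x,y,-z)$ and the identity $E\circ\iota = E$ on $\partial M$ from Theorem 3.11 — which would give nonsingularity of $dE$ on a neighborhood of $\partial_0 M$ and leave only the region bounded away from $\partial_0 M$ to treat.
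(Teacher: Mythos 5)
Your setup agrees with the paper's up to a point: the radial direction supplies $\gamma'(T)$ with nonzero $\partial_z$-component (the paper gets the same effect via the Gauss Lemma), and your phase variation $\partial_s E$ is exactly the paper's second variation --- a nonzero vector tangent to the hyperbola $x^\alpha y = H_\lambda^2$ inside $Z=0$, nonzero precisely because $V \notin \partial_0 M$. The gap is in your third direction. By differentiating along $\partial M$ in the $\beta$-direction you produce a vector that \emph{stays} in the plane $Z=0$, and you are then forced to purchase its independence from $\partial_s E$ with the unproven claim $\frac{d}{d\beta}H_\lambda(\beta)\neq 0$. You flag this as the main obstacle, but the situation is worse than ``a hard estimate left for later'': in the paper the non-criticality of the holonomy invariant (Proposition 3.14, $dH/dP>0$) is itself \emph{deduced from} this proposition --- its proof assumes $dH/dP(P_0)=0$ and derives that $dE$ is singular at a perfect vector off $\partial_0 M$, contradicting exactly the statement you are trying to prove. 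So the most natural source for your missing ingredient is circular, and neither of your fallbacks closes the gap for general $\alpha$: the Sol computation covers only $\alpha=1$, and the fold-singularity idea would at best handle a neighborhood of $\partial_0 M$, as you concede.

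The paper's proof sidesteps the holonomy invariant entirely by choosing the transverse variation differently: it varies $V$ perpendicular to the loop level sets \emph{at fixed length} $T=\|V\|$, i.e.\ within the round sphere $S$, so the nearby geodesic segments are small rather than perfect and their endpoints leave the plane $Z=0$; the resulting variation field therefore has a nonzero $\partial_z$-component (the paper extracts the nonvanishing rate from the linear-in-$t$ growth of the $z$-coordinate established in the Grayson Cylinder computation). Linear independence is then immediate: one image vector is nonzero with $dz=0$ (the loop-level-set variation), the other has $dz\neq 0$, and both are orthogonal to $\gamma'(T)$ by the Gauss Lemma. If you replace your $\partial_\beta$-along-$\partial M$ direction with this fixed-length transverse variation, your argument closes without any monotonicity input on $H_\lambda$.
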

\begin{proof}
Let $V\in \partial M - \partial_0 M$ and let $\gamma$ be its corresponding geodesic segment. Let $S$ be the round sphere around the origin in the Lie algebra that contains $V.$ By the Gauss Lemma, we know that it suffices to show $dE$ is nonsingular when restricted to the tangent plane of $S$ at $V.$ We produce two linearly independent geodesic variations that proves the sufficient condition.

First, let $V_t$ be a curve on $S$ perpendicular to all loop level sets and such that the associated geodesic segments $\gamma_t$ are small for $t>0.$ In the proof of the Grayson Cylinder Theorem, we showed that the third-coordinate of our geodesic segment will vary linearly in $t$ away from $Z=0.$ This is our first nonzero variation. Now consider the variation $V_t$ on the same loop level set. Since all $V_t$ are perfect, with the same holonomy invariant (as in Lemma 3.10), the actual holonomy of $\gamma_t$ will vary on the hyperbola $x^\alpha y= H_\lambda^2,$ which is our second, linearly independent, nonzero variation. We note that this proposition is equivalent to stating that no vector in $\partial M -\partial_0 M$ has a conjugate point, i.e. the geodesic segment associated to any vector in $\partial M -\partial_0 M$ cannot have any non-trivial Jacobi Field along it which vanishes at its endpoints.
\end{proof}
Another useful consequence of what we have heretofore shown is that the Holonomy function (defined in Lemma 3.10) is monotonically increasing. The following result will be useful later, when we analyze the behavior of $E$ on the set of perfect vectors. More precisely, we have:
\begin{proposition}
Let $P$ be the unique period associated to a flowline $\lambda$. We know that the holonomy $H_\lambda$ is an invariant of the flowline, so it is a function of $P$. Moreover, $H_\lambda$ varies monotonically with the flowlines. Explicitly, we have $\frac{dH}{dP}(P)>0$.
\end{proposition}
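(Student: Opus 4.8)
The plan is to use the parametrization of the loop level sets by $\beta\in(0,1)$ coming from Theorem 3.1 and Proposition 3.2, so that both quantities become functions of the single variable $\beta$: the period $P=P_\lambda(\beta)$ and the holonomy $H=H_\lambda(\beta)$. The first assertion, that $H$ is a function of $P$, is then immediate from Proposition 3.5, which states that $P_\lambda(\beta)$ is strictly decreasing --- hence a diffeomorphism onto its image --- for $\alpha=1$ and $\alpha=1/2$. For the inequality, wherever $P'(\beta)\neq 0$ we have $\frac{dH}{dP}=H'(\beta)/P'(\beta)$, so since $P'(\beta)<0$ by Proposition 3.5 it suffices to prove $H'(\beta)<0$.

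To analyze $H(\beta)$ I would first write it as an integral, in the spirit of the proof of Theorem 3.1. Fix the perfect geodesic $\gamma_\beta$ based at the identity whose initial tangent vector is a $Z=0$ point of the loop level set $\lambda(\beta)$ (the choice of such a point is irrelevant by Lemma 3.10). By Lemma 3.9 its far endpoint is a horizontal translation $(a_1(\beta),b_1(\beta),0)$, and $H(\beta)=\sqrt{|a_1(\beta)^\alpha b_1(\beta)|}$. Running the Theorem 3.1 computation over one full period --- integrating the flat-flow-line description of $\Sigma_\alpha$ and tracking the sign of the $z$-component on the upper and lower branches --- expresses $a_1(\beta)$ and $b_1(\beta)$ as explicit integrals of $e^{2t}\sigma_\beta(t)$ and $e^{-2\alpha t}\sigma_\beta(t)$ over $[-t_1,t_0]$, with $\sigma_\beta=1/z_\beta$ and $z_\beta,t_0,t_1$ as in Proposition 3.2 and Remark 3.3. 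After the substitution $u=e^{t}$ used in Corollary 3.4, these become integrals of rational functions of $u$ against the reciprocal square root of a polynomial (a genuine quartic when $\alpha=1/2$) whose relevant roots include $u=e^{-t_1}$ and $u=e^{t_0}$; in particular, for $\alpha=1$ and $\alpha=1/2$ one should be able to evaluate $H(\beta)$ in closed form in terms of complete elliptic integrals using \cite{GR}, exactly as the period was handled.

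The step I expect to be the main obstacle is proving $\frac{d}{d\beta}\bigl(a_1(\beta)^\alpha b_1(\beta)\bigr)<0$. The difficulty is precisely the one that forced the elliptic-integral detour for the period: as $\beta$ increases the integrand grows pointwise but the interval $[-t_1,t_0]$ shrinks, and there are inverse-square-root singularities at the endpoints, so a crude comparison does not settle the sign. I would attack it in one of three ways: (i) differentiate the closed forms from \cite{GR} for $\alpha=1,1/2$ and invoke standard monotonicity of $K$ (and $E$); (ii) rescale to a $\beta$-independent interval by a substitution that also absorbs the endpoint singularities, after which $\partial_\beta$ of the new integrand has a definite sign and one differentiates under the integral; or (iii) produce, by integration by parts, an identity expressing $H'(\beta)$ in terms of $P'(\beta)$, so that the sign is inherited from Proposition 3.5. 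In all cases the conclusion to confirm is the expected one: $H(\beta)\to\infty$ as $\beta\to 0^{+}$ (the Grayson cylinders degenerate and the perfect geodesics, of length $P\to\infty$, translate by an unbounded amount), while $H(\beta)$ decreases to a finite positive value as $\beta\to 1^{-}$ (the loop shrinks to the straight-line equilibrium, whose perfect segment has the finite length recorded in Proposition 3.5 and a definite nonzero holonomy). Thus $H$ and $P$ are both strictly decreasing in $\beta$, and $\frac{dH}{dP}>0$.
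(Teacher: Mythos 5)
Your proposal has a genuine gap: the entire argument funnels into the single inequality $\frac{d}{d\beta}\bigl(a_1(\beta)^\alpha b_1(\beta)\bigr)<0$, and at that point you stop and list three candidate strategies (closed forms from \cite{GR}, rescaling the interval, integration by parts against $P'$) without carrying any of them out. That inequality is exactly the hard analytic content, and the difficulties you yourself identify --- the competition between the growing integrand and the shrinking interval $[-t_1,t_0]$, and the inverse-square-root endpoint singularities --- are real; none of the three routes is obviously executable, and route (i) would in any case be confined to $\alpha=1$ and $\alpha=1/2$. Indeed the whole framework is too narrow: by resting on Proposition 3.5 you could at best prove the statement for $\alpha\in\{1,1/2\}$, whereas the proposition is asserted (and needed conceptually) for all $\alpha\in(0,1]$, and the paper proves it at that level of generality without any knowledge of $dP/d\beta$.

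The paper's actual proof avoids integrals entirely and is worth internalizing as a model of a ``soft'' argument. First, unboundedness of $H$ as a function of $P$: for large $R$, take the shortest vector $V$ with $E(V)=(R\sqrt{\alpha},R,0)$ (it exists because the straight-line geodesic reaches that point); by Corollary 3.12 it is small or perfect, so some $\delta V$ with $\delta\geq 1$ is perfect, and sector-preservation forces $E(\delta V)=(a,b,0)$ with $a\geq R\sqrt{\alpha}$, $b\geq R$, whence $H(P(\delta V))\geq R^{(1+\alpha)/2}\alpha^{\alpha/4}\to\infty$. Second, $dH/dP$ never vanishes: if it vanished at some period $P_0$, then along the variation of perfect vectors $U(t)$ lying over the companion line $x=y\sqrt{\alpha}$ with $\|U(t)\|=P_0+t$, the endpoint $(a\sqrt{\alpha},a,0)$ would be stationary to first order, making $dE$ singular at a perfect vector off the plane $Z=0$ and contradicting Proposition 3.13. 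A continuous, nowhere-vanishing derivative has constant sign, and the negative sign is excluded by unboundedness. Your limiting observations ($H\to\infty$ as $\beta\to 0^+$, $H$ finite as $\beta\to 1^-$) are consistent with this, but by themselves they establish only the global trend, not the pointwise sign of the derivative that the proposition demands.
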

\begin{proof}
Consider the point (that is on the straight line geodesic) $(R\sqrt{\alpha} , R, 0)$ in $G_\alpha$, where $R>0$ is a large number. Let $V$ be the shortest vector in the Lie algebra of $G_\alpha$ such that $E(V)=(R\sqrt{\alpha}, R, 0)$. Such a vector is guaranteed to exist because the straight-line geodesic (which, however, may not be length-minimizing) sends a vector in the Lie algebra to this point. By Corollary 3.12, $V$ is either a short or a perfect vector. In any case, there exists a constant $\delta\geq 1$ such that $\delta V$ is a perfect vector, and since $\delta V$ is perfect, we also know that $E(\delta V) = (a,b,0)$ for some $a,b$ such that $a\geq R\sqrt{\alpha}$ and $b\geq R$. It follows that 
$$H(P(\delta V))=H(\| \delta V\|) \geq R^{\frac{1+\alpha}{2}}\alpha^{\alpha/4}.$$
Since $\alpha$ is a constant for each $G_\alpha$ and $R$ can be arbitrarily large, we see that $H$ is an unbounded function of $P$. 

Now we show that $dH/dP$ cannot vanish, which coupled with the unboundedness of $H$, proves the desired monotonicity result. Assume that $dH/dP(P_0)=0$ for some choice of period $P_0$ (in other words, for some choice of flowline $\lambda_0$ of period $P_0$). Choosing the necessary $\nu$, we consider the vector 
$$U_0=\bigg(\nu\sqrt{\frac{\alpha}{1+\alpha}}, \frac{\nu}{\sqrt{1+\alpha}}, \sqrt{1-\nu^2}\bigg)=\big(x(0)\sqrt{\alpha},x(0),z(0)\big)$$
that is on the flowline $\lambda_0$. Theorem 3.1 states that the geodesic with this initial tangent vector spirals around its corresponding Grayson Cylinder with companion line $x=y\sqrt{\alpha}$ in the $Z=0$ plane of Sol. Thus, its endpoint after spiraling exactly once around is in fact $E(U_0)=(a\sqrt{\alpha},a,0)$ for some number $a$. We consider the variation of perfect vectors in the positive sector $U(t)=(x(t)\sqrt{\alpha}, x(t), z(t))$ such that $\|U(t)\|=P_0 +t$. Since we assumed that $dH/dP(P_0)=0$, it follows that $da/dt(0)=0$ which implies that $dE$ is singular at the perfect vector $U_0$, a contradiction. 
\end{proof}
To begin our analysis of the small geodesic segments, we prove an interesting generalization of the \textit{Reciprocity Lemma} from \cite{MS}. If $V$ is a perfect vector, then $E(V)$ will lie in the $Z=0$ plane by Lemma 3.9; however, we can get something better.
\begin{theorem}
Let $V=(x,y,z)$ be a perfect vector. There exists a number $\mu\neq 0$ such that $E(V)=\mu(\alpha y,x,0).$
\end{theorem}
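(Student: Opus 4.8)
\emph{Proof proposal.} The plan is to uncover two Killing fields of the left-invariant metric that are invisible from the group structure, use them to rewrite the coordinates of $E(V)$ as simple integrals along the flowline, and then reduce the direction claim to a single scalar identity that falls out of the formula for $\Sigma_\alpha$ in Proposition 2.6.

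First I would record the conservation laws. Because the metric coefficients $e^{-2z}, e^{2\alpha z}, 1$ depend only on $z$, the coordinate fields $\partial_x$ and $\partial_y$ are Killing. Hence along the unit-speed geodesic $\gamma(s)=(\xi(s),\eta(s),\zeta(s))$ issuing from the identity $(0,0,0)$ with $\gamma'(0)=V/|V|$, the quantities $e^{-2\zeta}\dot\xi$ and $e^{2\alpha\zeta}\dot\eta$ are constant; call them $c_1$ and $c_2$. Evaluating at $s=0$, where $\zeta(0)=0$ and the frame of $(1)$ coincides with the coordinate frame, gives $c_1=x/|V|$ and $c_2=y/|V|$, where $V=(x,y,z)$ and $|V|=P_\lambda$ is the period. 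Now write $\gamma'(s)$ in the orthonormal frame $\{X,Y,Z\}$ as $S(s)=(S_x(s),S_y(s),S_z(s))$; this is the development, i.e. the arc-length parametrized integral curve of $\Sigma_\alpha$, and for a perfect vector it runs exactly once around the loop level set, so $S(P_\lambda)=S(0)$. Since $X=e^{z}\partial_x$, $Y=e^{-\alpha z}\partial_y$, $Z=\partial_z$, one has $\dot\xi=S_x e^{\zeta}$, $\dot\eta=S_y e^{-\alpha\zeta}$, $\dot\zeta=S_z$; comparing with the conservation laws yields $S_x=c_1 e^{\zeta}$ and $S_y=c_2 e^{-\alpha\zeta}$, hence the clean formulas $\dot\xi=S_x^2/c_1$ and $\dot\eta=S_y^2/c_2$. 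In particular $\zeta(P_\lambda)=\log(S_x(P_\lambda)/c_1)=0$, so $E(V)=\gamma(P_\lambda)$ lies in the plane $Z=0$ (this re-proves Lemma 3.9), and integrating gives $E(V)=\big(\tfrac{1}{c_1}\int_0^{P_\lambda}S_x^2\,ds,\ \tfrac{1}{c_2}\int_0^{P_\lambda}S_y^2\,ds,\ 0\big)$.

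It remains to check the direction. Since $(\alpha y,x,0)$ is parallel to $(\alpha c_2,c_1,0)$, the assertion $E(V)=\mu(\alpha y,x,0)$ is equivalent to $\int_0^{P_\lambda}S_x^2\,ds=\alpha\int_0^{P_\lambda}S_y^2\,ds$, i.e. to $\int_0^{P_\lambda}(S_x^2-\alpha S_y^2)\,ds=0$. But the third component of $\Sigma_\alpha$ is $\alpha y^2-x^2$, so along the flowline $\dot S_z=\alpha S_y^2-S_x^2$, and integrating over one full period gives $\int_0^{P_\lambda}(\alpha S_y^2-S_x^2)\,ds=S_z(P_\lambda)-S_z(0)=0$, which is exactly the identity needed. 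Finally $\mu\neq 0$: by Corollary 2.8 the flowline stays in the open sector containing $V$, so $S_x$ never vanishes and the first coordinate of $E(V)$ is nonzero.

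I do not expect a genuine obstacle; the only care required is bookkeeping — confirming that the flow parameter of $\Sigma_\alpha$ is arc length (so that one circuit of the loop level set has length precisely $P_\lambda$), and that anchoring the geodesic at the identity is what forces $c_1=x/|V|$, $c_2=y/|V|$. If one prefers not to invoke the identification of the flow of $\Sigma_\alpha$ with the development when deriving $\dot S_z=\alpha S_y^2-S_x^2$, the same relation follows by differentiating the unit-speed constraint $c_1^2e^{2\zeta}+c_2^2e^{-2\alpha\zeta}+\dot\zeta^2=1$, which gives $\ddot\zeta=\alpha c_2^2e^{-2\alpha\zeta}-c_1^2e^{2\zeta}=\alpha S_y^2-S_x^2$.
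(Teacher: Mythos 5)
Your argument is correct, but it reaches the conclusion by a genuinely different route than the paper. The paper proves Theorem 3.15 by comparing two scalar functions along the loop level set: $v(t)=y(t)/x(t)$ on the flowline and $h(t)=\tfrac{1}{\alpha}a(t)/b(t)$ built from the holonomies of the perfect geodesics, using the conjugation formula of Lemma 3.10 to show that both satisfy $f'=-(1+\alpha)zf$ with the same initial value $1/\sqrt{\alpha}$. You instead work directly with the geodesic in the group: the translational Killing fields $\partial_x,\partial_y$ give the Clairaut first integrals $e^{-2\zeta}\dot\xi=c_1$, $e^{2\alpha\zeta}\dot\eta=c_2$, which turn the endpoint into the explicit integrals $\tfrac{1}{c_1}\int_0^{P_\lambda}S_x^2$ and $\tfrac{1}{c_2}\int_0^{P_\lambda}S_y^2$, and the directional claim collapses to $\int_0^{P_\lambda}(S_x^2-\alpha S_y^2)\,ds=0$, i.e.\ the integral of $-\dot S_z$ over one full circuit. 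This is essentially the mechanism behind the paper's promised ``second proof'' (Proposition 5.1, $ax-\alpha by=2z$, evaluated where $z=0$), but you obtain it without the concatenation machinery of Section 4: the conservation laws do the work that the ODEs $a'=2x+az$, $b'=2y-\alpha bz$ do there. Your route is more self-contained --- it needs only Proposition 2.6, the metric, and the definition of a perfect vector --- and it re-proves Lemma 3.9 along the way; what it does not yield is the pointwise identity $ax-\alpha by=2z$ along the whole flowline, which the paper reuses later in the proof of the Monotonicity Theorem. The two bookkeeping points you flag (the flow parameter of $\Sigma_\alpha$ is arc length, and anchoring at the identity forces $c_1=x/|V|$, $c_2=y/|V|$) are exactly how the paper sets things up in Proposition 2.6 and the proof of Proposition 3.2, so nothing is missing.
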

\begin{proof}
We proceed as in \cite{MS}. As usual, it suffices to prove the result for the positive sector. Let $g(t)=(x(t),y(t),z(t))$ be a flowline of $\Sigma_\alpha$ with initial conditions 
$$(x(0),y(0),z(0))=\bigg(\nu\sqrt{\frac{\alpha}{1+\alpha}}, \frac{\nu}{\sqrt{1+\alpha}}, \sqrt{1-\nu^2}\bigg),$$
where $\nu$ is a constant corresponding to the appropriate level set, and let $(a(t),b(t),0)=E(x(t),y(t),z(t))$. We define the functions
$$h(t)=\frac{1}{\alpha}\frac{a(t)}{b(t)}\quad \textrm{and}\quad v(t)=\frac{y(t)}{x(t)}.$$
Where it is understood, we shall avoid writing that the functions defined above are functions of $t.$ To prove the theorem, it suffices to show $h(t)=v(t)$ for all $t,$ which we do by demonstrating that $h$ and $v$ are solutions of the same ODE initial value problem. It is evident that $v(0)=1/\sqrt{\alpha},$ and we can see that $h(0)=1/\sqrt{\alpha}$ by geometric considerations. We recall from Theorem 3.1 that a geodesic with initial velocity vector $(x(0), y(0), z(0))$ spirals with the companion line (in the $Z=0$ plane) $x-y\sqrt{\alpha}$. Since we have a perfect vector, the holonomy $(a(0),b(0),0)$ lies on this line, whence: $h(0)=\frac{1}{\alpha}\frac{a(0)}{b(0)}=\frac{1}{\alpha}\sqrt{\alpha}=\frac{1}{\sqrt{\alpha}}=v(0).$
Having shown that $h(0)=v(0),$ all that remains is to show that the functions satisfy the same ODE. By using Lemma 3.10, we can consider the following approximation of the infinitesimal change in $(a(t),b(t),0):$
$$(a(t+\epsilon),b(t+\epsilon),0)=u_\epsilon^{-1} \ast (a(t),b(t),0)\ast u_\epsilon \textrm{ where } u_\epsilon=(\epsilon x, \epsilon y, \epsilon z),$$
or
$$(a(t+\epsilon),b(t+\epsilon),0)=(a(t)e^{-\epsilon z}, b(t)e^{\alpha\epsilon z},0).$$
Then, we have
$$\frac{d}{dt}h(t)=\frac{1}{\alpha}\lim_{\epsilon\rightarrow 0}\bigg(\frac{a(t+\epsilon)}{b(t+\epsilon)}-\frac{a(t)}{b(t)}\bigg)$$
so
$$\frac{d}{dt}h(t)=\frac{1}{\alpha}\frac{a}{b}\lim_{\epsilon\rightarrow 0}\frac{e^{-\epsilon(1+\alpha)z}-1}{\epsilon}=-(1+\alpha)zh.$$
We can use the structure field and elementary calculus to compute the other derivative:
$$\frac{d}{dt}v(t)=\frac{d}{dt}(y/x)=\frac{xy'-yx'}{x^2}=-(1+\alpha)zv.$$
Having shown that $v,h$ satisfy the same initial value problem, it follows that they are equal for all time. Another, shorter proof of this lemma will be offered later.
\end{proof}
\subsection{Symmetric Flowlines}
We now introduce another technique introduced first in \cite{MS}: the emphasis on symmetric flow lines, which is justified by Lemma 3.9. We introduce the following sets in $\frak{g}_\alpha$ and $G_\alpha$:
\begin{itemize}
\item Let $M, \partial M \subset \frak{g}_\alpha$ be the set of small and perfect vectors, as previously defined.

\item Let $\Pi$ be the $XY$ plane in $\frak{g}_\alpha$ and $\tilde{\Pi}$ be the $XY$ plane in $G_\alpha$.

\item Let $\partial_0 M=\partial M \cap \Pi$.

\item Let $M^{symm} \subset M$ be those small vectors which correspond to symmetric flowlines as in Lemma $3.7$.

\item Let $\partial_0 N=E(\partial_0 M)$.

\item Let $\partial N$ be the complement, in $\tilde{\Pi}$, of the
  component of $\tilde{\Pi}-\partial_0 N$ that contains the origin.
  
\item Let $N=G_\alpha-\partial N$.

\item For any set $A$ in either $\frak{g}_\alpha$ or $G_\alpha,$ we denote $A_+$ to be the elements of $A$ in the positive sector, where $x,y>0$.
\end{itemize}
Our underlying goal is to show that $\partial N$ is the cut locus of the origin in $G_\alpha$. This has already been done for Sol ($G_1$) in \cite{MS}, and we shall prove the same for $G_{1/2}$. Thus, although the notation we use here is suggestive of certain topological relationships (e.g. is $\partial N$ the topological boundary of $N$?), we are only able to prove these relationships for $G_{1/2}$ in the present paper. Reflections across the $XZ$ and $YZ$ planes are isometries in every $G_\alpha$, so proving something for the positive sector (where $x,y>0$) proves the same result for every sector. This is useful in simplifying many proofs. 

A first step towards proving that the cut locus is $\partial N$ is to show that $$E(M) \cap \partial N =\emptyset,$$ or, intuitively, that the exponential map "separates" small and perfect vectors. The following lemma is a step towards this.

\begin{lemma}
If $E(M) \cap \partial N \not = \emptyset$, then
$E(M_+^{symm}) \cap \partial N_+ \not = \emptyset$.
\end{lemma}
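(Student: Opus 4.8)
The goal is to show that if the exponential map sends some small vector into the cut-locus region $\partial N$, then it already does so for a small, symmetric vector in the positive sector. The overall strategy is a reduction: peel away everything that isn't symmetric using the concatenation machinery from Lemmas 3.7--3.10, and use the sector-preservation of $E$ (Corollary 2.8) together with the reflection isometries to land in the positive sector.

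Here is the plan. First I would reduce to the positive sector immediately: reflections across the $XZ$ and $YZ$ planes are isometries of $G_\alpha$ permuting the sectors and fixing $\tilde\Pi$, and $\partial N$ is defined symmetrically, so if $E(M)\cap\partial N\neq\emptyset$ then $E(M_+)\cap\partial N_+\neq\emptyset$; pick such a small vector $V$ with $E(V)=p\in\partial N_+$. Since $V$ is small, its flowline $\lambda$ is a proper sub-arc of a loop level set, so it is not symmetric in general; by Lemma 3.8 we may write $\lambda=a|b|c$ with $a,c$ symmetric (or empty) and $b$ lying strictly on one side of $Z=0$. Then $E(V)=L_\lambda=L_a\ast L_b\ast L_c$, where $L_a,L_c$ are horizontal translations. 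The idea is that $L_b$ alone, being the exponential of a symmetric-or-shorter piece, is what carries the ``bad'' behavior, and pre/post-composing with horizontal translations $L_a,L_c$ can only move the image point around within horizontal planes in a controlled way — in particular, it should not be able to move a point out of $\partial N$ if the core piece $b$ landed inside $N$. So the contrapositive-flavored step is: if $E(M_+^{symm})$ avoids $\partial N_+$, then so does $E$ on the relevant concatenated pieces.

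More concretely, the key steps in order: (1) sector reduction as above; (2) given the small positive vector $V$ with $E(V)\in\partial N_+$, extract from its flowline a symmetric sub-flowline whose endpoint, suitably translated, still lies in $\partial N_+$ — this uses that $\partial N$ is a union over horizontal translations in a way compatible with the group law, and that horizontal left-translations preserve $\tilde\Pi$ and the nested structure of $\tilde\Pi-\partial_0 N$; (3) observe the symmetric sub-flowline is still small (it is a sub-arc of the same loop level set, shorter than a full period) and lies in the positive sector by Corollary 2.8; (4) conclude $E(M_+^{symm})\cap\partial N_+\neq\emptyset$. Along the way I would use Lemma 3.7 (symmetric flowlines have partner endpoints and give horizontal translations) and the holonomy relation of Lemma 3.10 to track how the endpoint of the symmetric core relates to $p$.

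The main obstacle I expect is step (2): making precise the claim that composing with the horizontal translations $L_a$ and $L_c$ does not let you escape from $N$ to $\partial N$ — i.e., that $\partial N$ is ``saturated'' under the relevant horizontal translations in the right direction. This requires knowing enough about the shape of $\partial_0 N$ inside $\tilde\Pi$: that $\tilde\Pi\setminus\partial_0 N$ has a distinguished component containing the origin, that $\partial N$ is its complement, and that left-translation by $L_a$ (a horizontal translation with positive coordinates, by Corollary 2.8 applied to the symmetric piece $a$) maps the origin-component into itself or at least cannot map a point of $N\cap\tilde\Pi$ onto $\partial_0 N$. Establishing this monotonicity/convexity-type property of $\partial_0 N$, or finding the slick concatenation identity that sidesteps it (as was presumably done in \cite{MS} for Sol), is the crux; the rest is bookkeeping with the group law.
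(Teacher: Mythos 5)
Your proposal has a genuine gap, and it is exactly the step you flag as the crux: the ``saturation'' claim in step (2), that conjugating/translating by the horizontal translations $L_a$ and $L_c$ cannot move a point out of $\partial N$. You do not prove this, and it would require nontrivial information about the shape of $\partial_0 N$ that is not available at this stage of the paper (indeed, that kind of information is only obtained later, and only for $\alpha=1/2$, via the Monotonicity Theorem). An argument that extracts a symmetric core $b$ from $\lambda=a|b|c$ and then tries to track its endpoint through the group law is not completed here and is not how the lemma is proved.

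The point you are missing is that the decomposition is unnecessary: the flowline of $V$ is \emph{already} symmetric. By definition $\partial N$ is a subset of $\tilde\Pi$, the plane $Z=0$ in $G_\alpha$. So the hypothesis $E(V)\in\partial N$ forces the far endpoint of the geodesic to lie in the plane $Z=0$, i.e.\ in the same horizontal plane as the origin. By the contrapositive of Lemma 3.8, a flowline whose geodesic has both endpoints in the same horizontal plane must be symmetric (otherwise $\lambda=a|b|c$ with $b$ strictly on one side of $Z=0$, and the endpoints could not be in the same horizontal plane). Hence $V$ itself is associated to a small symmetric flowline; after the sector reduction you correctly perform (plus the easy observations that $x,y>0$ because $\partial N_+$ is disjoint from the coordinate planes, and $z\neq 0$ because the endpoints of a symmetric flowline are partners), the very same $V$ lies in $M_+^{symm}$ and witnesses $E(M_+^{symm})\cap\partial N_+\neq\emptyset$. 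No sub-flowline extraction, holonomy tracking, or translation-invariance of $\partial N$ is needed.
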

\begin{proof}
Let $V=(x,y,z) \in M$ be such that $E(V) \in \partial N$. By symmetry, it suffices to assume that $x,y \geq 0$.
Since $E$ is sector-preserving, we must have
$E(V) \in \partial N_+$. If $x=0$ then $E(V)$ lies in the plane $X=0$, a set which
is disjoint from $\partial N_+$.  Hence $x>0$.  Similarly, $y>0$. Since $V\in M,$ $V$ is associated
to a small flowline.  Since $\partial N_+ \subset \tilde{\Pi}$,
we must have $E(V) \in \tilde{\Pi}$.  Then, $V$ is associated
to a small symmetric flowline, by Lemma 3.9.
In this case, we must have $z\neq0$ because the endpoints
of small symmetric flowlines are partner points in
the sense of Theorem 3.11. So,
$V \in M_+^{{\rm symm}}$, as claimed.
\end{proof}
With this lemma in hand, we should analyze the symmetric flowlines in detail in order to prove that $E(M_+^{symm}) \cap \partial N_+= \emptyset$.  Symmetric flowlines are governed by a certain system of nonlinear ordinary differential equations.  Let $\Theta_P^+$ denote those points in the
(unique in the positive sector) loop level set of period $P$ having
all coordinates positive. Every element
of $M_+^{{\rm symm\/}}$ corresponds to a small symmetric
flowline starting in $\Theta_P^+$.
\newline
\newline
{\bf The Canonical Parametrization:\/}
The set $\Theta_P^+$ is an open arc. 
We fix a period $P$ and we set $\rho=P/2$.
Let $p_0=(x(0),y(0),0) \in \Theta_P \cap \Pi$ be the point with $x(0)>y(0)$. The initial value $x(0)$ varies from $\sqrt{(\alpha+1)/\alpha}$ to $1$.
We then let 
\begin{equation}
p_t=(x(t),y(t),z(t))
\end{equation}
be the point on $\Theta_P^+$ which we reach after time $t \in (0,\rho)$ by 
flowing {\it backwards\/} along the structure field
$\Sigma$.   That is
\begin{equation}
\label{backwards}
\frac{dp}{dt}=
(x',y',z')=-\Sigma(x,y,z)=(-xz,+\alpha yz,x^2-\alpha y^2).
\end{equation}
Henceforth, we use the notation $x'$ to stand for $dx/dt$, etc.
\newline
\newline
{\bf The Associated Flowlines:\/} 
We let $\hat{p}_t$ be the partner of $p_t$, namely
\begin{equation}
\hat{p}_t=(x(t),y(t),-z(t)).
\end{equation}
 We let $\lambda_t$ be the small symmetric
flowline having endpoints $p_t$ and $\hat{p}_t$.
Since the structure field $\Sigma$
points downward at $p_0$, the symmetric flowline $\lambda_t$
starts out small and increases all the way to a perfect
flowline as $t$ increases from $0$ to $\rho$.
We call the limiting perfect flowline $\lambda_{\rho}$.
\newline
\newline
\noindent
{\bf The Associated Plane Curves:\/}
Let $V_t \in M_+^{{\rm symm\/}}$ be the vector
corresponding to $\lambda_t$.  (Recall that $E(V_t)=L_{\lambda_t}$)
Define
\begin{equation}
\Lambda_P(t):=E(V_t)=(a(t),b(t),0) \hskip 30 pt t \in (0,\rho].
\end{equation}
These plane curves are in $\tilde{\Pi}$ because they are endpoints of symmetric flowlines, and they will be among our main objects of interest in what follows. In Figure 4, we present a collection of the plane curves (colored blue) for $\alpha=1/2$ with the choice of $x(0)$ varying from $0.6$ to $0.95$ at intervals of $0.05$. We also include the initial value $x_0=1/\sqrt{3}$, which corresponds to the straight geodesic segment in $G_{1/2}$. The black curve is an approximation of $\partial_0 N_+$, or endpoints of perfect flowlines, which are the right-hand endpoints of each $\Lambda_P$ curve.
\begin{figure}[h!]
\centering
\includegraphics[width=1\textwidth]{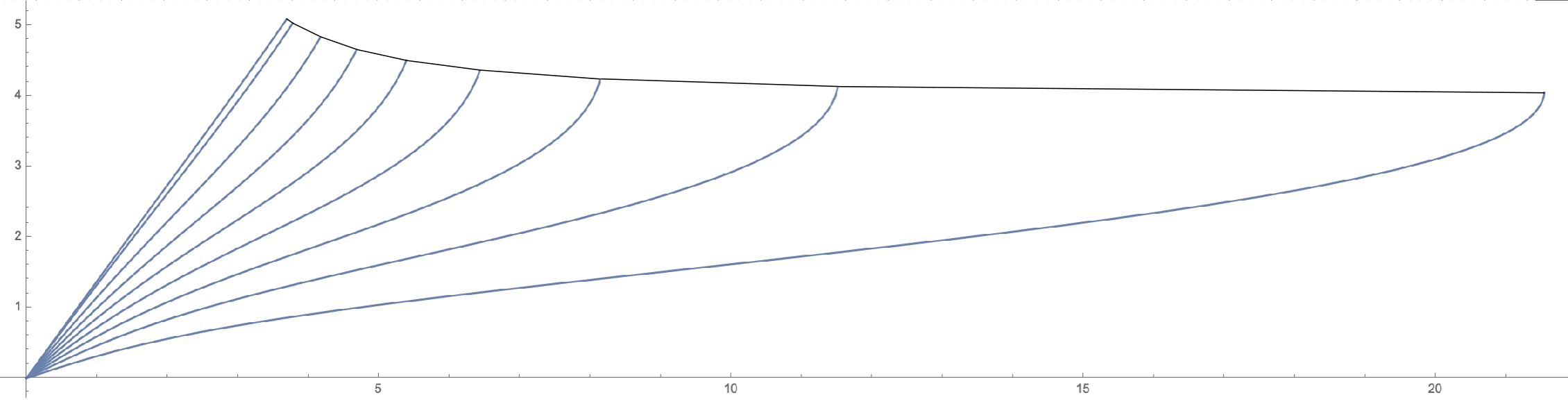}
\caption{The image of $\Lambda_P$ over the interval $(0,\rho]$ for varying $x_0$ and $\partial_0 N_+$. }
\end{figure}
\begin{lemma}
\label{endpoint}
  $\Lambda_P(\rho) \in \partial_0 N_+$, and $0<b(\rho)<a(\rho)$.
\end{lemma}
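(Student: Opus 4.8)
The statement has two parts. First, that $\Lambda_P(\rho) \in \partial_0 N_+$; this is essentially immediate from the setup. By construction, $\lambda_\rho$ is the limiting perfect flowline in $\Theta_P^+$, so $V_\rho \in \partial M$ is a perfect vector with all coordinates obtained from a symmetric flowline with $z$-endpoint value tending to $z(\rho)$. Since the flow $p_t$ is parametrized for $t \in (0,\rho)$ by flowing backwards along $\Sigma$ starting at $p_0 \in \Pi$, and we have set $\rho = P/2$, at $t=\rho$ we have traversed exactly half a period, so $z(\rho) = 0$ and $p_\rho \in \Pi$; hence $V_\rho \in \partial_0 M_+$ and $\Lambda_P(\rho) = E(V_\rho) \in E(\partial_0 M_+) = \partial_0 N_+$. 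That $\Lambda_P(\rho)$ lies in the \emph{open} positive sector (both $a(\rho), b(\rho) > 0$ strictly) follows from Corollary 2.8: $E$ is sector-preserving, and $V_\rho$ has $x, y > 0$ by the definition of $\Theta_P^+$ as the set of points with all coordinates positive (together with the fact that $x(\rho), y(\rho) > 0$ as limits of positive quantities that stay bounded away from $0$ — the $x$-coordinate on $\Theta_P^+$ runs between $1$ and $\sqrt{(\alpha+1)/\alpha}$ and the level set constraint $|x|^\alpha y = \text{const} > 0$ forces $y$ away from $0$).

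The substantive part is the strict inequality $b(\rho) < a(\rho)$. The natural approach is to invoke the Reciprocity Theorem (Theorem 3.16): since $V_\rho = (x(\rho), y(\rho), 0)$ is a perfect vector, there is $\mu \neq 0$ with $E(V_\rho) = \mu(\alpha y(\rho), x(\rho), 0)$, so $a(\rho) = \mu \alpha y(\rho)$ and $b(\rho) = \mu x(\rho)$. Since all quantities are positive, $\mu > 0$, and the inequality $b(\rho) < a(\rho)$ becomes $x(\rho) < \alpha y(\rho)$. Now I would use the Grayson Cylinder Theorem (Theorem 3.1): the companion line of the cylinder for this loop level set is $x = y\sqrt{\alpha}$ in the $Z=0$ plane, and a perfect vector's endpoint lies on this companion line — wait, more carefully, one should track which side of the companion line $p_\rho$ itself sits on. The point $p_0 \in \Theta_P^+$ is chosen with $x(0) > y(0)$, i.e. on one definite side; flowing backwards by half a period along $\Sigma$ reaches $p_\rho$, and by the symmetry of the loop level set across $Z = 0$ together with the behavior of the $x,y$ coordinates under $\Sigma$ (where $x' = -xz$, $y' = \alpha y z$), $p_\rho$ lands on the opposite side of the companion line, giving the needed inequality $x(\rho) < \sqrt{\alpha}\, y(\rho)$. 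Combined with $0 < \alpha < 1$ (so $\sqrt{\alpha} \le \alpha$ fails, but one wants $x(\rho) < \alpha y(\rho)$, which needs $x(\rho) < \sqrt{\alpha} y(\rho) \le$ ... this requires care).

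The main obstacle, then, is pinning down exactly which quantitative inequality on $(x(\rho), y(\rho))$ one gets from the geometry of the loop level set and verifying it propagates correctly through the factor $\mu(\alpha y, x, 0)$ to yield $b(\rho) < a(\rho)$. Concretely, I expect the cleanest route is: (i) show the endpoints of the perfect flowline — namely $p_0$ and $p_\rho$ — are partners reflected across $Z = 0$, hence $x(\rho) = x(0)$... no: the perfect flowline $\lambda_\rho$ has endpoints $p_\rho$ and $\hat p_\rho = p_\rho$ (since $z(\rho) = 0$), so it is the full-period flowline; its two "halves" $a|b$ from Lemma 3.9 have endpoints $L_a, L_b$ that are horizontal translations, and $\Lambda_P(\rho) = L_{\lambda_\rho} = L_a \ast L_b$. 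Rather than this, I would lean on Reciprocity plus the explicit identification of the companion line: on the loop level set $\Theta_P^+$, the point with $x = \sqrt\alpha\, y$ is precisely the point whose forward $\Sigma$-flow is the perfect geodesic spiraling around the companion-line cylinder; since we start the backward flow at the point $p_0$ with $x(0) > y(0) \ge \sqrt\alpha\, y(0)$ and the loop level set is a closed curve symmetric about $Z=0$ with $x$ monotonic in a suitable sense along each half, after half a period we reach $p_\rho$ with $x(\rho) < \sqrt\alpha\, y(\rho)$; then $a(\rho)/b(\rho) = \alpha y(\rho)/x(\rho) > \alpha/\sqrt\alpha = \sqrt\alpha$. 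This still only gives $a(\rho) > \sqrt\alpha\, b(\rho)$, not $a(\rho) > b(\rho)$, unless $x(\rho) < \alpha y(\rho)$ outright — so the real work is establishing the sharper bound $x(\rho) < \alpha\, y(\rho)$ on $\Theta_P^+$, which I would extract either from the explicit cylinder equation $w^2 + e^{2z} + \frac1\alpha e^{-2\alpha z} = \frac{1+\alpha}{\alpha\beta^2}$ evaluated at the two $Z=0$ intersection points (there are two values of $w$, hence two endpoints, symmetric in $w \mapsto -w$, so $w = \pm w_0$, i.e. $x - \sqrt\alpha y = \pm w_0$), or directly from a monotonicity/convexity argument on the level-set curve. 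I will carry out the endpoint analysis via the cylinder equation: the two $Z = 0$ points are $x - \sqrt\alpha\, y = \pm w_0$ with the same $|x|^\alpha y$, and checking signs shows the one reached after backward-flowing half a period is the one with $x - \sqrt\alpha\, y = -w_0 < 0$, which after feeding through $E$ and Reciprocity gives the claim. I do not expect the routine algebra here to hide any surprise, but the sign bookkeeping — which root, which side, which direction of flow — is the delicate point.
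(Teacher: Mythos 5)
Your first part is fine and is essentially the paper's argument: $\lambda_\rho$ is perfect and starts at the $Z=0$ point $p_\rho$, so $V_\rho\in\partial_0M_+$ and $\Lambda_P(\rho)=E(V_\rho)\in E(\partial_0M_+)=\partial_0N_+$, with $a(\rho),b(\rho)>0$ because $E$ preserves the open positive sector. For the inequality $b(\rho)<a(\rho)$ you also reach for the same key tool as the paper (Reciprocity, Theorem 3.15), which correctly reduces the claim to $x(\rho)<\alpha\,y(\rho)$. But there you stop: you observe that the loop level set only visibly gives $x(\rho)<\sqrt{\alpha}\,y(\rho)$ (i.e.\ $p_\rho$ lies on the far side of the equilibrium direction $x/y=\sqrt{\alpha}$), note that this is strictly weaker than what is needed when $\alpha<1$, and defer ``the real work'' to an endpoint analysis you never carry out. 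As a proof, the proposal is therefore incomplete.

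That said, the obstruction you isolated is genuine and, as far as I can tell, cannot be overcome, because $x(\rho)<\alpha\,y(\rho)$ is \emph{false} for loop level sets near the equilibrium when $\alpha<1$. Concretely, for $\alpha=1/2$ and $x(0)=0.6$ one finds $p_\rho\approx(0.554,\,0.832,\,0)$, so $x(\rho)/y(\rho)\approx 0.666>\alpha$, and Reciprocity then forces $b(\rho)>a(\rho)$; more structurally, as the loop level sets shrink to the equilibrium one has $b(\rho)/a(\rho)=x(\rho)/(\alpha y(\rho))\to \sqrt{\alpha}/\alpha=1/\sqrt{\alpha}>1$, consistent with the fact that the straight-line geodesic run for one period lands at $P\cdot\bigl(\sqrt{\alpha/(1+\alpha)},\,1/\sqrt{1+\alpha},\,0\bigr)$, whose second coordinate exceeds its first. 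The paper's own proof conceals this: it invokes the relations $x(0)^\alpha=y(\rho)$ and $y(0)=x(\rho)^\alpha$ ``by symmetry of the flowlines,'' but the relevant symmetry $(x,y,z)\mapsto(y,x,-z)$ of the structure field exists only for $\alpha=1$, and those relations are incompatible with $p_\rho$ lying on the unit sphere when $\alpha\neq1$; moreover, even granting $x(\rho)<y(\rho)$, Reciprocity yields only $b(\rho)<a(\rho)/\alpha$, which is exactly the non sequitur you flagged. For $\alpha=1$ everything does close up, since then $x(\rho)<y(\rho)=\alpha y(\rho)$. So the right conclusion is not that your bookkeeping is missing a sign, but that the second assertion of the lemma should be weakened (to $0<b(\rho)<a(\rho)/\alpha$, or simply to $a(\rho),b(\rho)>0$); fortunately only the latter, together with $\Lambda_P(\rho)\in\partial_0N_+$, is needed to define the box $B_P$ and run the rest of the argument.
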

\begin{proof}
We have $\Lambda_P(\rho) \in \partial_0 N_+$ because $\lambda_{\rho}$ is perfect and
starts at $(x(\rho),y(\rho),0)$.
Note that, by symmetry of the flowlines,
\begin{equation} x(0)^\alpha = y(\rho) \textrm{ and } y(0) = x(\rho)^\alpha\end{equation}
Hence
$x(\rho)<y(\rho)$. Theorem 3.15, applied to
the perfect vector $V_{\rho}$, now gives 
$$0<b(\rho)<a(\rho).$$
\end{proof}

We have $E(M_+^{{\rm symm\/}}) \cap \partial N_+=\emptyset$ provided that
\begin{equation}
\label{goal}
\Lambda_P(0,\rho) \cap  \partial N_+=\emptyset,
\textrm{ for all periods } P.
\end{equation}
So all we have to do is establish Equation \ref{goal}. Let $B_P$ be the rectangle in the $XY$ plane with vertices 
$$(0,0,0), (0,b(\rho),0), (a(\rho),0,0), \textrm{ and } (a(\rho),b(\rho),0).$$ 
Our first step in proving Equation \ref{goal} is to contain the image of $\Lambda_P$ with the following theorem, which we will prove to be true for each $G_\alpha$ group:
\begin{theorem}[The Bounding Box Theorem]
$\Lambda_P(0,\rho) \subset {\rm interior\/}(B_P)$ for all $P$.
\end{theorem}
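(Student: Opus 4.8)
The plan is to control the two coordinate functions $a(t)$ and $b(t)$ of $\Lambda_P(t)=E(V_t)$ separately, showing that each stays strictly between $0$ and its terminal value on the open interval $(0,\rho)$. Since the exponential map is sector-preserving (Corollary 2.8), we already know $a(t),b(t)>0$ for all $t\in(0,\rho)$, so the real content is the upper bounds $a(t)<a(\rho)$ and $b(t)<b(\rho)$. The natural tool is the concatenation/holonomy machinery of Section 3.2: I would use Lemma 3.10 to write down how the endpoint $(a(t),b(t),0)$ of the symmetric flowline $\lambda_t$ evolves as $t$ increases, deriving a system of ODEs for $a(t)$ and $b(t)$ analogous to the one in the proof of Theorem 3.15. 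Concretely, as $t$ varies the flowline $\lambda_t$ grows by adjoining a small symmetric sub-flowline near its endpoints, and the conjugation formula $L_{\lambda_{t+\epsilon}} = u_\epsilon^{-1} L_{\lambda_t} u_\epsilon \ast (\text{small correction})$ should yield expressions for $a'(t)$ and $b'(t)$ in terms of $x(t),y(t),z(t)$ and $a(t),b(t)$ themselves.

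First I would set up the canonical parametrization precisely and record the monotonicity facts already available: $z(t)>0$ on $(0,\rho)$ with $z(\rho)=0$ (the flowline $\lambda_\rho$ is perfect, so its endpoint is in $Z=0$), and the flat-flow-line description from Theorem 3.1 giving $x(t),y(t)$ explicitly in terms of the level-set parameter. Then I would derive the evolution equations for $(a(t),b(t))$. The key structural point I expect to exploit is that the increment to $\lambda_t$ as $t\to t+\epsilon$ is \emph{itself} a symmetric flowline starting from $p_t$, so its endpoint $L$ is a horizontal translation whose components have definite signs determined by the sign of the $x$- and $y$-travel along a short arc of the structure field near $p_t$. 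Because we are in the positive sector and the structure field is $\Sigma_\alpha=(xz,-\alpha yz,\alpha y^2 - x^2)$, the $x$-component of this travel is positive and the $y$-component is positive as well (both $x,y>0$), which should force $a(t)$ and $b(t)$ to be strictly increasing on $(0,\rho)$. Monotone increase plus the endpoint values from Lemma~\ref{endpoint} then gives the strict containment in $B_P$, and the openness of the interval keeps us off the boundary.

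The main obstacle I anticipate is making the "increment is a horizontal translation with positive components" claim rigorous, because the sub-flowline being adjoined is only symmetric in a limiting/infinitesimal sense, and one must be careful that the conjugation by $u_\epsilon$ (which rescales the $x$-coordinate by $e^{-\epsilon z}$ and the $y$-coordinate by $e^{\alpha\epsilon z}$, cf. Lemma 3.10) does not overwhelm the contribution of the new arc. In other words, the sign of $a'(t)$ is a competition between the conjugation term $-(1+\alpha)z\,(\text{something})$ seen in the proof of Theorem 3.15 and the genuine forward travel along the geodesic; I would need to show the forward-travel term dominates throughout $(0,\rho)$. I expect this to come down to an integral inequality comparing $\int_0^t e^{2s}\sigma_\beta(s)\,ds$-type quantities (the raw $x$-travel) against the accumulated conjugation distortion, and the cleanest route is probably to show directly that $a(t)$ and $b(t)$, viewed as functions built from these integrals, have no interior critical points — which is exactly the kind of statement Proposition 3.14 established for the holonomy invariant. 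If a direct monotonicity argument proves too delicate, the fallback is a barrier/maximum-principle argument: assume $a(t_*)=a(\rho)$ for some interior $t_*$, and derive a contradiction with the sign of $a'$ there, using that at such a point the geodesic variation forces a conjugate point, contradicting Proposition 3.13 ($dE$ nonsingular on $\partial M - \partial_0 M$, which by continuity controls nearby small vectors).
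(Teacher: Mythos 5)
Your setup matches the paper's: the proof does begin by deriving the evolution equations for the endpoint of the symmetric flowline from the concatenation formula, obtaining $a'=2x+az$ and $b'=2y-\alpha bz$, and the positivity of $a'$ on $(0,\rho)$ is indeed immediate from these. But the entire substance of the theorem is the claim $b'>0$ on $(0,\rho]$, and this is exactly where your proposal stops short. You assert that positivity of the $y$-travel ``should force'' $b$ to be increasing, but the term $-\alpha bz$ is negative throughout $(0,\rho)$ and the competition is genuinely close: the paper's Figure 7 shows $b'$ initially \emph{decreasing}, and Figure 8 shows $b'$ actually going negative just past the half-period. So no soft sign argument can work; one needs a quantitative inequality tuned to the interval $(0,\rho]$. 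The paper's route is to reduce (via a concavity analysis showing $b''$ changes sign at most once on $(0,\rho)$, starting negative) to proving $b'>0$ wherever $b''<0$, and then to establish the integral inequality $\int_0^t y^2\,ds\le y^3/y'$ there by combining the Hermite--Hadamard inequality for the concave function $z$ with the log-convex Hermite--Hadamard inequality applied to $y^2$ (which is log-convex precisely where $b''<0$). None of this machinery, or a substitute for it, appears in your proposal.

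Your two suggested fallbacks also do not close the gap. The holonomy monotonicity of Proposition 3.14 concerns the invariant $H_\lambda$ of \emph{perfect} flowlines as the period varies; it says nothing about interior critical points of $t\mapsto b(t)$ along a single curve $\Lambda_P$. And the conjugate-point barrier argument misfires twice over: the vectors $V_t$ for $t\in(0,\rho)$ are small vectors, so Proposition 3.13 (which addresses $\partial M-\partial_0 M$, i.e.\ perfect vectors off the plane $Z=0$) does not apply to them; and even where $dE$ is nonsingular, the planar curve $\Lambda_P$ can perfectly well have $b'(t_*)=0$ at an interior point with $\Lambda_P'(t_*)\neq 0$, so nonsingularity of $dE$ yields no contradiction. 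What is missing is the actual analytic estimate, and it is the heart of the proof.
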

The Bounding Triangle Theorem serves a similar role in \cite{MS} for Sol $(G_1)$, but it cannot be generalized to any other $G_{\alpha}$ group. It states that $\Lambda_P(0,\rho)$ is contained inside the  triangle with vertices $(0,0,0), (a(\rho),0,0)$, and $(a(\rho),b(\rho),0)$. In Figure 6, we depict the image of a single plane curve $\Lambda_P$ for $\alpha=1/2$ and $x_0=0.99945$, which illustrates the failure of the Bounding Triangle Theorem in the other Lie groups.

Now, if we could also manage to show ${\rm interior\/}(B_P)\cap \partial N_+ =\emptyset$, we would finish proving Equation \ref{goal}. Since the Bounding Box Theorem is not as powerful as the Bounding Triangle theorem of \cite{MS}, we need more information about $\partial N_0$ to prove Equation 10 than was needed in \cite{MS}. We succeed in performing this second step for the group $G_{1/2}$ by getting bounds on the derivative of the period function (using its expression in terms of an elliptic integral in that case). The necessary ingredient that we get is
\begin{theorem*}[The Monotonicity Theorem]
For $\alpha=1/2$, $\partial_0 N_+$ is the graph of a non-increasing function (in Cartesian coordinates).
\end{theorem*}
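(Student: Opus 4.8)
\textit{Proof proposal for the Monotonicity Theorem.}
The plan is to make the arc $\partial_0 N_+$ completely explicit as a parametrized curve, and then reduce the assertion ``graph of a non-increasing function'' to a single scalar differential inequality, which can be checked using the closed form for the period function available when $\alpha=1/2$. Throughout I work in the positive sector, which by the reflection isometries across the $XZ$ and $YZ$ planes is no loss of generality.

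First I would parametrize. The perfect vectors of $\partial_0 M_+$ are indexed by their loop level set, i.e.\ by $\beta\in(0,1)$; equivalently, by the period $P=P_\lambda(\beta)$, which by Proposition 3.5 is a decreasing diffeomorphism of $(0,1)$ onto $(2\pi,\infty)$. For each $\beta$ the corresponding point of $\partial_0 N_+$ is $\Lambda_P(\rho)=(a(\beta),b(\beta),0)$ with $\rho=P/2$ (Lemma 3.17), and $\partial_0 N_+$ is exactly the continuous arc traced by $\beta\mapsto(a(\beta),b(\beta))$. Since this is a continuous arc, to prove it is the graph of a non-increasing function it suffices to show $a'(\beta)$ and $b'(\beta)$ are nowhere zero and of opposite sign on $(0,1)$, together with the boundary behaviour; then the arc is strictly monotone in the first coordinate, so a graph, and strictly decreasing in the second. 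The boundary behaviour I expect to read off directly from the explicit formulas below: as $\beta\to 1$ (i.e.\ $P\to 2\pi$) the point $(a,b)$ tends to a finite point with both coordinates positive, and as $\beta\to 0$ one has $a(\beta)\to\infty$ and $b(\beta)\to 0$, so the arc is the graph of a function on a half-line of $a$-values ending at $0$.

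Next I would pin down $(a(\beta),b(\beta))$ using three ingredients already available. (i) Theorem 3.15 (Reciprocity) says $(a,b)$ lies on the ray in direction $(\alpha y_0,x_0)$, where $(x_0,y_0,0)$ is the perfect vector on loop level set $\beta$; the direction $(x_0:y_0)$ is the direction of the $Z=0$ crossing of that loop level set, which is given in closed form through the quantities $t_0(\beta),t_1(\beta)$ of Proposition 3.2 and Equation $(4)$ --- and here is where $\alpha=1/2$ enters, since $(4)$ becomes a solvable cubic and $t_0,t_1$ are as in Corollary 3.4. (ii) Theorem 3.1 says $E(V)=(a,b,0)$ lies on the Grayson cylinder of level set $\beta$ at height $z=0$, which cuts out an affine relation between $a$ and $b$ with coefficients explicit in $\beta$. (iii) Combining (i) and (ii), and feeding in the explicit $t_0(\beta),t_1(\beta)$ and the elliptic expression $P_\lambda(\beta)=\tfrac{4\sqrt3}{\beta\sqrt{e^{t_0-t_1}+2e^{t_1}}}\,K\!\big(\tfrac{2(e^{t_1}-e^{-t_0})}{e^{t_0-t_1}+2e^{t_1}}\big)$ from Corollary 3.4, one gets $a(\beta)$ and $b(\beta)$ as explicit functions of $\beta$ (built from elementary functions of $t_0(\beta),t_1(\beta)$ and from $K$).

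Finally I would differentiate. I expect that the inequality $\alpha\tfrac{a'}{a}+\tfrac{b'}{b}<0$ comes for free from Proposition 3.14 (monotonicity of the holonomy $H_\lambda^2=a^\alpha b$) together with $dP/d\beta<0$, so that once one of $a'$, $b'$ has a known sign the other is forced; concretely, it should suffice to prove the single inequality $b'(\beta)>0$ for all $\beta\in(0,1)$, which then yields $a'(\beta)<0$ and hence the strictly decreasing graph. Proving $b'(\beta)>0$ is where the explicit period function is indispensable: one writes $b'(\beta)$ in terms of $\beta$, $t_0(\beta)$, $t_1(\beta)$, their derivatives (computable implicitly from $(4)$) and $P_\lambda'(\beta)$, substitutes $\tfrac{dK}{dm}=\tfrac{E(m)-(1-m)K(m)}{2m(1-m)}$, and must then show the resulting expression is positive. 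I expect this last step --- sharp two-sided control of $P_\lambda'(\beta)$ and of the relevant combinations of $K(m)$ and $E(m)$ (using monotonicity, convexity of $K$, Legendre's relation, and elementary bounds such as $\tfrac\pi2\le K(m)$ and $E(m)\le\tfrac\pi2$) over the whole range $\beta\in(0,1)$ --- to be the main obstacle. It is exactly the ``bound on the derivative of the period function'' flagged in the introduction; for general $\alpha$ there is no usable closed form for $P_\lambda$, which is why the argument is confined to $\alpha=1/2$ (the case $\alpha=1$ being \cite{MS}), and the longer of these elliptic-integral estimates is naturally isolated as a separate (tiresome) lemma.
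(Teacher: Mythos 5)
Your high-level plan --- parametrize $\partial_0 N_+$ by the loop level set, show the two coordinate functions move in opposite directions, and use the $\alpha=1/2$ elliptic-integral formula for $P$ plus bounds on $K$ and $E$ to control the hard derivative --- does match the shape of the paper's argument, and your observation that holonomy monotonicity (Proposition 3.14, $\tfrac{d}{d\beta}(a^\alpha b)<0$) forces $a'<0$ once $b'>0$ is a valid, slightly slicker route to the easy half than the paper's direct computation (Corollary 5.8). Note, though, that the implication only runs in that one direction, so the entire burden still falls on $b'>0$, which is exactly the hard half. But two of your concrete steps fail. First, the proposed explicit determination of $(a(\beta),b(\beta))$ from Reciprocity plus the Grayson cylinder does not go through: the perfect geodesic whose endpoint lies on $\partial_0 N_+$ starts with tangent vector at the $Z=0$ crossing of the loop, so it lies on a left-translate of the cylinder of Theorem 3.1 by a group element at nonzero height, and the resulting relation between $a$ and $b$ has coefficients involving the position of the reference geodesic at an intermediate time --- incomplete elliptic integrals, not elementary functions of $\beta$. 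Reciprocity alone fixes only the direction $(a\!:\!b)$, not the magnitude. The paper reaches $b$ instead through the integral identity $b=\tfrac{2}{y}\int_0^{t} y(s)^2\,ds$ and the closed-form Jacobi-elliptic solution of $(y^2)''=y^2-\tfrac32y^4$, and even then only to evaluate a limit, not to get a global formula.

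Second, and more seriously, your asserted boundary behaviour $b(\beta)\to 0$ as $\beta\to 0$ is false: Lemma 5.9 shows $b_{x_0}(P(x_0)/2)\to 4$ as $x_0\to 1$ (equivalently $\beta\to 0$). This is not cosmetic, because a lower bound on $b$ is the linchpin of the whole proof. The paper converts the desired inequality into $\tfrac{dP}{dx_0}<b\cdot(\bar y-\tfrac{y}{x}\bar x)$ at the half-period (using the variational system for $\bar x,\bar y,\bar z,\bar a,\bar b$ and the identities $ax-\alpha by=2z$ and $x\bar a+y\bar b=0$, none of which appear in your outline); the right-hand side dominates the sharp Appendix-A upper bound on $dP/dx_0$ precisely because $b\ge 4>\pi$, and the bound $b\ge4$ is itself obtained by a bootstrap from the limit value $4$. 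With the wrong picture of $b$ near $\beta=0$ and no substitute mechanism, your final step --- ``show the resulting expression is positive'' by generic estimates on $K$ and $E$ --- has no identified way to close, since the positivity genuinely hinges on the quantitative fact $b\ge 4$ that your outline never locates. You did correctly anticipate that a two-sided bound on $P'$ would be isolated as a separate tiresome lemma (this is Lemma 5.11 and Appendix A), but that lemma is only half of the comparison.
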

\subsection{Proof of the Main Results for $G_{1/2}$}
For $G_{1/2}$, assuming that the Bounding Box and Monotonicity Theorems are true, we can proceed to characterize the cut locus of the identity.

First, we prove equation $(10)$:
\begin{theorem}
For the group $G_{1/2}$ we have, for all $P$,
$$\Lambda_P(0,\rho) \cap \partial N_+ = \emptyset$$
\end{theorem}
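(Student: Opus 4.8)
The plan is to combine the two structural inputs we have — the Bounding Box Theorem and the Monotonicity Theorem — to pin the plane curve $\Lambda_P$ away from $\partial N_+$. Recall that $\partial N_+$ is, by definition, the part of the positive quadrant of $\tilde\Pi$ cut off from the origin by the curve $\partial_0 N_+$. So to show $\Lambda_P(0,\rho)\cap\partial N_+=\emptyset$ it suffices to show that every point $\Lambda_P(t)$ with $t\in(0,\rho)$ lies strictly on the origin side of the curve $\partial_0 N_+$. The Monotonicity Theorem tells us that $\partial_0 N_+$ is the graph of a non-increasing function $b=f(a)$; the origin side of such a graph is exactly the region $\{(a,b): b< f(a)\}$ (equivalently $\{a< f^{-1}(b)\}$). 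So the goal reduces to the pointwise inequality: for each $t\in(0,\rho)$, the point $(a(t),b(t))=\Lambda_P(t)$ satisfies $b(t)< f(a(t))$.

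First I would use the Bounding Box Theorem: $\Lambda_P(0,\rho)\subset\operatorname{interior}(B_P)$, where $B_P$ is the rectangle with opposite corners $(0,0)$ and $(a(\rho),b(\rho))$. Thus $0<a(t)<a(\rho)$ and $0<b(t)<b(\rho)$ for all $t\in(0,\rho)$. Next I would use Lemma~3.18 (the ``endpoint'' lemma), which says $\Lambda_P(\rho)=(a(\rho),b(\rho))\in\partial_0 N_+$; that is, $(a(\rho),b(\rho))$ lies \emph{on} the graph of $f$, so $b(\rho)=f(a(\rho))$. Now I would combine these: since $b$ is non-increasing as a function along $\partial_0 N_+$ in the $a$-variable, and since $a(t)< a(\rho)$ forces $f(a(t))\ge f(a(\rho))=b(\rho)$ (the function is non-increasing, so smaller argument gives larger or equal value), we get
$$
b(t)<b(\rho)=f(a(\rho))\le f(a(t)),
$$
which is exactly the inequality we wanted. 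Hence $\Lambda_P(t)$ lies strictly below the graph $\partial_0 N_+$, so it is not in $\partial N_+$. One has to be slightly careful at the endpoints of the interval and about the case where $\partial_0 N_+$ might be vertical or horizontal over part of its range, but ``non-increasing graph'' plus the strict inequality $b(t)<b(\rho)$ handles those degeneracies: even if $f$ is locally constant, $f(a(t))\ge b(\rho)>b(t)$ still holds.

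The subtle point — and where I expect the real work to hide — is making the phrase ``origin side of the graph $\partial_0 N_+$'' rigorous, i.e. verifying that $\partial N_+$ really is $\{b\ge f(a)\}\cap(\text{positive quadrant})$ and not something with extra components or pathologies, so that ``$b(t)<f(a(t))$'' genuinely certifies $\Lambda_P(t)\notin\partial N_+$. This needs the Monotonicity Theorem to give not just monotonicity but that $\partial_0 N_+$ is a connected graph over a full interval of $a$-values (or $b$-values) separating the quadrant into exactly two pieces, together with the fact (from Lemma~3.17 / the set-up) that $\partial_0 N_+$ is parametrized by the right-hand endpoints of the $\Lambda_P$ curves as $P$ ranges over all periods, and that these right-hand endpoints sweep out $\partial_0 N_+$ monotonically — this last is essentially Proposition~3.14 ($dH/dP>0$), which guarantees the perfect-vector endpoints move monotonically outward. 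Once the geometry of $\partial_0 N_+$ as a separating monotone graph is in place, the inequality argument above closes the proof; the Bounding Box Theorem is doing exactly the job of keeping $\Lambda_P(t)$ inside the ``$a<a(\rho)$, $b<b(\rho)$'' quadrant-corner where monotonicity of $f$ can be leveraged.
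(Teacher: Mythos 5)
Your proposal is correct and takes essentially the same route as the paper: the paper's proof likewise combines the Bounding Box Theorem with the Monotonicity Theorem, concluding that $\partial N_+$ is disjoint from ${\rm interior}(B_P)$ because the corner $(a(\rho),b(\rho))$ lies on the non-increasing graph $\partial_0 N_+$. Your write-up simply makes explicit the pointwise inequality $b(t)<b(\rho)=f(a(\rho))\le f(a(t))$ and the separation property of the graph, both of which the paper leaves implicit.
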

\begin{proof}
By the Bounding Box Theorem we know that $\Lambda_P(0,\rho) \subset {\rm interior\/}(B_P)$ for all $P$. By the Monotonicity Theorem, we know that $\partial_0 N_+$ is the graph of a decreasing function in Cartesian coordinates, so we conclude that $\partial N_+$ is disjoint from ${\rm interior\/}(B_P)$ for all $P$. Our desired result holds. 
\end{proof}
The above theorem, combined with Lemma 3.16 gets us:
\begin{corollary}
\label{smallperfect} 
$$E(M) \cap \partial N=\emptyset$$
\end{corollary}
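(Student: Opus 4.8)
The plan is to argue by contradiction, reducing the global statement to the single-sector, symmetric-flowline situation that has already been set up, and then quoting Theorem 3.17. Suppose $E(M) \cap \partial N \neq \emptyset$. By Lemma 3.16 this forces $E(M_+^{\mathrm{symm}}) \cap \partial N_+ \neq \emptyset$, so it suffices to rule out the latter.

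Next I would describe $E(M_+^{\mathrm{symm}})$ explicitly via the Canonical Parametrization. Every vector $V \in M_+^{\mathrm{symm}}$ corresponds to a small symmetric flowline in the positive sector, and (up to reparametrization) such a flowline is one of the $\lambda_t$ attached to some period $P$ and some $t$: in that construction $\lambda_0$ is the degenerate straight geodesic, $\lambda_\rho$ is perfect, and for $t \in (0,\rho)$ the flowline $\lambda_t$ is genuinely small, since $\lambda_t$ grows monotonically from small to perfect as $t$ runs from $0$ to $\rho$. Hence $E(V) = \Lambda_P(t)$ for some $P$ and some $t \in (0,\rho)$, i.e.
$$E(M_+^{\mathrm{symm}}) = \bigcup_{P} \Lambda_P(0,\rho).$$
The endpoint $t=\rho$ drops out precisely because $V_\rho$ is perfect rather than small, so the fact that $\Lambda_P(\rho) \in \partial_0 N_+$ does not create a conflict.

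Now I would simply invoke Theorem 3.17: for $G_{1/2}$ one has $\Lambda_P(0,\rho) \cap \partial N_+ = \emptyset$ for every period $P$. Taking the union over $P$ gives $E(M_+^{\mathrm{symm}}) \cap \partial N_+ = \emptyset$, which contradicts the consequence of Lemma 3.16 drawn above. Therefore $E(M) \cap \partial N = \emptyset$, as claimed. One can also restore the conclusion from $\partial N_+$ to all of $\partial N$ using the reflection isometries across the $XZ$ and $YZ$ planes, exactly as in Lemma 3.16.

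The only genuine care needed at this level is bookkeeping: one must check that the reduction in Lemma 3.16 really lands inside $M_+^{\mathrm{symm}}$ — which is why that argument eliminates the cases $x=0$, $y=0$, and $z=0$ — and that the open-interval form of Theorem 3.17 is exactly what the identification $E(M_+^{\mathrm{symm}}) = \bigcup_P \Lambda_P(0,\rho)$ calls for. There is no hard analysis left here: the substantive inputs, the Bounding Box Theorem and (for $\alpha=1/2$) the Monotonicity Theorem, have already been absorbed into Theorem 3.17, so this corollary is purely an assembly step. If anything is subtle, it is only making precise the claim that the $\lambda_t$ with $t\in(0,\rho)$ exhaust, up to reparametrization, all small symmetric flowlines in the positive sector.
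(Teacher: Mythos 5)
Your proposal is correct and follows exactly the paper's route: Lemma 3.16 reduces the claim to $E(M_+^{\mathrm{symm}})\cap\partial N_+=\emptyset$, the identification $E(M_+^{\mathrm{symm}})=\bigcup_P\Lambda_P(0,\rho)$ is the content of Equation (10) as set up in Section 3.3, and Theorem 3.17 supplies the final input. The paper leaves this assembly implicit in a single sentence; you have merely written out the same argument in full.
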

The rest of our argument for showing that the cut locus of $G_{1/2}$ is $\partial N$ follows exactly as in \cite{MS}.
Let $E$ be Riemannian exponential map.
Let $\cal M$ be the component
of $\partial M_+-\partial_0M_+$ which contains
vectors with all coordinates positive.
Let ${\cal N\/}=\partial N_+ - \partial_0 N_+$. We first prove a few lemmas. 
\begin{lemma}
The map $E$ is injective on $\mathcal{M}$.
\end{lemma}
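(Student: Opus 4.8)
The plan is to show that $E$ restricted to $\mathcal{M}$ is injective by arguing that two distinct perfect vectors in $\mathcal{M}$ cannot be sent to the same point of $\tilde\Pi$. First I would recall the structure: every vector $V\in\mathcal{M}$ lies on a unique loop level set $\lambda$ of some period $P$, and by Theorem 3.15 the image $E(V)=\mu(\alpha y, x,0)$ lies on the ray through $(\alpha y, x,0)$ in the $XY$ plane. The two coordinates of a vector in $\mathcal{M}$ that can vary are (i) which loop level set $\lambda$ it sits on — equivalently, its period $P$ — and (ii) which point of $\Theta_P^+$ it is, i.e. how the flowline is positioned relative to the plane $Z=0$. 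So suppose $V_1, V_2\in\mathcal{M}$ with $E(V_1)=E(V_2)=(a,b,0)$.

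The key step is to use the holonomy invariant of Lemma 3.10 together with its monotonicity from Proposition 3.14. If $V_1$ and $V_2$ lie on loop level sets of periods $P_1$ and $P_2$, then the common image point $(a,b,0)$ forces $H_{\lambda_1}=\sqrt{|a^{\alpha}b|}=H_{\lambda_2}$ (here using that perfect vectors have endpoints on the holonomy hyperbola, so the holonomy invariant is literally recovered from the endpoint coordinates). By Proposition 3.14, $dH/dP>0$, so $H$ determines $P$, hence $P_1=P_2$ and both vectors lie on the \emph{same} loop level set $\lambda$ of period $P$. Now the remaining freedom is the canonical parametrization: $V_1=V_{t_1}$ and $V_2=V_{t_2}$ for $t_1,t_2\in(0,\rho)$ in the notation of the Canonical Parametrization, and $\Lambda_P(t_1)=\Lambda_P(t_2)$. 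I would then rule this out by showing $\Lambda_P$ is injective on $(0,\rho)$ — for instance by showing one of the coordinate functions $a(t)$ or $b(t)$ is strictly monotone, which should follow from the ODE satisfied by $(a(t),b(t))$ derived in the proof of Theorem 3.15 (the relation $h(t)=v(t)$ and the fact that $v=y/x$ is strictly monotone along the backward flow since $z$ has a fixed sign on the relevant arc), combined with the fact that $a^{\alpha}b$ is constant along $\Lambda_P$ (holonomy invariance within a fixed level set). Constancy of the product plus strict monotonicity of the ratio $b/a$ forces both $a$ and $b$ to be strictly monotone, giving injectivity of $\Lambda_P$.

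There is one subtlety: $\mathcal{M}$ is defined as the component of $\partial M_+-\partial_0 M_+$ with all coordinates positive, so it consists of perfect vectors with $z\neq 0$; every such vector is $V_t$ for some $t\in(0,\rho)$ on a symmetric flowline, which is exactly the regime the Canonical Parametrization covers (the $z>0$ and $z<0$ cases being partners, which by Theorem 3.11 have the same image, but only one of them lies in the positive-$z$ half — and since $\mathcal M$ is connected and avoids $\partial_0M_+$, a sign choice is consistent throughout). So I should be careful to note that working in a single sector and a single sign of $z$ is legitimate here, and that the partner identification $E(V_+)=E(V_-)$ does not contradict injectivity on $\mathcal{M}$ because $\mathcal M$ contains only one partner of each perfect partner pair.

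The main obstacle I expect is the last injectivity step for $\Lambda_P$: establishing strict monotonicity of the endpoint coordinates along the canonical parametrization. The holonomy invariance pins down the product $a^{\alpha}b$, but to conclude strict monotonicity of $a$ and $b$ separately I need strict monotonicity of the ratio, and while the ODE $\dot v=-(1+\alpha)zv$ from the proof of Theorem 3.15 makes $v=y/x$ monotone in $t$ wherever $z$ keeps a constant sign, one must check that the backward flow from $p_0$ does keep $z$ of one sign on $(0,\rho)$ — which it does, since $\Sigma$ points downward at $p_0$ and the flowline only returns to $Z=0$ at the endpoints of a half-period. Assembling these pieces carefully, rather than any single hard estimate, is where the real work lies; everything else is bookkeeping with results already proved (Theorem 3.11, Theorem 3.15, Lemma 3.10, Proposition 3.14).
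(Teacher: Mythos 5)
Your overall strategy is the paper's: use the monotonicity of the holonomy invariant (Proposition 3.14) to force $V_1$ and $V_2$ onto the same loop level set, then use the Reciprocity Lemma (Theorem 3.15) together with the constancy of the level-set function to pin down the individual coordinates, and finally invoke the positivity of all coordinates on $\mathcal{M}$ to rule out the partner ambiguity. The paper does exactly this, just more algebraically: same level set gives $V_{11}^{\alpha}V_{12}=V_{21}^{\alpha}V_{22}$, Reciprocity applied to the common image gives $V_{12}/V_{11}=V_{22}/V_{21}$, and equal product plus equal ratio plus positivity forces $V_1=V_2$.

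However, there is a genuine error in your execution of the second half. The vectors of $\mathcal{M}$ are \emph{perfect}, so a perfect vector on the loop level set of period $P$ based at $p_t\in\Theta_P^+$ is $P\cdot p_t$; it is \emph{not} the vector $V_t$ of the Canonical Parametrization, which is the \emph{small} symmetric vector whose flowline runs only from $p_t$ to its partner $\hat p_t$. Consequently the relevant endpoint curve is $t\mapsto E(P\cdot p_t)$, not $\Lambda_P(t)=E(V_t)$, and your key claim that "$a^{\alpha}b$ is constant along $\Lambda_P$" is false: $\Lambda_P(t)\to(0,0,0)$ as $t\to 0$, so $a^{\alpha}b$ runs from $0$ up to $H_\lambda^2$ along $\Lambda_P$, and it is only the endpoints of the \emph{perfect} flowlines that sweep out the hyperbola $x^{\alpha}y=H_\lambda^2$ (Lemma 3.10). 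The repair is straightforward: apply your "constant product plus strictly monotone ratio" argument to the curve $t\mapsto E(P\cdot p_t)$, whose image does lie on that hyperbola and whose coordinate ratio equals $x(t)/(\alpha y(t))$ by Theorem 3.15, with $y/x$ strictly monotone on $(0,\rho)$ since $z>0$ there. With that substitution your argument closes and coincides with the paper's proof; as written, the step quoted above would not survive scrutiny.
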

\begin{proof}
Let $V_1$ and $V_2$ be two vectors in $\mathcal{M}$ such that $E(V_1)=E(V_2)$. We also let $U_1=E(V_1)$ and $U_2=E(V_2)$ and denote the $j^{th}$ coordinate of $U_i$ as $U_{ij}$ and likewise for $\frac{V_i}{\|V_i\|}$. Since $U_1$ and $U_2$ have the same holonomy invariant and since the holonomy is monotonic with respect to choice of flowline (Proposition 3.14), it follows that $\frac{V_1}{\|V_1\|}$ and $\frac{V_2}{\|V_2\|}$ lie on the same loop level set in $S(G_{1/2})$. Thus, $V_{11}V_{12}^2=V_{21}V_{22}^2$. By the Reciprocity Lemma, and since $U_1=U_2$, we get 
$$\frac{V_{12}}{V_{11}}=\frac{V_{22}}{V_{21}}.$$ 
We can now conclude that $V_{11}=V_{21}$ and $V_{12}=V_{22}$. Since $\|V_1\|=\|V_2\|$, we get $V_1=V_2$, finishing the proof.
\end{proof}
\begin{lemma}
  \label{SP0}
  $E({\cal M\/}) \subset \cal N$.
\end{lemma}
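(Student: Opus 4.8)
The plan is to deduce the inclusion (in fact the equality $E(\mathcal M)=\mathcal N$) from a soft topological argument that feeds on the Bounding Box and Monotonicity Theorems only through Corollary 3.21, together with the fact, from Corollary 3.12, that a minimizing geodesic is never large.

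First I would record two facts about a vector $V=(x,y,z)\in\mathcal M$, so that $x,y,z>0$ and $V$ is perfect. By Lemma 3.9 the geodesic segment determined by $V$ has both endpoints in one horizontal plane, so, since it starts at the identity, $E(V)\in\tilde\Pi$; and $E(V)\in\tilde\Pi_+$ because $E$ is sector-preserving (Corollary 2.8). Second, I claim $E(\mathcal M)\cap\partial_0 N_+=\emptyset$. Indeed, if $E(V)=E(W)$ with $W=(x',y',0)\in\partial_0 M_+$, then the Reciprocity Lemma (Theorem 3.15) gives $E(V)=\mu(\alpha y,x,0)$ and $E(W)=\mu'(\alpha y',x',0)$ with $\mu,\mu'>0$, hence $y/x=y'/x'$; moreover the holonomy invariant of the common endpoint equals that of each of the two loop level sets (Lemma 3.10), and since $dH/dP>0$ (Proposition 3.14) and, for $G_{1/2}$, the period function is strictly monotone in $\beta$ (Proposition 3.5), the unit vectors $V/\|V\|$ and $W/\|W\|$ lie on the same loop level set. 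Since a point of that level set in the positive octant is determined by its $x$-to-$y$ ratio up to the sign of its third coordinate, we get $x=x'$, $y=y'$ and $z=0$, contradicting $z>0$. Thus $E(\mathcal M)\subseteq\tilde\Pi_+\setminus\partial_0 N_+$.

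By the Monotonicity Theorem, $\partial_0 N_+$ is the graph of a non-increasing function, so $\tilde\Pi_+\setminus\partial_0 N_+$ has exactly two components: the one containing the origin, which is $N\cap\tilde\Pi_+$, and the ``far'' component, which is precisely $\mathcal N$. As $\mathcal M$ is connected, $E(\mathcal M)$ lies in one of the two, and it suffices to exclude the near one. Suppose $E(\mathcal M)\subseteq N$. Writing $\partial M_+=\mathcal M\sqcup\mathcal M'\sqcup\partial_0 M_+$, where $\mathcal M'$ is the piece with negative third coordinate, Theorem 3.11 gives $E(\mathcal M')=E(\mathcal M)$, so $E(\partial M_+)=E(\mathcal M)\cup\partial_0 N_+\subseteq N\cup\partial_0 N$; by the reflection isometries the same holds in every sector, so $E(\partial M)\subseteq N\cup\partial_0 N$. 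Since $\overline M=M\cup\partial M$ and $E(M)\subseteq N$ (Corollary 3.21), we obtain $E(\overline M)\subseteq N\cup\partial_0 N$. But every point of $G_\alpha$ is joined to the identity by a minimizing geodesic segment (Hopf--Rinow; $G_\alpha$ is homogeneous, hence complete), and such a segment is not large (Corollary 3.12), hence small or perfect, so $E(\overline M)=G_\alpha$. This forces $\partial N=\partial_0 N$, which is absurd: $\partial_0 N$ is a curve, while $\partial N$ contains the nonempty open far region of $\tilde\Pi$. Hence $E(\mathcal M)$ lies in the far component, i.e.\ $E(\mathcal M)\subseteq\mathcal N$.

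The only step with real content is $E(\mathcal M)\cap\partial_0 N_+=\emptyset$, and there the crux is having the monotonicity of both the holonomy function and the period function in hand — which is exactly why the whole discussion is restricted to $G_{1/2}$. One routine point, handled as in \cite{MS}, is to check that the finitely many straight-line geodesic directions do not spoil the identity $E(\overline M)=G_\alpha$; with the convention that such segments count as small, they already lie in $\overline M$.
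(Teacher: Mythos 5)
Your argument is correct and, for most of its length, runs parallel to the paper's: both show that $E(\mathcal{M})$ lands in $\tilde{\Pi}_+ - \partial_0 N_+$ (your direct argument that $E(\mathcal{M})\cap\partial_0 N_+=\emptyset$ is exactly the injectivity argument of Lemma 3.21, extended to $\mathcal{M}\cup\partial_0 M_+$, which is what the paper cites), and both then use connectedness of $\mathcal{M}$ to place $E(\mathcal{M})$ in a single component of $\tilde{\Pi}_+-\partial_0 N_+$. The genuine divergence is in deciding \emph{which} component. The paper simply observes that $E(\mathcal{M})$ and $\mathcal{N}$ are not disjoint, because perfect vectors of large norm land far from the identity near the line $x=y/\sqrt{2}$, hence in the far component. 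You instead run a contradiction through Hopf--Rinow: if $E(\mathcal{M})$ sat in the near component, then $E(M\cup\partial M)\subset N\cup\partial_0 N$ (folding in the lower half of $\partial M$ via Theorem 3.11 and handling $M$ via the corollary $E(M)\cap\partial N=\emptyset$ --- which is Corollary 3.20 in the paper's numbering, not 3.21), while completeness plus Corollary 3.12 forces $E(M\cup\partial M)=G_{1/2}$, collapsing the two-dimensional region $\partial N$ onto the curve $\partial_0 N$. Your route is longer but replaces the paper's rather informal ``evidently not disjoint'' with a self-contained argument; its extra costs are the need to know $\partial N\neq\partial_0 N$ (harmless, since otherwise $\mathcal{N}=\emptyset$ and the lemma is vacuously false) and the bookkeeping for the unclassified directions. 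On that last point, note that besides the finitely many straight-line directions there are the two-dimensional families of vectors in the planes $X=0$ and $Y=0$, which also carry no period; they cause no trouble because $E$ preserves those planes, so a point of $\mathcal{N}$ (which has both horizontal coordinates nonzero) is never reached by such a geodesic, but this deserves a sentence. Finally, the origin lies on the boundary of $\tilde{\Pi}_+$ rather than in it, so ``the component containing the origin'' should read ``the component whose closure contains the origin.''
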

\begin{proof}
The map $E$ is injective
on ${\cal M\/} \cup \partial_0 M_+$, by the previous lemma. At the same
time, $E(\partial_0 M_+)=\partial_0 N_+$.
Hence
\begin{equation}
  \label{alternative}
  E({\cal M\/}) \subset \Pi - \partial_0 N_+.
\end{equation}
By definition, $\cal N$ is one of the components of the
$\Pi-\partial_0 N_+$.  Therefore,
since $\cal M$ is connected, the image
$E({\cal M\/})$ is either contained in $\cal N$ or disjoint from $\cal N$.
Since the sets are evidently not disjoint (large perfect vectors land far away from the identity and near the line $x=y/\sqrt{2}$), we have containment. 
\end{proof}

\begin{corollary}
\label{SP}
$E(\partial M) \cap E(M)=\emptyset$.
\end{corollary}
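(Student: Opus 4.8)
The plan is to deduce this purely formally from material already in place: Corollary~\ref{smallperfect} (which gives $E(M)\cap\partial N=\emptyset$), Lemma~\ref{SP0} (which gives $E(\mathcal{M})\subset\mathcal{N}$), the sector-preserving property of $E$ (Corollary 2.8), and the identification of perfect partners under $E$ (Theorem 3.11). The single fact I will extract from these is that $E$ carries every perfect vector into $\partial N$; once that is known, disjointness from $E(M)$ is immediate from Corollary~\ref{smallperfect}.

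Concretely, I would argue by contradiction. Suppose $p=E(V_1)=E(V_2)$ with $V_1\in\partial M$ and $V_2\in M$. Since $V_1$ is perfect, its associated unit vector lies on a loop level set, on which $|x|^{\alpha}y$ is a nonzero constant; hence $V_1$ lies in one of the four open sectors, and since $E$ preserves sectors, $p$ lies in that same open sector, and so does $V_2$. Composing with the reflections across the $XZ$ and $YZ$ planes — isometries fixing the identity, hence commuting with $E$ and preserving the properties "small" and "perfect" — I may assume $V_1\in\partial M_+$, $V_2\in M_+$, and that $p$ lies in the open positive sector. Now split on the third coordinate $z_1$ of $V_1$. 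If $z_1=0$, then $V_1\in\partial_0 M_+$, so $p\in E(\partial_0 M)=\partial_0 N\subset\partial N$, which contradicts Corollary~\ref{smallperfect} because $p=E(V_2)\in E(M)$. If $z_1\neq 0$, let $\hat V_1=(x_1,y_1,-z_1)$ be the partner of $V_1$; it lies on the same loop level set, so it is perfect as well, and Theorem 3.11 gives $E(\hat V_1)=E(V_1)=p$. Exactly one of $V_1,\hat V_1$ has positive third coordinate; calling it $W$, we see $W$ is a perfect vector with all three coordinates positive, so $W\in\mathcal{M}$, and Lemma~\ref{SP0} gives $p=E(W)\in E(\mathcal{M})\subset\mathcal{N}\subset\partial N_+\subset\partial N$ — again contradicting Corollary~\ref{smallperfect}. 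In either case we have a contradiction, which proves $E(\partial M)\cap E(M)=\emptyset$.

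I do not expect a genuine obstacle: the statement is a short bookkeeping consequence of the apparatus already assembled. The two places that deserve a little care are the reduction to the open positive sector — which rests on the observation that perfect vectors never lie on the coordinate planes $X=0$ or $Y=0$, since $|x|^{\alpha}y$ vanishes there — and the transfer of the case $z_1<0$ back to $\mathcal{M}$, which must pass through Theorem 3.11 rather than through a reflection, because $z\mapsto -z$ is not an isometry of $G_\alpha$ when $0<\alpha<1$.
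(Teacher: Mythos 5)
Your argument is correct and follows essentially the same route as the paper's: reduce every perfect vector, up to the sector reflections and the partner identification of Theorem~3.11, to one lying in $\partial_0 M_+$ or in $\mathcal{M}$, deduce $E(\partial M)\subset\partial N$ from $E(\partial_0 M)=\partial_0 N$ together with Lemma~\ref{SP0}, and conclude via Corollary~\ref{smallperfect}. The paper compresses the reduction into the phrase ``up to symmetry''; your explicit passage through Theorem~3.11 for the negative-$z$ component (where no isometry is available when $0<\alpha<1$) is a clarification of the same argument, not a different method.
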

\begin{proof}
Up to symmetry, every vector in $\partial M$ lies either in
$\cal M$ or in $\partial_0 M_+$.
By definition, $E(\partial_0 M)=\partial_0 N \subset \partial N$.  So,
by the previous result, we have
$E(\partial M) \subset \partial N$.
By Corollary \ref{smallperfect} we have
$E(M) \cap \partial N=\emptyset$.
Combining these two statements gives the result.
\end{proof}

\begin{theorem}
  \label{minimi}
  Perfect geodesic segments are length minimizing.
\end{theorem}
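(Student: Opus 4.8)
The plan is to show that no path strictly shorter than a perfect segment can reach its endpoint, combining completeness of $G_{1/2}$ with the results already assembled. Fix a perfect vector $V$, set $p=E(V)$, and let $\gamma$ be the corresponding geodesic segment from the origin, so that $\|V\|=P_{\lambda}$ where $\lambda$ is the loop level set through $V/\|V\|$. Since $G_{1/2}$ is homogeneous and hence complete, Hopf--Rinow supplies a minimizing geodesic segment from the origin to $p$; let $W$ be the associated vector in the Lie algebra, so that $E(W)=p$ and $\|W\|=d(0,p)\le\|V\|$. It suffices to prove $\|W\|=\|V\|$. By the reflection isometries across the $XZ$ and $YZ$ planes I may put $V$ in the positive sector, and then $W$ lies there as well by Corollary 2.8, so $p$ has all coordinates positive.

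The next step is a short case analysis on the direction $W/\|W\|$. If it lies in the $XZ$ or $YZ$ plane, or at a pole $(0,0,\pm1)$ of $\Sigma_\alpha$, then $E(W)$ would lie in $\{x=0\}$, $\{y=0\}$, or on the $z$-axis, none of which contains $p$; so $W/\|W\|$ lies either on a loop level set or at the positive-sector equilibrium $\big(\sqrt{\alpha/(1+\alpha)},\sqrt{1/(1+\alpha)},0\big)$. In the latter case $E(W)$ lies on the line $\{x=\sqrt{\alpha}\,y,\ z=0\}$, a situation I postpone. So assume $W/\|W\|$ lies on a loop level set, making $W$ small, perfect, or large. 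By Corollary 3.12 it is not large. It is not small either: otherwise $p=E(W)=E(V)$ would lie in $E(M)\cap E(\partial M)$, which is empty by Corollary 3.26 (equivalently, $p\in E(M)$ while $p\in E(\partial M)\subset\partial N$, contradicting Corollary 3.23). Hence $W$ is perfect. Writing $p=(a,b,0)$, which lies in $\tilde{\Pi}$ with $a,b>0$ since perfect vectors land there by Lemma 3.9, the holonomy invariant of the loop level set $\lambda'$ through $W/\|W\|$ equals $\sqrt{a^{\alpha}b}$, the same value as for $\lambda$, by Lemma 3.10. Proposition 3.14 --- which is available precisely because $\alpha=1/2$ --- says the holonomy is strictly increasing as a function of the period, so $P_{\lambda'}=P_{\lambda}$; since each sector carries exactly one loop level set of a given period, $\lambda'=\lambda$, and therefore $\|W\|=P_{\lambda'}=P_{\lambda}=\|V\|$.

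The heart of the argument --- and the only place it truly needs the hard quantitative input --- is this perfect-versus-perfect comparison: a priori two perfect segments of different lengths could share an endpoint, and it is exactly the strict monotonicity $dH/dP>0$ of Proposition 3.14 that forbids it (for general $\alpha$ this is only Conjecture 3.6, which is why the theorem is stated for $G_{1/2}$). The postponed axial case is secondary: if $V$ is perfect with $E(V)$ on the line $\{x=\sqrt{\alpha}\,y,\ z=0\}$, then by the Reciprocity Lemma its direction must be the canonical point $V_{\beta}$ of Theorem 3.1 on its loop level set, and perturbing the direction along that loop level set produces nearby perfect vectors whose images leave the line, to which the argument above applies; since the length-minimizing property passes to limits of geodesic segments with converging endpoints, $\gamma$ is minimizing too. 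I would present this last reduction briefly and let Proposition 3.14 carry the weight of the proof.
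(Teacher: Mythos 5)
Your argument is correct and follows essentially the same route as the paper's proof: exclude a large competitor by Corollary 3.12, exclude a small one by the separation result $E(\partial M)\cap E(M)=\emptyset$ (which is Corollary 3.23, not 3.26; the parenthetical variant you give rests on Corollary 3.20), and then resolve the perfect-versus-perfect comparison, a step the paper delegates to Lemma 3.21 (injectivity of $E$ on $\mathcal{M}$), whose own proof is precisely the holonomy-monotonicity argument you carry out inline. One small correction of emphasis: Proposition 3.14 is proved in the paper for all $\alpha\in(0,1]$, so the genuine dependence on $\alpha=1/2$ does not enter there but earlier, through the Monotonicity Theorem feeding into Corollaries 3.20 and 3.23.
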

\begin{proof}
Suppose $V_1 \in \partial M$ and
$E(V_1)=E(V_2)$ for some $V_2$ with $\|V_2\|< \|V_1\|$.
By symmetries of $G_{1/2}$ and the flowlines, we can assume that both $V_1$ and $V_2$ are in the positive sector and that their third coordinates are also positive.
By Corollary 3.12, we have $V_2 \in M \cup \partial M$.
By Corollary \ref{SP} we have $V_2 \in \mathcal{M}$.
But then $V_1=V_2$, by Lemma 3.21, which contradicts $\|V_2\|< \|V_1\|$.
\end{proof}

The results above identify $\partial N$ as the cut locus of the
identity of $G_{1/2}$ just as obtained in \cite{MS} for Sol. We can summarize by saying 
\begin{theorem}
A geodesic segment in $G_{1/2}$ is a length
minimizer if and only if it is small or perfect.
\end{theorem}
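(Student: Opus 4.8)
The plan is to assemble the pieces that have already been set up in the excerpt, treating the Bounding Box Theorem and the Monotonicity Theorem as given. The statement to prove is the ``if and only if'': a geodesic segment in $G_{1/2}$ is length minimizing exactly when it is small or perfect. The ``only if'' direction is the easy half and is already done: Corollary 3.12 shows that a large geodesic segment is never a length minimizer, so any length-minimizing segment must be small or perfect. Thus the entire burden is the ``if'' direction, i.e.\ that small and perfect geodesic segments \emph{are} length minimizing.

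For the ``if'' direction I would proceed in two stages, small then perfect. The key intermediate fact is that $E(M)\cap\partial N=\emptyset$ (Corollary~\ref{smallperfect}), whose proof in turn rests on Lemma~3.16 together with Theorem~3.17 (the disjointness $\Lambda_P(0,\rho)\cap\partial N_+=\emptyset$), and the latter is where the Bounding Box Theorem and Monotonicity Theorem get used: the Bounding Box Theorem confines each plane curve $\Lambda_P$ to the interior of the rectangle $B_P$, and the Monotonicity Theorem guarantees $\partial_0 N_+$ is the graph of a non-increasing function, so $\partial N_+$ (the ``far'' component) stays out of every such rectangle. Once $E(M)\cap\partial N=\emptyset$ is in hand, I would argue that small geodesic segments are minimizing: a small segment corresponds to a vector $V\in M$; if it failed to minimize, there would be a shorter $V'$ with $E(V')=E(V)$, and by continuity/shortening one could produce a perfect or smaller competitor landing on $\partial N$-adjacent data --- more precisely one runs the standard argument that the exponential map is a diffeomorphism from $\operatorname{int}(N)$ (the region bounded by $\partial N$) onto $G_{1/2}-\partial N$, using Corollary~2.8 (sector preservation), Proposition~3.13 ($dE$ nonsingular on $\partial M-\partial_0M$), and the injectivity package.

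For the perfect case I would follow the chain of lemmas exactly as laid out: Lemma~3.21 ($E$ injective on $\mathcal M$), Lemma~3.22 ($E(\mathcal M)\subset\mathcal N$), and Corollary~3.23 ($E(\partial M)\cap E(M)=\emptyset$), culminating in Theorem~3.24: if a perfect vector $V_1$ had a strictly shorter partner $V_2$ with $E(V_2)=E(V_1)$, then by Corollary~3.12 $V_2\in M\cup\partial M$, by Corollary~3.23 in fact $V_2\in\mathcal M$, and then Lemma~3.21 forces $V_1=V_2$, contradicting $\|V_2\|<\|V_1\|$. Combining: every small or perfect segment minimizes (Theorem~3.24 handles perfect; the analogous --- and actually easier, since no Jacobi field vanishes --- argument via Proposition~3.13 and $E(M)\cap\partial N=\emptyset$ handles small), and every minimizer is small or perfect by Corollary~3.12. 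That gives the biconditional.

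The main obstacle --- and the reason this theorem is only proved for $\alpha=1/2$ rather than all $\alpha$ --- is entirely upstream of this final assembly: it is establishing the Monotonicity Theorem, which requires controlling the sign of the derivative of the period function $P_\lambda(\beta)$, and that in turn relies on the closed-form elliptic-integral expression from Corollary~3.4 that is available only when $\alpha=1$ or $\alpha=1/2$. Granting the Bounding Box and Monotonicity Theorems, the remaining work is the bookkeeping of connectedness and sector-preservation arguments (which components of $\tilde\Pi-\partial_0 N_+$ contain what), plus the standard Jacobi-field/injectivity argument that upgrades ``$E$ has no conjugate points and is injective on a region'' to ``geodesics to that region minimize.'' None of that is hard, but one must be careful that the variation argument in Corollary~3.12 (conjugate points for $z=0$ perfect vectors) and the non-vanishing of $dE$ off $\partial_0 M$ (Proposition~3.13) are invoked correctly so that the perfect vectors on the boundary plane are handled by the partner-identification Theorem~3.11 rather than claimed to be non-conjugate.
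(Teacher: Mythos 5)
Your proposal is correct and follows the paper's own argument essentially step for step: the ``only if'' direction via Corollary~3.12, and the ``if'' direction by assembling the Bounding Box and Monotonicity Theorems into Theorem~3.17 and Corollary~3.18, then using Lemmas~3.21--3.22 and Corollary~3.23 to obtain Theorem~3.24 for perfect segments, with the standard injectivity/no-conjugate-point argument handling small segments. No substantive differences from the paper's proof.
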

In addition, small geodesic segments are
unique length minimizers and they have no conjugate points.
Hence, using standard results about the cut locus, as in \cite{N}, we get that $E: M \to N$
is an injective, proper, local diffeomorphism. This implies
that $E: M \to N$ is also surjective and hence a diffeomorphism.
Moreover, $E: \partial_0 M_+ \to \partial_0 N_+$ is a diffeomorphism, by similar considerations. Results about the geodesic spheres in $G_{1/2}$ follow immediately, as in \cite{MS} for Sol, by "sewing up" $\partial M$ in a 2-1 fashion with $E$.
In particular, we have:
\begin{corollary}
Geodesic spheres in $G_{1/2}$ are always topological spheres.
\end{corollary}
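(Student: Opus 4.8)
The plan is to realize each geodesic sphere as an explicit quotient of a round sphere in the Lie algebra and then read off its topology. Fix a radius $r>0$, let $S_r\subset\mathfrak g_{1/2}$ be the sphere of radius $r$ about the origin, and set $\Sigma_r=S_r\cap(M\cup\partial M)$, the small or perfect vectors of norm $r$. By the preceding theorem together with Corollary 3.12, a length-$r$ geodesic segment minimizes exactly when its initial vector lies in $M\cup\partial M$, so every point of the geodesic sphere $S(r)=\{p:d(e,p)=r\}$ is $E$ of some vector in $\Sigma_r$, while $E$ carries $\Sigma_r$ into $S(r)$; hence $S(r)=E(\Sigma_r)$. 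Since $\Sigma_r$ is compact and $S(r)$ is Hausdorff, $E|_{\Sigma_r}$ is a quotient map, so the problem reduces to (i) identifying the homeomorphism type of $\Sigma_r$, and (ii) identifying the fibers of $E|_{\Sigma_r}$.

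For (ii) I would assemble the identifications already proved. Small geodesic segments are unique minimizers, so $E$ is injective on $M\cap S_r$; $E(\partial M)\cap E(M)=\emptyset$ by Corollary \ref{SP}, so no small vector shares its image with a perfect one; and $E$ is injective on each component of $\partial M_+-\partial_0M_+$ (Lemma 3.21) and is sector-preserving (Corollary 2.8), so two perfect vectors of equal norm have the same image only if they coincide or are partners $(x,y,z)\leftrightarrow(x,y,-z)$ — and these partners \emph{are} identified, by Theorem 3.11. Consequently $S(r)=\Sigma_r/\!\sim$, where $\sim$ is the partner relation restricted to $\partial\Sigma_r=S_r\cap\partial M$ and is trivial elsewhere.

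For (i) I would combine the monotonicity of the period function with the global structure of the level sets of $H=|x|^{1/2}y$ on the unit sphere. Reflections in the $XZ$- and $YZ$-planes are isometries fixing $e$ and permuting the four sectors transitively, so the non-minimizing directions of norm $r$ form a symmetric set, appearing identically in each sector. Within one sector the unit sphere is a lune foliated by the loop level sets, nested about the central straight-line equilibrium — where the period tends to $2\pi$ — and degenerating along the boundary arcs $\{x=0\}$, $\{y=0\}$, where the period tends to $\infty$ and the geodesics run off to infinity inside totally geodesic hyperbolic planes and so are minimizing. Because $\frac{d}{d\beta}P_\lambda(\beta)<0$ (Proposition 3.5), the directions of period $<r$ in a sector are, for $r>2\pi$, precisely the region enclosed by the single period-$r$ loop level set — an open disk — and are absent for $r\le 2\pi$. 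Thus $\Sigma_r=S_r\cong S^2$ with $\sim$ trivial when $r\le 2\pi$, giving $S(r)\cong S^2$ at once; and for $r>2\pi$, $\Sigma_r$ is $S_r$ with four disjoint open disks removed — a sphere with four boundary circles, the circles being the four period-$r$ loop level sets, each symmetric under $(x,y,z)\mapsto(x,y,-z)$ with exactly two points in $Z=0$. The relation $\sim$ folds each boundary circle in half along this reflection; an Euler-characteristic count then gives $\chi(S(r))=(2-4)+4\cdot 1=2$, since each fold turns a circle (two $Z=0$ vertices, two edges) into an arc. As $S(r)$ is a closed connected surface — and the local model $z\mapsto z^2$ on a half-disk shows the quotient is a manifold even at the fold points — we conclude $S(r)\cong S^2$.

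The deep content is already in hand: the classification of minimizing segments (Theorem 3.11, Corollary 3.12, Corollary \ref{SP}, and the preceding characterization theorem), the injectivity of $E$ on the components $\mathcal M$, and above all the monotonicity $\frac{d}{d\beta}P_\lambda<0$, which holds only for $G_1$ and $G_{1/2}$ and is exactly what forces the non-minimizing directions on each $S_r$ to enclose a \emph{disk} rather than an annulus or worse. So the main obstacle is not new analysis but the topological bookkeeping of step (i): verifying the four-holed-sphere picture of $\Sigma_r$ and that the partner-folding reassembles it into an $S^2$ — the one delicate point being the seam $\partial_0 N$, where the fold has fixed points and one uses that $E\colon\partial_0M_+\to\partial_0N_+$ is a diffeomorphism to see the folded boundary arcs fit together into a manifold.
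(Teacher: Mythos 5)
Your argument is correct and takes essentially the same route as the paper, which disposes of this corollary in one line by describing the sphere as $E$ ``sewing up'' $\partial M$ in a 2--1 fashion as in \cite{MS}; your picture of $S_r\cap(M\cup\partial M)$ as a four-holed sphere whose boundary loop level sets are folded along the partner involution is precisely that sewing-up carried out explicitly. The paper leaves the topological bookkeeping (the disk structure of the large directions via $dP/d\beta<0$, the $z\mapsto z^2$ fold model at $\partial_0 M$, the Euler characteristic count) implicit, so your write-up is a more detailed rendering of the same proof rather than a genuinely different one.
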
 

The rest of our paper is devoted to proving our technical results: the Bounding Box Theorem and the Monotonicity Theorem. We will prove the Bounding Box Theorem in full generality, i.e. for all $\alpha\in (0,1]$. However, we only manage to prove the Monotonicity Theorem for $G_{1/2}$, where we have an expression of the period in terms of an elliptic integral. It is our opinion that either an expression for $P$ in terms of hypergeometric functions exists for general $\alpha$ or a thorough analysis of the (novel?) integral function in Proposition 3.2 can be done to demonstrate the monotonicity results required. Regardless, our Bounding Box Theorem does half of the work necessary to finish the proof of our main conjecture: for all $G_\alpha$ groups, a geodesic segment is length minimizing if and only if it is small or perfect. 

We reiterate that the necessary step to prove our conjecture is to show the Monotonicity Theorem holds for general $G_\alpha$ and that there is encouraging numerical evidence supporting this proposition. We plan to investigate this last step and prove our main conjecture in the future.

\begin{figure}[H]
\centering
\includegraphics[width=1\textwidth]{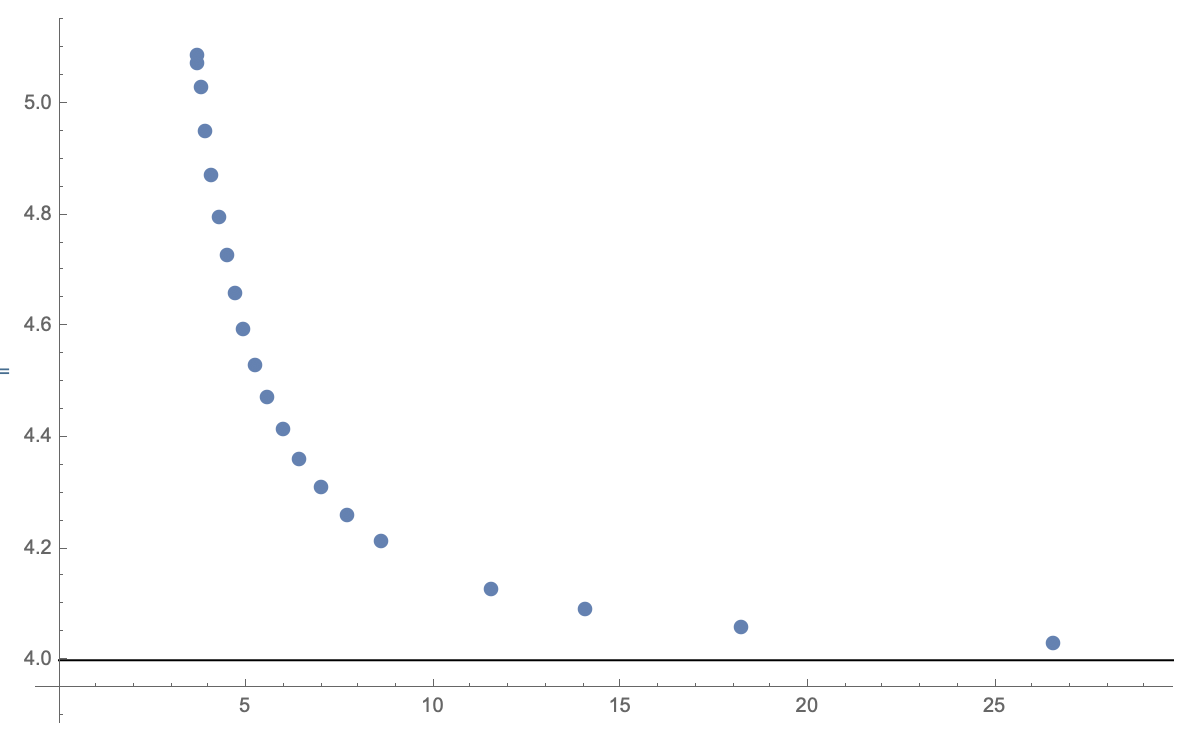}
\caption{Here, for $G_{1/2}$, we have plotted points on $\partial_0 N_+$, as $x_0$ varies from $0.6$ to $0.98$ in increments of $0.02$.}
\end{figure}
\begin{figure}[H]
\centering
\includegraphics[width=\textwidth]{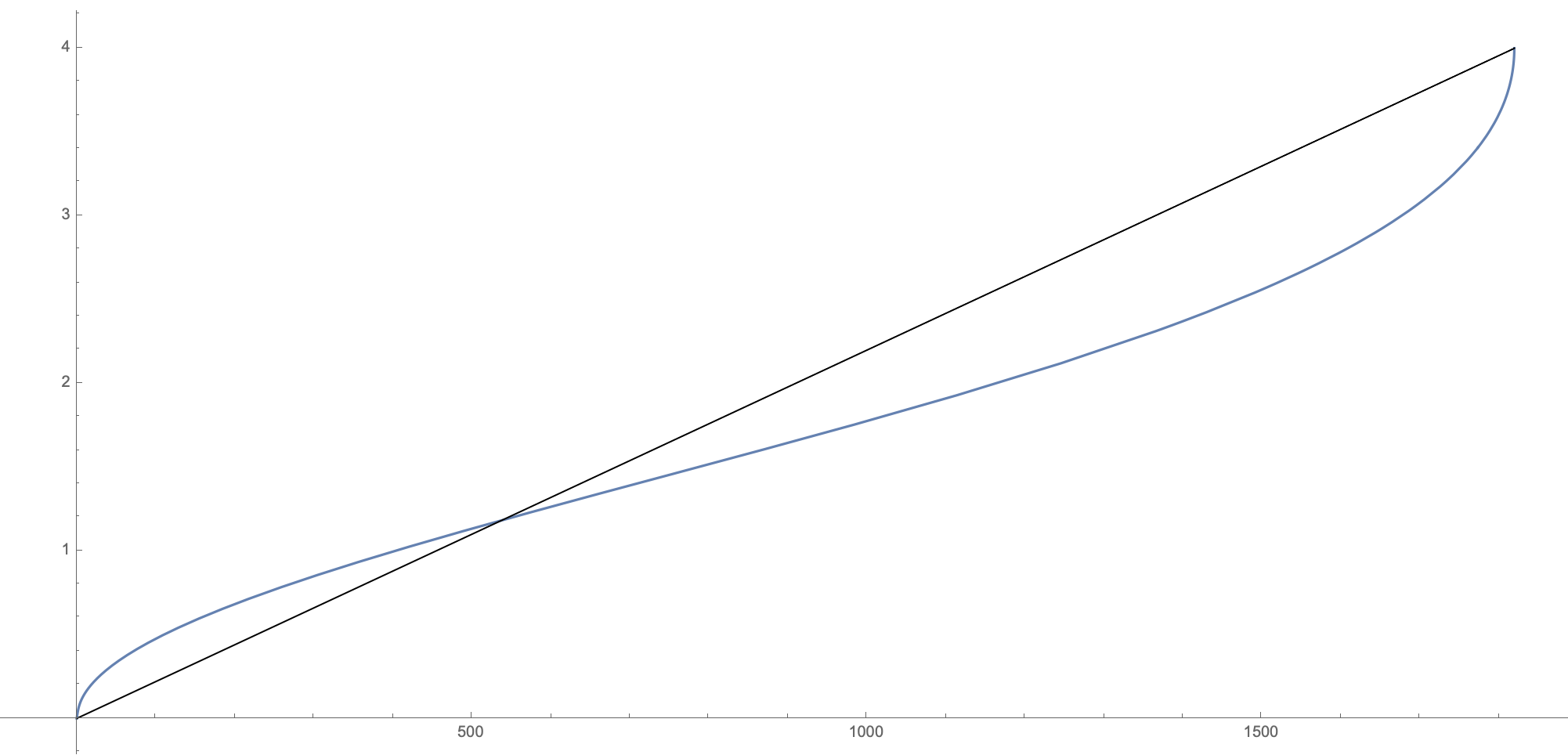}
\caption{This depicts a the image of $\Lambda_P$ for $\alpha=1/2$ and $x_0=0.99945$ over the interval $(0,\rho]$.}
\end{figure} 
\section{Proof of the Bounding Box Theorem}
We now study the system of ODE's that governs the behavior of $x,y,z,a,$ and $b$ as in equations $(6)$ and $(8)$.
We write 
$\lambda_{t+\epsilon}=u|\lambda_t|v$,
where $u$ is the flowline
connecting $p_{t+\epsilon}$ to $p_t$ and $v$ is the flowline 
connecting $\hat{p}_t$ to $\hat p_{t+\epsilon}$.
We have
$$
(a',b',0)=\Lambda'_P(t)=\lim_{\epsilon \to 0} \frac{\Lambda_P(t+\epsilon)-\Lambda(t)}{\epsilon},
$$
$$
  \Lambda_P(t+\epsilon) \approx (\epsilon x, \epsilon y,\epsilon z) *
  (a,b,0) * ( \epsilon x, \epsilon y,-\epsilon z).
$$
The approximation is true up to order $\epsilon^2$ and
$(*)$ denotes multiplication in $G_\alpha$.
A direct calculation gives
\begin{equation} a'=2x+az \textrm{ and  } b'=2y-\alpha bz.\end{equation}

Simply from its differential equation, it is evident that $a'>0$ on $(0,\rho)$. This implies that
$\Lambda_P(t)$ is the graph of a function for $t\in (0,\rho)$, hence $\Lambda_P(t)$
avoids the vertical sides of $B_P$. This is the first, easy step in proving the Bounding Box Theorem.

To finish the proof, it would suffice to show that $\Lambda_P(t)$ also avoids the horizontal sides of $B_P$, which amounts to proving that $b'(t)>0$ for all $t\in(0,\rho]$. A priori, it is not evident that $b'>0$ in this interval. For example, the function $b$ may start out concave as depicted in Figure 7. Also, after the half-period $\rho$, $b'$ may actually be negative as depicted in Figure 8. However, the remarkable fact that $b'>0$ in $(0,\rho]$ for all choices of $\alpha$ and $x_0$ is also true, and we demonstrate this fact in what follows.
\begin{figure}[H]
\centering
\includegraphics[width=0.8\textwidth]{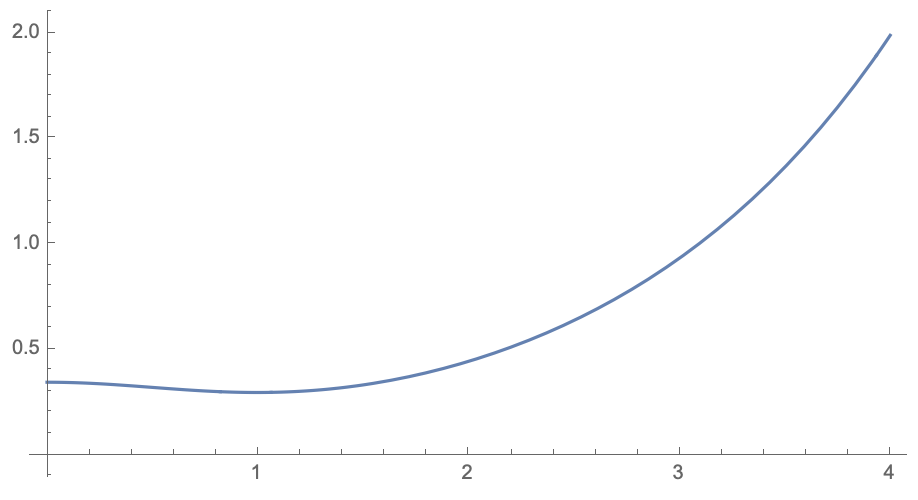}
\caption{This is the graph of $b'(t)$ over the interval $(0,\rho)$ for the parameter choices $\alpha=3/4$ and $x(0)=.985$. We can see that $b'$ is initially decreasing.}
\end{figure}
\begin{figure}[H]
\centering
\includegraphics[width=0.57\textwidth]{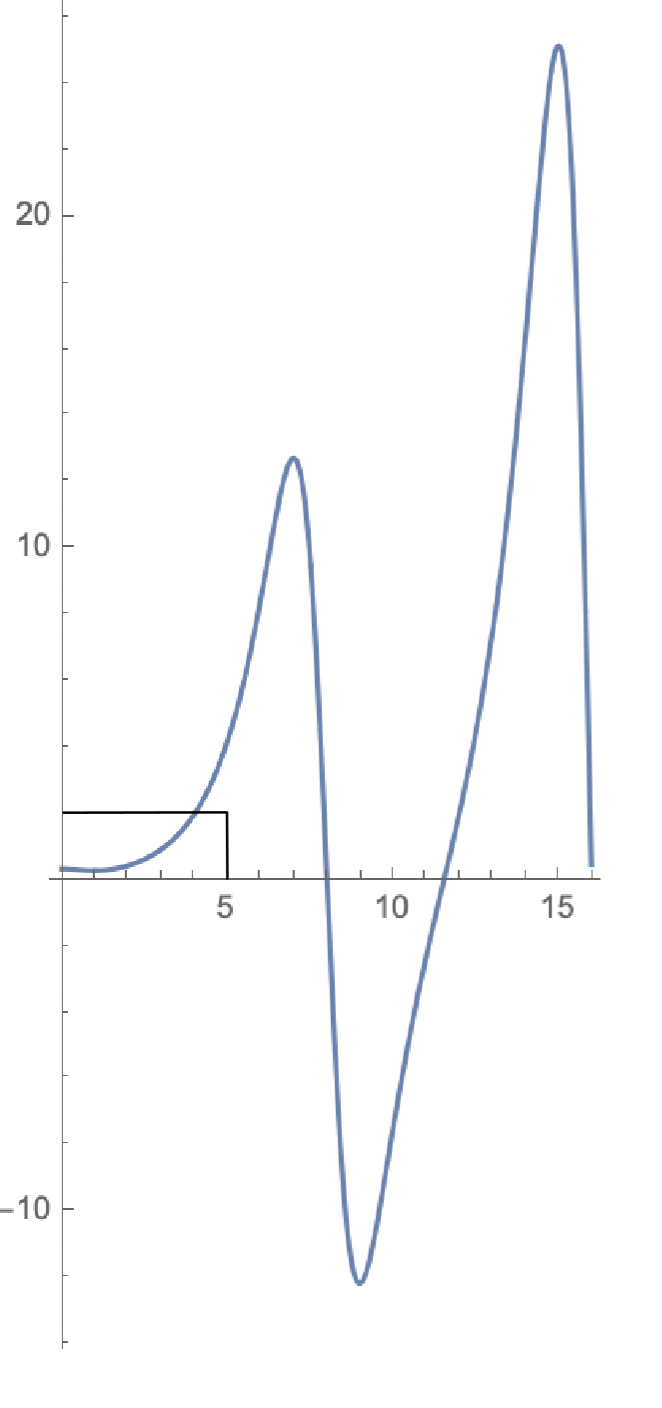}
\caption{This is the graph of $b'(t)$ over the interval $(0,4\rho)$ for the parameter choices $\alpha=3/4$ and $x(0)=.985$, illustrating that $b'$ is not necessarily positive outside of the half-period interval. Figure 7 is contained in the small box shown in this figure.}
\end{figure}
Once again, we collect the ODE's of interest to us together:
$$x'=-xz\quad y'=\alpha y z\quad z'=x^2-\alpha y^2\quad b'=2y-\alpha b z$$ from which we compute
\begin{equation}z''=-2z(x^2+\alpha y^2)\quad b''=\alpha b(\alpha z^2-z').\end{equation}
\begin{lemma}
To show that $b'>0$ in $(0,\rho),$ it suffices to show that $b'>0$ whenever $b''<0$ in the interval $(0,\rho).$ 
\end{lemma}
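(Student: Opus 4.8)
The statement is a reduction, so the plan is to argue by contradiction using the first zero of $b'$. First I would record the initial datum: from $b'=2y-\alpha bz$ and $z(0)=0$ we get $b'(0)=2y(0)>0$, since $y>0$ throughout the positive sector. Hence, if $b'$ failed to be positive somewhere in $(0,\rho)$, continuity would produce a first $t_0\in(0,\rho)$ with $b'(t_0)=0$ and $b'>0$ on $(0,t_0)$; as $b'$ decreases to $0$ on the left of $t_0$, this forces $b''(t_0)\le 0$. The hypothesis immediately rules out $b''(t_0)<0$ (it would give $b'(t_0)>0$), so the entire content of the lemma is the borderline case $b''(t_0)=0$.

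To dispose of that case I would use the explicit form of $b''$. Along a flowline $x^2+y^2+z^2\equiv 1$ and $z'=x^2-\alpha y^2$, so
$$\alpha z^2-z'=\alpha(y^2+z^2)-x^2=\alpha-(1+\alpha)x^2,$$
whence $b''=\alpha b\bigl(\alpha-(1+\alpha)x^2\bigr)$. Since $E$ preserves the positive sector (Corollary 2.8), $b>0$ on $(0,\rho]$, so $\operatorname{sign}(b'')=\operatorname{sign}\bigl(\alpha-(1+\alpha)x^2\bigr)$; and because $x'=-xz<0$ on $(0,\rho)$ (both $x$ and $z$ are positive there), $x$ is strictly decreasing. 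If $x$ stays above $\sqrt{\alpha/(1+\alpha)}$ on $(0,\rho)$ then $b''<0$ there and the hypothesis already gives $b'>0$; otherwise $x$ crosses $\sqrt{\alpha/(1+\alpha)}$ exactly once, at some $t^{*}\in(0,\rho)$, and $b''<0$ on $(0,t^{*})$, $b''(t^{*})=0$, $b''>0$ on $(t^{*},\rho)$. In the latter case the hypothetical first zero $t_0$ must coincide with $t^{*}$; since $b''>0$ on $(t^{*},\rho)$, $b'$ is strictly increasing there, hence positive. The only remaining possibility is that $b'$ merely touches $0$ at the single instant $t^{*}$, and this is excluded once the deferred estimate "$b'>0$ whenever $b''<0$" is proved with $b'$ bounded away from $0$ (equivalently, with $b''\le 0$ in place of $b''<0$), since then $b'(t^{*})>0$ by continuity. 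Either way $b'(t_0)=0$ is contradicted, so $b'>0$ on all of $(0,\rho)$.

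I expect the genuine difficulty to lie entirely outside this lemma: proving "$b'>0$ whenever $b''<0$" amounts to controlling $b'$ on the initial arc $(0,t^{*})$, where the flowline sits "above" the equilibrium, $b$ is concave, and $b'$ can itself be decreasing (as in Figure 7), so no soft monotonicity argument works — one must analyze the coupled system for $x,y,z,b$ directly, which is the subject of the rest of Section 4. Internally to the present lemma, the only point needing care is the borderline $b''=0$, and the monotonicity of $x$ along the flow localizes it to one time, making it harmless.
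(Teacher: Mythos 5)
Your argument is correct and follows essentially the same route as the paper: you reduce $b''=\alpha b(\alpha z^2-z')$ to $\alpha b\bigl(\alpha-(1+\alpha)x^2\bigr)$ and use the strict monotonicity of $x$ to show $b''$ changes sign at most once, from negative to positive, so that positivity of $b'$ on the concave initial arc propagates forward; the paper's proof is exactly this, phrased without the contradiction scaffolding. Your extra care about the single borderline instant where $b''=0$ is a legitimate refinement of a point the paper dismisses as ``elementary calculus,'' but it does not change the substance.
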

\begin{proof}
We remark that the function $b''$ changes sign at the unique point $t_0,$ where $\alpha z(t_0)^2=z'(t_0)$. Since $x,y,z>0$ in the whole interval $(0,\rho),$ and since $x(0)>y(0),$ we get that $z'(0)>0.$ Thus, $b''(0)<0.$ At the point $t_0$ where $b$ changes concavity, we have $x(t_0)=\sqrt{\frac{\alpha}{\alpha+1}}.$ Since $x$ is monotonically decreasing in our interval $(0,\rho),$ it follows that $b''$ changes sign at most once in the interval $(0,\rho).$ By elementary calculus, it follows that to show $b'>0$ in $(0,\rho),$ it suffices to show $b'>0$ in the interval $(0,t_0)$ if $t_0<\rho,$ or else on the whole interval $(0,\rho).$ Either case amounts to showing that $b'>0$ whenever $b''<0$ in the interval $(0,\rho)$.
\end{proof}
We observe that $z''<0$ everywhere in $(0,\rho)$. Also, whenever $b''<0,$ we have that $z'>\alpha z^2>0$. We first get an inequality regarding the function $z(t):$ 
\begin{lemma}
$$2\alpha \int_0^t z(s) ds\geq t\alpha z(t)$$
\end{lemma}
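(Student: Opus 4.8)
The plan is to exploit two facts already in hand: that $z(0)=0$ by the canonical parametrization, and that $z''<0$ throughout $(0,\rho)$ (the observation recorded immediately before the lemma), so that $z$ is concave on this interval. The inequality is really a statement about concave functions vanishing at the origin, and the factor $\alpha>0$ plays no role — it suffices to prove $2\int_0^t z(s)\,ds\ge t\,z(t)$ for $t\in(0,\rho)$.

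The quickest route is to fix $t\in(0,\rho)$ and compare $z$ with the chord joining $(0,z(0))=(0,0)$ to $(t,z(t))$: concavity gives $z(s)\ge \tfrac{s}{t}\,z(t)$ for all $s\in[0,t]$. Integrating over $[0,t]$ yields $\int_0^t z(s)\,ds \ge \tfrac{z(t)}{t}\int_0^t s\,ds=\tfrac{t\,z(t)}{2}$, and multiplying through by $2\alpha$ gives the claim.

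A more self-contained alternative is to set $f(t)=2\int_0^t z(s)\,ds-t\,z(t)$, so that $f(0)=0$, $f'(t)=z(t)-t\,z'(t)$, and $f''(t)=-t\,z''(t)$. Since $z''<0$ on $(0,\rho)$ and $t>0$, we get $f''>0$ there; combined with $f'(0)=z(0)=0$ this forces $f'>0$ on $(0,\rho)$, hence $f$ is increasing, and since $f(0)=0$ we conclude $f>0$ on $(0,\rho)$, which is exactly (after multiplying by $\alpha$) the desired inequality — in fact with strict inequality.

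There is no genuine obstacle here: the only point requiring care is that both inputs, $z(0)=0$ and the concavity of $z$, are already established, so the argument reduces to a short application of elementary calculus. One could remark that the inequality is strict for $t\in(0,\rho)$, which is all that will be needed in the sequel.
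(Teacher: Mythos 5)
Your proposal is correct and follows essentially the same route as the paper: the paper invokes the Hermite--Hadamard inequality for concave functions together with $z(0)=0$, and your chord comparison $z(s)\ge \tfrac{s}{t}z(t)$ is precisely the standard proof of that inequality in this special case. Your second, self-contained argument via $f(t)=2\int_0^t z(s)\,ds - t\,z(t)$ is a valid minor variant of the same idea.
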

\begin{proof}
By the well known Hermite-Hadamard Inequality for concave functions, the concavity of $z$ in $(0,\rho),$ and the fact that $z(0)=0,$ we get:
$$\frac{1}{t}\int_0^t z(s) ds\geq \frac{z(t)+z(0)}{2}=z(t)/2.$$
Multiplying by $2\alpha t>0,$ we get our desired inequality.
\end{proof}
We now recall the Log-Convex Version of Hermite-Hadamard, proven first in \cite{GPP}:
\begin{proposition}[\cite{GPP}]
If $f$ is log-convex on $[a,b]$ then
$$\frac{1}{b-a}\int_a^b f(s) ds\leq \frac{f(b)-f(a)}{\log f(b)-\log f(a)}.$$
\end{proposition}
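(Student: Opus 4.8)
The plan is to reduce everything to the single pointwise estimate that is essentially the definition of log-convexity. Write a typical point of $[a,b]$ as $s=(1-t)a+tb$, where $t=(s-a)/(b-a)\in[0,1]$. Convexity of $\log f$ at this two-point configuration gives
$$\log f(s)\le (1-t)\log f(a)+t\log f(b),$$
i.e. $f(s)\le f(a)^{1-t}f(b)^{t}$. First I would substitute this bound into $\int_a^b f(s)\,ds$, so that it suffices to estimate $\int_a^b f(a)^{1-t(s)}f(b)^{t(s)}\,ds$.

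Next I would change variables $s\mapsto t=(s-a)/(b-a)$, so $ds=(b-a)\,dt$, turning the right-hand side into $(b-a)\int_0^1 f(a)^{1-t}f(b)^{t}\,dt$. Assuming first $f(a)\neq f(b)$, the integrand equals $f(a)\,r^{t}$ with $r=f(b)/f(a)>0$, and $\int_0^1 r^{t}\,dt=(r-1)/\log r$ in closed form. Hence
$$\frac{1}{b-a}\int_a^b f(s)\,ds\le \int_0^1 f(a)^{1-t}f(b)^{t}\,dt=f(a)\cdot\frac{r-1}{\log r}=\frac{f(b)-f(a)}{\log f(b)-\log f(a)},$$
which is exactly the asserted inequality. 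For the degenerate case $f(a)=f(b)=:c$, log-convexity with equal endpoint values forces $\log f(s)\le \log c$ on $[a,b]$, so $f(s)\le c$ and thus $\frac{1}{b-a}\int_a^b f\le c$; since the logarithmic mean of $c$ with itself is $c$, the inequality persists (one may also just take a limit from the non-degenerate case, both sides being continuous in $(f(a),f(b))$).

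I do not expect a genuine obstacle here: the whole content is the convexity estimate $f(s)\le f(a)^{1-t}f(b)^{t}$ together with the elementary evaluation of $\int_0^1 r^{t}\,dt$. The only point that warrants an explicit word is the $f(a)=f(b)$ case and the convention for the logarithmic mean there; everything else is routine calculus.
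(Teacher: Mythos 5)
Your proof is correct. Note that the paper itself does not prove this proposition at all: it is imported verbatim from the cited reference \cite{GPP} as a known result (the log-convex version of the Hermite--Hadamard inequality), so there is no in-paper argument to compare against. Your argument --- the pointwise bound $f(s)\le f(a)^{1-t}f(b)^{t}$ from convexity of $\log f$, followed by the exact evaluation $\int_0^1 r^{t}\,dt=(r-1)/\log r$ --- is the standard proof, and your handling of the degenerate case $f(a)=f(b)$ (where the logarithmic mean is read as the limiting value $c$) is the right caveat to flag. The only implicit hypothesis worth stating is that $f>0$ on $[a,b]$, which is already built into the meaning of log-convexity.
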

Let's return to one of our initial ODE's: $y'=\alpha yz.$ Dividing by $y,$ integrating, and multiplying by 2, we get:
\begin{equation}2\log y(t) -2\log y(0)= 2\alpha \int_0^t z(s) ds. \end{equation}
Since $z'>0$ whenever $b''<0,$ we get that $y^2$ is log-convex whenever $b''<0.$ We can now get:
\begin{lemma} For all $t$ where $b''(t)<0,$ we have
$$\int_0^t y(s)^2 ds \leq t\cdot\frac{y(t)^2}{2\log y(t)/y(0)}$$
\end{lemma}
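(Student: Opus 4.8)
The plan is to apply the Log-Convex Hermite--Hadamard inequality (Proposition 4.5) directly to the function $f=y^2$ on the interval $[0,t]$, and then throw away a harmless positive term in the numerator. The only point requiring genuine care is verifying the hypothesis of Proposition 4.5, namely that $y^2$ is log-convex on \emph{all} of $[0,t]$, not merely that $b''<0$ at the single time $t$.

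First I would establish that log-convexity. Recall from the proof of Lemma 4.4 that $b''$ changes sign at most once on $(0,\rho)$ and that $b''(0)<0$; hence the set $\{s\in(0,\rho):b''(s)<0\}$ is an initial subinterval. Consequently, if $b''(t)<0$ then in fact $b''<0$ on the whole of $(0,t)$, and therefore (as observed just before Lemma 4.4) $z'>\alpha z^2>0$ on $(0,t)$. Since the ODE $y'=\alpha y z$ gives $(\log y^2)'=2y'/y=2\alpha z$, we obtain $(\log y^2)''=2\alpha z'>0$ on $(0,t)$, so $y^2$ is log-convex on $[0,t]$ (it is continuous up to the endpoints, and $y(0)>0$ in the positive sector). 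This is exactly the reasoning already packaged in Equation (17): $2\log y(t)-2\log y(0)=2\alpha\int_0^t z(s)\,ds$ is the integral of a monotone function once $z'>0$.

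Next I would invoke Proposition 4.5 with $a=0$, $b=t$, $f=y^2$, and rewrite $\log y(t)^2-\log y(0)^2=2\log\big(y(t)/y(0)\big)$ to get
$$\frac1t\int_0^t y(s)^2\,ds\ \le\ \frac{y(t)^2-y(0)^2}{2\log\big(y(t)/y(0)\big)}.$$
The denominator is strictly positive because $y'=\alpha yz$ with $z>0$ on $(0,t)$ makes $y$ strictly increasing, so $y(t)>y(0)$. Finally, since $y(0)^2>0$ we have $y(t)^2-y(0)^2<y(t)^2$; dividing by the positive quantity $2\log\big(y(t)/y(0)\big)$ preserves the inequality, and multiplying through by $t>0$ yields
$$\int_0^t y(s)^2\,ds\ \le\ t\cdot\frac{y(t)^2}{2\log\big(y(t)/y(0)\big)},$$
as claimed. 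I do not anticipate any real obstacle here: the substance of the argument is the log-convexity upgrade in the second paragraph, and everything else is a one-line application of Proposition 4.5 followed by a crude bound.
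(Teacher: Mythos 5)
Your proposal is correct and follows essentially the same route as the paper: apply the log-convex Hermite--Hadamard inequality to $f=y^2$ on $[0,t]$ and then discard the $-y(0)^2$ term in the numerator. The only difference is that you verify the hypothesis more carefully --- using the single-sign-change of $b''$ to upgrade ``$b''(t)<0$'' to ``$b''<0$, hence $z'>0$, on all of $(0,t)$'' so that $y^2$ is genuinely log-convex on the whole interval --- a point the paper handles only implicitly in the discussion preceding the lemma.
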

\begin{proof}
By Proposition 4.3, we have
$$\int_0^t y(s)^2 ds \leq t\cdot\frac{y(t)^2-y(0)^2}{2\log y(t)/y(0)}\leq t\cdot\frac{y(t)^2}{2\log y(t)/y(0)}$$
where the last inequality comes from $y(0)>0.$ 
\end{proof}
We are now in a position to prove the Bounding Box Theorem:
\begin{proof}
By Lemma 4.1, it suffices to show that $b'>0$ whenever $b''<0.$ If we integrate the ODE for $b$, we get
\begin{equation} b(t)=\frac{2}{y(t)}\int_0^t y(s)^2 ds.\end{equation}
Differentiating, we want to show $y(t)^3-y'(t)\int_0^t y(s)^2 ds\geq 0$ whenever $b''<0.$ Equivalently (we can divide by $y,y'$ since they are always strictly greater than $0$):
$$ \int_0^t y(s)^2 ds\leq \frac{y(t)^3}{y'(t)}.$$
By Lemma 4.4 it suffices to show
$$t\cdot\frac{y(t)^2}{2\log y(t)/y(0)}\leq\frac{y(t)^3}{y'(t)}$$
or, by cancelling some terms and taking the reciprocal,
$$2\log y(t)/y(0) \geq t\cdot \frac{y'(t)}{y(t)}.$$
Since $y'=\alpha yz$, this is equivalent to
$$2\alpha \int_0^t z(s) ds\geq t\alpha z(t)$$
which is nothing but the inequality of Lemma 4.2.
\end{proof}
Finally, since $b'>0, \Lambda_P(t)$ is an increasing function, so $\Lambda_P(t)$ avoids the vertical sides of $B_P$ when $t\in(0,\rho)$. This, along with the previously stated fact that $\Lambda_P(t)$ avoids the horizontal sides of $B_P$, finishes the proof of the Bounding Box Theorem. 
\section{Proof of the Monotonicity Theorem}
\subsection{Endpoints of Symmetric Flowlines}
Henceforth, we view $\partial_0 N_+$ as the following parametrized curve in the $XY$ plane. Denote $x(0)=x_0$, then
$$\partial_0 N_+= \{ (a_{x_0}(P(x_0)/2),b_{x_0}(P(x_0)/2),0)\}, \textrm{ as } x_0 \textrm{ varies in } \bigg(\sqrt{\frac{\alpha}{1+\alpha}},1\bigg).$$
To prove Equation $10$ in general it would suffice, by using the Bounding Box Theorem, to prove $B_P\cap \partial_+ N=\emptyset$, and, to prove the latter statement, it suffices to show that $\partial_0 N_+$ is the graph of a decreasing function in Cartesian coordinates. This involves differentiating our ODE's with respect to the initial value $x_0$. Let $\bar{x}$ denote $dx(t,x_0)/dx_0$, etc. Then we get
$$\bar{x}'=-x\bar{z}-z\bar{x},\quad \bar{y}'=\alpha y\bar{z}+\alpha z\bar{y},\quad \bar{z}'=2x\bar{x}-2\alpha y \bar{y},$$
$$\bar{a}'=2\bar{x}+a\bar{x}+x\bar{a}, \quad \bar{b}'=2\bar{y}-\alpha\bar{y}b-\alpha y\bar{b}.$$
Since $x^2+y^2+z^2=1$ for all $t, x_0,$ it follows that 
\begin{equation}x\bar{x}+y\bar{y}+z\bar{z}=0\end{equation}
always, and we can get a similar equation for the time derivative. Now, we prove some very useful propositions:
\begin{proposition}
$ax-\alpha by = 2z$ for all $t$ and $x_0$.
\end{proposition}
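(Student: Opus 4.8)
The plan is to show that the function $\Phi(t) := ax - \alpha b y - 2z$ is identically zero by recognizing it as a conserved quantity of the coupled system of ODEs — precisely the same mechanism used to prove the Reciprocity Lemma (Theorem 3.15): two functions of $t$ that obey the same first-order equation and agree at $t=0$ must coincide. Here the statement is even cleaner, since one checks directly that $\Phi' \equiv 0$.

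First I would assemble the relevant differential equations along a symmetric flowline: the backward flow of the structure field gives $x' = -xz$, $y' = \alpha y z$, $z' = x^2 - \alpha y^2$, while the endpoint curve $\Lambda_P(t) = (a(t),b(t),0)$ satisfies $a' = 2x + az$ and $b' = 2y - \alpha b z$. Differentiating $\Phi$ and substituting these five identities yields
$$\Phi' = (2x+az)x + a(-xz) - \alpha\big[(2y-\alpha bz)y + b(\alpha yz)\big] - 2(x^2-\alpha y^2),$$
and all the terms containing $a$ or $b$ cancel in pairs ($axz - axz$, and $\alpha byz - \alpha byz$ inside the bracket), leaving $\Phi' = 2x^2 - 2\alpha y^2 - 2(x^2-\alpha y^2) = 0$. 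Equivalently, $\tfrac{d}{dt}(ax - \alpha by) = 2z'$, so $ax - \alpha by - 2z$ is constant in $t$.

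It then remains to evaluate the constant at $t=0$. By the Canonical Parametrization the flow starts at $p_0 \in \Theta_P \cap \Pi$, so $z(0)=0$; and since $\Lambda_P(0) = E(V_0)$ is the image under the exponential map of the degenerate length-zero flowline $\lambda_0$, we have $a(0)=b(0)=0$. (Alternatively this is manifest from the integral formulas $a(t) = \tfrac{2}{x(t)}\int_0^t x(s)^2\,ds$ and $b(t) = \tfrac{2}{y(t)}\int_0^t y(s)^2\,ds$, obtained by integrating the linear equations for $a$ and $b$ against the integrating factor $x(t)/x(0) = e^{-\int_0^t z}$.) Hence $\Phi(0)=0$, and combined with $\Phi' \equiv 0$ we get $\Phi \equiv 0$ on $[0,\rho]$; since $x_0$ was arbitrary, the identity holds for all $t$ and $x_0$.

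There is no real obstacle here: the computation is mechanical once the ODEs for $x,y,z,a,b$ are in hand, and the only point needing a sentence of justification is the vanishing of $a$, $b$, and $z$ at $t=0$, which is immediate from the construction. The only thing to watch is the bookkeeping of the $\alpha$'s and the sign in $z' = x^2 - \alpha y^2$, so that the two cancellations line up correctly.
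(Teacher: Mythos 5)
Your proof is correct and is essentially the paper's argument read in the opposite direction: the paper integrates the ODEs to get $ax=2\int_0^t x^2$ and $\alpha by=2\int_0^t \alpha y^2$ and then invokes $z'=x^2-\alpha y^2$ with $z(0)=0$, while you differentiate $ax-\alpha by-2z$ and check the same cancellations plus the initial condition $a(0)=b(0)=z(0)=0$. Both hinge on the identities $(ax)'=2x^2$ and $(by)'=2y^2$, so there is no substantive difference.
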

\begin{proof}
By integrating the ODE's in time for $a$ and $b$ we get:
$$ax=2\int_0^t x(s)^2 ds \textrm{ and } \alpha by=2\int_0^t \alpha y(s)^2 ds$$
hence 
$$ax-\alpha by= 2\int_0^t x(s)^2-\alpha y(s)^2 ds= 2z(t)-2z(0)=2z$$
since $z(0)=0$ always. Note: this gives us our promised second proof of the Reciprocity Lemma, if we evaluate at $t=\rho$.
\end{proof}
Since the above equality is true for all $t$ and $x_0$ we can differentiate with respect to $x_0$ and get
\begin{corollary}
$a\bar{x}+x\bar{a}-\alpha b\bar{y}-\alpha y \bar{b}=2\bar{z}$ for all $t$ and $x_0$.
\end{corollary}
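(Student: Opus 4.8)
The plan is to derive this identity directly by differentiating the conclusion of Proposition 5.1 with respect to the initial value $x_0$. Recall that Proposition 5.1 asserts $ax-\alpha by=2z$ as an identity holding for \emph{all} admissible $t$ and $x_0$. The solutions $x(t,x_0),y(t,x_0),z(t,x_0),a(t,x_0),b(t,x_0)$ of our system depend smoothly on $x_0$ by the classical theorem on differentiable dependence of ODE solutions on initial data — the right-hand sides of $(6)$, $(7)$, $(8)$ and the equations for $a,b$ are polynomial, hence $C^\infty$ — and this smoothness holds on the open set of pairs $(t,x_0)$ with $0<t<\rho(x_0)=P(x_0)/2$. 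Consequently, for each fixed $t$ in this range, both sides of $ax-\alpha by=2z$ are smooth functions of $x_0$, so we may differentiate the identity in $x_0$. Writing $\bar{x}=\partial x/\partial x_0$, etc., the product rule gives $\bar{a}x+a\bar{x}-\alpha(\bar{b}y+b\bar{y})=2\bar{z}$, which upon rearrangement is precisely
$$a\bar{x}+x\bar{a}-\alpha b\bar{y}-\alpha y\bar{b}=2\bar{z},$$
as claimed.

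An alternative (but strictly longer) route would start from the integrated forms $ax=2\int_0^t x(s)^2\,ds$ and $\alpha by=2\int_0^t \alpha y(s)^2\,ds$ used to prove Proposition 5.1, differentiate under the integral sign — again legitimate by the smoothness just invoked — to get $\tfrac{d}{dx_0}(ax)=4\int_0^t x\bar{x}\,ds$ and similarly for $\alpha by$, and then combine with the variational ODEs for $\bar{x},\bar{y},\bar{z}$ together with the constraint $x\bar{x}+y\bar{y}+z\bar{z}=0$ to recover the same relation. I see no genuine obstacle in either approach: the only thing that requires (routine) justification is the interchange/differentiation in the parameter $x_0$, which is immediate here. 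The mild point worth noting is only that the half-period $\rho$ depends on $x_0$, so the identity is differentiated on the open region $\{(t,x_0): 0<t<\rho(x_0)\}$ rather than on a product domain; this costs nothing. The role of this corollary is organizational: it supplies, alongside $x\bar{x}+y\bar{y}+z\bar{z}=0$, one of the linear relations among the $x_0$-derivatives that we will use to control the Jacobian governing the shape of $\partial_0 N_+$ in the proof of the Monotonicity Theorem.
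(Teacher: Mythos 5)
Your proof is correct and is exactly the paper's argument: the paper likewise obtains this corollary by differentiating the identity $ax-\alpha by=2z$ of Proposition 5.1 with respect to $x_0$ and applying the product rule. Your added remarks on smooth dependence on initial data and the domain $\{0<t<\rho(x_0)\}$ are sound but routine, and the paper omits them.
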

Now we prove:
\begin{proposition}
$x\bar{a}+y\bar{b}=0$ for all $t$ and $x_0$.
\end{proposition}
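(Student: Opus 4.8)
The plan is to show that the function $f(t):=x(t)\,\bar a(t)+y(t)\,\bar b(t)$ is constant in $t$ along each flowline and vanishes at $t=0$, whence $f\equiv 0$. First I would differentiate $f$ in $t$, using the structure‑field equations $x'=-xz$, $y'=\alpha yz$, the equations $a'=2x+az$, $b'=2y-\alpha bz$, and the linearized equations obtained by differentiating these last two with respect to $x_0$, namely $\bar a'=2\bar x+z\bar a+a\bar z$ and $\bar b'=2\bar y-\alpha z\bar b-\alpha b\bar z$. This gives
$$f'=-xz\bar a+x\big(2\bar x+z\bar a+a\bar z\big)+\alpha yz\bar b+y\big(2\bar y-\alpha z\bar b-\alpha b\bar z\big),$$
and the two terms $\mp xz\bar a$ cancel, as do the two terms $\pm\alpha yz\bar b$, leaving the clean expression
$$f'=2\,(x\bar x+y\bar y)+(ax-\alpha by)\,\bar z.$$

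The second step is to observe that the two facts needed to annihilate the right‑hand side are already in hand. Differentiating the constraint $x^2+y^2+z^2\equiv 1$ with respect to $x_0$ gives $x\bar x+y\bar y+z\bar z=0$, and Proposition 5.1 gives $ax-\alpha by=2z$. Substituting both,
$$f'=-2z\bar z+2z\bar z=0,$$
so $f$ does not depend on $t$.

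For the value of $f$, note that for every $x_0$ the flowline $\lambda_0$ is the constant (trivial) flowline at $p_0$, so $a(0,x_0)=b(0,x_0)=0$ identically in $x_0$; hence $\bar a(0)=\bar b(0)=0$ and $f(0)=x(0)\bar a(0)+y(0)\bar b(0)=0$. (Equivalently, from $ax=2\int_0^t x^2\,ds$ and $by=2\int_0^t y^2\,ds$, already used in the proof of Proposition 5.1, one sees directly that $a,b,\bar a,\bar b\to 0$ as $t\to 0$ while $x\to x_0>0$ and $y\to y_0>0$.) Since $f$ is continuous on the closed half‑period interval, $f'\equiv 0$ on its interior, and $f(0)=0$, we conclude $f\equiv 0$, i.e. $x\bar a+y\bar b=0$ for all $t$ and $x_0$.

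I do not expect a genuine obstacle here: once one decides to examine the combination $x\bar a+y\bar b$, the differentiation is short, the cancellation is forced, and Proposition 5.1 together with the constraint $x\bar x+y\bar y+z\bar z=0$ is exactly — and only — what is needed to close the computation. The one creative move is guessing that $x\bar a+y\bar b$ (rather than some other linear combination of $\bar a$ and $\bar b$) is the conserved quantity, and the statement of the proposition already supplies that guess; what remains is bookkeeping.
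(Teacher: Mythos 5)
Your argument is correct and is essentially the paper's own proof: the paper integrates $(x\bar a)'$ and $(y\bar b)'$ from $0$ to $t$ and adds, which is exactly your computation $f'=2(x\bar x+y\bar y)+(ax-\alpha by)\bar z=0$ via Proposition 5.1 and the constraint $x\bar x+y\bar y+z\bar z=0$, together with the vanishing of $a,b$ (hence $\bar a,\bar b$) at $t=0$. No substantive difference.
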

\begin{proof}
Integrating the ODE's (in time) for $\bar{a}$ and $\bar{b}$ gets us:
$$x\bar{a}=\int 2x\bar{x}+ax\bar{z} \textrm{ and } y\bar{b}=\int 2y\bar{y}-\alpha by\bar{z}.$$
Adding the two integrals above and using equation $(16)$ gets us our desired equality.
\end{proof}
Corollary 5.2 and Proposition 5.3 combine to get us the following useful expressions for $\bar{a}$ and $\bar{b}$. 
\begin{corollary}
We have
$$\bar{a}=\frac{1}{x(1+\alpha)}\bigg(2\bar{z}+\alpha b\bar{y}-a\bar{x}\bigg)$$
and
$$\bar{b}=-\frac{1}{y(1+\alpha)}\bigg(2\bar{z}+\alpha b\bar{y}-a\bar{x}\bigg).$$
\end{corollary}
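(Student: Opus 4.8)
The plan is to treat Corollary 5.2 and Proposition 5.3 as a $2\times 2$ linear system in the two unknowns $\bar a$ and $\bar b$, regarding everything else ($x,y,z,a,b$ and the barred velocity-sphere derivatives $\bar x,\bar y,\bar z$) as known coefficients, and simply solve it by elimination. First I would rewrite Corollary 5.2 by moving the terms not involving $\bar a,\bar b$ to the right-hand side, obtaining
$$x\bar a-\alpha y\bar b=2\bar z+\alpha b\bar y-a\bar x,$$
and keep Proposition 5.3 as the second equation $x\bar a+y\bar b=0$. Since we are working in the positive sector, Corollary 2.8 guarantees $x>0$ and $y>0$ along the entire flowline, so the coefficient matrix $\bigl(\begin{smallmatrix} x & -\alpha y\\ x & y\end{smallmatrix}\bigr)$ has determinant $xy(1+\alpha)\neq 0$, and the system has a unique solution.

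Next I would eliminate. From $x\bar a+y\bar b=0$ we get $y\bar b=-x\bar a$; substituting $-\alpha y\bar b=\alpha x\bar a$ into the first equation gives $x\bar a+\alpha x\bar a=2\bar z+\alpha b\bar y-a\bar x$, i.e. $(1+\alpha)x\bar a=2\bar z+\alpha b\bar y-a\bar x$, which is exactly the claimed expression for $\bar a$ after dividing by $(1+\alpha)x$. Feeding this back into $\bar b=-x\bar a/y$ yields the companion formula $\bar b=-\frac{1}{(1+\alpha)y}\bigl(2\bar z+\alpha b\bar y-a\bar x\bigr)$. In particular $\bar a$ and $\bar b$ always carry opposite signs (up to the positive factor $x/y$), a structural fact that I expect to be the real payoff of this corollary when we later analyze the shape of $\partial_0 N_+$.

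There is essentially no obstacle here: the statement is pure linear algebra once Corollary 5.2 and Proposition 5.3 are available, and the only point needing a word of justification is the nonvanishing of $x$, $y$ (hence of the determinant $(1+\alpha)xy$), which is precisely Corollary 2.8 applied in the positive sector. The genuine difficulty is deferred to the following subsections, where these two formulas — combined with $ax-\alpha by=2z$ from Proposition 5.1 and the sphere constraint $x\bar x+y\bar y+z\bar z=0$ of equation $(16)$ — must be parlayed into a proof that $da/dx_0$ and $db/dx_0$ have the correct signs at the half-period $t=\rho$, which is what the Monotonicity Theorem ultimately demands.
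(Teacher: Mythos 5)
Your proposal is correct and is exactly the argument the paper intends: the corollary is stated as an immediate consequence of combining Corollary 5.2 and Proposition 5.3, and solving that $2\times 2$ linear system by elimination (with determinant $xy(1+\alpha)\neq 0$ since $x,y>0$ along the flowline) is the whole content. No gap.
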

\subsection{The Monotonicity Theorem for $G_{1/2}$}
To get our desired results about $\partial_0 N_+$, we must look at a particular case of our one-parameter family, where we have more information about the derivative of $P(x_0)$, courtesy of the expression of $P$ in terms of an elliptic integral. Everything we have heretofore shown is, however, applicable to every $G_\alpha$ with $0<\alpha\leq 1$.
Although we restrict ourselves to $G_{1/2}$, the methods presented here could just as well be applied to $G_1$, where we also have an explicit formula for the period function. 

In this section, we show that $\partial_0 N_+$ is the graph of a non-increasing function in Cartesian coordinates (for $G_{1/2}$) by using properties of the period function. This finishes the proof of Equation \ref{goal}, which in turn allows us to prove our main theorem. For encouragement, we refer the reader back to Figure 5, where we can see that $\partial_0 N_+$ is indeed the graph of a non-increasing function in Cartesian coordinates. It will be relatively easy to show that $\partial_0N_+$ is the graph of a Cartesian function, but the proof that $\partial_0N_+$ is a non-increasing function will be more involved. For example, we will first need to show that $\partial_0N_+$ limits to the line $b=4$ as $x_0\to 1$. 

 Recall Proposition 3.5, which states that $P(\beta)$ is decreasing with respect to $\beta$ in the case when $\alpha=1$ or $1/2$. Here, we change variables for the period function from $\beta$ to $x_0\in (1/\sqrt{3},1)$ and get:
\begin{proposition}
$$\frac{d}{dx_0}\big(P(x_0)\big) >0$$
\end{proposition}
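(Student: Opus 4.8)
The plan is to reduce Proposition 5.5 to Proposition 3.5 by a change of variables. The period $P$ is an invariant of a loop level set, so the function $P(x_0)$ appearing here and the function $P_\lambda(\beta)$ of Proposition 3.2 agree whenever $x_0$ and $\beta$ label the same loop level set; thus $P(x_0)=P_\lambda(\beta(x_0))$ once we know how $\beta$ depends on $x_0$. Since $\frac{d}{d\beta}P_\lambda(\beta)<0$ for $\alpha=1/2$ by Proposition 3.5, the whole statement will follow from the chain rule
$$\frac{d}{dx_0}P(x_0)=\frac{dP_\lambda}{d\beta}\cdot\frac{d\beta}{dx_0},$$
provided I show that $x_0\mapsto\beta$ is a strictly decreasing diffeomorphism of $\big(\sqrt{\alpha/(1+\alpha)},1\big)$ onto $(0,1)$.

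To pin down this correspondence I would use the holonomy/Hamiltonian function $H(x,y,z)=|x|^\alpha y$, which is constant along flowlines (Proposition 2.9). The canonical-parametrization point $p_0=(x_0,y_0,0)$ lies on the unit sphere in the plane $Z=0$, so $y_0=\sqrt{1-x_0^2}$ and $H(p_0)=x_0^\alpha\sqrt{1-x_0^2}$; the representative vector $V_\beta$ of Proposition 3.2 lies on the same loop level set, so $H(p_0)=H(V_\beta)$, i.e.
$$x_0^\alpha\sqrt{1-x_0^2}=\frac{\alpha^{\alpha/2}}{(1+\alpha)^{(1+\alpha)/2}}\,\beta^{1+\alpha}.$$
The right side is strictly increasing in $\beta\in(0,1)$. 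For the left side, writing $f(x_0)=x_0^\alpha\sqrt{1-x_0^2}$ and differentiating logarithmically gives $f'(x_0)/f(x_0)=\alpha/x_0-x_0/(1-x_0^2)$, which vanishes exactly at $x_0=\sqrt{\alpha/(1+\alpha)}$ and is strictly negative on $\big(\sqrt{\alpha/(1+\alpha)},1\big)$. Hence $f$ is a strictly decreasing diffeomorphism of $\big(\sqrt{\alpha/(1+\alpha)},1\big)$, so by the implicit function theorem $\beta(x_0)$ is a strictly decreasing smooth function with $x_0=\sqrt{\alpha/(1+\alpha)}\leftrightarrow\beta=1$ and $x_0\to1\leftrightarrow\beta\to0$.

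Combining the two displays, $\frac{d\beta}{dx_0}<0$ throughout $\big(\sqrt{\alpha/(1+\alpha)},1\big)$, and Proposition 3.5 (for $\alpha=1/2$) gives $\frac{dP_\lambda}{d\beta}<0$, so $\frac{d}{dx_0}P(x_0)>0$. The only genuinely delicate point is bookkeeping the direction of the reparametrization — confirming that increasing $x_0$ corresponds to a loop level set of strictly smaller holonomy, hence strictly smaller $\beta$ — and the monotonicity of $f$ above is precisely what encodes this; no estimate beyond Proposition 3.5 is needed.
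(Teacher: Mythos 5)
Your proposal is correct and follows essentially the same route as the paper: both identify $\beta$ with $x_0$ via the same-flowline condition $H(p_0)=H(V_\beta)$ (the paper writes this relation out explicitly for $\alpha=1/2$, obtaining $\beta^3=\tfrac{3\sqrt{3}}{2}(x_0-x_0^3)$), deduce $d\beta/dx_0<0$ from $x_0^2>\alpha/(1+\alpha)$, and conclude by the chain rule together with Proposition 3.5. Your version merely carries out the bookkeeping for general $\alpha$ and verifies the endpoint correspondence more explicitly, which is a harmless refinement rather than a different argument.
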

\begin{proof}
This amounts to an application of the chain rule. Since the vector (associated to $\beta$) as in the statement of Theorem 3.1 is on the same flow line as the vector associated to $x_0$, we know
$$x_0^{1/2}\sqrt{1-x_0^2}=\bigg(\beta\sqrt{\frac{1}{3}}\bigg)^{1/2}\frac{\beta\sqrt{2}}{\sqrt{3}}$$
so
$$\beta^3=\frac{3\sqrt{3}}{2}(x_0-x_0^3),$$
thus
$$\frac{d\beta}{dx_0}=\frac{\sqrt{3}}{2\beta^2}(1-3x_0^2)<0$$
since $x_0^2>1/3$.
By the chain rule and Proposition 3.5, we get our desired result.
\end{proof}
Since $z$ always vanishes at the half period, we have
\begin{proposition}
For any initial value $x_0$, we have
$$\bar{z}+(\frac{1}{2}\frac{dP}{dx_0})z'=0$$
at the time $t=P(x_0)/2$. Also, by Proposition 5.5, and since $z'<0$ at the half period, we get that 
$$\bar{z}(P(x_0)/2)>0, \quad \forall x_0.$$
\end{proposition}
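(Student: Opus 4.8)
The plan is to treat the fact recalled immediately before the statement — that the third coordinate of the canonically parametrized flowline vanishes at the half period — as an identity in the initial value $x_0$, and to obtain the asserted relation by differentiating it.

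First I would set $\rho(x_0)=P(x_0)/2$ and regard $z=z(t,x_0)$ as a jointly smooth function of the flow time $t$ and the initial condition $x_0$, so that, consistent with the notation of the paper, $z'$ and $\bar z$ denote its partial derivatives in $t$ and in $x_0$. The quoted fact is then the identity $z(\rho(x_0),x_0)\equiv 0$. Applying the chain rule to the composite $x_0\mapsto z(\rho(x_0),x_0)$ — taking care that the evaluation time $\rho$ itself depends on $x_0$ — gives
\[
z'\big(\rho(x_0),x_0\big)\,\rho'(x_0)+\bar z\big(\rho(x_0),x_0\big)=0,
\]
and since $\rho'(x_0)=\tfrac12\,\tfrac{dP}{dx_0}$ this is exactly the stated relation $\bar z+\tfrac12\,\tfrac{dP}{dx_0}\,z'=0$ evaluated at $t=P(x_0)/2$.

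For the sign of $\bar z(\rho)$ I would combine three ingredients. From the backwards-flow equation $z'=x^2-\alpha y^2$, at the half period, where $z=0$, one has $z'(\rho)=x(\rho)^2-\alpha\,y(\rho)^2$; since $z>0$ on all of $(0,\rho)$ while $z(\rho)=0$, necessarily $z'(\rho)\le 0$. Moreover $z'(\rho)=0$ would force $x(\rho)=\sqrt{\alpha}\,y(\rho)$, so that $(x(\rho),y(\rho),0)$ would be the equilibrium $\big(\sqrt{\alpha/(1+\alpha)},\sqrt{1/(1+\alpha)},0\big)$ of $\Sigma$; but a loop level set has finite period, so its flowline is non-constant and, by uniqueness of solutions, cannot pass through an equilibrium — hence $z'(\rho)<0$. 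Finally, Proposition 5.5 gives $\tfrac{dP}{dx_0}>0$. Substituting all of this into the identity of the preceding paragraph yields
\[
\bar z\big(\rho(x_0),x_0\big)=-\tfrac12\,\tfrac{dP}{dx_0}\,z'\big(\rho(x_0),x_0\big)>0,
\]
which is the claim.

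There is no serious obstacle once Proposition 5.5 is in hand; essentially everything reduces to a single chain-rule computation. The only point that warrants attention is the promotion of $z'(\rho)\le 0$ to the strict inequality $z'(\rho)<0$, which is why I would rule out the equilibrium point explicitly.
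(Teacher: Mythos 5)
Your proposal is correct and follows essentially the same route as the paper: the paper's entire justification is that $z$ vanishes identically at the half period, so differentiating $z(P(x_0)/2,x_0)\equiv 0$ in $x_0$ via the chain rule gives the stated relation, and the sign then follows from Proposition 5.5 together with $z'<0$ there. Your additional care in upgrading $z'(\rho)\le 0$ to $z'(\rho)<0$ by excluding the equilibrium is a detail the paper leaves implicit, but it does not change the argument.
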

From equation $(9)$ and the fact that $x'$ and $y'$ always vanish at the half-period, we have
\begin{proposition}
$$\bar{y}(P(x_0)/2)=\frac{1}{2\sqrt{x_0}}>0 \quad \textrm{ and }  \quad \bar{x}(P(x_0)/2)=-2x_0<0$$
\end{proposition}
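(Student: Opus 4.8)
The plan is to use the fact that at the half-period $t=\rho=P(x_0)/2$ the quantities $x(\rho,x_0)$ and $y(\rho,x_0)$ are not merely implicit outputs of the ODE flow but \emph{explicit elementary functions of $x_0$}; granting this, $\bar{x}(\rho)$ and $\bar{y}(\rho)$ fall out of a one-line chain-rule computation.

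First I would record the two identities at $t=\rho$. The canonical parametrization begins at $p_0=(x(0),y(0),0)$ on the unit sphere $S(G_{1/2})$ with $z(0)=0$, so the constraint $x(0)^2+y(0)^2+z(0)^2=1$ gives $y(0)=\sqrt{1-x_0^2}$ (positive sector). Equation $(9)$, specialized to $\alpha=1/2$, reads $y(\rho)=x(0)^{1/2}=\sqrt{x_0}$ and $x(\rho)^{1/2}=y(0)=\sqrt{1-x_0^2}$, that is $x(\rho)=1-x_0^2$. Thus, as functions of the single variable $x_0$,
$$y\big(\rho(x_0),x_0\big)=\sqrt{x_0}, \qquad x\big(\rho(x_0),x_0\big)=1-x_0^2.$$

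Next I would differentiate both identities totally in $x_0$ by the chain rule, writing $\tfrac{d}{dx_0}f(\rho(x_0),x_0)=f'(\rho,x_0)\,\rho'(x_0)+\bar{f}(\rho,x_0)$ with $f'=\partial_t f$. The key point is that the $\rho'(x_0)$ terms drop out: at the half-period $z(\rho)=0$ (as already used in Proposition 5.6), and since $x'=-xz$ and $y'=\alpha yz$, both $x'$ and $y'$ vanish at $t=\rho$. What remains is $\bar{y}(\rho)=\tfrac{d}{dx_0}\sqrt{x_0}=\tfrac{1}{2\sqrt{x_0}}$ and $\bar{x}(\rho)=\tfrac{d}{dx_0}(1-x_0^2)=-2x_0$; the asserted signs are then immediate since $x_0\in(1/\sqrt{3},1)$ is positive.

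I do not expect a real obstacle here; notably the argument needs nothing about $\tfrac{dP}{dx_0}$ (in contrast to Proposition 5.6), precisely because the vanishing of $x'$ and $y'$ at the half-period annihilates the $\rho'(x_0)$ contribution. The only steps demanding a little care are converting Equation $(9)$ into the explicit forms $y(\rho)=\sqrt{x_0}$, $x(\rho)=1-x_0^2$ using the $t=0$ unit-sphere constraint, and recording that $z$ — hence $x'$ and $y'$ — vanishes at $t=\rho$. (The same computation works for general $\alpha$, yielding $\bar{y}(\rho)=\alpha x_0^{\alpha-1}$ and $\bar{x}(\rho)=-\tfrac{x_0}{\alpha}(1-x_0^2)^{(1-2\alpha)/(2\alpha)}$, which collapse to the stated values exactly because $1/(2\alpha)=1$ when $\alpha=1/2$.)
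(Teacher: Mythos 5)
Your argument is exactly the paper's: the proof given there is just the one sentence preceding the proposition, citing equation $(9)$ together with the vanishing of $x'$ and $y'$ at the half-period (because $z(\rho)=0$), which is precisely the chain-rule computation you have spelled out. Note that, as in the paper, everything here rests on equation $(9)$ — i.e.\ on the explicit identities $y(\rho)=x_0^{1/2}$ and $x(\rho)=1-x_0^2$ — so your proof stands or falls with that identity rather than adding any independent verification of it.
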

We are ready to get some information about $\partial_0 N_+$, beginning with:
\begin{corollary} 
$\partial_0 N_+$ is the graph of a function in Cartesian coordinates.
\end{corollary}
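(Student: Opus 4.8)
The goal is to show that $\partial_0 N_+$, the curve $x_0 \mapsto (a_{x_0}(\rho), b_{x_0}(\rho), 0)$ with $\rho = P(x_0)/2$, is the graph of a function in Cartesian coordinates. Since the first coordinate is $a_{x_0}(\rho)$ and the second is $b_{x_0}(\rho)$, it suffices to show that $a_{x_0}(\rho)$ is a strictly monotone function of $x_0$, i.e., that its total derivative with respect to $x_0$ never vanishes (and has a definite sign). Writing $\frac{d}{dx_0}\big(a_{x_0}(P(x_0)/2)\big) = \bar a(\rho) + a'(\rho)\cdot \tfrac12 \tfrac{dP}{dx_0}$, where $\bar a$ denotes $\partial a/\partial x_0$ at fixed $t$, the plan is to evaluate each piece at the half-period using the structural identities already assembled in Section 5.1.

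First I would evaluate $a'(\rho)$. From $a' = 2x + az$ and the fact that $z$ vanishes at the half-period (Lemma 3.9 / the canonical parametrization), we get $a'(\rho) = 2x(\rho) = 2x_0^{\alpha} = 2\sqrt{x_0}$ for $\alpha = 1/2$, using the symmetry relation $x(0)^\alpha = y(\rho)$, $y(0) = x(\rho)^\alpha$ from Lemma 3.16. Next I would evaluate $\bar a(\rho)$ using Corollary 5.4: $\bar a = \frac{1}{x(1+\alpha)}\big(2\bar z + \alpha b \bar y - a \bar x\big)$. At the half-period, Propositions 5.6 and 5.7 give the crucial signs and values: $\bar z(\rho) > 0$, $\bar y(\rho) = \frac{1}{2\sqrt{x_0}} > 0$, and $\bar x(\rho) = -2x_0 < 0$. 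Since $a(\rho) = a_{x_0}(\rho) > 0$ and $b(\rho) > 0$ (Lemma 3.16), every term inside the parenthesis $2\bar z + \alpha b\bar y - a\bar x$ is strictly positive, so $\bar a(\rho) > 0$. Finally, by Proposition 5.5, $\frac{dP}{dx_0} > 0$, so $\tfrac12 \tfrac{dP}{dx_0} \cdot a'(\rho) > 0$ as well. Adding the two contributions gives $\frac{d}{dx_0}\big(a_{x_0}(\rho)\big) > 0$, so the first coordinate is strictly increasing in $x_0$; hence $\partial_0 N_+$ passes the vertical line test and is the graph of a function.

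The only real subtlety — and the step I would be most careful about — is making sure the sign bookkeeping in $\bar a(\rho)$ is airtight: one must confirm that $a(\rho)$ and $b(\rho)$ are genuinely positive (which is exactly the content of Lemma 3.16, $0 < b(\rho) < a(\rho)$) so that $-a\bar x > 0$ and $\alpha b \bar y > 0$, and one must cite Proposition 5.6 correctly for $\bar z(\rho) > 0$, which itself rests on Proposition 5.5 ($dP/dx_0 > 0$) and on $z'(\rho) < 0$. Everything else is a direct substitution. I would also remark that, in fact, this argument shows slightly more than claimed — that $a_{x_0}(\rho)$ is strictly increasing, not merely that the curve is a graph — which is the natural setup for the subsequent (harder) Monotonicity Theorem asserting that $b$ is then a \emph{non-increasing} function of $a$ along $\partial_0 N_+$.
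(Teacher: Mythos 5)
Your proof is correct and follows essentially the same route as the paper: the chain rule decomposition $\bar a(\rho)+\tfrac12\tfrac{dP}{dx_0}a'(\rho)$, the formula for $\bar a$ from Corollary 5.4, and the sign information from Propositions 5.5, 5.6, and 5.7, with the positivity of $a(\rho)$ and $b(\rho)$ supplying the remaining signs. The extra care you take in verifying $a(\rho),b(\rho)>0$ (via the endpoint lemma, Lemma 3.17) is a reasonable elaboration of the paper's terser "all the terms above are positive."
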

\begin{proof}
This is equivalent to showing that 
$$\frac{d}{dx_0}a_{x_0}(P(x_0)/2)>0.$$
The chain rule gets us
$$\frac{d}{dx_0}a_{x_0}(P(x_0)/2)=\bar{a}(P(x_0)/2)+(\frac{1}{2}\frac{dP(x_0)}{dx_0})a'(P(x_0)/2)$$
$$=\bigg(\frac{2}{3x}\bigg(2\bar{z}+\frac{1}{2}b\bar{y}-a\bar{x}\bigg)+(x+az/2)\frac{dP}{dx_0}\bigg)\bigg\rvert_{t=P(x_0)/2}$$
By Propositions 5.5, 5.6, and 5.7, we know that all the terms above are positive at $P(x_0)/2$, whence the desired result.
\end{proof}
As with showing that $b'>0$ in the interval $(0,\rho)$, things are more difficult with the function $b$. We need three lemmas first. The proof of the following may also suggest that $\alpha=1/2$ is a "special case"; nevertheless, the situation is different than for Sol. Richard Schwartz proves a similar limit in \cite{S} for Sol (in which case, the limit is $2$), but his method uses an additional symmetry of the flow lines that we cannot use here. 
\begin{lemma}For $\alpha=1/2$,
$$\lim_{x_0\to 1} b_{x_0}(P(x_0)/2)=4$$
\end{lemma}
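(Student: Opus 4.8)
The plan is to turn the limit into the asymptotics of a single explicit integral as $x_0\to1$. First I would record the structure of the backward flowline. Let $H=H(x_0)=x_0^{1/2}\sqrt{1-x_0^2}$ be the value of $H(x,y,z)=|x|^{1/2}y$ along the flowline through $p_0=(x_0,\sqrt{1-x_0^2},0)$; this is constant along the flow, and $H\to0$ as $x_0\to1$. On $(0,\rho)$ the conservation law $x^{1/2}y=H$ together with $x^2+y^2+z^2=1$ gives $y^2=H^2/x$ and $z^2=1-x^2-H^2/x$, while $x'=-xz<0$ shows that $x$ decreases strictly from $x_0$ at $t=0$ to $x(\rho)=x_{\min}$ at $t=\rho$, where $x_0>x_{\min}$ are the two roots in $(0,1)$ of the cubic $x^3-x+H^2=0$. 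Its third root is $-(x_0+x_{\min})$ (the coefficient of $x^2$ vanishes), so $x-x^3-H^2=(x_0-x)(x-x_{\min})(x+x_0+x_{\min})$; moreover $H^2=x_0-x_0^3$ (as $x_0$ is a root) and $H^2=x_0x_{\min}(x_0+x_{\min})$ (product of the roots). In particular $x_{\min}\le\tfrac32H^2\to0$, hence $y(\rho)=\sqrt{1-x_{\min}^2}\to1$ as $x_0\to1$.

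Next, using the identity $b(\rho)=\tfrac{2}{y(\rho)}\int_0^{\rho}y(s)^2\,ds$ from the proof of the Bounding Box Theorem and the fact that $y(\rho)\to1$, it suffices to show $\int_0^{\rho}y^2\,ds\to2$. I would change variables from $s$ to $x$ via $ds=dx/x'=-dx/(xz)$, substitute $y^2=H^2/x$ and $z=\sqrt{1-x^2-H^2/x}$, and use the factorization above to get
$$\int_0^{\rho}y^2\,ds=\int_{x_{\min}}^{x_0}\frac{H^2\,dx}{x^{3/2}\sqrt{(x_0-x)(x-x_{\min})(x+x_0+x_{\min})}}.$$
I would then split this at $x=\tfrac12$ (valid once $x_0$ is close enough to $1$ that $x_{\min}<\tfrac14$). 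On $[\tfrac12,x_0]$ the integrand is at most $CH^2(x_0-x)^{-1/2}$ for an absolute constant $C$, so that piece is $O(H^2)\to0$. On $[x_{\min},\tfrac12]$ I would rescale by $x=x_{\min}v$ and use $H^2/x_{\min}=x_0(x_0+x_{\min})$ to write that piece as
$$\int_1^{1/(2x_{\min})}\frac{x_0(x_0+x_{\min})\,dv}{v^{3/2}\sqrt{(x_0-x_{\min}v)(v-1)(x_0+x_{\min}(v+1))}}.$$

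On the range of integration $x_{\min}v<\tfrac12$, so the factors $x_0-x_{\min}v$ and $x_0+x_{\min}(v+1)$ stay between fixed positive constants and the integrand is dominated by $C'v^{-3/2}(v-1)^{-1/2}$ on $(1,\infty)$, an integrable function with $\int_1^{\infty}v^{-3/2}(v-1)^{-1/2}\,dv=2$ (the substitution $v=1/w$ reduces it to $\int_0^1(1-w)^{-1/2}\,dw$). Since for each fixed $v>1$ the integrand converges pointwise to $v^{-3/2}(v-1)^{-1/2}$ as $x_0\to1$, dominated convergence gives that this piece tends to $2$. Adding the two pieces yields $\int_0^{\rho}y^2\,ds\to2$, and therefore $b(\rho)\to4$.

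The computation is routine rather than deep, which is precisely why the full write-up is a bit tedious: the one step needing care is the uniform control of the integrand near the two branch points. Near $x_0$ the vanishing prefactor $H^2$ must be used to beat the (integrable) square-root singularity, and near $x_{\min}$ one must check that essentially all of the integral's mass concentrates there and is captured, after the scaling $x=x_{\min}v$, by the elementary model integral $\int_1^{\infty}v^{-3/2}(v-1)^{-1/2}\,dv=2$. It is also worth noting that $\rho=P(x_0)/2\to\infty$ as $x_0\to1$ (by Proposition 3.2, since $t_0,t_1\to\infty$ when $H\to0$); this is why one does not estimate directly in the time variable on $[0,\rho]$ but instead passes to the variable $x$.
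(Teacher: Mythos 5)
Your proposal is correct, and it takes a genuinely different route from the paper. The paper exploits what it calls a ``minor miracle'' special to $\alpha=1/2$ (and $\alpha=1$): the substitution $f=y^2$ satisfies the clean second-order ODE $f''=f-\tfrac32 f^2$, which is solved explicitly in terms of the Jacobi elliptic function $\mathrm{dn}$; the integral $\int_0^{\rho}y^2\,ds$ is then evaluated in closed form via $E(k)=\int_0^{K(k)}\mathrm{dn}(u,k)^2\,du$, and the limit is read off from the asymptotics of the parameters $\nu_1,\dots,\nu_4$. You instead use only the first integrals available for every $\alpha$ --- the conserved quantity $x^{1/2}y=H$ and $x^2+y^2+z^2=1$ --- to change variables from $t$ to $x$, reducing the problem to the asymptotics of an explicit algebraic integral over $[x_{\min},x_0]$, which you then handle by splitting at $x=\tfrac12$ and applying dominated convergence after the rescaling $x=x_{\min}v$; the model integral $\int_1^\infty v^{-3/2}(v-1)^{-1/2}\,dv=2$ delivers the limit. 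Both arguments are complete (your incidental bound $x_{\min}\le\tfrac32H^2$ could be weakened to the immediate $x_{\min}<3H^2$ without loss, and your derivation of $y(\rho)\to 1$ from $y(\rho)^2=x_0(x_0+x_{\min})$ is actually cleaner than the paper's appeal to its Equation (9)). The trade-off: the paper's computation also produces the closed form $P(x_0)=2K(\nu_4)/\nu_3$, which is reused in Appendix A to prove Lemma 5.11, whereas your elementary argument yields only the limit; on the other hand, your method does not depend on the special linearizability at $\alpha=1/2$ and so looks like a plausible path toward evaluating $L(\alpha)$ for general $\alpha$ (Conjecture 5.10), which the paper's method cannot reach.
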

\begin{proof}
Recall equation (15), which gives us an integral form for $b$:
$$b_{x_0}(P(x_0)/2)=\frac{2}{y(P(x_0)/2)}\int_0^{P(x_0)/2} y(s)^2 ds.$$
From Equation (9), we know that $\lim_{x_0\to 1} y(P(x_0)/2)=1$, so it suffices to show 
$$\lim_{x_0\to1}\int_0^{P(x_0)/2} y(s)^2 ds=2.$$
Now, a minor miracle occurs. There is a second order ODE for $y$
$$2yy''=y^2-\frac{3}{2}y^4-2(y')^2$$
from which we can also get an ODE for $y^2$:
$$(y^2)''=y^2-(3/2)y^4.$$
Here we remark that the cases $\alpha=1$ and $1/2$ are the only ones where we do not also get an additional, unpleasant, and nonlinear term: $(y')^2$. 
Let $f=y^2$, then we can solve the ODE $f''=f-(3/2)f^2$ for $f$ and get (as done, for example, in \cite{CV}):
$$f(t,x_0)=\nu_1+\nu_2\textrm{dn}(t\nu_3,\nu_4)^2$$
where $\textrm{dn}(u,m)$ is the Jacobi elliptic function (with parameters as in Mathematica) and with
$$\nu_1=\frac{1}{2}(x_0^2 - \sqrt{4 x_0^2 - 3 x_0^4})$$
$$\nu_2=\frac{1}{2}(2 - 3 x_0^2 + \sqrt{4 x_0^2 - 3 x_0^4})$$
$$\nu_3=\frac{\sqrt{2 - 3 x_0^2 + \sqrt{4 x_0^2 - 3 x_0^4}}}{2\sqrt{2}}$$
$$\nu_4=\frac{2-3x_0^2-\sqrt{4x_0^2-3x_0^4}}{2-3x_0^2+\sqrt{4x_0^2-3x_0^4}}.$$
Since the function is periodic with period $P(x_0)$, and since we know that $\textrm{dn}(u,k)^2$ has period $2K(k)$, where $K$ is the complete elliptic integral of the first kind, we get the identity
\begin{equation}P(x_0)=\frac{2K(\nu_4)}{\nu_3}.\end{equation}
Now, in \cite{T}, many properties of elliptic integrals and elliptic functions are reviewed, including:
$$E(k)=\int_0^{K(k)} \textrm{dn}(u,k)^2du$$
where $E$ is the complete elliptic integral of the second kind. Therefore, we can write our original integral as
$$\int_0^{P(x_0)/2} y(s)^2 ds=\frac{\nu_1}{2}P(x_0)+\frac{\nu_2}{\nu_3}E(\nu_4).$$
As $x_0$ tends to $1$, the term $\nu_1P(x_0)/2$ converges to $0$ and the term $\frac{\nu_2}{\nu_3}E(\nu_4)$ converges to 2, which is what we desired to prove.
\end{proof}
More generally, we have the following conjecture
\begin{conjecture} Let
$$L(\alpha):=\lim_{x_0\to 1} b_{x_0,\alpha}(P(x_0,\alpha)/2), \textrm{ defined for all } \alpha \in (0,1].$$
We conjecture that $L(\alpha)$ is monotonically decreasing from $\alpha=0$ to $\alpha=1$ and that $\lim_{\alpha\to 0} L(\alpha)=\infty$. In fact, we also conjecture that 
$$L(\alpha)=\frac{2}{\alpha}.$$
\end{conjecture}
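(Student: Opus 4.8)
The plan is to prove the exact formula $L(\alpha)=2/\alpha$. Granting that, the other two assertions of the conjecture are immediate, since $\alpha\mapsto 2/\alpha$ is strictly decreasing on $(0,1]$ and blows up as $\alpha\to 0$. The argument will parallel the bookkeeping in the proof of Lemma 5.9, but since the explicit Jacobi-elliptic solution of the $y^2$-equation is available only for $\alpha=1/2$ (and $\alpha=1$), for general $\alpha$ I would replace it by a direct analysis of the half-period flowline $p_t$ as the initial value $x_0\to 1$.

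\emph{Reduction and the formal limit.} By equation $(15)$ together with the boundary relation $(9)$ one has, writing $\rho=\rho(x_0)=P(x_0)/2$,
$$b_{x_0,\alpha}\big(P(x_0)/2\big)=\frac{2}{y(\rho)}\int_0^{\rho}y(s)^2\,ds,\qquad y(\rho)=x(0)^\alpha=x_0^\alpha\to 1\ \ (x_0\to 1),$$
so it suffices to show $\int_0^{\rho}y(s)^2\,ds\to 1/\alpha$. Since $z(0)=z(\rho)=0$, integrating $z'=x^2-\alpha y^2$ over $[0,\rho]$ also gives the exact identity $\int_0^{\rho}x^2\,dt=\alpha\int_0^{\rho}y^2\,dt$, so on any piece of the orbit I can track whichever of $\int x^2$, $\int y^2$ is cleaner. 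Now as $x_0\to 1$ the holonomy invariant $H_\lambda=x_0^\alpha\sqrt{1-x_0^2}$ of the loop level set carrying $p_t$ tends to $0$, so the loop level sets degenerate onto $(\{x=0\}\cup\{y=0\})\cap S(G_\alpha)$, a heteroclinic cycle of the structure field through the saddles $(0,0,\pm 1)$; by the $Z=0$ symmetry of $p_t$ exploited throughout Section 3.3, the half-period flowline degenerates onto its upper half, namely the arc $A_1\subset\{y=0\}\cap S(G_\alpha)$ from $p_0\to(1,0,0)$ up to $(0,0,1)$, followed by the arc $A_2\subset\{x=0\}\cap S(G_\alpha)$ from $(0,0,1)$ down to $p_\rho\to(0,1,0)$. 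On $A_1$ we have $y\equiv 0$ and $z'=x^2$, so $\int_{A_1}y^2=0$ and $\int_{A_1}x^2=\int_0^1 dz=1$; on $A_2$ we have $x\equiv 0$ and $z'=-\alpha y^2$, so $\int_{A_2}\alpha y^2\,dt=1$, i.e. $\int_{A_2}y^2\,dt=1/\alpha$. Hence, formally, $\int_0^\rho y^2\to 0+1/\alpha=1/\alpha$, giving $L(\alpha)=2/\alpha$; this reproduces $L(1/2)=4$ from Lemma 5.9 and the value $2$ for Sol in \cite{S}.

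\emph{Justifying the limit --- the main obstacle.} Because $\rho(x_0)\to\infty$, there is no fixed integrable majorant and the interchange of limit and integral is the real content. The orbit spends essentially all of its diverging length inside any fixed neighborhood of the saddle $(0,0,1)$, whose linearization on $S(G_\alpha)$ has stable eigenvalue $-1$ (tangent to $A_1$) and unstable eigenvalue $\alpha$ (tangent to $A_2$). I would fix $\varepsilon>0$, choose a small round neighborhood $U_\delta$ of $(0,0,1)$, and split $[0,\rho]$ into the intervals where $p_t$ lies (a) before first entering $U_\delta$, (b) inside $U_\delta$, (c) after last leaving $U_\delta$. On (a) and (c), re-centering time at the crossing of $\partial U_\delta$, the flowline converges uniformly on compact time intervals --- continuous dependence on data, driven by $x_0\to 1$ --- to the corresponding sub-arcs of $A_1$, $A_2$, so $\int_{(a)}y^2\to 0$ and $\int_{(c)}y^2\to 1/\alpha-O(\delta^2)$. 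On (b) the standard saddle estimate $x(t)\le C\delta\,e^{-(t-t_{\mathrm{in}})}$ along the stable direction gives $\int_{(b)}x^2\le C^2\delta^2/2$, hence $\int_{(b)}y^2=\tfrac{1}{\alpha}\int_{(b)}x^2\le C^2\delta^2/(2\alpha)$, uniformly for $x_0$ near $1$. Letting $x_0\to 1$ and then $\delta\to 0$ gives $\int_0^\rho y^2\,dt\to 1/\alpha$ and therefore $L(\alpha)=2/\alpha$.

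The steps needing care are: verifying that, for $x_0$ near $1$, the half-period flowline crosses $U_\delta$ exactly once and enters and leaves close to the stable and unstable directions (this follows from the picture of the degenerating loop level set together with the $Z=0$ symmetry), and making the closeness of the crossing data quantitative enough to legitimate the uniform convergence on pieces (a), (c). I expect the whole estimate --- in effect a quantitative shadowing of the broken heteroclinic cycle, eased by the fact that on the hyperbolic pieces $\int x^2\,dt$ and $\int\alpha y^2\,dt$ are literally bounded monotone changes of $z$ --- to be routine but lengthy, in the spirit of the computation already consigned to Appendix A for $\alpha=1/2$.
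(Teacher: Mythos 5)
The paper does not prove this statement: it appears only as Conjecture 5.10, with the single case $\alpha=1/2$ established in Lemma 5.9 via the explicit Jacobi-elliptic solution of $(y^2)''=y^2-\tfrac{3}{2}y^4$, a device the paper itself notes is unavailable for general $\alpha$. So there is no proof in the paper to compare against; I can only assess your argument on its own terms. Your formal limit computation is correct and, I think, identifies the right mechanism: by equations $(15)$ and $(9)$ the claim reduces to $\int_0^\rho y^2\,dt\to 1/\alpha$; the level sets $x^\alpha y=x_0^\alpha\sqrt{1-x_0^2}\to 0$ (this is the first integral $H$ of Proposition 2.9, not the holonomy invariant $H_\lambda$ of Lemma 3.10 --- a harmless slip) degenerate onto the heteroclinic cycle through $(0,0,\pm 1)$; and the limiting arcs contribute $\int_{A_1}y^2=0$ and $\int_{A_2}\alpha y^2=\int(-z')=1$. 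This correctly reproduces $4$ at $\alpha=1/2$ and $2$ at $\alpha=1$.

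Two cautions on the justification, which is where the real work lies and which you have sketched rather than executed. First, the identity $\int_{(b)}y^2=\tfrac{1}{\alpha}\int_{(b)}x^2$ is false on a subinterval: integrating $z'=x^2-\alpha y^2$ over $(b)$ gives $\alpha\int_{(b)}y^2=\int_{(b)}x^2+z(t_{\rm in})-z(t_{\rm out})$, and you must carry the boundary term, which is $O(\delta^2)$ because both values of $z$ lie in $[\sqrt{1-\delta^2},1]$. This matters: the crude bound $\int_{(b)}y^2\le\delta^2\cdot|(b)|$ fails outright, since the time spent in $U_\delta$ grows like $\tfrac{1}{\alpha}\log(1/h)$, so the $z$-bookkeeping is essential rather than cosmetic. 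Second, the saddle passage you defer as ``routine but lengthy'' is actually easier than generic shadowing here, precisely because $x^\alpha y$ is conserved: the entry and exit points of $U_\delta=\{x^2+y^2<\delta^2\}$ are the two small-$h$ roots of $x^{2\alpha}(\delta^2-x^2)=h^2$, so the exit point converges explicitly to $(0,\delta,\sqrt{1-\delta^2})$ with no dynamical estimate, pieces (a) and (c) are then handled by continuous dependence over compact, $x_0$-independent time intervals, and the concavity $z''=-2z(x^2+\alpha y^2)<0$ guarantees the orbit meets $U_\delta$ in a single interval. As written, your proposal is an incomplete proof --- a sound strategy with the decisive estimates asserted rather than proven --- but I see no obstruction to completing it along these lines, which would promote the conjecture to a theorem.
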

The next technical lemma is quite wearisome; we have placed its demonstration in Appendix A. We direct the reader who does not wish to plough through its proof to Figure 9, which provides numerical evidence for its veracity.
\begin{lemma}
For $\alpha=1/2,$ we have
$$G(x_0):=\frac{dP}{dx_0}(x_0)-\pi\bigg(\frac{1}{2\sqrt{x_0}}+\frac{2x_0\sqrt{x_0}}{1-x_0^2}\bigg)<0,\quad \forall x_0 \in \bigg(\frac{1}{\sqrt{3}},1\bigg)$$ 
\end{lemma}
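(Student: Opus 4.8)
The plan is to exploit the closed form for the period. Applying the imaginary-modulus transformation $K(\nu_4)=\tfrac{1}{\sqrt{1-\nu_4}}K\!\left(\tfrac{-\nu_4}{1-\nu_4}\right)$ to the identity $P(x_0)=2K(\nu_4)/\nu_3$ from the proof of Lemma 5.9, and abbreviating $s:=2-3x_0^2$ and $D:=x_0\sqrt{4-3x_0^2}$ (so that $8\nu_3^2=s+D$ and $1-\nu_4=2D/(s+D)$), one rewrites the period in the form
\[
P(x_0)=\frac{4}{\sqrt{D}}\,K\bigl(m(x_0)\bigr),\qquad m(x_0)=\frac{D-s}{2D},
\]
where $m(x_0)$ increases from $0$ (at $x_0=1/\sqrt3$) to $1$ (as $x_0\to1$); the advantage is that now the elliptic parameter stays in $(0,1)$, where the monotonicity and convexity of $K$ and $E$ are standard. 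Differentiating with the textbook formulas $K'(m)=\tfrac{E(m)-(1-m)K(m)}{2m(1-m)}$ and $E'(m)=\tfrac{E(m)-K(m)}{2m}$ yields an identity $\tfrac{dP}{dx_0}=A(x_0)\,K\bigl(m(x_0)\bigr)+B(x_0)\,E\bigl(m(x_0)\bigr)$ with $A,B$ explicit and algebraic in $x_0$ and $\sqrt{4-3x_0^2}$, and with $B>0$ throughout. The lemma then becomes the inequality
\[
A(x_0)\,K\bigl(m(x_0)\bigr)+B(x_0)\,E\bigl(m(x_0)\bigr)<\pi\left(\frac{1}{2\sqrt{x_0}}+\frac{2x_0^{3/2}}{1-x_0^2}\right),\qquad x_0\in\Bigl(\tfrac1{\sqrt3},1\Bigr).
\]

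Next I would eliminate the transcendental functions using elementary estimates valid on $(0,1)$: $\tfrac\pi2\le K(m)\le\tfrac{\pi}{2\sqrt{1-m}}$, $\;1\le E(m)\le\tfrac\pi2$, and $E(m)\le K(m)$, each immediate from the integral representations. Since $B$ is positive but $A$ may change sign, one first splits $(1/\sqrt3,1)$ at the zeros of $A$ (finitely many, being roots of an explicit algebraic function), and on each piece substitutes, for each of $K$ and $E$, whichever of the bounds above matches the sign of its coefficient. After rationalizing $\sqrt{x_0}$ and $\sqrt{4-3x_0^2}$ this reduces the claim on any compact subinterval $[1/\sqrt3,\,1-\delta]$ to finitely many explicit polynomial inequalities, each verified by checking endpoint values together with the absence of interior roots.

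The genuine obstacle is a left-neighborhood of $x_0=1$, where the inequality is tight rather than slack: one computes $1-m(x_0)\sim 4(1-x_0)$ and $D\to1$, hence $P(x_0)\sim 2\log\tfrac{4}{1-x_0}$ and $\tfrac{dP}{dx_0}\sim\tfrac{2}{1-x_0}$, while the right-hand side behaves like $\tfrac{\pi}{1-x_0}$; the inequality survives in the limit only because $\pi>2$. To make this rigorous I would use the logarithmic expansion $K(m)=\log\tfrac{4}{\sqrt{1-m}}+R(m)$ with the classical sign-definite remainder bound $0<R(m)=O\bigl((1-m)\log\tfrac1{1-m}\bigr)$ together with the analogous bound for $R'(m)$, expand $A,B,m,1-m$ to one further order in $1-x_0$, and check that every error term is $o\bigl((1-x_0)^{-1}\bigr)$, so that the comparison of the leading coefficients $2<\pi$ closes the estimate on an explicit interval $[1-\delta,1)$. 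I expect the length of Appendix A to come entirely from this two-sided bookkeeping --- tracking the sign changes of $A$, locating polynomial roots, and matching the two blow-up rates at $x_0=1$ --- rather than from any conceptual difficulty. An equivalent alternative is to use Proposition 5.6 to write $\tfrac{dP}{dx_0}=2\,\bar z(P/2)/\lvert z'(P/2)\rvert$ and estimate $\bar z(P/2)$ directly from the linearized flow equations, but this merely relocates the same computation.
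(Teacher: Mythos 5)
Your setup coincides with the paper's: Appendix~A applies the same imaginary-modulus transformation to write $P=\sigma_1(x_0)\,K(\sigma_2(x_0))$ with $\sigma_2=\nu_4/(\nu_4-1)=m(x_0)\in(0,1)$ (your $4K(m)/\sqrt{D}$ is exactly $\sigma_1 K(\sigma_2)$), differentiates with the same formula for $dK/dm$, and observes --- by a Mathematica check rather than by locating roots --- that the coefficient of $K$ is negative and that of $E$ is positive on all of $(1/\sqrt3,1)$, so your splitting at the zeros of $A$ is unnecessary and only the lower bound on $K$ is ever used. Where the two arguments genuinely diverge is the endgame. The paper substitutes the Anderson--Vamanamurthy--Vuorinen bound $K(r^2)>\tfrac{\pi}{2}(\arctanh r/r)^{1/2}$ together with $E\le\pi/2$, then discards the $\arctanh$ factor via $\arctanh r/r\ge1$, so that the whole lemma collapses to the single algebraic inequality $(20)$, verified on the entire interval by one call to \texttt{Reduce}; in effect this is exactly your ``$K\ge\pi/2$, $E\le\pi/2$'' reduction, and it turns out that the resulting rational inequality survives all the way to $x_0=1$ even though, as you correctly note, the leading $\pi/(1-x_0)$ terms on both sides then cancel and the outcome is decided by the $O(1)$ terms (the coefficient of $K$ tends to $-4$ while $F-2G'$ stays inside $(-4,4)$). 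Your alternative treatment of a left neighborhood of $1$ --- using the logarithmic expansion of $K$ and the true limit $E\to1$ to get the comfortable margin $2<\pi$ rather than a near-tie --- is sound (the asymptotics $1-m\sim4(1-x_0)$, $dP/dx_0\sim 2/(1-x_0)$ all check out, and you correctly insist on bounding the derivative of the remainder rather than differentiating an asymptotic formula), and it is arguably more explanatory; the cost is the two-regime bookkeeping and an explicit $\delta$, versus the paper's single computer-certified algebraic inequality. Both routes still lean on symbolic computation for the sign of the $K$-coefficient and for the bulk-interval algebra, so neither is computation-free; yours merely relocates the delicate part from a tight rational inequality near $x_0=1$ to an asymptotic expansion there.
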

\begin{figure}[H]
\centering
\includegraphics[width=\textwidth]{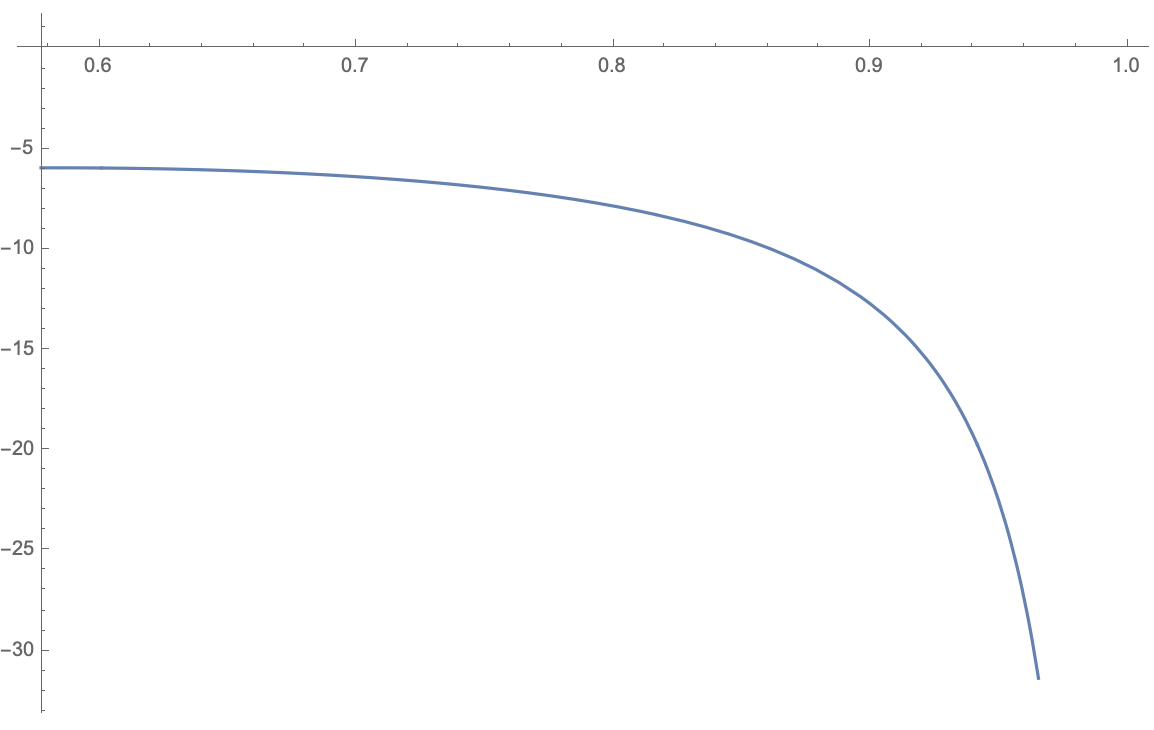}
\caption{The graph of the function $G(x_0)$.}
\end{figure}
The penultimate step towards the Monotonicity Theorem:
\begin{lemma}
$$\frac{d}{dx_0}b_{x_0}(P(x_0)/2)<0, \textrm{ whenever } b_{x_0}(P(x_0)/2)>\pi$$
\end{lemma}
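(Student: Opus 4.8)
The plan is to compute the left-hand side directly via the chain rule and reduce it to a one-variable inequality for $P'(x_0):=\tfrac{dP}{dx_0}(x_0)$ that is (up to an elementary check) exactly the content of Lemma 5.11. Write $\rho=P(x_0)/2$. By the chain rule,
$$\frac{d}{dx_0}\big(b_{x_0}(\rho)\big)=\bar b(\rho)+\tfrac12 P'(x_0)\,b'(\rho).$$
Since $z(\rho)=0$ (Proposition 5.6), the ODE $b'=2y-\alpha bz$ gives $b'(\rho)=2y(\rho)$, and Corollary 5.4 writes $\bar b(\rho)$ in terms of $\bar z(\rho),\bar y(\rho),\bar x(\rho),a(\rho),b(\rho)$. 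I would then insert the boundary data at $t=\rho$: $\bar z(\rho)=-\tfrac12 P'(x_0)\,z'(\rho)$ (Proposition 5.6), $\bar x(\rho)=-2x_0$ and $\bar y(\rho)=\tfrac{1}{2\sqrt{x_0}}$ (Proposition 5.7), $a(\rho)=\tfrac{b(\rho)\,y(\rho)}{2x(\rho)}$ (Proposition 5.1 at $t=\rho$, where $z=0$), together with $z'(\rho)=x(\rho)^2-\tfrac12 y(\rho)^2$ and $x(\rho)^2+y(\rho)^2=1$; the symmetry relation $(9)$ makes $x(\rho),y(\rho)$ explicit in $x_0$. After cancellation the $P'$-terms collapse — the identity $2z'(\rho)+3y(\rho)^2=2$ does the work — leaving an identity of the shape
$$\frac{d}{dx_0}\big(b_{x_0}(\rho)\big)=\frac{2}{3\,y(\rho)}\Big(P'(x_0)-b_{x_0}(\rho)\,\kappa(x_0)\Big),$$
where $\kappa(x_0)>0$ is an explicit elementary function of $x_0$ of the form $\tfrac{1}{4\sqrt{x_0}}+\tfrac{x_0\,y(\rho)}{x(\rho)}$ (up to the precise constants produced by the bookkeeping).

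Given this formula the reduction is immediate: the prefactor is positive, so $\tfrac{d}{dx_0}b_{x_0}(\rho)<0$ is equivalent to $P'(x_0)<b_{x_0}(\rho)\,\kappa(x_0)$, and since $\kappa(x_0)>0$, the hypothesis $b_{x_0}(\rho)>\pi$ reduces this to the $x_0$-only inequality $P'(x_0)\le\pi\,\kappa(x_0)$ on $(\tfrac{1}{\sqrt3},1)$. This is essentially Lemma 5.11, whose right-hand side is $\pi$ times an expression of exactly this type; the reduction is in fact engineered so that $\pi\kappa(x_0)$ is precisely (or is dominated by) the bound furnished by Lemma 5.11. This is the only point where the special value $\alpha=1/2$ — through the closed form of the period in terms of an elliptic integral, which is what Lemma 5.11 rests on — enters.

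So the main obstacle is not the differential-geometric set-up but this last comparison. It is genuinely delicate: as $x_0\to1$ both $P'(x_0)$ and $\pi\kappa(x_0)$ blow up, and the margin in $P'(x_0)\le\pi\kappa(x_0)$ is exactly the slack between the threshold $\pi$ in the hypothesis and the limiting value $4=\lim_{x_0\to1}b_{x_0}(\rho)$ of Lemma 5.9; the genuine analytic difficulty has therefore been exported to the proof of Lemma 5.11 in Appendix A, and what could still bite here is an arithmetic mismatch in the reduction, which I would double-check against the $\alpha=1/2$ specializations of Propositions 5.1, 5.6 and 5.7. Everything preceding that step is routine differentiation of the Section 5.1 ODE system. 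Finally, once this lemma is in hand the Monotonicity Theorem follows by a short bootstrap: the set of $x_0$ with $b_{x_0}(\rho)>\pi$ is open, on it $b_{x_0}(\rho)$ is strictly decreasing, and since its right-hand limit is $4>\pi$ this set is forced to be the whole interval, so $b_{x_0}(\rho)$ is decreasing throughout and $\partial_0 N_+$ is the graph of a non-increasing function.
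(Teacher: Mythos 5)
Your proposal follows the paper's proof of this lemma essentially verbatim: chain rule at $t=\rho$, substitution of the boundary data from Propositions 5.1, 5.6 and 5.7 into Corollary 5.4, collapse of the $P'$-terms via $x^2+y^2=1$ at the half-period, and reduction to Lemma 5.11 using the hypothesis $b>\pi$. The one point you flag for double-checking is genuinely the delicate one: honest bookkeeping yields the criterion $2P' < b\left(\bar y - \tfrac{y}{x}\bar x\right)$ at $t=\rho$ (i.e.\ your $\kappa=\tfrac{1}{4\sqrt{x_0}}+\tfrac{x_0\sqrt{x_0}}{1-x_0^2}$), whereas the paper passes from $(2x^2+2y^2)\tfrac{dP}{dx_0}<b\bar y-2a\bar x$ to $\tfrac{dP}{dx_0}<b(\bar y-\tfrac{y}{x}\bar x)$ without accounting for $2x^2+2y^2=2$, and hence invokes Lemma 5.11 with twice your constant — so the factor of $2$ you worry about is exactly where the comparison with the Appendix A bound needs to be re-examined.
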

\begin{proof}
By the chain rule we know (as we did for Corollary 5.4)
$$\frac{d}{dx_0}b_{x_0}(P(x_0)/2)=\bigg(-\frac{2}{3y}\bigg(2\bar{z}+\frac{1}{2}b\bar{y}-a\bar{x}\bigg)+(y-\frac{1}{4}bz)\frac{dP}{dx_0}\bigg)\bigg\rvert_{t=P(x_0)/2}.$$
The $bz$ term is zero at the half period, and we can simplify further. The above is less than zero if and only if
$$(2x^2+2y^2)\frac{dP}{dx_0}<b\bar{y}-2a\bar{x} \textrm{ at } t=P(x_0)/2.$$
However we know $x^2+y^2=1$ at the half period, and we may also employ the Reciprocity Lemma (equivalently, Theorem 3.15 or Proposition 5.1). This yields that our desired result is equivalent to showing
$$\frac{dP}{dx_0}(x_0)<\big(b(\bar{y}-\frac{y}{x}\bar{x})\big)\big\rvert_{t=P(x_0)/2}.$$
By hypothesis, $b_{x_0}(P(x_0)/2)>\pi$, and we can use equation $(9)$ and Proposition 5.7. This means it suffices to show
$$\frac{dP}{dx_0}(x_0)<\pi\bigg(\frac{1}{2\sqrt{x_0}}+\frac{2x_0\sqrt{x_0}}{1-x_0^2}\bigg),\quad \forall x_0 \in \bigg(\frac{1}{\sqrt{3}},1\bigg)$$
which is nothing but Lemma 5.11.
\end{proof}
Now we are ready to prove the Monotonicity Theorem.
\begin{theorem}[The Monotonicity Theorem]
For $\alpha=1/2$, $\partial_0 N_+$ is the graph of a non-increasing function (in Cartesian coordinates).
\end{theorem}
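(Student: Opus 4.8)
The plan is to combine the three preceding results (Corollary 5.8, Lemma 5.9, and Lemma 5.12) into a short continuity--bootstrap argument, with no further hard computation. By Corollary 5.8 the map $x_0 \mapsto a_{x_0}(P(x_0)/2)$ is strictly increasing, so $\partial_0 N_+$ is already known to be a Cartesian graph; it remains only to show that the second coordinate \emph{decreases} along this graph, i.e. that $B(x_0) := b_{x_0}(P(x_0)/2)$ is non-increasing on $\big(1/\sqrt{3},\,1\big)$. First I would note that $B$ is a smooth function of $x_0$: the solutions $x,y,z,a,b$ of the governing ODE system depend smoothly on the initial value $x_0$, and $P(x_0)$ is smooth (being given by the closed-form elliptic expression of Corollary 3.3, composed with the smooth relation $\beta^3 = \tfrac{3\sqrt{3}}{2}(x_0-x_0^3)$ of Proposition 5.5).

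The key observation is that Lemma 5.12 gives $B'(x_0) < 0$ on the open set $U := \{x_0 \in (1/\sqrt{3},1) : B(x_0) > \pi\}$, while Lemma 5.9 tells us $B(x_0) \to 4 > \pi$ as $x_0 \to 1$, so $U$ contains an interval of the form $(c,1)$. I would then show that $U$ is all of $(1/\sqrt{3},1)$ by a connectedness argument. Let $(c,1)$ be the connected component of $U$ having $1$ as an endpoint of its closure, with $c \in [1/\sqrt{3},1)$, and suppose for contradiction that $c > 1/\sqrt{3}$. Then $c \notin U$, so $B(c) \leq \pi$ by continuity. But $B' < 0$ throughout $(c,1)$ forces $B$ to be strictly decreasing there, so for any fixed $x_1 \in (c,1)$ and any $t \in (c,x_1)$ we have $B(t) > B(x_1)$; letting $t \to c^+$ and using continuity gives $B(c) \geq B(x_1) > \pi$, contradicting $B(c) \leq \pi$. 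Hence $c = 1/\sqrt{3}$, so $B > \pi$ and therefore $B' < 0$ on the entire interval; in particular $B$ is strictly decreasing, a fortiori non-increasing.

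Composing the two monotonicities finishes the proof: since $a_{x_0}(P(x_0)/2)$ is strictly increasing in $x_0$ and $b_{x_0}(P(x_0)/2)$ is strictly decreasing in $x_0$, the curve $\partial_0 N_+$, parametrized as in Section 5.1, is the graph of a strictly decreasing --- in particular non-increasing --- function $b = b(a)$, which is the assertion of the theorem.

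As for the main obstacle: essentially all of the difficulty has been front-loaded into the lemmas cited above. Lemma 5.9 pins the curve into the regime $B > \pi$ where Lemma 5.12 applies, and Lemma 5.12 itself rests on the delicate estimate of Lemma 5.11 (the bound $\tfrac{dP}{dx_0} < \pi\big(\tfrac{1}{2\sqrt{x_0}} + \tfrac{2x_0\sqrt{x_0}}{1-x_0^2}\big)$, relegated to Appendix A). Given those inputs, the Monotonicity Theorem is routine. If I were attacking it from scratch, the hard part would be exactly Lemma 5.11: controlling the derivative of the elliptic-integral period function uniformly on $(1/\sqrt{3},1)$, capturing the blow-up as $x_0 \to 1$, and matching it against the explicit algebraic bound --- which is presumably why $\alpha=1/2$ (where a clean closed form for $P$ exists) is the only case settled here.
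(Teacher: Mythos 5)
Your proposal is correct and takes essentially the same route as the paper: both arguments use Lemma 5.9 to force $b_{x_0}(P(x_0)/2)$ above the threshold $\pi$ near $x_0=1$ and then bootstrap via Lemma 5.12 to propagate $b>\pi$ (the paper phrases this as showing $b\geq 4$ always) across all of $\bigl(1/\sqrt{3},1\bigr)$, with Corollary 5.8 handling the first coordinate. Your connected-component formulation is just a more carefully written version of the paper's ``last initial value where $b(P/2)\leq\pi$'' contradiction.
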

\begin{proof}
This is equivalent to showing 
$$\frac{d}{dx_0}b_{x_0}(P(x_0)/2)\leq0$$
or, as in the proof of the previous lemma,
$$\frac{dP}{dx_0}(x_0)\leq\big(b(\bar{y}-\frac{y}{x}\bar{x})\big)\big\rvert_{t=P(x_0)/2}.$$
We now show that $b_{x_0}(P(x_0)/2)\geq4$ always. If this is true, then the theorem follows by Lemma 5.12. Otherwise, assume that  $b_{x_0}(P(x_0)/2)<4$ for some choice of $x_0$. By Lemma 5.9, $b_{x_0}(P(x_0)/2)$ limits to $4$ as $x_0$ tends to $1$, so $b_{x_0}(P(x_0)/2)$ must eventually be greater than $\pi$. Let $x_0'$ be the last initial value where $b(P/2)\leq \pi$. By Lemma 5.12, $b_{x_0}(P(x_0)/2)$ will be strictly decreasing in $x_0$, which is a contradiction. Hence, $b>4$ always and the theorem is proven. 
\end{proof}
\section{Appendix A: Proof of Lemma 5.11}
\begin{lemma*}[5.11]
For $\alpha=1/2,$ we have
$$G(x_0):=\frac{dP}{dx_0}(x_0)-\pi\bigg(\frac{1}{2\sqrt{x_0}}+\frac{2x_0\sqrt{x_0}}{1-x_0^2}\bigg)<0,\quad \forall x_0 \in \bigg(\frac{1}{\sqrt{3}},1\bigg)$$ 
\end{lemma*}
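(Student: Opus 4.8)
The plan is to reduce everything to the closed-form expression for the period. From the proof of Lemma 5.9 (equivalently, from Corollary 3.3) we have the representation $P(x_0)=2K(\nu_4(x_0))/\nu_3(x_0)$, where $\nu_3,\nu_4$ are the explicit algebraic functions of $x_0$ listed there. Differentiating in $x_0$ and using the standard formula $\frac{dK}{dm}=\frac{E(m)}{2m(1-m)}-\frac{K(m)}{2m}$ (suitably transformed when $\nu_4<0$ via the imaginary-modulus identity $K(-m)=\frac{1}{\sqrt{1+m}}K\!\left(\frac{m}{1+m}\right)$) yields $\frac{dP}{dx_0}$ as an explicit combination of $K(\nu_4)$, $E(\nu_4)$ and algebraic functions of $x_0$. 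Thus $G(x_0)<0$ becomes an inequality among two complete elliptic integrals of a known argument and elementary functions of $x_0$, to be verified on $(1/\sqrt3,1)$. One could also differentiate the integral of Proposition 3.2 directly, using the explicit $t_0,t_1$, but the elliptic form is cleaner.

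\textbf{Behaviour near $x_0=1/\sqrt3$.} Here $\beta\to1$, and by the chain-rule computation in the proof of Proposition 5.5 one has $d\beta/dx_0=\frac{\sqrt3}{2\beta^2}(1-3x_0^2)\to0$; since $dP/d\beta$ is finite at $\beta=1$ (Proposition 3.5 and the closed form), $\frac{dP}{dx_0}(1/\sqrt3)=0$, while the bracketed term on the right equals $\frac{3}{2}\,3^{1/4}>0$. By continuity, $G<0$ on a right-neighbourhood of $1/\sqrt3$.

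\textbf{Behaviour near $x_0=1$.} As $x_0\to1$ one has $\nu_3\sim\sqrt{1-x_0}$ and $\nu_4\to-\infty$; applying the imaginary-modulus transformation and the logarithmic asymptotics $K(m)\sim\frac12\log\frac{16}{1-m}$ as $m\to1^-$ gives $P(x_0)=-2\log(1-x_0)+O(1)$, hence $\frac{dP}{dx_0}=\frac{2}{1-x_0}\bigl(1+o(1)\bigr)$. The comparison function behaves like $\frac{\pi}{1-x_0}\bigl(1+o(1)\bigr)$, and since $2<\pi$ we obtain $G<0$ on a left-neighbourhood of $1$, with a definite (constant-ratio) margin.

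\textbf{The interior, and the main obstacle.} What remains — and this is the genuinely laborious part — is $G(x_0)<0$ on a closed subinterval $[1/\sqrt3+\delta,\,1-\delta]$, where $K(\nu_4)$, $E(\nu_4)$ and all algebraic coefficients are smooth and bounded but the two sides are of comparable magnitude (near $x_0=0.9$ the margin is only on the order of tens of percent). The plan is to bound $\frac{dP}{dx_0}$ from above using elementary inequalities for complete elliptic integrals — monotonicity of $m\mapsto E(m)/K(m)$, the bounds $\frac\pi2\sqrt{1-m}\le E(m)\le\frac\pi2$ and $K(m)\le\frac{\pi}{2\sqrt{1-m}}$, and Legendre's relation — so as to reduce $G(x_0)<0$ to a rational inequality in $x_0$ and $\sqrt{4x_0^2-3x_0^4}$, which can be cleared of radicals and denominators and checked directly; where a single crude bound is not sharp enough, one subdivides the interval and uses explicit numerical enclosures of $K$ and $E$ at the subdivision points together with their monotonicity to control $G$ in between. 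The central difficulty is precisely this: neither side has a clean closed form, so one must find elliptic-integral estimates that are simultaneously tight enough to beat the modest interior margin and simple enough to survive the algebra — which is why the argument is deferred to this appendix.
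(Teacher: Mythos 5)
Your overall strategy coincides with the paper's: start from $P(x_0)=2K(\nu_4)/\nu_3$, apply the imaginary-modulus transformation so that the argument of $K$ lies in $[0,1)$, differentiate with the standard formulas, and then replace $K$ and $E$ by elementary bounds so that $G(x_0)<0$ becomes an algebraic inequality in $x_0$. Your endpoint analysis is correct and is a nice addition (the cancellation $\tfrac{\pi}{2}\sigma_1'(1/\sqrt3)+\tfrac{\pi}{8}\sigma_1\sigma_2'(1/\sqrt3)=0$ does give $dP/dx_0\to 0$ at the left endpoint, and the $-2\log(1-x_0)$ asymptotic with the $2<\pi$ comparison is right at the other end). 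But the lemma is not proved by its endpoint behaviour, and the step you yourself identify as the central difficulty is left as a plan rather than executed; that is a genuine gap. Concretely: writing $dP/dx_0=K(\sigma_2)\,A+E(\sigma_2)\,B$ with $A<0$ and $B>0$ on the whole interval, an upper bound for $dP/dx_0$ requires a \emph{lower} bound for $K$ and an upper bound for $E$; of the inequalities you list, $K(m)\le \pi/(2\sqrt{1-m})$ points the wrong way, and you never exhibit a lower bound for $K$ that is both valid and strong enough, nor verify that the resulting algebraic inequality actually holds. Your fallback of subdividing the interval and using "numerical enclosures" of $K$ and $E$ would additionally need rigorous interval arithmetic to count as a proof.

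The paper closes exactly this gap as follows: it uses the Anderson--Vamanamurthy--Vuorinen bound $K(r^2)>\tfrac{\pi}{2}\bigl(\arctanh r/r\bigr)^{1/2}$ (a lower bound, applied to the negative-coefficient term) together with $E\le \pi/2$, and then the elementary fact $\arctanh(r)/r\ge 1$, which reduces the entire lemma on all of $(1/\sqrt3,1)$ -- with no endpoint case analysis and no subdivision -- to the single explicit inequality $(2C-B)^2/A^2<1$ between algebraic functions of $x_0$ (inequality $(20)$ in Appendix A), which is then verified exactly by quantifier elimination in Mathematica. In effect the decisive content is $|2C-B|<|A|$, i.e.\ essentially the bounds $K\ge\pi/2$, $E\le\pi/2$ plus one rational-radical inequality; if you supply that reduction and its verification, your argument becomes the paper's, and your asymptotic checks at the two endpoints then serve only as corroboration.
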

\begin{proof}
In the course of proving Lemma 5.9, we defined the functions
$$\nu_3(x_0)=\frac{\sqrt{2 - 3 x_0^2 + \sqrt{4 x_0^2 - 3 x_0^4}}}{2\sqrt{2}}$$
and
$$\nu_4(x_0)=\frac{2-3x_0^2-\sqrt{4x_0^2-3x_0^4}}{2-3x_0^2+\sqrt{4x_0^2-3x_0^4}}.$$
To avoid a disarray of variables, we will omit writing the dependence of $\nu_3$ and $\nu_4$ on $x_0$ explicitly. Recall Equation 17, where we found the following expression for the period function in $G_{1/2}$:
$$P(x_0)=\frac{2K(\nu_4)}{\nu_3}$$
where the parameter for the complete elliptic integral $K$ is as in Mathematica. We use the imaginary-modulus transformation in \cite[Eq.~19.7.5]{DLMF} to change the argument of $K$ to something more appropriate:
$$P(x_0)=\frac{2K(\nu_4)}{\nu_3}=\frac{2}{\nu_3 \sqrt{1-\nu_4}}\cdot K\bigg(\frac{\nu_4}{\nu_4-1}\bigg)$$
We prefer the above expression since the argument for $K$ is now a diffeomorphism from $[1/\sqrt{3},1)$ onto $[0,1)$. Again, for brevity's sake, we define the following rational functions:
$$\sigma_1(x_0)=\frac{2}{\nu_3\sqrt{1-\nu_4}}$$
$$\sigma_2(x_0)=\frac{\nu_4}{\nu_4-1}$$
to get the following expression for the period function
$$P(x_0)=\sigma_1(x_0)K(\sigma_2(x_0)).$$
Remembering the differentiation formula for $K$ (also given in \cite[Eq. ~19.4.1]{DLMF}), we get (again omitting dependences on $x_0$)
$$\frac{dP}{dx_0}=K(\sigma_2)\bigg(\frac{d\sigma_1}{dx_0}-\frac{\sigma_1}{2\sigma_2}\cdot\frac{d\sigma_2}{dx_0}\bigg)+E(\sigma_2)\cdot\frac{\sigma_1\cdot\frac{d\sigma_2}{dx_0}}{2\sigma_2(1-\sigma_2)}$$
Thus, proving this lemma amounts to demonstrating that
\begin{equation}K(\sigma_2)\bigg(\frac{d\sigma_1}{dx_0}-\frac{\sigma_1}{2\sigma_2}\cdot\frac{d\sigma_2}{dx_0}\bigg)+E(\sigma_2)\cdot\frac{\sigma_1\cdot\frac{d\sigma_2}{dx_0}}{2\sigma_2(1-\sigma_2)}-\pi \bigg(\frac{1}{2\sqrt{x_0}}+\frac{2x_0\sqrt{x_0}}{1-x_0^2}\bigg)<0\end{equation}
We will use the following inequality, proven in \cite{AQ}, that also gives a good approximation of $K$,
\begin{equation}
\frac{\pi}{2}\bigg(\frac{\arctanh{r}}{r}\bigg)^{1/2}<K(r^2)<\frac{\pi}{2}\bigg(\frac{\arctanh{r}}{r}\bigg), \quad \forall r, 0<r<1
\end{equation}
Before continuing, we wish to emphasize that the parameter for $K$ above is as in Mathematica for the \textbf{EllipticK} function, which causes a difference (albeit superficial) between our statement of the inequality and the statement given in \cite{AQ}. Now, it can be verified with Mathematica that the quantity 
$$\bigg(\frac{d\sigma_1}{dx_0}-\frac{\sigma_1}{2\sigma_2}\cdot\frac{d\sigma_2}{dx_0}\bigg)$$ is always negative and
$$\frac{\sigma_1\cdot\frac{d\sigma_2}{dx_0}}{2\sigma_2(1-\sigma_2)}$$ is always positive for $x_0 \in (1/\sqrt{3},1)$.
Thus, using the inequality given in $(19)$ (and the well-known fact that $E(k)\leq \pi/2$ for any real $0<k<1$), we know that
$$K(\sigma_2)\bigg(\frac{d\sigma_1}{dx_0}-\frac{\sigma_1}{2\sigma_2}\cdot\frac{d\sigma_2}{dx_0}\bigg)+E(\sigma_2)\cdot\frac{\sigma_1\cdot\frac{d\sigma_2}{dx_0}}{2\sigma_2(1-\sigma_2)}-\pi \bigg(\frac{1}{2\sqrt{x_0}}+\frac{2x_0\sqrt{x_0}}{1-x_0^2}\bigg)<$$
$$\frac{\pi}{2}\bigg(\frac{\arctanh{\sqrt{\sigma_2}}}{\sqrt{\sigma_2}}\bigg)^{1/2}\bigg(\frac{d\sigma_1}{dx_0}-\frac{\sigma_1}{2\sigma_2}\cdot\frac{d\sigma_2}{dx_0}\bigg)+\frac{\pi}{2}\cdot \frac{\sigma_1\cdot\frac{d\sigma_2}{dx_0}}{2\sigma_2(1-\sigma_2)}-\pi \bigg(\frac{1}{2\sqrt{x_0}}+\frac{2x_0\sqrt{x_0}}{1-x_0^2}\bigg)$$
It follows that, to finish our proof, it suffices to prove
$$\bigg(\frac{\arctanh{\sqrt{\sigma_2}}}{\sqrt{\sigma_2}}\bigg)^{1/2}\bigg(\frac{d\sigma_1}{dx_0}-\frac{\sigma_1}{2\sigma_2}\cdot\frac{d\sigma_2}{dx_0}\bigg)+\frac{\sigma_1\cdot\frac{d\sigma_2}{dx_0}}{2\sigma_2(1-\sigma_2)}-2\bigg(\frac{1}{2\sqrt{x_0}}+\frac{2x_0\sqrt{x_0}}{1-x_0^2}\bigg)<0.$$
We remember that $$\bigg(\frac{d\sigma_1}{dx_0}-\frac{\sigma_1}{2\sigma_2}\cdot\frac{d\sigma_2}{dx_0}\bigg)$$ is always negative, so it in fact suffices to show:
$$\frac{\arctanh{\sqrt{\sigma_2}}}{\sqrt{\sigma_2}}>\bigg(2\bigg(\frac{1}{2\sqrt{x_0}}+\frac{2x_0\sqrt{x_0}}{1-x_0^2}\bigg)-\frac{\sigma_1\cdot\frac{d\sigma_2}{dx_0}}{2\sigma_2(1-\sigma_2)}\bigg)^{2}\cdot\frac{1}{\bigg(\frac{d\sigma_1}{dx_0}-\frac{\sigma_1}{2\sigma_2}\cdot\frac{d\sigma_2}{dx_0}\bigg)^{2}}.$$
Using elementary calculus, it can be proven that the function
$$F(x)=\frac{\arctanh{x}}{x}$$
satisfies 
$$F(x)\geq1$$
with equality if and only if $x=0$. We can simplify $\sigma_2$ and see that
$$\sigma_2(x_0)= \frac{-2+3x_0^2+\sqrt{4x_0^2-3x_0^4}}{2\sqrt{4x_0^2-3x_0^4}}$$
so $\sigma_2(x_0)=0$ if and only if $x_0=\pm 1/\sqrt{3}$. It follows that
$$\frac{\arctanh{\sqrt{\sigma_2}}}{\sqrt{\sigma_2}}\geq 1$$
for $x_0\in[1/\sqrt{3},1)$ and the inequality is strict unless $x_0=1/\sqrt{3}$. So, we have reduced our desired inequality $(18)$ to showing that
$$1>\bigg(2\bigg(\frac{1}{2\sqrt{x_0}}+\frac{2x_0\sqrt{x_0}}{1-x_0^2}\bigg)-\frac{\sigma_1\cdot\frac{d\sigma_2}{dx_0}}{2\sigma_2(1-\sigma_2)}\bigg)^{2}\cdot\frac{1}{\bigg(\frac{d\sigma_1}{dx_0}-\frac{\sigma_1}{2\sigma_2}\cdot\frac{d\sigma_2}{dx_0}\bigg)^{2}}$$
for $x_0\in (1/\sqrt{3},1)$. After simplification, the desired inequality is
\tiny
\begin{equation}\frac{\left(27 x_0^6-36 x_0^4-3 x_0^2+8 \sqrt[4]{4-3
   x_0^2}+4\right)^2 \left(-3 x_0^2+\sqrt{4-3 x_0^2}
   x_0+2\right)^4}{64 \sqrt{4-3 x_0^2} \left(-27 x_0^8+72
   x_0^6-57 x_0^4+12 x_0^2+6 \sqrt{4-3 x_0^2} x_0-9
   \sqrt{4-3 x_0^2} x_0^7+18 \sqrt{4-3 x_0^2} x_0^5-17 \sqrt{4-3
   x_0^2} x_0^3+2\right)^2}<1\end{equation}\normalsize
for $x_0\in(1/\sqrt{3},1)$. Fortunately for us, Mathematica (and presumably other computer algebra systems) can verify inequality $(20)$ very quickly, which allows us to conclude our proof. 

We have included the Mathematica code that proves this last inequality in Appendix B.
\end{proof}
\section{Appendix B: Computer Code}
Here we present the Mathematica code that generates the figures we have presented. In addition, we present the modification to Richard Schwartz's Java program, \cite{S2}, which allows the rendering of geodesics and geodesic spheres in $G_\alpha$ groups (Figure 3), the Mathematica code written by Stephen Miller that allows numerical computation of the period function for arbitrary positive $\alpha$, and the Mathematica code that verifies the inequality in $(20)$. We note that the whole of the program \cite{S2} is available online, on Richard Schwartz's website.
\subsection{Figures 1 and 2}
This gets us the implicit plots in the plane:
\begin{lstlisting}[language=Mathematica]
Manipulate[
 ContourPlot[(1/a) E^(2*a*z) + E^(-2 z) + 
    w^2 == (1+a)/(a*b^2), {w,-10,10}, {z,-10,10}, 
  PlotRange -> All, AspectRatio -> Automatic, PlotPoints -> 50], 
  {b,0.001,1}, {a,0.001,1}]
\end{lstlisting}
A 3-D rendering can be obtained with:
\begin{lstlisting}[language=Mathematica]
Manipulate[
 ContourPlot3D[(1/a) E^(2*a*z) + 
    E^(-2 z) + (x - Sqrt[a]*y)^2 == (1 + a)/(a*b^2), {x, -10, 
   10}, {y,-10,10}, {z,-10,10}, PlotRange -> All], {b,0.01, 
  1}, {a,0.05,1}]
\end{lstlisting}
\newpage
\subsection{Figure 3}
The GroupStructure class file in \cite{S2} contains a routine which specifies both the structure equations and the structure vector field ($\Sigma_\alpha$ for the $G_\alpha$ groups). Richard Schwartz made his program to explore Sol, but a small modification to the structure routine allows us to use his program for all of the $G_\alpha$ groups as well as other Lie groups (equipped with a choice of left-invariant metric) such as Nil. If the reader would like to investigate the geodesic geometry of the $G_\alpha$ groups using Schwartz's program, they need only download Schwartz's program from \cite{S2}, insert the following subroutine into the GroupStructure class file, and quickly make the necessary modifications to the user interface.
\begin{lstlisting}[language=Java]
/**For G Alpha groups, where "A" denotes the choice of alpha*/

/**This specifies the group law*/
    public static Vector leftTranslationGroup(Vector V,Vector W) {
	double x=V.x[0];
	double y=V.x[1];
	double z=V.x[2];
	double a=W.x[0];
	double b=W.x[1];
	double c=W.x[2];
	double aa=a*Math.exp(+z)+x;
	double bb=b*Math.exp(-A*z)+y;
	double cc=c+z;
	Vector Z=new Vector(aa,bb,cc);
	return Z;			 
}
/**This specifies the structure equations*/
    public static Vector structureFieldGroup(Vector V) {
	double x=V.x[0];
	double y=V.x[1];
	double z=V.x[2];
	double a=+x*z;
	double b=-A*y*z;
	double c=+A*y*y-x*x;
	Vector W=new Vector(a,b,c);
	return W;
    }
\end{lstlisting}
\subsection{Figure 4, 5, and 8}
The following renders the symmetric flowline associated to any choice of $x_0 \in (\frac{1}{\sqrt{3}}, 1)$ for the $G_{1/2}$ group. Moreover, this provides numerical evidence that the unwieldy expression for the period function in $G_{1/2}$ we presented earlier is indeed correct. Figure 5 just adds the straight line from the origin to the endpoint of the flowline to show that the "Bounding Triangle Theorem" does not work in general while Figure 8 just plots the endpoints of the flowlines.
\begin{lstlisting}[language=Mathematica]

(*This is the period function for the G_{1/2} group, 
as in Corollary 3.4 *)

P[B_]:= (*as in the statement of the corollary *)

(*This is the period function, 
after the change of variables from \beta to x0*)

L1[x0_] := P[((3*Sqrt[3]/2)*(x0-x0^3))^(1/3)]
              
 (*This numerically solves the fundamental system of ODE's *)
 
 Manipulate[
 	s1 = NDSolve[{x'[t] == -x[t]*z[t], y'[t] == (1/2)*y[t]*z[t], 
    z'[t] == -(1/2)*y[t]^2 + x[t]^2, a'[t] == 2*x[t] + a[t]*z[t], 
    b'[t] == 2 y[t] - (1/2)*b[t]*z[t], z[0] == 0, a[0] == 0, 
    b[0] == 0, x[0] == x0, y[0] == Sqrt[1 - x0^2]}, {x, y, z, a, 
    b}, {t, 0, Re[L1[x0]]/2}], {x0, 1/Sqrt[3], 1}]
    
 (*This plots the flowline*)    
 
    ParametricPlot[{a[t], b[t]} /. s1, {t, 0,Re[L1[x0]/2]}, 
 PlotRange -> All]
\end{lstlisting}
\newpage
\subsection{Figures 6 and 7}
We can use Stephen Miller's program to compute the period for arbitrary alpha, or we can "find" it by inspection, preferably looking at the function $z$.
\begin{lstlisting}[language=Mathematica]
(*This numerically solves our ODE's, d is alpha here*)

Manipulate[
 s = NDSolve[{x'[t] == -x[t]*z[t], y'[t] == d*y[t]*z[t], 
    z'[t] == -d*y[t]^2 + x[t]^2, a'[t] == 2*x[t] + a[t]*z[t], 
    b'[t] == 2 y[t] - d*b[t]*z[t], z[0] == 0, a[0] == 0, b[0] == 0, 
    x[0] == c1, y[0] == Sqrt[1 - c1^2]}, {x, y, z, a, b}, {t, 0, 
    20}], {c1, Sqrt[d/(d + 1)], 1}, {d, 0, 1}]
    
(*We can use this to find \rho by inspection*)
    Manipulate[z[t] /. s, {t, 0,20}]

(*This plots the derivative of b*)
Plot[b'[t] /. s, {t, 0, 15.99}, AspectRatio -> Automatic, 
 AxesOrigin -> {0, 0}]
\end{lstlisting}
\subsection{Figure 9}
This code provides the numerical evidence for Lemma 5.11.
\begin{lstlisting}[language=Mathematica]
L1[x0_]:= (*As before*)

(*First we evaluate the derivative*)
D[L1[x0],x0]

(*Then, we plot G(x0)*)
Plot[% - 3 (1/(2 Sqrt[x0]) + 2*x0*Sqrt[x0]/(1 - x0^2)),
{x0,1/Sqrt[3.],1},AxesOrigin -> {1/Sqrt[3.], 0}]
\end{lstlisting}
\newpage
\subsection{Computing the General Period Function}
Stephen Miller helped us with this code. It allows us to compute the period function for any choice of $\alpha$ and $\beta$. Using it, it appears that the monotonicity results we would like are indeed true for arbitrary $\alpha$, a promising sign for our Main Conjecture.
\begin{lstlisting}[language=Mathematica]
(*Defining the integrand*)
integrand[t_, A_, B_] = 2/
 Sqrt[1 - B^2/(A + 
     1) (A Exp[2 t] + Exp[-2 A t])]
     
    (*Numerically finding the endpoints of integration *)
     endpoints[A_, B_] := 
 Sort[Log[Flatten[
     y /. NSolve[
       1 - B^2/(A + 1) (A y + y^-A) == 0, 
       y, 20]]]/2]
       
     (*Numerical integration*)
       p[A_, B_] := 
 NIntegrate[integrand[t, A, B], 
  Join[{t}, endpoints[A, B]]]
  
  (*Generates the table presented earlier*)
Table[{a, p[a, .999], Pi*Sqrt[2/a]}, {a, 0.1, 1, 0.1}] 
\end{lstlisting}

\subsection{Verification of Inequality $(20)$}
Mathematica and other computer algebra systems can prove inequalities involving rational functions fairly quickly. A human-readable proof could in practice be generated as well, but an output of "true" suffices for our purposes.

\begin{lstlisting}[language=Mathematica]
Reduce[ForAll[x0, 
  1/Sqrt[3] < x0 < 
   1, ...The expression in (20)... < 1]]
(* The output to this code is true*)
\end{lstlisting}

\end{document}